\newfont\eul{eusm10}
\newfont\euls{eusm8}
\newcommand{\bea}{\begin{eqnarray}}
\newcommand{\eea}{\end{eqnarray}}
\newcommand{\bean}{\begin{eqnarray*}}
\newcommand{\eean}{\end{eqnarray*}}
\newcommand{\mcA}{\mathcal{A}}
\newcommand{\mcB}{\mathcal{B}}
\newcommand{\mcG}{\mathcal{G}}
\newcommand{\mcI}{\mathcal{I}}
\newcommand{\mcJ}{\mathcal{J}}
\newcommand{\mcL}{\mathcal{L}}
\newcommand{\mcS}{\mathcal{S}}
\newcommand{\mcX}{\mathcal{X}}
\newcommand{\mcZ}{\mathcal{Z}}
\newcommand{\mbbP}{\mathbb{P}}
\newcommand{\mbbQ}{\mathbb{Q}}
\newcommand{\mbfL}{\mathbf{L}}
\newcommand{\mbfV}{\mathbf{V}}
\newcommand{\Kc}{\mathbf{K}^c}
\newcommand{\Le}{\mathbf{L}[E]}
\newcommand{\On}{\mathbf{On}}
\newcommand{\cff}{\mathsf{cof}}
\newcommand{\crp}{\mathsf{cr}}
\newcommand{\csp}{\mathbb{CS}}
\newcommand{\cut}{\mathop{||}}
\newcommand{\dom}{\mathrm{dom}}
\newcommand{\gch}{\mathsf{GCH}}
\newcommand{\idm}{\mathsf{id}}
\newcommand{\lht}{\mathsf{lh}}
\newcommand{\nrp}{{\mathbb{NR}}}
\newcommand{\ptm}{\mbox{\eul P}}
\newcommand{\rng}{\mathrm{rng}}
\newcommand{\rst}{\restriction}
\newcommand{\ult}{\mathsf{Ult}}
\newcommand{\zfc}{{\mathsf{ZFC}}}
\newcommand{\card}{\mathsf{card}}
\newcommand{\dfeq}{\mathop{\stackrel{\mathrm{def}}{=}}}
\newcommand{\mlex}{<_{\mathrm{mlex}}}
\newcommand{\forces}{\Vdash}
\newcommand{\symdiff}{\mathop{\triangle}}
\newtheorem{lemma}{Lemma}[section]
\newtheorem{proposition}[lemma]{Proposition}
\newtheorem{theorem}[lemma]{Theorem}
\newtheorem{corollary}[lemma]{Corollary}
\newtheorem{definition}[lemma]{Definition}
\newtheorem{remark}[lemma]{Remark}
\newtheorem{claim}[lemma]{Claim}
\def\qed{\nobreak\hfill\penalty250 \hbox{}\nobreak\hfill\qedbox\medskip}
\newtheorem*{theorem*}{Theorem}
\newtheorem*{corollary*}{Corollary}
\newtheorem{open}{Open Problem}
\newcommand{\cat}{^\frown}
\newcommand{\rest}{\ensuremath{\upharpoonright}}
\newcommand{\la}{\langle}
\newcommand{\ra}{\rangle}
\newcommand{\nn}{{\mathbb N}}
\newcommand{\poP}{\ensuremath{\mathbb P}}
\newcommand{\poQ}{\ensuremath{\mathbb Q}}
\newcommand{\mct}{{\ensuremath{\mathcal T}}}
\newcommand{\mci}{\ensuremath{\mathcal I}}
\newcommand{\mca}{\ensuremath{\mathcal A}}
\newcommand{\mcb}{\mathcal B}
\newcommand{\mcs}{{\mathcal S}}
\newcommand{\mcj}{\mathcal J}
\newcommand{\mcu}{\mathcal U}
\newcommand{\bfni}[1]{\noindent {{\bf{#1}}}}
\renewcommand{\qed}{{\nopagebreak \hfill $\dashv$ 
 \par\bigskip}}
\newtheorem*{remark*}{Remark}
\title{{Games with Filters I}}
\author{Matthew Foreman}
\address{Department of Mathematics, University of California at Irvine,
Irvine, CA 92697-3875, USA}
\email{mforeman@math.uci.edu}
\thanks{Matthew Foreman was supported in part by NSF grant DMS-2100367}
\author{Menachem Magidor}
\address{Institute of Mathematics, Hebrew university of Jerusalem, Jerusalem
91904, Israel} 
\email{mensara@savion.huji.ac.il}
\thanks{Menachem Magidor was supported by ISF grant 684-17}
\author{Martin Zeman}
\address{Department of Mathematics, University of California at Irvine, 
Irvine, CA 92697-3875, USA}
\email{mzeman@math.uci.edu}
\begin{document}
%made 3 corrections on June 29, 2023:  repunctuated the comment about 
%theorem 2.3 in the problems section, corrected the spelling of Nielsen's 
%name, and removed the dollar signs from Player $II$.

\begin{abstract}
This paper has two parts.  The first is concerned with a variant of a family
of games introduced by Holy and Schlicht, that we call \emph{Welch
games}. Player II having a winning strategy in the Welch game of length
$\omega$ on $\kappa$ is equivalent to weak compactness. Winning the game of
length $2^\kappa$ is equivalent to $\kappa$ being measurable. We show that for
games of intermediate length $\gamma$, II winning implies the existence of
precipitous ideals with $\gamma$-closed, $\gamma$-dense trees.   

The second  part shows the first is not vacuous.  For each  $\gamma$ between
$\omega$ and $\kappa^+$, it gives a model where   II wins the games of length
$\gamma$, but not $\gamma^+$.   The technique also gives models where for all
$\omega_1< \gamma\le\kappa$ there are $\kappa$-complete, normal,
$\kappa^+$-distributive ideals having dense sets that are $\gamma$-closed,
but not $\gamma^+$-closed. 
\end{abstract}

\maketitle

\begin{center}

%{\Large\bf GAMES WITH FILTERS}

%\bigskip

\date={}
\end{center}

\bigskip

\section{Introduction}\label{s.intro}

Motivated by ideas of generalizing properties of the first inaccessible
cardinal~$\omega$, Tarski \cite{T62}  came up with the idea of {considering}
uncountable cardinals  $\kappa$ such that $\mcL_{\kappa\kappa}$-compactness
holds
for languages of size $\kappa$.  This became the definition of a \emph{weakly
compact} cardinal.  Hanf \cite{H63}, showed that weakly compact cardinals are
Mahlo. Work of Keisler \cite{62a} and Keisler and Tarski \cite{K-T64} showed: 

\begin{theorem*}
Let $\kappa$ be an uncountable inaccessible cardinal. Then the following are
equivalent to weak compactness: 
	\begin{enumerate}
	\item Whenever $R\subseteq V_\kappa$ there is a transitive set $X$ and
	$S\subseteq X$ such that %$\kappa\subseteq X$ and
	\[
	\la V_\kappa,\in, R\ra \prec \la X, \in, S\ra.
	\]
	\item \label{extension} If $\mcb\subseteq \ptm(\kappa)$ is a
	$\kappa$-complete Boolean subalgebra with $|\mcb|=\kappa$ and $F$ is
	a $\kappa$-complete filter on $\mcb$, then $F$ can be extended to a
	$\kappa$-complete ultrafilter on $\mcb$. 
	\end{enumerate}
\end{theorem*}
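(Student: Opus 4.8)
The plan is to prove the two equivalences by running the cycle of implications $\mathrm{WC}\Rightarrow(1)\Rightarrow(2)\Rightarrow\mathrm{WC}$, where $\mathrm{WC}$ denotes weak compactness in the form of $\mathcal{L}_{\kappa\kappa}$-compactness for theories of size $\le\kappa$. Throughout I use that inaccessibility gives $|V_\kappa|=\kappa$ and $\kappa^{<\kappa}=\kappa$, so languages and theories indexed by $V_\kappa$ have size $\kappa$; I also use that for any predicate $R$, $(V_\kappa,\in,R)$ satisfies every instance of Separation and Replacement in the expanded language, simply because every subset of a member of $V_\kappa$ already lies in $V_\kappa$. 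For $\mathrm{WC}\Rightarrow(1)$, given $R\subseteq V_\kappa$ I work in the language with $\in$, a predicate for $R$, a constant $c_a$ for each $a\in V_\kappa$, and one extra constant $d$. Let $T$ be the full $\mathcal{L}_{\kappa\kappa}$-elementary diagram of $(V_\kappa,\in,R,a)_{a\in V_\kappa}$ together with ``$d$ is an ordinal'' and all $c_\alpha\in d$ for $\alpha<\kappa$. Then $|T|=\kappa$, and $T$ is $<\kappa$-satisfiable, since a subtheory of size $<\kappa$ mentions fewer than $\kappa$ ordinals below $d$ and is satisfied in $V_\kappa$ itself by interpreting $d$ as a large enough ordinal $<\kappa$. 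By weak compactness $T$ has a model; placing the $\mathcal{L}_{\omega_1\omega_1}$-sentence expressing well-foundedness of $\in$ into the diagram forces the membership relation to be well-founded and extensional, so I take the transitive collapse $X$, let $S$ be the image of $R$, and verify by $\in$-induction that the collapse fixes $V_\kappa$. Since $d$ collapses to an ordinal $\ge\kappa$, we get $V_\kappa\subsetneq X$ and $(V_\kappa,\in,R)\prec(X,\in,S)$, the proper extension being the real content of (1).

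For $(1)\Rightarrow(2)$, fix $\mcb$ and $F$. Enumerating $\mcb=\{b_\xi:\xi<\kappa\}$, I code the membership relation $\{(\xi,\eta):\eta\in b_\xi\}$, the Boolean operations, and the index set of $F$ into a single $R\subseteq V_\kappa$, and apply (1) to get a transitive proper extension $(V_\kappa,\in,R)\prec(X,\in,S)$. By elementarity $X$ is a model of $\zfc$ in the expanded language, $V_\kappa=V_\kappa^X$, and $\kappa\in X$ is an ordinal. Each original $b\in\mcb$ reinterprets in $X$ as a set $b^X$ of $X$-ordinals with $b^X\cap\kappa=b$, and decoded complements and $<\kappa$-meets transfer verbatim since they are given by the same first-order formulas. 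The crux is to produce a single $d$ lying in $b^X$ for every $b\in F$; then $U=\{b\in\mcb:d\in b^X\}$ is, by transfer of the ultrafilter and meet formulas, a $\kappa$-complete ultrafilter extending $F$. To find $d$, note that $V_\kappa$ satisfies (reading $F$ through its code) that any set-indexed subfamily of $F$ has a common element, where ``set'' means of size $<\kappa$; this is exactly $\kappa$-completeness together with properness, and it transfers to $X$.

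For $(2)\Rightarrow\mathrm{WC}$ it suffices to verify the tree property, since with inaccessibility this is the classical characterization of weak compactness. Given a tree of height $\kappa$ with levels of size $<\kappa$, I identify its underlying set (of size $\le\kappa$) with a subset of $\kappa$, let $\mcb$ be the $\kappa$-complete subalgebra of $\mathcal{P}(\kappa)$ generated by the cones $\hat t=\{s:s\ge_T t\}$, and let $F$ be the $\kappa$-complete filter generated by the tails $T^{\ge\alpha}=\bigcup_{t\in\text{level }\alpha}\hat t$. Extending $F$ to a $\kappa$-complete ultrafilter $U$ via (2), the cones over level $\alpha$ partition $T^{\ge\alpha}\in U$ into $<\kappa$ pieces, so $\kappa$-completeness forces exactly one cone $\hat t_\alpha$ into $U$; upward closure and uniqueness make the $t_\alpha$ cohere into a cofinal branch.

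I expect the only genuinely delicate point to be the extension clause in $(1)\Rightarrow(2)$: not merely obtaining some $\kappa$-complete ultrafilter on $\mcb$, but one containing the prescribed $F$. The naive candidate, the least new ordinal $\kappa$ itself, lies in $b^X$ only for co-bounded $b$, so it need not meet a filter whose members are not tails. The resolution is the observation above: because $\kappa$ becomes an honest set in $X$ and $X$ inherits Separation and Replacement in the language with $S$, the entire $F$, even when $|F|=\kappa$ with unbounded index set, is visible to $X$ as a single set, so the family $\{b^X:b\in F\}$ is an $X$-set of members of the reflected filter; the transferred form of $\kappa$-completeness then yields one point $d$ meeting all of them at once.
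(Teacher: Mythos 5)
The paper never proves this statement: it appears in the introduction as a quoted classical result, attributed to Keisler \cite{62a} and Keisler--Tarski \cite{K-T64}, so there is no internal proof to compare against, and your proposal has to be judged as a free-standing reconstruction of the standard argument --- which it is, and a correct one. Your cycle $\mathrm{WC}\Rightarrow(1)\Rightarrow(2)\Rightarrow\mathrm{WC}$ is the classical route: for $\mathrm{WC}\Rightarrow(1)$, the $\mathcal{L}_{\kappa\kappa}$-diagram with a new constant above all $c_\alpha$, the $\mathcal{L}_{\omega_1\omega_1}$ well-foundedness sentence, and the sentences $\forall x\,(x\in c_a\leftrightarrow\bigvee_{b\in a}x=c_b)$ forcing the collapse to fix $V_\kappa$ (your cardinality count $|T|=\kappa$ is fine under inaccessibility); for $(2)\Rightarrow\mathrm{WC}$, the tree-property argument via the algebra generated by cones and the tail filter, which is in fact the same argument the paper itself uses in Section \ref{weak compactness} to show that Player II winning the Welch game of length $1$ yields weak compactness. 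You also correctly isolate the genuinely delicate point in $(1)\Rightarrow(2)$ --- that the canonical new ordinal $\kappa$ only recovers the co-bounded filter --- and your fix is sound: the index set $I=\{\xi<\kappa : b_\xi\in F\}$ is definable over $(X,\in,S)$ with parameter $\kappa\in X$, hence is an element of $X$ by Separation in the expanded language (each instance of which transfers from $(V_\kappa,\in,R)$, where it holds because every subset of a member of $V_\kappa$ lies in $V_\kappa$), and the first-order statement ``every set $z$ of indices from $I$ admits an $\eta$ with $(\xi,\eta)\in R$ for all $\xi\in z$'' --- true in $V_\kappa$ exactly because sets of indices there are bounded, hence of size $<\kappa$, where $\kappa$-completeness plus properness of $F$ applies --- transfers and can be instantiated in $X$ at $z=I$ itself, producing your $d$.

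One phrase in your closing paragraph is literally false, though your actual argument never relies on it: for $b\subseteq\kappa$ unbounded, the reinterpretation $b^X=\{\eta : (\xi,\eta)\in S\}$ is a \emph{proper class} of $X$, not a set --- unboundedness of $b$ in the ordinals is a first-order fact about $(V_\kappa,\in,R)$ and transfers, so $b^X$ is unbounded in $\mathrm{Ord}^X$ --- and consequently the family $\{b^X : b\in F\}$ cannot be ``an $X$-set of members of the reflected filter.'' What is a set in $X$ is the index set $I$, and the extension step must be (and, in your second paragraph, correctly is) routed through $I$ and the transferred $\forall z$ statement rather than through the reflected sets themselves. With that wording repaired, the proof is complete and matches the standard Keisler--Tarski argument the paper cites.
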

 
Items 1 and 2 are clearly implied by their analogues for measurable cardinals:
 
\begin{itemize}
\item[($1'$)] There is an elementary embedding of $V$ into a transitive
class $M$ that has critical point $\kappa$. 
\item[($2'$)] There is a non-atomic, $\kappa$-complete ultrafilter on
$\ptm(\kappa)$.  
\end{itemize}

\smallskip

\bfni{Holy-Schlicht Games.} This paper concerns several of a genre of
games originating in the paper \cite{HolySh} of Holy and Schlicht, which {were modified and further} explored
by Nielsen and Welch \cite{NW}. The following small variant of the
{Holy-Schlicht-Nielsen-Welch games} was suggested to us by Welch.  

{
Players I and II alternate moves:
\begin{center}
\begin{tabular}{ c||c|c|c|c|c|c } 
 I & $\mca_0$& $\mca_1$ &\dots &$\mca_\alpha$&$\mca_{\alpha+1}$&\dots \\
 \hline
II & $U_0$ & $U_1$& \dots &$U_\alpha$&$U_{\alpha+1}$& \dots \\ 
\end{tabular}
\end{center}
The game {proceeds} for some length $\ell\le \gamma$ {determined by the play}. 
The sequence  $\la \mca_\delta:0\le\delta<\ell\le\gamma\ra$ is an increasing sequence of
$\kappa$-complete subalgebras of $\ptm(\kappa)$ of cardinality $\kappa$ and
$\la U_\delta:0\le\delta<\ell\ra$ is sequence of uniform $\kappa$-complete
filters,  each $U_\alpha$ is a uniform ultrafilter on $\mca_\alpha$ and
$\alpha<\alpha'$ implies that $U_\alpha\subseteq U_{\alpha'}$. We assume
without loss of generality that $\mca_0$ contains all singletons. Player I
goes first at limit stages. The game continues until either Player II can't
play or the play has length~$\gamma$. {If Player II can't play, the game ends and $\ell$ is the length of the sequence already played.}\footnote{We could omit ``uniform" and
simply require $\mca_0$ to include the co-$<\!\!\kappa$ subsets of $\kappa$
and $U_0$ to extend the co-$<\!\!\kappa$-filter.  As noted in section \ref{weak compactness}, if $\kappa$ is inaccessible, then Player I always has a legal play in the Welch game.}
We denote this game by $\mcG^W_\gamma$. }
\medskip
 
\bfni{The winning condition.} Player II wins if the game continues through all
stages below $\gamma$.
 
\bigskip
There are two extreme cases: $\gamma\le \omega$ and
$\gamma=2^{\kappa}$.  Using item (\ref{extension}) of the characterization of
weakly compact cardinals, one sees easily that if $\kappa$ is weakly compact
then II wins the game of length $\omega$.  

The situation with the converse is slightly complicated. If $\kappa$ is inaccessible and Player II can win the Welch game of length 2, then 
$\kappa$ is weakly compact.  If $\kappa$ is not inaccessible, then either Player I does not have an opening move, or Player II loses. This {follows from work in \cite{flipping}, though stated in a different way there. For completeness it is} proved in Section \ref{weak compactness}.

At the other {extreme} if $\kappa$ is
measurable one can fix in advance a $\kappa$-complete uniform ultrafilter
$\mcu$ on $\ptm(\kappa)$ and at stage $\alpha$ play $U_\alpha=U\cap
\mca_\alpha$.  {The converse is also immediate:  if the second player has a winning strategy in the game of length 
$2^\kappa$, and the first player plays a sequence of algebras with $\bigcup_{\alpha<2^\kappa} \mca_\alpha=\ptm(\kappa)$, 
then the union of the $U_\alpha$'s in Player II's responses gives a $\kappa$-complete ultrafilter on $\kappa$.}

In \cite{NW}, Nielsen-Welch proved that Player II having a winning
strategy in  the game of length $\omega+1$ implies that there is an inner
model with a measurable cardinal. This motivated the following:
\medskip

\bfni{Welch's Question.} Welch asked whether Player~II having a winning
strategy in the game of length $\omega_1$ implies the existence of a
non-principal precipitous ideal.

\medskip

For the readers' convenience we recall the definition of \emph{precipitousness}. An ideal $\mci$ on a set $X$ is
\emph{precipitous} if for all generic $G\subseteq \ptm(X)/\mci$ the generic
ultrapower $V^X/G$ is well-founded. See \cite{JP} or \cite{Fo} for details of the definition. 

The main result of this paper is:
\bigskip

\bfni{Theorem} If  $\kappa$ is inaccessible, $2^\kappa=\kappa^+$ and Player~II
can win the game of length $\omega+1$ then there is a uniform normal
precipitous ideal on $\kappa$.  
\smallskip

\noindent In section \ref{weak compactness}, we show that even the Welch  game of length one is not meaningful if $\kappa$ is not inaccessible.

\medskip
  
We note here that for $\gamma$ a limit, there is an intermediate property
between ``Player~II wins the game of length $\gamma$" and ``Player~II wins the
game of length $\gamma+1$".  It is the  game ${\mathcal G}_\gamma^*$  of length
$\gamma$ that is played the same way as the original Welch game $\mcG^W_\gamma$,
but with a 
different winning condition: For Player~II to win, there must be an extension of
$\bigcup_{\alpha<\gamma}U_\alpha$ to a uniform $\kappa$-complete ultrafilter
on the $\kappa$-complete subalgebra of $\ptm(\kappa)$ generated by
$\bigcup_{\alpha<\gamma}\mca_\alpha$. 
\medskip

\bfni{Precipitous ideals.}
We are fortunate Welch's question leads to a number of more refined results about 
the structure of the quotients of the Boolean algebras $P(\kappa)/\mci$.  We begin by
discussing a strong hypothesis: 
\begin{quotation} \noindent A $\kappa$-complete, uniform ideal 
 $\mci$ on $\kappa$ such that the Boolean algebra $\ptm(\kappa)/\mci$   has the
 $\kappa^+$-chain condition is called a \emph{saturated ideal}. 
\end{quotation}
 
It follows from results of Solovay in  \cite{rvm} that if $\mci$ is a
saturated ideal on $\kappa$ then $\mci$ is precipitous.  Thus to show that a property $P$ implies that
there is a non-principal precipitous ideal on $\kappa$ it suffices to consider only
the case where $\kappa$ does not carry a saturated ideal. 
\bigskip

The most direct answer to Welch's question is given by the following theorem:
\begin{theorem}\label{t1}  Assume that 
$2^\kappa=\kappa^+$ and that $\kappa$ does
not carry a saturated ideal. If Player II has a winning strategy in the game
$\mathcal G^*_\omega$, then there is a uniform normal precipitous ideal on
$\kappa$.  
\end{theorem}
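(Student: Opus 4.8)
The plan is to read a winning strategy $\tau$ for Player~II in $\mcG^*_\omega$ as a device for building a countably closed dense subtree of the quotient $\ptm(\kappa)/\mci$, and then to invoke the standard fact that a $\kappa$-complete ideal whose quotient has a countably closed dense subset is precipitous. Fix such a $\tau$. Call a finite sequence $p=\la(\mca_0,U_0),\dots,(\mca_{n-1},U_{n-1})\ra$ a \emph{$\tau$-play} if it is a legal partial play of the Welch game in which Player~II has followed $\tau$, and write $\mca^p$, $U^p$ for its last algebra and filter. Declare $S\in\mci^+$ iff $S\in U^p$ for some $\tau$-play $p$, and let $\mci$ be the complement. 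The structural input is that Player~I is free: given any $\tau$-play $p$ and fewer than $\kappa$ sets, Player~I may play a $\kappa$-complete algebra of size $\kappa$ containing $\mca^p$ and all those sets (using $\kappa^{<\kappa}=\kappa$), whereupon $\tau$ must answer with a $\kappa$-complete uniform ultrafilter $U\supseteq U^p$. Since the filters along a play increase, one checks that $\mci$ is downward closed (if $T\subseteq S$ and $T\in U^q$, extend $q$ to place $S$ in the algebra; then $S\in U^{q'}$) and closed under unions of $<\kappa$ many sets (if $\bigcup_{i<\delta}S_i\in U^{q}$ with $\delta<\kappa$, extend $q$ to capture all $S_i$ and use $\kappa$-completeness of the ultrafilter to put some $S_i$ into $U^{q'}$). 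As $U_0$ is uniform and $\mca_0$ contains the singletons, $\mci$ is a uniform, $\kappa$-complete ideal.

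Second, I would reduce precipitousness to this closure property. Every element of a generic ultrapower $V^\kappa/G$ is represented by a ground-model function, so ill-foundedness produces functions $f_n\in V$ together with conditions forcing $\{\xi:f_{n+1}(\xi)\in f_n(\xi)\}$ to be large. If $\ptm(\kappa)/\mci$ has a countably closed dense subset $D$, one builds in $V$ a descending chain in $D$ deciding the $f_n$, takes a lower bound $B\in D$, and—because the sequence $\la f_n\ra$ is now a genuine element of $V$—applies $\kappa$-completeness of $\mci$ to find $\xi\in B$ with $f_0(\xi)\ni f_1(\xi)\ni\cdots$, an impossible descending $\in$-chain. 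Thus it suffices to exhibit such a $D$.

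Third, and this is the heart, I would let $D$ consist of the conditions $[S]$ that are captured by a $\tau$-play, organized into a tree by the extension order on the underlying plays, so that a descending $\omega$-chain $[S_0]\ge[S_1]\ge\cdots$ in $D$ corresponds to an increasing chain of $\tau$-plays $p_0\subseteq p_1\subseteq\cdots$ with $S_n\in U^{p_n}$. Their union is a length-$\omega$ play in which Player~II has followed $\tau$; since $\tau$ wins $\mcG^*_\omega$, the filter $\bigcup_n U^{p_n}$ extends to a $\kappa$-complete, hence countably complete, uniform ultrafilter $\mcW$ on the algebra generated by $\bigcup_n\mca^{p_n}$. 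As each $S_n\in\mcW$, countable completeness gives $\bigcap_n S_n\in\mcW$, providing the desired lower bound and verifying countable closure.

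The main obstacle is exactly at this last step: one must arrange the bookkeeping so that membership in the \emph{limit} ultrafilter $\mcW$ translates back into honest $\mci$-positivity and membership in $D$, so that $\bigcap_n S_n$ is a genuine condition below every $[S_n]$ rather than merely a nonempty set, and so that $D$ is truly dense with no dead ends at limits. This is where the assumption that $\kappa$ carries no saturated ideal enters—to keep the ideal proper and the tree perfectly splitting throughout the construction—after which a standard normalization (pressing-down) argument, using that the seed $[\idm]$ can be driven down to $\kappa$ in the generic ultrapower, replaces $\mci$ by a normal ideal of the same kind, yielding the uniform normal precipitous ideal.
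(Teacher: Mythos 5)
Your opening moves are sound: the ideal you define is exactly the paper's \emph{hopeless ideal} $\mcI(\tau)$, and your verification of $\kappa$-completeness and uniformity via the freedom of Player~I matches Proposition~\ref{hopeless.p.completeness}; likewise, ``countably closed dense subset implies precipitous'' is a correct (Laver-style) reduction, as in Proposition~\ref{star.p.dense}. But the heart of your plan fails at precisely the limit step you flag, and it cannot be repaired within your framework. Winning $\mcG^*_\omega$ supplies, at the end of a length-$\omega$ play, \emph{some} $\kappa$-complete ultrafilter $\mcW$ extending $\bigcup_n U^{p_n}$; but $\mcW$ is never a move of $\tau$, and positivity for your ideal means membership in an ultrafilter actually played along a \emph{finite} $\tau$-play. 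So $\bigcap_n S_n\in\mcW$ yields only $\bigcap_n S_n\neq\varnothing$, not $\bigcap_n S_n\in\mci^+$, and your $D$ (which is all of $\mci^+$) has no reason to be countably closed. Indeed a $\sigma$-closed dense subset is essentially the conclusion of Theorem~\ref{t2} for $\gamma>\omega$, and by Theorem~\ref{t4} it would hand Player~II winning strategies in longer games, which the separation results (Theorems~\ref{t3} and~\ref{t6}) show do not follow from a length-$\omega$ strategy; your route, if it worked, would prove too much. The paper therefore does not seek a closed dense set at $\gamma=\omega$: it proves precipitousness via the Galvin--Jech--Magidor criterion, where defeating a strategy of Player~I in the ideal game needs only \emph{nonemptiness} of $\bigcap_n X_n$ --- exactly what the $Q_\omega/W_\omega$ payoff gives. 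Even for that, the raw $\tau$ is inadequate, because a positive set produced by Player~I may be witnessed only by $\tau$-plays incompatible with the current auxiliary position; this coherence problem (which your ``bookkeeping'' remark gestures at) is solved by constructing the tree $T(\mcS)$ of Proposition~\ref{star.p.tree}, whose incomparable nodes are almost disjoint --- and \emph{this} is where non-saturation actually enters: it supplies antichains of size $\kappa^+$, so one can always pick successors outside the relevant models $N_\alpha$ of the internally approachable chain (this is also where $2^\kappa=\kappa^+$ is used) --- and then replacing $\tau$ by the strategy $\mcS^*$ that plays down $T(\mcS)$, making witnessing runs unique (Proposition~\ref{star.p.precipitous}). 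Non-saturation does not, as you suggest, make limit intersections positive.

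The second genuine gap is normality. Your closing appeal to ``a standard normalization (pressing-down) argument'' is not available: whether a precipitous ideal on $\kappa$ yields a \emph{normal} precipitous ideal is the Jech--Prikry problem, open in general (the projection argument in Proposition~\ref{star.p.complete-vs-normal} works for saturated ideals because the $\kappa^+$-c.c.\ provides a function representing $\kappa$; mere precipitousness gives no such transfer). The paper instead secures normality inside the game machinery, \emph{before} the ideal is formed: one passes from a played ultrafilter $U$ to the derived normal ultrafilter $U^*$, but this operation is not monotone --- $U\subseteq U'$ need not give $U^*\subseteq (U')^*$ (``drops'') --- and a real argument is needed: Lemmas~\ref{doesn't happen} and~\ref{not at limits} show that along any play extendible to a countably complete limit ultrafilter (this is exactly where the $\mcG^*_\omega$ payoff is consumed) there are only finitely many drops, and the minimal-position construction in Proposition~\ref{games.p.passing-to-normal} then converts a winning strategy for $\mcG_1^-(Q_\omega)$ into one playing \emph{increasing normal} ultrafilters. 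Without some such mechanism your construction produces at best a uniform $\kappa$-complete precipitous ideal, short of the stated theorem.
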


We recall that a normal uniform ideal on $\kappa$ is $\kappa$-complete. 
As a corollary we obtain:
 
\begin{corollary*}
Under the assumptions of Theorem \ref{t1}, if
 Player~II has a winning strategy in either $\mcG^*_\omega$ or $\mcG^W_\gamma$ for any
$\gamma\ge \omega+1$, then there is a uniform normal precipitous ideal on
$\kappa$.  
\end{corollary*}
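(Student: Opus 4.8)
The plan is to reduce the corollary to Theorem \ref{t1}. If Player~II has a winning strategy in $\mcG^*_\omega$, then under the hypotheses $2^\kappa=\kappa^+$ and ``$\kappa$ carries no saturated ideal'' Theorem \ref{t1} applies verbatim and yields a uniform normal precipitous ideal on $\kappa$, with nothing further to prove. So the whole content is the second branch: given $\gamma\ge\omega+1$ and a winning strategy for Player~II in $\mcG^W_\gamma$, I would manufacture a winning strategy for Player~II in $\mcG^*_\omega$ and then invoke Theorem \ref{t1}. The reduction factors through the length-$(\omega+1)$ Welch game, using the two monotonicity facts that $\mcG^*_\omega$ sits between $\mcG^W_\omega$ and $\mcG^W_{\omega+1}$, and that winning longer Welch games is the stronger property. (Throughout I may assume $\kappa$ is inaccessible, hence $\kappa^{<\kappa}=\kappa$; this follows from Player~II being able to win, as established in the first part of the paper, and is implicit in the game being meaningful.)

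First I would record monotonicity in the length. Let $\sigma$ be a winning strategy for Player~II in $\mcG^W_\gamma$. By definition, a play following $\sigma$ never leaves Player~II unable to respond at any stage $\delta<\gamma$. Since $\omega+1\le\gamma$, every position arising in $\mcG^W_{\omega+1}$ is an initial segment of a legal $\sigma$-play of $\mcG^W_\gamma$; in particular stage $\omega$ lies below $\gamma$, so $\sigma$ furnishes a legal answer to any move $\mca_\omega$ played at stage $\omega$. Restricting $\sigma$ to plays of length at most $\omega+1$ therefore gives a winning strategy $\tau$ for Player~II in $\mcG^W_{\omega+1}$.

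Next I would pass from $\mcG^W_{\omega+1}$ to $\mcG^*_\omega$. Let Player~II follow $\tau$ for the first $\omega$ rounds of $\mcG^*_\omega$; these rounds are played identically in the two games, producing $\la\mca_n:n<\omega\ra$ and $\la U_n:n<\omega\ra$. Let $\mcb$ be the $\kappa$-complete subalgebra of $\ptm(\kappa)$ generated by $\bigcup_{n<\omega}\mca_n$. Since each $\mca_n$ has size $\kappa$ and $\kappa$ is inaccessible, $\mcb$ has cardinality $\kappa^{<\kappa}=\kappa$, contains all singletons, and contains every $\mca_n$, so $\mca_\omega=\mcb$ is a legal move for Player~I at stage $\omega$ of $\mcG^W_{\omega+1}$. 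As $\tau$ is winning, it answers with a uniform $\kappa$-complete ultrafilter $U_\omega$ on $\mcb$ satisfying $U_n\subseteq U_\omega$ for all $n<\omega$; thus $U_\omega$ extends $\bigcup_{n<\omega}U_n$ to a uniform $\kappa$-complete ultrafilter on the $\kappa$-complete subalgebra generated by $\bigcup_{n<\omega}\mca_n$. This is precisely the winning condition of $\mcG^*_\omega$, so Player~II wins $\mcG^*_\omega$, and Theorem \ref{t1} finishes the proof.

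I expect no substantive obstacle here; the corollary is essentially the bookkeeping observation just made. The one point deserving care is the last step: verifying that the generated subalgebra $\mcb$ is a genuinely legal move for Player~I (closure under the relevant operations, cardinality $\kappa$ via inaccessibility, and containment of the earlier algebras and the singletons), and confirming that the strategy's response on $\mcb$ is exactly the extension demanded by the winning condition of $\mcG^*_\omega$ rather than merely an ultrafilter on a larger algebra. Once this is checked the two cases of the hypothesis collapse onto Theorem \ref{t1}.
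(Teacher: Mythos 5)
Your proof is correct and is essentially the paper's own argument: the paper disposes of the $\mcG^W_\gamma$ case via its trivial observation (TO3) --- a winning strategy in a Welch game of length $>\omega$ restricts to a winning strategy in $\mcG_0(Q_\omega)=\mcG^*_\omega$, exactly by having Player~I play the $\kappa$-algebra generated by $\bigcup_{n<\omega}\mcA_n$ at stage $\omega$ --- and then invokes Theorem~\ref{t1}, just as you do. Your added care about legality of the generated algebra (inaccessibility giving $\kappa^{<\kappa}=\kappa$) and about uniformity of the extending ultrafilter matches the paper's standing assumptions, so nothing further is needed.
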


While this is the result with the simplest statement, its proof gives a lot of structural information about the quotient algebra
$\ptm(\kappa)/\mci$.  {We prove the following theorem in section \ref{s.star}:}
 
\begin{theorem}\label{t2}  
Assume that 
$2^\kappa=\kappa^+$ and that $\kappa$
does not carry a saturated ideal.  Let $\gamma>\omega$ be a regular cardinal
less than $\kappa^+$. If Player~II has a winning strategy in the Welch game of
length $\gamma$, then there is a uniform normal ideal $\mci$ on $\kappa$
and a set $D\subseteq\mcI^+$ such that: % \ptm(\kappa)/\mci$ such that: 
\begin{enumerate}
\item $(D, \subseteq_\mci)$ is a downward growing tree of height $\gamma$,
\item $D$ is closed under $\subseteq_\mci$-decreasing sequences of
	length less than $\gamma$, 
\item $D$ is dense in $\ptm(\kappa)/\mci$.    % $\ptm(\kappa)/\mci$.
\end{enumerate}
In fact, it is possible to construct such a {dense} set $D$ where (1) and (2) above
hold with the almost containment $\subseteq^*$ in place of $\subseteq_{\mcI}$. 
\end{theorem}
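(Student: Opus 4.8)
The plan is to use Player~II's winning strategy $\sigma$ in $\mcG^W_\gamma$ to grow a tree $T$ of partial plays in which II follows $\sigma$, attaching to each node an $\mci$-positive subset of $\kappa$; the set $D$ will be the collection of these sets, and $\mci$ will be read off from $T$. A node $t$ at level $\alpha<\gamma$ is a legal position $\la(\mca_\delta,U_\delta):\delta<\alpha\ra$ of length $\alpha$ with II playing by $\sigma$, together with a set $X_t$ lying in the filter $\bigcup_{\delta<\alpha}U_\delta$ (with $X_t=\kappa$ at the root). The tree order is end-extension of positions, and I will arrange that the associated sets are $\subseteq_\mci$-decreasing along branches and pairwise $\mci$-disjoint across incomparable nodes. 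Disjointness across incomparable nodes is what makes $(D,\subseteq_\mci)$ a genuine tree (the predecessors of a node are linearly ordered), so the burden of the argument falls entirely on the choice of the branching.

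The limit levels are the easy direction. Since $\gamma$ is a regular cardinal with $\gamma<\kappa^+$ and $\kappa$ is inaccessible, we have $\gamma\le\kappa$, so every limit level $\lambda<\gamma$ satisfies $\lambda<\kappa$. Given a branch of length $\lambda$, the union $\bigcup_{\delta<\lambda}U_\delta$ is a uniform $\kappa$-complete filter because the $U_\delta$ increase, and since $\sigma$ is winning this position is legal, so II can continue. As $\lambda<\kappa$, the intersection of the $\lambda$-many sets already chosen along the branch lies in this filter and is therefore positive; declaring it the level-$\lambda$ node gives closure under $\subseteq_\mci$-decreasing sequences of length $<\gamma$. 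The same $\kappa$-completeness yields that $\mci$ is proper and $\kappa$-complete, while uniformity of the first ultrafilter makes all bounded sets null. Survival of $\sigma$ through all $\alpha<\gamma$ is exactly what produces a tree of height $\gamma$, and because $\sigma$ promises survival only to $\gamma$ there is no reason for branches to continue to $\gamma+1$.

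The heart of the matter is the successor/branching step, where density and normality are secured. Using $2^\kappa=\kappa^+$, fix enumerations of all subsets of $\kappa$ and of all regressive functions and set up a bookkeeping assigning these tasks to nodes. At a node $t$ I split $X_t$ into $\mci$-disjoint positive pieces: to discharge a density task $B$, or the partition $\{f^{-1}(\zeta):\zeta<\kappa\}$ of a regressive $f$, I let Player~I play an algebra $\mca$ of size $\kappa$ containing $X_t$, $B$, and the relevant pieces, and $\sigma$ answers with an ultrafilter $U$ on $\mca$ extending $\bigcup_{\delta<\alpha}U_\delta$. Then $U$ decides $B$, and choosing $X_{t'}\in U$ with $X_{t'}\subseteq B\cap X_t$ when $B\in U$ produces a node below $B$, giving density. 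For normality I must instead arrange, below every node and for every regressive $f$, a descendant contained in a single fibre of $f$; the fibre is selected using the decisions of $U$ and the remaining fibres are pushed into $\mci$. Because a single branch meets only $\kappa$ algebras while there are $\kappa^+$ subsets and functions to treat, the $\kappa^+$ tasks must be spread across the branching of $T$, and this is where the hypothesis that $\kappa$ carries no saturated ideal enters: it guarantees $\kappa^+$ pairwise $\mci$-incompatible positive refinements below every node, so $T$ can be made wide enough. Defining $A\in\mci$ to mean that no node of $D$ is $\subseteq_\mci$-below $A$ then makes $D$ dense by fiat, and the decision and pressing-down properties just described are precisely what show $\mci$ is closed under unions and diagonal unions.

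The main obstacle is this coordination: running the single \emph{deterministic} strategy $\sigma$ simultaneously along a $\kappa^+$-branching, height-$\gamma$ tree while (i) keeping positions legal and filters coherent, (ii) discharging all $\kappa^+$ density tasks, and (iii) securing the pressing-down property needed for normality. Point (iii) is the genuinely delicate one, because the ultrafilters $\sigma$ hands back need not be normal, so no fibre of a regressive $f$ need be $U$-large; reconciling the choice of fibres to descend into with the coherence demanded by $\sigma$ and with the disjointness of the pieces is the crux, and is exactly where the non-saturation hypothesis must be used to keep enough positive fibres available. Everything else — closure, height, and properness — is immediate from $\kappa$-completeness and the fact that $\sigma$ wins the game of length $\gamma$. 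Finally, the $\subseteq^*$ refinement is obtained by running the identical recursion but requiring only that the chosen sets decrease modulo sets of size less than $\kappa$; since all limit levels lie below $\kappa$, the $\kappa$-complete filters still supply the needed pseudo-intersections, so conditions (1) and (2) hold verbatim with $\subseteq^*$ in place of $\subseteq_\mci$.
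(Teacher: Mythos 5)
There is a genuine gap, and it sits exactly where you flagged it: normality. Your strategy $\sigma$ hands back $\kappa$-complete ultrafilters on $\kappa$-algebras, and $\kappa$-completeness is fully compatible with avoiding every fibre of a regressive $f$ (the ultrafilter can contain the complement of each fibre, since only intersections of fewer than $\kappa$ sets must be decided into it). So at the node designated to handle $f$, nothing forces any descendant to sit almost inside a fibre, and your fallback --- ``push the remaining fibres into $\mci$'' with the non-saturation hypothesis keeping ``enough positive fibres available'' --- has no mechanism behind it. Non-saturation is not a normality tool: in the paper it is used solely at the branching step, to extract antichains of size $\kappa^+$ in $\mcI(\mcS,P_b)^+$ so that the tree is wide and the map $Y\mapsto\alpha(Y)$ is injective (together with Proposition~\ref{star.p.complete-vs-normal} to pass between saturated and normal saturated ideals). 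Normality is instead obtained by a layer of machinery your proposal has no counterpart for: the paper replaces the Welch game by auxiliary games along an internally approachable chain $\la N_\alpha:\alpha<\kappa^+\ra$ ($\mcG_1^-$), passes from each ultrafilter $U$ on $\ptm(\kappa)\cap N_{\alpha+1}$ to the derived normal ultrafilter $U^*$ coming from the ultrapower of the collapsed model, and proves that along any play there are only finitely many ``drops'' where $U^*_i\not\subseteq U^*_j$ (Lemmas~\ref{doesn't happen} and~\ref{not at limits}, plus the well-foundedness of the tree $\blacktriangleleft$); choosing a $\blacktriangleleft$-minimal position then converts a winning strategy in $\mcG_1^-$ into one in the \emph{normal} game $\mcG_1$. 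Without this normalization step, the ideal you define is $\kappa$-complete at best, not normal, and Theorem~\ref{t2} is not proved.

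A second, related defect is your tree-ness claim. You ``arrange'' that sets at incomparable nodes are disjoint modulo $\mci$, but an ad hoc choice of $X_{t'}\in U$ decides nothing about its siblings. The paper gets almost-disjointness (clause (b) of Proposition~\ref{star.p.tree}, and hence the stronger $\subseteq^*$ form of the theorem) because in $\mcG_2$ Player~II's move $Y$ is a diagonal intersection of the \emph{entire} ultrafilter, so $Y$ is almost contained in or almost disjoint from every set in $N_{\alpha(Y)+1}$; combined with the antichain property ($Y\subseteq^* Y'$ would make $Y\subseteq^* X\cap X'$, witnessing $X\cap X'$ positive and contradicting that $\mcA$ is an antichain), this yields pairwise almost-disjointness and linearly ordered predecessor sets. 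Both defects are cured simultaneously by the same idea --- normalize the ultrafilters and then play their diagonal intersections --- and that is precisely the content of Sections~\ref{s.games} and~\ref{s.star} that your outline is missing. The parts of your proposal that do work (limit stages via $\kappa$-completeness, bookkeeping for density, non-saturation for $\kappa^+$-branching, the hopeless-ideal-style definition of $\mci$) faithfully mirror the paper's architecture, but they are the easy parts.
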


  	\begin{definition}
	Let $\mci$ be a $\kappa$-complete ideal on $\ptm(\kappa)$ and $\gamma>\omega$ be a regular cardinal. Then $\mci$ is $\gamma$-\emph{densely treed} if there is a set $D\subseteq\mci^+$ such that
	\begin{enumerate}
	\item  $(D, \subseteq_\mci)$ is a downward growing tree,
	\item $D$ is closed under $\subseteq_\mci$-decreasing sequences of
	length less than $\gamma$, 
\item $D$ is dense in $\ptm(\kappa)/\mci$.
	\end{enumerate}
Note that this is weaker than the conclusions of Theorem \ref{t2}.	

We will abuse notation slightly and say ``$D$ is dense in $\mci^+$" to mean
	that $D$ is a dense subset of $\ptm(\kappa)/\mci$. 
	
We will say that an ideal $\mci$ is $(\kappa,\infty)$-\emph{distributive} if
$P(\kappa)/\mci$ is a $(\kappa,\infty)$-distributive Boolean Algebra.

\end{definition} 

\noindent
In this language,  Theorem \ref{t2} can be restated as saying that Player II
having a winning strategy in the Welch game implies the existence of a normal
$\gamma$-densely treed ideal and the tree has height $\gamma$. 

We have a partial converse to Theorem \ref{t2}:
\begin{theorem}\label{t4}
Let $\gamma\le\kappa$ be uncountable regular cardinals and $\mcJ$ be a
uniform $\kappa$-complete ideal over $\kappa$ 
which is $(\kappa^+,\infty)$-distributive and has a dense $\gamma$-closed
subset. Then Player~II has a winning strategy in the game $\mcG^W_\gamma$
which is constructed in a natural way from the ideal $\mcJ$, and which we
denote by $\mcS_\gamma(\mcJ)$.
\end{theorem}

\noindent
{A proof of Theorem~\ref{t4} is at the end of Section~\ref{s.star}. We note
that if $\kappa$ carries a uniform, $\kappa$-complete ideal which is
$(\kappa^+,\infty)$-distributive, then $\kappa$ must be inaccessible. 
\bigskip
   
\bfni{How does precipitousness arise?} In \cite{GJM}, Galvin, Jech and
Magidor introduced the following game of length $\omega$. Fix an ideal
$\mci$. Players I and II alternate playing  
\begin{center}
\begin{tabular}{ c||c|c|c|c|c|c } 
I & $A_0$& $A_1$ &\dots &$A_n$&$A_{n+1}$&\dots \\
 \hline
II & $B_0$ & $B_1$& \dots &$B_n$&$B_{n+1}$& \dots \\ 
\end{tabular}
\end{center}
With $A_n\supseteq B_n\supseteq A_{n+1}$ and each $A_n, B_n\in
\mci^+$. Player~II wins the game if $\bigcap_n B_n\ne \varnothing$. We will call
this game the \emph{Ideal Game} {for \mci}. They proved the following theorem. 

\begin{theorem*}\cite{GJM}
Let $\mci$ be a countably complete ideal on a set $X$. Then $\mci$ is
precipitous if and only if Player I does not have a winning strategy {in the ideal game} { for $\mci$}. 
\end{theorem*}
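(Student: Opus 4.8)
The plan is to prove the two implications separately, using the generic ultrapower of $\ptm(X)/\mci$ as the bridge between the combinatorics of the Ideal Game and the well-foundedness that defines precipitousness. Recall that $\mci$ fails to be precipitous exactly when, for some generic $G\subseteq\ptm(X)/\mci$, the ultrapower $\ult(V,G)=V^X/G$ is ill-founded, and that this happens iff there are functions $f_n\colon X\to\On$ in $V$ (depending on $G$) with $\{x:f_{n+1}(x)<f_n(x)\}\in G$ for every $n$.

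First I would show that if $\mci$ is not precipitous then Player I has a winning strategy. Fix a condition $A_0$ forcing $\ult(V,G)$ ill-founded, together with a name $\dot f$ for an infinite $\in$-descending sequence of ordinals of the ultrapower. Player I opens with a positive set (still called $A_0$) deciding $\dot f(0)=[\check g_0]$ for an actual ground-model $g_0$. The strategy maintains the invariant that after Player I's move $A_n$ there is $g_n\colon A_n\to\On$ with $A_n\forces [\check g_n]=\dot f(n)$ and $g_{n+1}(x)<g_n(x)$ for every $x\in A_{n+1}$. Given Player II's move $B_n\subseteq A_n$, since $B_n\forces\dot f(n+1)<\dot f(n)=[\check g_n]$, a density argument produces $A_{n+1}\subseteq B_n$ and $g_{n+1}$ with $A_{n+1}\forces[\check g_{n+1}]=\dot f(n+1)<[\check g_n]$; shrinking $A_{n+1}$ by a null set makes $g_{n+1}<g_n$ hold pointwise on $A_{n+1}$. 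The point is that one cannot fix the representatives in advance, since pointwise descent on a single fixed positive set is impossible by $\sigma$-completeness, so the $g_n$ must be re-chosen each round. Finally, if $x\in\bigcap_n B_n$, then $x\in B_{n+1}\subseteq A_{n+1}$ for every $n$, whence $g_1(x)>g_2(x)>\cdots$, an impossible descending sequence of ordinals; therefore $\bigcap_n B_n=\varnothing$ and Player I wins.

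Next I would prove the converse: if Player I has a winning strategy $\sigma$ then $\mci$ is not precipitous, by exhibiting a generic whose ultrapower is ill-founded. For a position $p$ consistent with $\sigma$ ending in Player I's move $A_p$ and a point $x\in A_p$, let $\mct_{p,x}$ be the tree of $\sigma$-consistent continuations in which Player II always plays sets containing $x$. Because the moves decrease, $x\in A_p$ forces $x$ into all earlier moves, so an infinite branch of $\mct_{p,x}$ would be a $\sigma$-play with $x\in\bigcap_n B_n$, contradicting that $\sigma$ wins; hence $\mct_{p,x}$ is well-founded with an ordinal rank $r(p,x)$. Now fix any generic $G$ with $A_0=\sigma(\varnothing)\in G$ and let Player I follow $\sigma$ while Player II's moves are read off from $G$: at stage $n$ the set $\{\sigma(p_n\cat B):B\subseteq A_n\text{ positive}\}$ is dense below $A_n$, so it meets $G$, yielding $B_n\in G$ with $A_{n+1}=\sigma(p_n\cat B_n)\in G$ (using upward closure of $G$). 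Setting $H_n(x)=r(p_n,x)$ for $x\in A_n$ and $0$ otherwise, the passage from $p_n$ to $p_{n+1}$ strictly drops the rank at each $x\in B_n$, so $\{x:H_{n+1}(x)<H_n(x)\}\supseteq B_n\in G$ and $[H_{n+1}]<[H_n]$ in $\ult(V,G)$. This descending $\omega$-sequence witnesses ill-foundedness, so $\mci$ is not precipitous.

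I expect the main obstacle to be direction one, getting the name manipulation exactly right. The subtlety is precisely that an $\mathrm{I}$-winning configuration corresponds to generic rather than almost-everywhere descent, so the representatives $g_n$ are generic-dependent and must be produced round by round by a density argument and then trimmed to achieve honest pointwise descent; the clean observation that $x\in\bigcap_n B_n$ already forces $x\in A_{n+1}$ (via $x\in B_{n+1}\subseteq A_{n+1}$) is what converts the ultrapower's formal descent into a genuine descending sequence of ground-model ordinals and thereby closes the argument.
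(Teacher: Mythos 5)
Your proof is correct, and since the paper simply cites this theorem from Galvin--Jech--Magidor without reproving it, the right comparison is to the cited source: your argument is essentially the classical one, producing Player~I's strategy from a name for a descending sequence via round-by-round choice of representatives trimmed to pointwise descent, and conversely converting a winning strategy for Player~I into ill-foundedness via the ranks of the well-founded trees $\mct_{p,x}$ evaluated along a play read off the generic filter. Both directions, including the key observations that $x\in\bigcap_n B_n$ lands in every $A_{n+1}$ and that $B_n\in G$ forces the rank functions to drop $G$-almost everywhere, are sound as written.
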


In the proof of Theorem~\ref{t1}, 
we construct an ideal $\mci$ and
show that Player~II has a winning strategy in the ideal game {for $\mci$}.  In
Theorem~\ref{t2}, 
the existence of a dense set $D$ closed under descending
$\omega$-sequences immediately gives that Player~II has a winning strategy in
the ideal game. (See \cite{forgames} for some information about the
relationship between games and dense closed subsets of Boolean Algebras.)
The proofs of both Theorem~\ref{t1} and Theorem~\ref{t2} are in
Section~\ref{s.star}. 
\medskip

\bfni{Is this vacuous?} So far we haven't addressed the question of the
existence of strategies in the Welch games if $\kappa$ is \emph{not}
measurable.  We answer this with the following theorem. We use the terminology
regarding closure and distributivity properties of forcing partial orderings from
\cite{ifee}. 
 
\begin{theorem}\label{t3}
Assume $\kappa$ is measurable and $\mbfV=\Le$ is a fine structural extender
model. Then there is a generic extension in which $\kappa$ is inaccessible,
carries no saturated ideals (in particular, $\kappa$ is non-measurable) and
for all regular $\gamma$ with $\omega<\gamma\le \kappa$
there is a uniform, normal $\gamma$-densely treed ideal $\mcj_\gamma$ on
$\kappa$ that is  $(\kappa^+,\infty)$-distributive.
The Boolean algebra $\ptm(\kappa)/\mcj_\gamma$ does not contain a dense
$\gamma^+$-closed subset.  
\end{theorem}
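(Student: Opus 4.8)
The plan is to begin in $\mbfV=\Le$ with $\kappa$ measurable, to fix a normal measure $U$ on $\kappa$ and the ultrapower embedding $j\colon\mbfV\to M=\ult(\mbfV,U)$, and to use that $\gch$ holds throughout $\Le$ (so that $2^\kappa=\kappa^+$ survives). I would force with a two-step iteration $\mbp=\mbp_\kappa\ast\dot{\mbq}$: a reverse Easton iteration $\mbp_\kappa$ below $\kappa$ serving as a preparation (so that $j$ can later be lifted on a tail), followed by a forcing $\dot{\mbq}$ at stage $\kappa$ that is $\le\kappa$-distributive, of size $\kappa^+$, and arranged to kill the measurability of $\kappa$ while leaving it inaccessible. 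Writing $\mbfV[G]$ for the extension, I would \emph{not} read the ideals $\mcj_\gamma$ off $\dot{\mbq}$ directly. Instead, for each regular $\gamma\in(\omega,\kappa]$ I would lift $j$ through a $\gamma$-closed tail forcing to produce, in a further generic extension $\mbfV[G][H_\gamma]$, a generic elementary embedding $j_\gamma\colon\mbfV[G]\to M_\gamma$ with $\crp(j_\gamma)=\kappa$, and then set $\mcj_\gamma=\{X\subseteq\kappa:\kappa\notin j_\gamma(X)\}$. All the $\mcj_\gamma$ live in the single model $\mbfV[G]$, even though the embeddings $j_\gamma$ live in different outer models.

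The positive half of the theorem would then come from a duality/lifting analysis (as in \cite{Fo}), which identifies $\ptm(\kappa)/\mcj_\gamma$ densely with the $\gamma$-closed tail forcing used to build $j_\gamma$. From this: (i) since that tail is $\le\kappa$-distributive---arranged from the Easton support, the chain condition of $\mbp$, and $\gch$---the quotient $\ptm(\kappa)/\mcj_\gamma$ is $(\kappa^+,\infty)$-distributive, which is exactly what permits a single condition to decide the $\kappa$-sized algebras of a Welch play; (ii) the natural conditions of the tail forcing form a downward growing tree that is dense and closed under its $\subseteq$-decreasing sequences of length $<\gamma$, so $\mcj_\gamma$ is $\gamma$-densely treed; and (iii) normality and uniformity of $\mcj_\gamma$ follow from $\crp(j_\gamma)=\kappa$ and $\kappa=[\idm]_{j_\gamma}$. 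I would also note, via the remark after Theorem~\ref{t4}, that the existence of such a $(\kappa^+,\infty)$-distributive quotient already forces $\kappa$ to be inaccessible.

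That $\kappa$ carries no saturated ideal in $\mbfV[G]$---whence $\kappa$ is non-measurable, a normal measure being itself a saturated ideal---I would obtain from core model theory. In $\Le$ with $\kappa$ only measurable, a saturated ideal on $\kappa$ would, by the work of Steel and Jensen--Steel, reflect to an inner model strictly beyond the extender sequence $E$, which is impossible; and since $\mbp$ is tame of size $\kappa^+$, the core model is not moved and ``no saturated ideal'' passes to $\mbfV[G]$.

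The delicate direction, and the step I expect to be the main obstacle, is that each $\mcj_\gamma$ is \emph{not} $\gamma^+$-densely treed. This is not a distributivity matter, since $(\kappa^+,\infty)$-distributivity already yields $(\gamma^+,\infty)$-distributivity; one must instead show that $\ptm(\kappa)/\mcj_\gamma$ has \emph{no} dense subset closed under its $\subseteq_{\mcj_\gamma}$-decreasing sequences of length $\le\gamma$, even though such sequences do have infima in the completion. My plan is to route this through the games: were $\mcj_\gamma$ $\gamma^+$-densely treed, then by $(\kappa^+,\infty)$-distributivity together with Theorem~\ref{t4} applied at $\gamma^+\le\kappa$, Player~II would win $\mcG^W_{\gamma^+}$; so it suffices to hand Player~I a winning strategy in $\mcG^W_{\gamma^+}$. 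That strategy would exploit the fact that the tail forcing is designed to be $<\gamma$-closed but \emph{not} $\le\gamma$-closed: its tree of conditions carries dead-end branches of length exactly $\gamma$, i.e.\ decreasing $\gamma$-sequences whose infimum in the quotient is $0$, and these are made unavoidable by a $\square$-type coherent sequence together with a non-reflecting stationary set concentrating on cofinality-$\gamma$ points, both furnished by the fine structure of $\Le$ and preserved by $\mbp$. Player~I steers the play so that at some limit stage $\delta<\gamma^+$ of cofinality $\gamma$ the algebras $\mca_\alpha$ he has played drive Player~II's filters onto a dead-end branch, leaving $\bigcup_{\alpha<\delta}U_\alpha$ with no $\kappa$-complete ultrafilter extension and ending the play. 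Finally, the boundary case $\gamma=\kappa$ must be treated by hand, since Theorem~\ref{t4} does not apply at $\gamma^+=\kappa^+$; there one argues directly that a dense subtree of $\ptm(\kappa)/\mcj_\kappa$ closed under decreasing $\le\kappa$-sequences is impossible for a normal ideal on $\kappa$.
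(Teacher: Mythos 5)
Your positive half follows the paper's outline reasonably closely: an Easton-support iteration below $\kappa$, a stage-$\kappa$ forcing, ideals $\mcj_\gamma$ defined from a lifted embedding constructed in an outer model, and a duality argument identifying $\ptm(\kappa)/\mcj_\gamma$ densely with the tail forcing (this is exactly Proposition~\ref{model.p.dense-embedding}, where the tail is $\csp(\kappa^+\smallsetminus S_\kappa)$ for a generic non-reflecting stationary $S_\kappa\subseteq\kappa^+\cap\cff(\gamma)$, with the $\gamma$ chosen by a lottery so that one model handles all $\gamma$ at once). But your argument that $\kappa$ carries no saturated ideal is a genuine gap. A $\kappa$-complete $\kappa^+$-saturated ideal on an inaccessible $\kappa$ is equiconsistent with a measurable cardinal, which your ground model has by hypothesis; so no core-model or consistency-strength reflection can rule it out, and the intended $\Le$ may even satisfy ``there is a Woodin limit of Woodin cardinals,'' where the Jensen--Steel machinery you invoke is unavailable. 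The paper flags precisely this difficulty (``It is not clear that the construction above will achieve this, even if $U$ is a measure of Mitchell order zero'') and solves it differently: the stage-$\kappa$ non-reflecting set $S$ is coded into the stationarity pattern of the canonical fine-structural sets $T_{\kappa,\xi}$ (clauses (C) and (D), Proposition~\ref{model.p.sets-t}, proved by condensation, not core-model theory). A saturated ideal would yield a $\kappa^+$-preserving generic embedding $j'$; decoding through the absoluteness clause (D), the stage-$\kappa$ club shooting in $j'(\mbbP_\kappa)$ makes $S_\kappa=S\cap\cff(\gamma)$ nonstationary in the outer model, while the $\kappa^+$-c.c.\ of $\ptm(\kappa)/\mcI$ preserves its stationarity --- an outright contradiction (Proposition~\ref{model.p.no-sat-ideals}).

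Your route to ``$\mcj_\gamma$ is not $\gamma^+$-densely treed'' is also unworkable, and in fact provably fails in this model. You propose to hand Player~I a winning strategy in $\mcG^W_{\gamma^+}$; but for regular $\gamma^+\le\kappa$ the \emph{same} extension contains the ideal $\mcJ_{\gamma^+}$, so by Theorem~\ref{t4} (cf.\ Corollary~\ref{c6}) Player~II wins $\mcG^W_{\gamma^+}$ there, and Player~I cannot have a winning strategy. The failure of $\gamma^+$-treedness is a property of the individual ideal, not of the game of length $\gamma^+$, so no game-theoretic reduction of this shape can succeed. The paper's argument is local and uniform: via the dense embedding, $\ptm(\kappa)/\mcJ_\gamma$ is densely isomorphic to $\csp(\kappa^+\smallsetminus S_\kappa)$, the set $S_\kappa\subseteq\kappa^+\cap\cff(\gamma)$ remains stationary in $\mbfV[G,G']$ (Lemma~\ref{what you need}, using that $T(S)$ concentrates on cofinality-$\kappa$ points and is non-reflecting), and a club-shooting poset through the complement of a \emph{stationary} subset of $\cff(\gamma)$ has no dense $\gamma^+$-closed subset (Proposition~\ref{model.p.distributivity}(b)). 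This also disposes of the boundary case $\gamma=\kappa$ that your proposal left open, with no separate argument needed.
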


\begin{corollary}\label{c6} It follows from Theorems \ref{t4} and \ref{t3}
that in the forcing extension of Theorem~\ref{t3},  

\begin{itemize}
	\item[(a)] Player~II has a winning strategy
$\mcS_\gamma\dfeq\mcS(\mcj_\gamma)$ in $\mcG^W_\gamma$. 

	\item[(b)] There is an ideal $\mci_\gamma$ as in Theorem~\ref{t2}.  
\end{itemize}
It will follow from the proof of Theorem \ref{t3} that 
 the winning strategies $\mcS_\gamma$ in (a) are incompatible
with winning strategies $\mcS_{\gamma'}$ for Player~II in $\mcG^W_{\gamma'}$ for
$\gamma'\neq\gamma$ in the following sense: If $\gamma,\gamma'\le\kappa$ are
regular and
$\gamma\neq\gamma'$ then it is possible for Player~I to play the
first round $\mcA_0$ in such a way that the responses of $\mcS_\gamma$ and
$\mcS_{\gamma'}$ to $\langle\mcA_0\rangle$ are distinct.
\end{corollary}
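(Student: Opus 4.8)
The plan is to obtain the strategies by feeding the ideals of Theorem~\ref{t3} into Theorem~\ref{t4}, to make the definition of $\mcS_\gamma$ explicit, and then to locate the disagreement between $\mcS_\gamma$ and $\mcS_{\gamma'}$ already on the opening move, using that the ideals $\mcj_\gamma$ are built from independent pieces.

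First, (a) and (b). I work in the extension given by Theorem~\ref{t3}: there $\kappa$ is inaccessible, $2^\kappa=\kappa^+$, $\kappa$ carries no saturated ideal, and for each regular $\gamma\in(\omega,\kappa]$ there is a uniform normal $(\kappa^+,\infty)$-distributive ideal $\mcj_\gamma$ that is $\gamma$-densely treed. Its witnessing tree $D_\gamma\subseteq\mcj_\gamma^+$ is a dense $\gamma$-closed subset of $\ptm(\kappa)/\mcj_\gamma$, so Theorem~\ref{t4} gives Player~II a winning strategy $\mcS_\gamma$ in $\mcG^W_\gamma$; this is (a). As $2^\kappa=\kappa^+$ and $\kappa$ carries no saturated ideal, the hypotheses of Theorem~\ref{t2} hold, and the strategy $\mcS_\gamma$ then produces an ideal $\mci_\gamma$ with the tree structure asserted in Theorem~\ref{t2}, which is (b); in fact $\mcj_\gamma$ already witnesses (b), and by Theorem~\ref{t1} these ideals are precipitous.

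Next I make $\mcS_\gamma$ explicit by recalling the proof of Theorem~\ref{t4}. Player~II keeps a $\subseteq_{\mcj_\gamma}$-decreasing sequence of nodes of $D_\gamma$: at stage $\alpha$ she takes a lower bound of the earlier nodes (using $\gamma$-closure of $D_\gamma$ at limits $\alpha<\gamma$), refines it by $(\kappa^+,\infty)$-distributivity to a node deciding every member of the size-$\kappa$ algebra $\mca_\alpha$, and, by a fixed well-ordering of $D_\gamma$, lets $b_\alpha$ be the least such node; she answers $U_\alpha=\{A\in\mca_\alpha:b_\alpha\subseteq_{\mcj_\gamma}A\}$. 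Thus the first response to $\la\mcA_0\ra$ is $U_0^\gamma=\{A\in\mcA_0:b_0^\gamma\subseteq_{\mcj_\gamma}A\}$, with $b_0^\gamma$ the least node of $D_\gamma$ deciding $\mcA_0$. Now let Player~I open with the $\kappa$-complete algebra $\mcA_0$ generated by a single set $X$, the singletons, and the co-$<\!\kappa$ sets. Uniformity and $\kappa$-completeness decide every generator of $\mcA_0$ other than $X$, so $U_0^\gamma$ is determined by whether $X\in U_0^\gamma$, and $U_0^\gamma\neq U_0^{\gamma'}$ exactly when $b_0^\gamma$ and $b_0^{\gamma'}$ decide $X$ on opposite sides. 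The task therefore reduces to finding $X$ with $X,\kappa\setminus X\in\mcj_\gamma^+\cap\mcj_{\gamma'}^+$ whose least $D_\gamma$- and $D_{\gamma'}$-decisions disagree.

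Finally, the incompatibility and its main difficulty. If no such $X$ existed, then for every $X$ positive for both ideals the least nodes of $D_\gamma$ and of $D_{\gamma'}$ deciding $X$ would always put $X$ on the same side --- a definable correlation between the two trees. The plan is to refute this using that, in the forcing of Theorem~\ref{t3}, the generics responsible for $\mcj_\gamma$ and $\mcj_{\gamma'}$ (hence the trees $D_\gamma,D_{\gamma'}$) come from mutually generic factors: a density argument over the relevant product then yields an $X$ positive for both ideals on which the canonical first nodes split oppositely, and Player~I opens with the corresponding $\mcA_0$. The hard part is exactly this step --- extracting from the explicit product structure of Theorem~\ref{t3} that the canonical first nodes of the two generic trees cannot be so correlated; the rest (well-definedness of $\mcS_\gamma$, that each $U_\alpha$ is a uniform $\kappa$-complete ultrafilter, and that $\mcS_\gamma$ wins) is as in the proofs of Theorems~\ref{t4} and~\ref{t2}.
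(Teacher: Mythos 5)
Your derivation of (a) and (b) is correct and matches the paper: the ideals of Theorem~\ref{t3} are fed into Theorem~\ref{t4} to produce $\mcS_\gamma$, and Theorem~\ref{t2} (whose hypotheses hold in that model) yields the ideals $\mci_\gamma$. The problem is the incompatibility claim, where you explicitly leave the decisive step open, and the reduction you set up points in the wrong direction. You reduce the task to finding a single set $X$ with $X,\kappa\smallsetminus X\in\mcJ_\gamma^+\cap\mcJ_{\gamma'}^+$ on which the well-ordering-least deciding nodes of the two dense trees split oppositely, to be extracted from ``mutual genericity'' of the factors producing $\mcJ_\gamma$ and $\mcJ_{\gamma'}$. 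But in the construction of Theorem~\ref{t3} these ideals are not built from mutually generic factors: both are defined in the single model $\mbfV[G,G']$, the sets $S\cap\cff(\gamma)$ and $S\cap\cff(\gamma')$ are projections (via $\pi_\gamma$, $\pi_{\gamma'}$) of the one $\nrp(\kappa^+)$-generic $S$, and the club-shooting generic $G'_1$ is shared; so there is no product over which your proposed density argument could run, and you offer no substitute --- you concede this is ``the hard part.'' Moreover, a disagreement witnessed by a set positive for \emph{both} ideals is generically decided on each side, so nothing in the construction forces the two canonical least nodes to decide it oppositely; that correlation question is genuinely unclear, which is why this route stalls.

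The missing idea is the lottery built into $\mbbP_\kappa$ (clause FI-2): at each inaccessible $\alpha<\kappa$ the generic chooses an ordinal $\gamma^p(\alpha)$, and the sets $x_\gamma=\{\alpha<\kappa\mid\gamma^p(\alpha)=\gamma \mbox{ for some/all } p\in G\}$ (with the condition $\gamma^p(\alpha)=\alpha$ when $\gamma=\kappa$) are pairwise disjoint for distinct regular $\gamma$. Since $\mcJ_\gamma$ is defined from generics for $j(\mbbP_\kappa)$ whose stage-$\kappa$ lottery value is $\gamma$ itself, one gets $\kappa\in j_H(x_\gamma)$, i.e.\ $x_\gamma\in\dot{U}^H$ for \emph{every} generic $H$, so $x_\gamma$ lies in the filter dual to $\mcJ_\gamma$. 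The disagreement is then automatic and needs no tree combinatorics: if Player~I opens with any $\mcA_0$ containing both $x_\gamma$ and $x_{\gamma'}$, the response $U_0$ of $\mcS_\gamma$ has the form $W\cap\mcA_0$ for a $\mbbP_{\mcJ_\gamma}$-generic $W$ and hence contains $x_\gamma$, while the response of $\mcS_{\gamma'}$ contains $x_{\gamma'}$; as $x_\gamma\cap x_{\gamma'}=\varnothing$, the two responses are distinct. In short, the first-round split comes from each dual filter concentrating on its own lottery set --- sets that are $\mcJ_{\gamma'}$-null for the other ideal --- not from decorrelating least nodes on sets positive for both ideals, which is why the paper's argument is a few lines while your reduction leads to an apparently intractable question.
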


We give a proof of  {Theorem \ref{t3}} in Section~\ref{s.model}.
{The
existence of winning strategies $\mcS_\gamma$ as in (a) for Player~II in
$\mcG^W_\gamma$ is a direct consequence of 
Theorem~\ref{t4}. A proof of the incompatibility of strategies $\mcS_\gamma$, 
as formulated at the end of Corollary~\ref{c6}, is at the
end of Section~\ref{s.model}.} 
\bigskip

\bfni{Strengthenings  of Theorem \ref{t3}}
We have two variants of Theorem \ref{t3} that are proved in Part II of this paper.  The first deals with a single regular uncountable  $\gamma<\kappa$, and shows that it is consistent that $\gamma$ is the only cardinal such that there is a normal $\gamma$-densely treed ideal on $\kappa$.
 The second shows that it is consistent that for all such $\gamma$ there is a normal $\gamma$-densely treed ideal $\mcj_\gamma$ on $\kappa$ but that they are all incompatible under inclusion.  

Similar statements about the relevant strategies in the Welch games are also included. Explicitly:

\begin{theorem}\label{t6} 
Assume $\kappa$ is a measurable cardinal, $\gamma<\kappa$ is regular
uncountable and 
$\mbfV=\Le$ is a fine structural extender model. Then there is a generic
extension in which $\kappa$ is inaccessible, carries no saturated ideals (in
particular, $\kappa$ is non-measurable) and there is a uniform, normal
$\gamma$-densely treed ideal  $\mcj_\gamma$ on $\kappa$ that is 
$(\kappa^+,\infty)$-distributive.
Moreover,  
in the generic extension:
\begin{itemize} 
\item[(a${}^*$)] {There does not exist a uniform 
ideal $\mcj'$
over $\kappa$ such that $\ptm(\kappa)/\mcj'$ has a dense $\gamma'$-closed
subset for any $\gamma'>\gamma$.}  
\item[(b${}^*$)] Player~II does not have any winning strategy in
$\mcG^W_{\gamma'}$ where $\gamma'>\gamma$.
\end{itemize}
In particular it is a consequence of (a*) that

\begin{itemize}
	\item[(c${}^*$)] For all regular $\gamma'>\gamma$ there is no uniform normal $\gamma'$-densely treed 
	ideal on $\kappa$. 
\end{itemize}

\end{theorem}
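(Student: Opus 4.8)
The plan is to build the model by forcing over $\Le$ with an iteration $\mbbP=\mbbP_\kappa * \dot{\mbbQ}$, where $\mbbP_\kappa$ is (a variant of) the reverse Easton preparation used for Theorem~\ref{t3} in Section~\ref{s.model}, and $\dot{\mbbQ}$ is a tail forcing engineered to do two opposing jobs at once. Its \emph{closed part} will manufacture the single ideal $\mcj_\gamma$ together with a dense tree of height $\gamma$, exactly as in the proof of Theorem~\ref{t2}; its \emph{destruction part} will add one combinatorial object $\vec c$ on $\kappa$ — a coherent, non-threadable sequence concentrated on the points of $\mathrm{cf}\,\gamma$ — whose defining property is that a \emph{thread} through it would be a sequence of length exactly $\gamma$, and that $V[G]$ contains no such thread. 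I would fix a normal measure $U$ on $\kappa$ with ultrapower $j\colon V\to M$, and use $\gch$ and the fine-structural square principles of $\Le$ (which I will need $\mbbP$ to preserve) to arrange the closure thresholds of $\dot{\mbbQ}$ to straddle $\gamma$ in the precise sense described below.

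For the positive half I would work in $V[G]$ and lift $j$ through $\mbbP$ by assembling a master-condition generic $G^+$ for the tail $j(\mbbP_\kappa)/G$ and the image of $\dot{\mbbQ}$, obtaining $j^+\colon V[G]\to M[G^+]$, and then set $\mcj_\gamma=\{X\subseteq\kappa : \Vdash \kappa\notin j^+(\dot X)\}$. Uniformity and normality of $\mcj_\gamma$ are automatic. The tree $D\subseteq\mcj_\gamma^+$ of height $\gamma$, closed under $\subseteq_{\mcj_\gamma}$-decreasing sequences of length $<\gamma$, is read off from the tree of conditions of the closed part of $\dot{\mbbQ}$, which I take to be $\gamma$-closed (closed under decreasing sequences of length $<\gamma$) but \emph{not} $\gamma^+$-closed; the $(\kappa^+,\infty)$-distributivity of $\ptm(\kappa)/\mcj_\gamma$ descends from the corresponding distributivity of the tail in $M$. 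That $\mcj_\gamma$ is \emph{not} $\gamma^+$-densely treed will follow from the same mechanism that drives (a$^*$): its generic ultrapower is closed only under sequences of length $<\gamma$, so it cannot assemble the length-$\gamma$ thread that a $\gamma^+$-closed dense tree would provide.

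The crux of the proof is (a$^*$). \textbf{This is the step I expect to be the main obstacle}, because (a$^*$) quantifies over \emph{all} uniform ideals, not merely the normal one we construct, so I cannot argue by inspecting a particular quotient and must route everything through an abstract generic embedding. Suppose for contradiction that some uniform $\mcj'$ has a dense $\gamma'$-closed subset with $\gamma'>\gamma$ regular. Since closure under decreasing sequences of length $<\gamma'$ entails closure under length $\le\gamma$, I may reduce to $\gamma'=\gamma^+$; a dense $\gamma^+$-closed (hence $\sigma$-closed) subset makes $\mcj'$ precipitous and its quotient add no new $\gamma$-sequences of ordinals. After first reducing to a $\kappa$-complete ideal (deriving the needed completeness from the closure and uniformity), forcing below a suitable condition yields a well-founded generic ultrapower $j'\colon V[G]\to N$ with critical point $\kappa$ and ${}^{\gamma}N\cap V[G][g]\subseteq N$. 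I would then show this $\gamma$-closure lets $N$ reconstruct a length-$\gamma$ thread through $j'(\vec c)$ at $\kappa$, contradicting the non-threadability of $\vec c$. The delicate point is calibrating the two closure thresholds so that they straddle $\gamma$ precisely: enough closure for the closed part of $\dot{\mbbQ}$ to build $\mcj_\gamma$, not enough for \emph{any} ideal to thread $\vec c$ in $V[G]$ itself, yet exactly enough threading power handed to every \emph{strictly} more closed generic ultrapower — and I must prove $\dot{\mbbQ}$ preserves the non-threadability of $\vec c$ while the closed part still delivers $\mcj_\gamma$. The very same embedding argument, applied to an actual ultrapower by a normal measure and to the $\kappa$-closed generic ultrapower of a putative saturated ideal, rules out measurability and saturated ideals at once, leaving inaccessibility as the only large-cardinal residue (preserved by the $\kappa$-c.c.\ and distributivity of $\mbbP$).

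Finally I would derive (b$^*$) and (c$^*$) from (a$^*$). Statement (c$^*$) is immediate, since a $\gamma'$-densely treed ideal has, by clause (2) of the definition, a dense subset closed under $\subseteq$-decreasing sequences of length $<\gamma'$, i.e.\ a dense $\gamma'$-closed subset, contradicting (a$^*$). For (b$^*$), suppose Player~II had a winning strategy in $\mcG^W_{\gamma'}$ for regular $\gamma$ with $\gamma<\gamma'\le\kappa$; since $2^\kappa=\kappa^+$ holds in $V[G]$ (by the $\gch$-bookkeeping of $\mbbP$) and $\kappa$ carries no saturated ideal, Theorem~\ref{t2} produces a normal $\gamma'$-densely treed ideal, contradicting (c$^*$). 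The one remaining case $\gamma'=\kappa^+=2^\kappa$ is excluded because a winning strategy of that length would make $\kappa$ measurable, which the argument of (a$^*$) has already ruled out. This yields (b$^*$) and completes the theorem.
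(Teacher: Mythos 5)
You have reproduced the high-level architecture correctly (one forcing that simultaneously builds the single ideal $\mcj_\gamma$ and plants a canonical ``destruction'' object whose survival kills everything above $\gamma$; $(\mathrm{c}^*)$ read off from $(\mathrm{a}^*)$ since a $\gamma'$-densely treed ideal yields a dense $\gamma'$-closed subset; $(\mathrm{b}^*)$ via Theorem~\ref{t2} plus the $2^\kappa$ case), and that matches the plan the paper announces. But note that the paper does \emph{not} prove Theorem~\ref{t6} in this part: it defers the proof to Part~II and only specifies the method --- shoot clubs through the complements of \emph{canonical} non-reflecting stationary sets $S_\alpha\subseteq\alpha^+$ built from the fine-structural square sequences of $\Le$ (so that, by condensation, \emph{any} suitable generic embedding computes the same $S_\kappa$, in the style of clause (D) and Proposition~\ref{model.p.sets-t}), with the iteration's $\kappa^+$-distributivity supplied by Theorem~\ref{t5} since these club-shooting posets have \emph{low closure}. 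The destruction invariant is stationarity of $S_\kappa\subseteq\kappa^+\cap\cff(\gamma)$: any forcing whose quotient has a dense $\gamma'$-closed subset with $\gamma'>\gamma$ preserves stationarity of sets concentrated on $\cff(\gamma)$, while the image iteration $j'(\mbbP)$ at stage $\kappa$ hands $N\subseteq\mbfV[G][g]$ a club disjoint from $S_\kappa$ --- contradiction, exactly as in Proposition~\ref{model.p.no-sat-ideals} but with closure replacing the $\kappa^+$-c.c.

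The genuine gap is your central device. A ``coherent sequence whose thread would have length exactly $\gamma$'' is not a coherent notion: threads of coherent sequences on $\kappa$ or $\kappa^+$ are cofinal objects of length $\kappa$ or $\kappa^+$, and closure of $N$ under $\gamma$-sequences gives no leverage toward producing one; conversely, if the thread is simply the $\kappa$-th (or $\sup j'[\kappa^+]$-th) coordinate of $j'(\vec c)$, then \emph{every} precipitous ideal's generic embedding threads it --- including the $\gamma$-closed one you use to build $\mcj_\gamma$ --- which destroys your positive half. The straddling at $\gamma$ cannot be achieved by threading; it is achieved by cofinality-$\gamma$ stationary non-reflection, where the asymmetry is real: the quotient $\csp(\kappa^+\smallsetminus S_\kappa)$ itself kills the stationarity of $S_\kappa$, but any strictly more closed quotient must preserve it. Two further missing pieces: first, since $(\mathrm{a}^*)$ quantifies over arbitrary uniform ideals, you must force the generic ultrapower of an \emph{unknown} $\mcj'$ to compute the very same object at $\kappa$, which requires the object to be canonically definable in $\Le$ with condensation-based absoluteness (together with $\kappa^+$-agreement between $N$ and $\mbfV[G]$); the generic coding of Theorem~\ref{t3} cannot substitute, because the coding sets $T_{\kappa,\xi}$ concentrate on cofinality $\kappa$ and a merely $\gamma^+$-closed-dense forcing need not preserve their stationarity, unlike a $\kappa^+$-c.c.\ one. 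Second, once the $S_\alpha$ are canonical rather than generically added, the iteration loses the closure your master-condition lifting leans on, and one needs precisely the distributivity of Theorem~\ref{t5} --- the fine-structural heart of the Part~II argument --- for which your proposal offers no substitute; your stated hope to ``arrange the closure thresholds'' is exactly the tension (with generic sets you get closure but lose canonicity; with canonical sets you get absoluteness but lose closure) that the paper resolves only through the fine structure.
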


Another modification of the proof of Theorem~\ref{t3} which is based on
Theorem~\ref{t5} below yields the
following variant of Theorem~\ref{t3}.

	\begin{theorem}\label{t7} 
Assume $\kappa$ is a measurable cardinal, and $\mbfV=\Le$ is a fine structural extender model. 
Then there is a generic
extension in which $\kappa$ is inaccessible, carries no saturated ideals (in
particular, $\kappa$ is non-measurable) and for all regular $\gamma$ with $\omega<\gamma\le \kappa$ 
there is a uniform, normal $\gamma$-densely treed ideal $\mcJ_\gamma$ that is $(\kappa^+,\infty)$-
distributive. 
The relationship between the ideals and  strategies for different $\gamma$'s is as follows:
\begin{itemize} 
\item[(a${}^*$)] There does not exist a uniform normal ideal
$\mcj'\subseteq\mcJ_\gamma$ over $\kappa$ such that $\ptm(\kappa)/\mcj'$ has a
dense $\gamma'$-closed subset for any $\gamma'>\gamma$. 
\item[(b${}^*$)] The strategy $\mcS_\gamma\dfeq\mcS(\mcj_\gamma)$ is not
included in any winning strategy for Player~II in $\mcG^W_{\gamma'}$ where
$\gamma'>\gamma$. 
\item[(c${}^*$)] {Letting $\mci_\gamma$ be the ideal arising from the strategy $\mcs_\gamma$,} there does not exist an ideal
$\mcI\subseteq\mcI_\gamma$ which is $\gamma'$-densely treed as witnessed by a
tree $D\subseteq\mcI^+$ of height $\gamma'$, for any $\gamma'>\gamma$.  
\footnote{
There are two general techniques used in this paper for building ideals.  One is the conventional method of starting with a large cardinal embedding and extending it generically. We use the notation $\mcJ_\gamma$ for these.  The second is the new technique of \emph{hopeless ideals}, built in Theorems \ref{t1} and \ref{t2} from the strategies $\mcs_\gamma$. These will be denoted by $\mcI_\gamma$ or very similar notation.}

%The ideals $\mcJ_\gamma$ arise from the generic elementary
%embedding, whereas the notation $\mcI_\gamma$ refers to the ``hopeless ideals"
%{constructed in the proof of Theorem \ref{t2} from the strategies $\mcs_\gamma$.}}  
\end{itemize}
	\end{theorem}

In other words, the ideals $\mcI$ in (c${}^*$) in Theorem~\ref{t7} are like
ideals $\mcI$ in Theorem~\ref{t2}, with $\gamma'$ in place of $\gamma$.

The models constructed in Theorems \ref{t6} and \ref{t7} require more
sophisticated techniques than those used in the proof of Theorem~\ref{t3}.
They involve the relationship between the fine structure in the base model and
the forcing extension. 

The most substantial difference is that the model in Theorem~\ref{t3} {is
built} by iteratively shooting clubs through the complements  
of non-reflecting stationary sets which have been  added generically, 
however the proofs of Theorems \ref{t6} and \ref{t7} shoot club sets through
non-reflecting stationary sets built from canonical square 
sequences constructed in the fine structural extender model. Unlike the partial orderings
used in the construction of a model in the proof of Theorem~\ref{t3}, those
partial orderings will have low closure properties, but high degree of distributivity. It
is the proof of distributivity of the iterations of club shooting partial orderings which
uses the significant fine structural properties
of the extender model.  Here is the result allowing the desired iteration.

\begin{theorem}\label{t5}
Assume $\mbfV=\Le$ is a fine structural extender model and $\kappa$ is a
measurable cardinal as witnessed by an extender on the extender sequence
$E$. Assume further that
\begin{itemize}
\item[(i)] $(c_\xi\mid\xi<\alpha^+)$ is a canonical square sequence,\footnote{By a canonical square sequence we mean a square sequence obtained by a slight variation of Jensen's fine structural construction, generalized to extender models.  This is made precise in Part II of this paper.}
\item[(ii)] $S_\alpha\subseteq\alpha^+\cap\cff(<\alpha)$,
\item[(iii)] $S_\alpha\cap c_\xi=\emptyset$ for all $\xi$
\end{itemize}
whenever $\alpha$ is a cardinal.

Let $\mbbP^\delta$ be the Easton support iteration of length $\kappa$ of club
shooting partial orderings with initial segments where each active stage $\alpha$ is
an inaccessible $\ge\delta$ and the club subset of $\alpha^+$ generically
added at stage $\alpha$ is disjoint from $S_\alpha$. Then there is an ordinal
$\varrho<\kappa$ such that for every inaccessible $\delta$ such that
$\varrho<\delta<\kappa$ the following holds.
\begin{itemize}
\item[(a)] $\mbbP^\delta$ is $\delta^+$-distributive.
\item[(b)] If $G$ is generic for $\mbbP^\varrho$ over $\mbfV$ and
$j:\mbfV\to M$ is an elementary embedding in some generic extension $\mbfV'$
of $\mbfV$ which preserves $\kappa^+$ then $j(\mbbP^\varrho)/G$ is
$\kappa^+$-distributive in $\mbfV'$.  
\end{itemize}
\end{theorem}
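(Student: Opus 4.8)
The plan is to reduce both parts to distributivity statements about tails of the iteration and to prove those by exhibiting a winning strategy for Player~II in a descending-sequence game, steering conditions along the canonical square sequence so as to avoid the non-reflecting sets $S_\alpha$ at limit stages. The basic building block is that each single club-shooting poset $\mathbb{C}(S_\alpha)$, whose conditions are the closed bounded subsets of $\alpha^+\setminus S_\alpha$ ordered by end-extension, is $\alpha^+$-distributive. Indeed, by hypothesis~(iii) every $c_\xi$ is disjoint from $S_\alpha$, and by coherence $c_{\beta'}=c_\beta\cap\beta'$ for limit points $\beta'$ of $c_\beta$; hence at any limit $\beta<\alpha^+$ the club $c_\beta$ misses $S_\alpha$, and these clubs fit together coherently. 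Player~II can therefore steer a descending sequence of conditions so that every supremum it creates lies on some $c_\beta$ and thus avoids $S_\alpha$, winning the descending-sequence game of length $\alpha$; this yields $\alpha^+$-distributivity of $\mathbb{C}(S_\alpha)$. It is exactly here that the fine structure enters, through the coherence of the canonical square.

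For (a), I would show that $\mbbP^\delta$ is $\delta^+$-distributive by combining the single-step strategies across the iteration. Because the support is Easton and the active stages are inaccessible, and hence sparse, $\mbbP^\delta$ factors at its least active stage $\alpha_0\ge\delta$ (which is $\delta$ itself when $\delta$ is inaccessible) as $\mathbb{C}(S_{\alpha_0})*\dot{\mathbb{Q}}$, where $\dot{\mathbb{Q}}$ is forced to be a club-shooting iteration whose least active stage is an inaccessible $>\delta$ and so is $\delta^{++}$-distributive. Since a two-step iteration of $\delta^+$-distributive forcings is $\delta^+$-distributive, and more generally the whole iteration admits a winning strategy of length $\delta$ assembled stagewise from the square-guided strategies above (using Easton support to take unions of conditions with bounded support at limits), $\mbbP^\delta$ is $\delta^+$-distributive. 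The ordinal $\varrho$ is chosen as the threshold above which the canonical square sequences behave coherently enough for this assembly to go through uniformly, so that (a) holds for every inaccessible $\delta>\varrho$.

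For (b), first note that $\varrho<\kappa=\crp(j)$ gives $j(\varrho)=\varrho$, and that $M$ computes the active stages and the sequences $(S_\alpha)$, $(c_\xi)$ below $\kappa$ exactly as $\mbfV$ does; hence $\mbbP^\varrho$ is the restriction of $j(\mbbP^\varrho)$ to stages below $\kappa$. Since $\kappa$ is inaccessible in $M$, it is an active stage of $j(\mbbP^\varrho)$, and we obtain a factorization $j(\mbbP^\varrho)\cong\mbbP^\varrho*\dot{\mathbb{R}}$, where $\dot{\mathbb{R}}$ is the tail from stage $\kappa$ to $j(\kappa)$. Thus $j(\mbbP^\varrho)/G$ is the realization $\mathbb{R}$ of this tail, an iteration of club-shooting posets living in $M[G]$ whose first nontrivial factor shoots a club through $\kappa^+\setminus(j(\vec S))_\kappa$. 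Internally to $M[G]$ the argument of (a) shows $\mathbb{R}$ is $\kappa^+$-distributive; the real task is to see this in $\mbfV'$.

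The hard part, and where the fine structure is indispensable, is precisely the transfer of distributivity from $M[G]$ to the larger model $\mbfV'$, since $M$ need not be sufficiently closed in $\mbfV'$ to quote internal distributivity directly. I would handle this by showing that Player~II's square-guided strategy in the length-$\kappa$ descending-sequence game on $\mathbb{R}$ is itself available in $\mbfV'$: the square sequence $j(\vec c)$ on $\kappa^+$ (and at the higher active stages) is an element of $M\subseteq\mbfV'$, and the instruction ``at a limit $\beta$ steer the condition along $c_\beta$'' produces a supremum missing $(j(\vec S))_\kappa$ no matter in which model the descending sequence is built, because the relevant coherence of $j(\vec c)$ and its disjointness from $(j(\vec S))_\kappa$ are absolute between $M[G]$ and $\mbfV'$. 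Hence no new $\kappa$-sequence of ordinals is added over $\mbfV'$, giving $\kappa^+$-distributivity of $j(\mbbP^\varrho)/G=\mathbb{R}$ in $\mbfV'$. The main obstacle to watch is the bookkeeping at limit stages of $\mathbb{R}$ of cofinality $<\kappa$, where one must dodge the non-reflecting set at the current coordinate while simultaneously respecting the Easton supports and the threads already chosen at lower coordinates; verifying that the square coherence makes all these choices simultaneously consistent is the crux, and is exactly what the choice of $\varrho$ and the canonicity of the square sequences are designed to guarantee.
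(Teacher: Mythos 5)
First, a point of reference: the paper does not actually prove Theorem~\ref{t5} here --- it explicitly postpones the proof to Part~II, noting that it is ``of considerable length'' and that it is precisely the distributivity of the \emph{iteration} of club-shooting posets that ``uses the significant fine structural properties of the extender model.'' So your sketch cannot be checked against an in-paper argument; judged on its own terms, it has the right general shape (factoring $j(\mathbb{P}^\varrho)$ at $\kappa$, square-guided steering at a single coordinate, absoluteness of coherence and of $c_\xi\cap S_\alpha=\varnothing$ between $M$ and $\mbfV'$), but the two places where the theorem's actual content lives are asserted rather than proved.

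The first gap is the passage from one step to the iteration. Your phrase ``the whole iteration admits a winning strategy of length $\delta$ assembled stagewise from the square-guided strategies, using Easton support to take unions at limits'' is exactly the step that fails to be routine: distributivity is not preserved at limit stages of an iteration, the individual posets $\csp(\alpha^+\smallsetminus S_\alpha)$ have low closure (since $S_\alpha\subseteq\cff(<\alpha)$ they are not even countably closed in general), and at a limit of the construction one must simultaneously arrange that the supremum of the play at \emph{every} coordinate in an unbounded support avoids the non-reflecting set at that coordinate --- while the strategies at later coordinates are only terms living in intermediate extensions. Even your single-coordinate strategy has a hole as stated: Player~II cannot fix a guide $c_\gamma$ in advance, since Player~I may jump past $\gamma$, and switching guides is precisely where coherence must be exploited with careful bookkeeping (this part is repairable by standard arguments, but you do not supply it). Relatedly, you never define $\varrho$: ``the threshold above which the canonical square sequences behave coherently enough'' is not a construction, and producing $\varrho$ from the fine structure (roughly, from the extender witnessing measurability and condensation for hulls of levels of $\mathbf{L}[E]$) is part of what the theorem claims. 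The second gap is in (b): $\kappa^+$-distributivity of $j(\mathbb{P}^\varrho)/G$ must hold \emph{in $\mbfV'$}, where one must meet $\kappa$-sequences of dense sets belonging to $\mbfV'$, a model over which $M[G]$ has no useful closure. Absoluteness of the coherence of $j(\vec{c}\,)$ handles the single-coordinate sup-avoidance, as you say, but it does not by itself yield generic filters for the tail meeting $\mbfV'$-dense sets across all coordinates; this is where the \emph{canonicity} (definability over levels of the extender model, hence condensation under hulls formed in $\mbfV'$) of the square sequences is needed, and your sketch invokes canonicity only as a slogan. In short: correct architecture, but the two load-bearing steps --- limit stages of the iteration, and transfer of distributivity to $\mbfV'$ --- are named as ``the crux'' rather than carried out, and these are exactly the steps the authors defer a long fine-structural argument to Part~II to handle.
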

Although Theorem \ref{t5} is formulated for Easton support iterations with inaccessible active stages, variations which involve iterations with supports which are not necessarily Easton, but still sufficiently large, and with active stages that are not necessarily inaccessible can also be proved. 

As the  proof of Theorem \ref{t5} is of considerable length and (we
believe) {has broader applicability and is} of interest on its own, we will
postpone  the proof to Part II of this paper. 

\bigskip
\bfni{Basic definitions and notation}
 {We now present terminology and} notation we use throughout the paper.  {We will use the phrases 
``ideal on $\kappa$" and ``ideal on $P(\kappa)$" interchangeably.  Perhaps ideals should be viewed as subsets of Boolean algebras, but the former phrase is the more common colloquialism.}

Fix a regular cardinal
$\kappa$ and $\mcI$ a $\kappa$-complete ideal on $\kappa$. We say that
$A\subseteq_\mci B$ if $A\smallsetminus B\in \mcI$, and $\supseteq_\mcI$ is the
converse relation. The notations $\subseteq^*$, $\supseteq^*$ are these
notions when $\mcI$ is the ideal of bounded subsets of $\kappa$.  The notation
$A\subsetneq_\mci B$ abbreviates the conjunction of  
$A\subseteq_\mci B$ and $A\symdiff B\notin \mci$, where $\symdiff$ means
symmetric difference. 

The ideal $\mcI$ induces an equivalence relation on $\ptm(\kappa)$ by
$[A]=[B]$ if and only if $A\symdiff B\in \mcI$. The notion $\subseteq_\mcI$
induces a partial ordering on  $\ptm(\kappa)/\mcI$, we will sometimes call
this $\le_\mci$ and refer to the set of $\mcI$ equivalence classes of  
$\ptm(\kappa)$ that don't contain the emptyset as $\mcI^+$. We will force with 
$(\ptm(\kappa)/\mcI, \subseteq^*_\mcI)$ viewed either as a Boolean algebra, or removing the equivalence class of the emptyset as a partial ordering.  These are equivalent forcing notions. Occasionally we will abuse language by saying ``forcing with $\mcI^+$" when we mean this forcing.

\begin{definition}\label{O-OK} \hypertarget{OOK}{$\kappa$-complete sub-Boolean algebras of $\ptm(\kappa)$ that have cardinality $\kappa$ are called
\emph{$\kappa$-algebras.}}
\end{definition}

If $\sigma$ and $\tau$ are sequences we will use $\sigma\cat \tau$ to mean the concatenation of $\sigma$ and $\tau$.  We will abuse this slightly when $\tau$ has length one.  For example given $\sigma=\la \alpha_i:i<\beta\ra$ and $\delta$ we will write $\la \alpha_i:i<\beta\ra\cat \delta$ for the sequence of length $\beta+1$ whose first $\beta$ elements coincide with $\sigma$ and whose last element is $\delta$.

Usually our trees grow downwards, with longer branches extending shorter branches. \hypertarget{gammaclosed}{A tree $\mct$ is $\gamma$-closed if when $b$ is a branch through $\mct$ whose length has cofinality less than $\gamma$ there is a node $\sigma\in \mct$ such that $\sigma$ is below each element of $b$.} Occasionally we will say $<\!\gamma$-closed to mean $\gamma$-closed.

%%%%%%%%%%%%%%%%%%%%%%%%%%%%%%%%%%%%%%%%%%%%%%%%%
%%%%%%%%%%%%%%%%%%%%%%%%%%%%%%%%%%%%%%%%%%%%%%%%%

\section{Weak Compactness}\label{weak compactness}
In this section we clarify  the relationship between these games and weak compactness and discuss the role of 
inaccessibility in the work of Keisler and Tarski.  It has been pointed out to us that these results appear  in {work of 
Abramson, Harrington, Kleinberg and Zwicker (\cite{flipping}) stated slightly differently and with different proofs}.  We 
include them here for completeness and because these techniques are relevant to the topics in this paper.

If $\kappa$ is inaccessible and $\mca$ is a $\kappa$-algebra and $B\subseteq [\kappa]^\kappa$ then $\mca\cup B$ 
generates a 
$\kappa$-complete subalgebra of $\ptm(\kappa)$ that has cardinality $\kappa$ (i.e. another $\kappa$-algebra).  The 
situation where $\kappa$ is not inaccessible is quite different.

\begin{proposition}\label{no A} Suppose that $\kappa$ is an infinite cardinal and either
	\begin{itemize} 
	\item a singular strong limit cardinal\footnote{We would like to thank James Cummings for giving significant help in 
	understanding  this case.} or 
	\item for some $\gamma<\kappa$, $2^\gamma>\kappa$ but for all $\gamma'<\gamma$, $2^{\gamma'}<\kappa$. 
	\end{itemize}
Then there is no Boolean subalgebra $\mca\subseteq \ptm(\kappa)$ such that $|\mca|=\kappa$, $\mca$ is $\kappa$-complete. 
\end{proposition}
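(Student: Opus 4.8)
The plan is to reduce the whole statement to one cardinal‑arithmetic observation plus the production of a large pairwise disjoint family inside $\mca$. First I would record that both hypotheses force $\kappa^{<\kappa}>\kappa$ together with $2^\theta\neq\kappa$ for every infinite $\theta<\kappa$. In the strong limit singular case $2^\theta<\kappa$ for all $\theta<\kappa$, so certainly $2^\theta\neq\kappa$, while König's theorem gives $\kappa<\kappa^{\cof(\kappa)}\le\kappa^{<\kappa}$ since $\cof(\kappa)<\kappa$. In the second case, letting $\gamma$ be the least cardinal with $2^\gamma>\kappa$, the continuum function jumps over $\kappa$ at $\gamma$ (so $2^\theta<\kappa$ for $\theta<\gamma$ and $2^\theta\ge 2^\gamma>\kappa$ for $\gamma\le\theta<\kappa$, whence $2^\theta\neq\kappa$), and $\kappa^{<\kappa}\ge\kappa^\gamma\ge 2^\gamma>\kappa$.

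The engine is the following remark: if a $\kappa$-complete subalgebra $\mca\subseteq\ptm(\kappa)$ contains a pairwise disjoint family $\{e_\alpha:\alpha<\kappa\}$ of nonempty sets, then $|\mca|\ge\kappa^{<\kappa}$. Indeed, for each $X\in[\kappa]^{<\kappa}$ the union $\bigcup_{\alpha\in X}e_\alpha$ lies in $\mca$ by $\kappa$-completeness, and distinct $X$ give distinct unions by disjointness, so $|\mca|\ge|[\kappa]^{<\kappa}|=\kappa^{<\kappa}$. Together with $\kappa^{<\kappa}>\kappa$ this already rules out any $\mca$ with $|\mca|=\kappa$ that carries a disjoint family of size $\kappa$; in particular it rules out any $\mca$ with $\kappa$ many atoms, since the atoms form such a family.

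The substance is to build such a family when $\mca$ has fewer than $\kappa$ atoms, and here I exploit that $\mca$ is a genuine field of sets, so its joins and meets are actual unions and intersections. Let $\theta<\kappa$ be the number of atoms; their union $U$ is a $<\kappa$-union, so $U\in\mca$, $a^{*}=\kappa\setminus U\in\mca$, and $\mca\restriction a^{*}$ is atomless (any atom of it would be an atom of $\mca$ disjoint from $U$). If $a^{*}\neq\emptyset$, I would fix a point $p\in a^{*}$ and construct a strictly decreasing sequence $\langle c_\alpha:\alpha<\kappa\rangle$ in $\mca\restriction a^{*}$ with $p\in c_\alpha$ throughout: at successors atomlessness splits $c_\alpha$ into two nonempty members of $\mca$, one containing $p$ (take it as $c_{\alpha+1}$) and the other a nonempty $e_\alpha$; at limits $\delta<\kappa$ put $c_\delta=\bigcap_{\alpha<\delta}c_\alpha$, which lies in $\mca$ by $\kappa$-completeness and is nonempty because it contains $p$. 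The sets $e_\alpha=c_\alpha\setminus c_{\alpha+1}$ ($\alpha<\kappa$) are then pairwise disjoint and nonempty, so the engine gives $|\mca|\ge\kappa^{<\kappa}>\kappa$. If instead $a^{*}=\emptyset$, then $\mca$ is atomic, every member is the union of the atoms below it, every set of atoms has size $\le\theta<\kappa$ and hence its union lies in $\mca$, so $\mca\cong\ptm(\theta)$ and $|\mca|=2^\theta\neq\kappa$. In all cases $|\mca|\neq\kappa$.

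The step I expect to be the crux is keeping the descending sequence alive through limit stages. In an abstract $\kappa$-complete Boolean algebra the meet of a descending chain can collapse to $0$, which would break the construction; the argument succeeds precisely because for a field of sets the limit is the \emph{actual} intersection, so anchoring every $c_\alpha$ at the single point $p$ forces $c_\delta\neq\emptyset$ at all limits. I would take care to verify that $\mca\restriction a^{*}$ is truly atomless and that the disjoint‑union bookkeeping yields $\kappa^{<\kappa}$ genuinely distinct elements; the two hypotheses on $\kappa$ enter only through the comparisons $2^\theta\neq\kappa$ (for the atomic case) and $\kappa^{<\kappa}>\kappa$ (for the disjoint‑family case).
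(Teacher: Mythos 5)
Your proof is correct, but it takes a genuinely different route from the paper's. The paper treats the two hypotheses separately: in the singular strong limit case it first upgrades $\kappa$-completeness to $\kappa^+$-completeness (possible since $\cof(\kappa)<\kappa$), forms the atoms $a_\delta=\bigcap\{A\in\mca:\delta\in A\}$ to show $\mca$ is atomic, and reads off $|\mca|=2^\kappa$ or $|\mca|<\kappa$ according to whether there are $\kappa$ many atoms; in the second case it counts that $\mca$ has fewer than $\gamma$ atoms, passes to the atomless part, and grows a binary splitting tree of height $\gamma$, extracting from the branch determined by a fixed point $c$ a family of $\gamma$ pairwise disjoint sets whose unions give $|\mca|\ge 2^\gamma>\kappa$. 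You instead isolate two arithmetic facts common to both hypotheses --- $\kappa^{<\kappa}>\kappa$ (K\"onig's theorem in the singular case, $\kappa^\gamma\ge 2^\gamma>\kappa$ in the other) and $2^\theta\neq\kappa$ for all $\theta<\kappa$ --- and run a single uniform argument: a disjoint family of $\kappa$ nonempty members forces $|\mca|\ge\kappa^{<\kappa}$; such a family comes either from $\kappa$ atoms or from your point-anchored strictly descending chain of length $\kappa$ through the atomless part; and the purely atomic case gives $|\mca|=2^\theta$. Your anchoring at $p$ is the same device the paper uses to keep limit levels of its tree nonempty (every $c\in C$ determines a branch), but a chain of length $\kappa$ is simpler than a tree of height $\gamma$, handles both cases at once, and dispenses with the $\kappa^+$-completeness observation entirely, since limits of the chain and unions in the counting lemma only ever involve fewer than $\kappa$ sets. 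What each approach buys: the paper's tree yields the sharper bound $2^\gamma$ directly and keeps the singular case very short via atomicity, while your version is uniform and slightly more general --- it proves the proposition from exactly the two displayed arithmetic conditions, which are essentially sharp given Proposition \ref{yes and} (if $2^\theta=\kappa$ then $\ptm(\theta)$ is such an algebra) and the fact that $\kappa^{<\kappa}=\kappa$ permits such algebras --- and it makes visible precisely where each hypothesis enters.
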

	\begin{proof} In the first case, since $\kappa$ is singular, if $\mca$ is $\kappa$-complete, it is $\kappa^+$-complete.  
 For $\delta<\kappa$, let $a_\delta=\bigcap\{A\in \mca:\delta\in A\}$. Then $a_\delta$ is an atom of $\mca$ and each non-empty
 $A\in \mca$ contains some $a_\delta$. {Moreover distinct $a_\delta$'s are  disjoint.} Thus the 
 $\{a_\delta:\delta\in \kappa\}$ generate $\mca$ as a $\kappa$-algebra.
  If there are $\kappa$ many distinct $a_\delta$ then $|\mca|=2^\kappa$.  Otherwise, since $\kappa$ is a strong limit, 
  $|\mca|<\kappa$.
  
  Assume now that $\gamma<\kappa$, $2^\gamma>\kappa$ and for all $\gamma'<\gamma,\ 2^{\gamma'}<\kappa$.  {Since $|\mca|=\kappa$,}  
  $\mca$ must have fewer than $\gamma$ atoms. If $\la a_\delta:\delta<\gamma'\ra$ is the collection of these atoms, the $\kappa$-algebra $\mcb$ generated by the atoms of 
  $\mca$ has cardinality at most $2^{\gamma'}$.  Let $B=\bigcup_{\delta<\gamma'}a_\delta$ and $C=\kappa\setminus B$.
  Since $2^{\gamma'}<\kappa$ and $\mca$ has cardinality $\kappa$,   there is an $a\in \mca$ that does not belong to $\mcb$. Since $a$ is not in $\mcb$ the set $a'=a\setminus \bigcup_\delta a_\delta\ne \emptyset$.  Hence
  $C$ is non-empty.
  Replacing $\mca$ with $\{A\setminus B:A\in \mca\}$ we get an atomless, $\kappa$-complete algebra 
  on the set $C$.

 Since no element of $\mca$ is an atom we can write each $A\in \mca$ as a disjoint union of non-empty elements 
 $A_0, A_1$ of $\mca$.   Build a binary splitting tree $\mct$ of elements of $\mca$ of height $\gamma$ by induction on 
 $\delta<\gamma$ as follows:\footnote{We use the notation $(\mct)_\alpha$ for level $\alpha$ of $\mct$.}
		\begin{itemize}
		\item $(\mct)_0$ is the $\subseteq$-maximal element $C$ of $\mca$.
		\item Suppose $(\mct)_\delta$ is built.  For each $A\in \mct_{\delta}$, write $A$ as the disjoint union 
		$A_0\cup A_1$ 
		such that 
		$A_i\in \mca$,  and $A_i\ne \emptyset$ and let $(\mct)_{\delta+1}=\{A_0, A_1:A\in (\mct)_\delta\}$. 
		\item Suppose that $\delta$ is a limit ordinal. Let $(\mct)_\delta=\{\bigcap b:b$ { is a branch through }
		$(\mct)_{<\delta}$ and $\bigcap b\ne \emptyset\}$.
		
		\end{itemize}
Note that for all $\delta<\gamma$, each element $c\in C$ determines a unique path through $(\mct)_\delta$ of length $\delta$. Hence $\bigcup (\mct)_\delta=C$. 
 
 Fix a $c\in C$ and let $b=\la A_\delta:\delta<\gamma\ra$ be the branch through $\mct$ determined by $c$.  For 
 $\delta<\gamma$, the tree splits $A_\delta=A_{\delta+1}\cup A'_{\delta+1}$ with 
 $A_{\delta+1}, A'_{\delta+1}\in (\mct)_{\delta+1}$. The sets $\la A'_{\delta+1}:\delta<\gamma\ra$ each 
 belong to $\mca$ and form a collection of disjoint subsets of $C$ of size 
 $\gamma$. By taking unions of these sets we see that $|\mca|\ge 2^\gamma>\kappa$.
 \end{proof} 

In contrast to Proposition \ref{no A}, we have:

\begin{proposition}\label{yes and}
Let $\kappa$ be infinite and $2^\gamma=\kappa$. Then $\ptm(\gamma)\subseteq \ptm(\kappa)$ and $\ptm(\gamma)$ is $\kappa$-complete.
\end{proposition}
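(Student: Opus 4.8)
The plan is to read both conclusions straight off the cardinal equation $2^\gamma=\kappa$ together with Cantor's theorem, so the argument should be short. The first step is to establish the inclusion $\ptm(\gamma)\subseteq\ptm(\kappa)$, for which it suffices to show $\gamma<\kappa$ as ordinals. Here I would use that Cantor's theorem gives $|\gamma|<|\ptm(\gamma)|=2^\gamma=\kappa$; since $\kappa$ is a cardinal, $|\gamma|<\kappa$ forces $\gamma<\kappa$ (any ordinal $\gamma\geq\kappa$ would have $|\gamma|\geq\kappa$). Consequently $\gamma\subseteq\kappa$, and every $A\subseteq\gamma$ is a subset of $\kappa$, so each member of $\ptm(\gamma)$ belongs to $\ptm(\kappa)$, giving $\ptm(\gamma)\subseteq\ptm(\kappa)$.

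The second step is to verify $\kappa$-completeness. Given any family $\{A_i:i\in I\}\subseteq\ptm(\gamma)$ with $|I|<\kappa$, both $\bigcup_{i\in I}A_i$ and $\bigcap_{i\in I}A_i$ are again subsets of $\gamma$ and hence lie in $\ptm(\gamma)$. Thus $\ptm(\gamma)$ is closed under unions and intersections of fewer than $\kappa$ of its members, which is exactly the asserted $\kappa$-completeness. (In fact no bound on $|I|$ is needed, so $\ptm(\gamma)$ is closed under arbitrary meets and joins.) I would also record that $|\ptm(\gamma)|=2^\gamma=\kappa$, so that $\ptm(\gamma)$ is an explicit $\kappa$-sized, $\kappa$-complete collection of subsets of $\kappa$, in pointed contrast to Proposition~\ref{no A}.

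There is no serious obstacle in this argument; the only points requiring care are bookkeeping. First, in the inclusion step one must keep the ordinal comparison $\gamma<\kappa$ separate from the cardinal comparison $|\gamma|<\kappa$, and use that $\kappa$ is a cardinal to pass between them. Second, I would flag that the completeness being claimed is closure under $<\kappa$ meets and joins in the ambient poset $\ptm(\kappa)$: although $\ptm(\gamma)$ carries its own Boolean structure with complementation taken relative to $\gamma$, that complementation does not coincide with complementation relative to $\kappa$, so the statement is naturally read as a statement about the complete lattice $\ptm(\gamma)$ sitting inside $\ptm(\kappa)$ rather than as a claim that $\ptm(\gamma)$ is a Boolean subalgebra closed under $\kappa$-ambient complement.
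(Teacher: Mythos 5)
Your argument is correct and is exactly what the paper has in mind: the paper's own proof is just the single word ``Immediate,'' and your two steps (Cantor's theorem giving $\gamma<\kappa$, hence $\ptm(\gamma)\subseteq\ptm(\kappa)$, plus trivial closure of $\ptm(\gamma)$ under small unions and intersections) fill in precisely that routine verification. Your parenthetical flag that the $\kappa$-completeness is lattice-theoretic closure inside $\ptm(\kappa)$, not closure under ambient complement, is a sensible reading consistent with how the proposition is used afterwards.
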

\begin{proof} Immediate.\end{proof}

The upshot of Propositions \ref{no A} and \ref{yes and} is the following theorem and corollary, which show that the Welch game is only interesting when $\kappa$ is inaccessible. 
 \begin{theorem}\label{showingprincipals} Suppose that $\kappa$ is an accessible infinite cardinal.  Then either:
	 \begin{enumerate}
 	\item There is no \hyperlink{OOK}{$\kappa$-algebra} $\mca\subseteq \ptm(\kappa)$ with $|\mca|=\kappa$ or
	\item There is a $\kappa$-algebra $\mca\subseteq \ptm(\kappa)$ with $|\mca|=\kappa$ but every 
	$\kappa$-complete ultrafilter $U$ on $\mca$ is principal
	 \end{enumerate} 
 \end{theorem}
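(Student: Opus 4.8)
The plan is to run a case analysis on the position of $\kappa$ relative to the strong limit property, matching each case to one of the two bullets of Proposition~\ref{no A} or to Proposition~\ref{yes and}. Since the paper regards $\omega$ as (the first) inaccessible, ``accessible'' here means uncountable and not inaccessible; that is, $\kappa$ is uncountable and is either singular or not a strong limit.

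First I would dispose of the case where $\kappa$ is a strong limit. An uncountable strong limit that is regular is inaccessible, so an accessible strong limit $\kappa$ must be singular. Then the first bullet of Proposition~\ref{no A} applies verbatim and yields alternative (1): there is no $\kappa$-complete subalgebra of $\ptm(\kappa)$ of cardinality $\kappa$.

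Next, suppose $\kappa$ is not a strong limit, and let $\gamma<\kappa$ be least with $2^\gamma\ge\kappa$; then $2^{\gamma'}<\kappa$ for all $\gamma'<\gamma$, and $\gamma$ is necessarily infinite. If $2^\gamma>\kappa$, then $\kappa,\gamma$ satisfy exactly the hypotheses of the second bullet of Proposition~\ref{no A}, so again alternative (1) holds. The remaining subcase is $2^\gamma=\kappa$, where I would produce the witnessing algebra for alternative (2). By Proposition~\ref{yes and} the set $\ptm(\gamma)$ sits inside $\ptm(\kappa)$ and is $\kappa$-complete of size $\kappa$; to make it a genuine unit-preserving subalgebra of $\ptm(\kappa)$ I pass to the $\kappa$-complete subalgebra $\mca$ it generates, namely $\mca=\{B,\,B\cup(\kappa\setminus\gamma):B\subseteq\gamma\}$. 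This $\mca$ has cardinality $2^\gamma=\kappa$ and is $\kappa$-complete, hence is a $\kappa$-algebra. Its atoms are the singletons $\{\xi\}$ for $\xi<\gamma$ together with the single block $\kappa\setminus\gamma$, and these atoms partition the unit $\kappa$ into $\gamma<\kappa$ pieces. If $U$ is any $\kappa$-complete ultrafilter on $\mca$, then applying $\kappa$-completeness to a partition of the unit into fewer than $\kappa$ pieces forces exactly one atom into $U$ (otherwise all the complements of the atoms lie in $U$ and their intersection, which is $0$, would lie in $U$); thus $U$ is generated by an atom, i.e.\ is principal. This establishes alternative (2).

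The only subtlety—and the one point worth writing carefully—is that $\ptm(\gamma)$ is not itself a unit-preserving subalgebra of $\ptm(\kappa)$, since complementation in $\ptm(\kappa)$ is taken relative to $\kappa$ rather than to $\gamma$; this is why I replace it by the generated algebra $\mca$ and verify that $\kappa\setminus\gamma$ is a genuine atom of $\mca$. Everything else is a routine check that the stated dichotomy on $\kappa$ (strong limit $\Rightarrow$ singular; otherwise split according to whether $2^\gamma$ equals or exceeds $\kappa$ at the least relevant $\gamma$) exhausts all accessible $\kappa$ and lines up precisely with the three results invoked.
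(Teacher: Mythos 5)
Your proof is correct and is essentially the paper's intended argument: the paper states this theorem as the ``upshot'' of Propositions~\ref{no A} and~\ref{yes and} without writing out details, and your case split (singular strong limit; least $\gamma$ with $2^\gamma\ge\kappa$, split according to $2^\gamma>\kappa$ versus $2^\gamma=\kappa$) is exactly the intended assembly. Your two refinements --- passing from $\ptm(\gamma)$ to the generated algebra $\{B,\,B\cup(\kappa\setminus\gamma):B\subseteq\gamma\}$, and the completeness argument forcing a $\kappa$-complete ultrafilter onto one of the $<\kappa$ many atoms (so ``principal'' in the atom-generated sense) --- correctly supply the glue the paper leaves implicit.
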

 \begin{corollary} Consider the Welch game of length 1. 
 Suppose that there is a $\kappa$-algebra  $\mca_0$ that is a legal move for Player I and that Player II has a winning strategy in $\mcG^W_1$.  Then $\kappa$ is inaccessible.
  \end{corollary}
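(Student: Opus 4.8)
The plan is to prove the contrapositive: assuming $\kappa$ is accessible, I will show that Player~I and Player~II cannot simultaneously possess a legal opening move and a winning strategy in $\mcG^W_1$. Concretely, a legal first move for Player~I is a $\kappa$-algebra, i.e. a $\kappa$-complete $\mca_0\subseteq\ptm(\kappa)$ with $|\mca_0|=\kappa$ containing all singletons, and Player~II wins the one-round game exactly when she can answer with a uniform $\kappa$-complete ultrafilter $U_0$ on $\mca_0$. I would first record that the existence of any legal move forces $\kappa^{<\kappa}=\kappa$: such an $\mca_0$ contains every subset of $\kappa$ of cardinality less than $\kappa$ (each being a union of fewer than $\kappa$ singletons), whence $\kappa=|\mca_0|\ge\kappa^{<\kappa}$.

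Now suppose $\kappa$ is accessible and both hypotheses hold. By the preceding Theorem we are in case (1) or case (2). Case (1) asserts there is no $\kappa$-algebra, contradicting the existence of a legal move; so we are in case (2). Moreover, since a $\kappa$-algebra does exist, Proposition~\ref{no A} rules out $\kappa$ being a singular strong limit or being of the gap type ($2^\gamma>\kappa$ while $2^{\gamma'}<\kappa$ for all $\gamma'<\gamma$). As $\kappa$ is accessible and not a singular strong limit, it is not a strong limit at all, so some $\gamma<\kappa$ has $2^\gamma\ge\kappa$; taking the least such $\gamma$, the exclusion of the gap type forces $2^\gamma=\kappa$ (the situation of Proposition~\ref{yes and}).

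It remains to produce a single legal move that defeats every reply of Player~II. Fix a bijection $e:\kappa\to{}^{\gamma}2$, put $B_\xi=e^{-1}\{f:f(\xi)=1\}$ for $\xi<\gamma$, and let $\mca_0$ be the $\kappa$-complete subalgebra of $\ptm(\kappa)$ generated by $\{B_\xi:\xi<\gamma\}$; write $B^1=B$ and $B^0=\kappa\setminus B$. This $\mca_0$ is a legal move: it contains each singleton, since $\{e^{-1}(f)\}=\bigcap_{\xi<\gamma}B_\xi^{f(\xi)}$ is the intersection of $\gamma<\kappa$ generators, and $|\mca_0|=\kappa$ because the $\kappa$-complete algebra generated by at most $\kappa$ sets has size at most $\kappa^{<\kappa}=\kappa$. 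Suppose Player~II replies with a $\kappa$-complete ultrafilter $U_0$ on $\mca_0$ and define $f_0\in{}^{\gamma}2$ by $f_0(\xi)=1$ iff $B_\xi\in U_0$. Then each $B_\xi^{f_0(\xi)}\in U_0$, so by $\kappa$-completeness $\{e^{-1}(f_0)\}=\bigcap_{\xi<\gamma}B_\xi^{f_0(\xi)}\in U_0$; thus $U_0$ is in fact the principal ultrafilter concentrated at $e^{-1}(f_0)$ and contains a set of size $1<\kappa$, hence is not uniform. So no uniform $\kappa$-complete ultrafilter exists on $\mca_0$, Player~II loses after $\mca_0$, and the second hypothesis fails. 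This contradiction shows $\kappa$ must be inaccessible.

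The step I expect to be the main obstacle is guaranteeing that the defeating algebra is a genuine legal move. Case (2) of the Theorem only supplies \emph{some} $\kappa$-algebra on which all $\kappa$-complete ultrafilters are principal, and the naive witness $\ptm(\gamma)$ has neither top element $\kappa$ nor all $\kappa$ singletons. Realizing the witness instead as the coordinate algebra $\mca_0$ above — whose $\gamma$ generators have as common refinement the partition of $\kappa$ into singletons — is exactly what makes each $\kappa$-complete ultrafilter concentrate on a single point and so fail uniformity, while containing all singletons makes it a legal move; and verifying $|\mca_0|=\kappa$ is precisely where the arithmetic $\kappa^{<\kappa}=\kappa$, forced by the first hypothesis, is used. (The borderline case $\kappa=\omega$, a regular strong limit, is set aside by reading ``inaccessible'' as ``uncountable regular strong limit''.)
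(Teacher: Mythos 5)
Your proof is correct, and it follows the same skeleton the paper intends — the corollary is presented there as an immediate ``upshot'' of Propositions \ref{no A} and \ref{yes and}, with no details written out — but you supply a step that the paper genuinely glosses over, and your repair is not merely pedantic. The naive witness suggested by Proposition \ref{yes and} does not work: the Boolean subalgebra of $\ptm(\kappa)$ obtained from $\ptm(\gamma)$ by closing under complements (even after adjoining all singletons and closing under $\kappa$-complete operations) carries a uniform $\kappa$-complete ultrafilter, namely the one generated by the atom $\kappa\smallsetminus\gamma$ (equivalently, the trace of the co-$<\!\kappa$ filter), so Player~II would in fact have a legal uniform reply to it. What is needed is exactly what your coordinate algebra provides: $\gamma$ generators pulled back through a bijection $e:\kappa\to{}^{\gamma}2$, whose common refinement is the partition of $\kappa$ into singletons, so that any $\kappa$-complete ultrafilter is forced by $\gamma<\kappa$ decisions to concentrate on a single point and hence fails uniformity — here ``principal'' really does mean ``contains a singleton,'' which is what contradicts uniformity, whereas atom-generated ultrafilters on an algebra with large atoms can be uniform. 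Your preliminary observation that the mere existence of a legal move yields $\kappa^{<\kappa}=\kappa$ (hence regularity of $\kappa$) is also doing real work: it is what bounds the size of the generated $\kappa$-complete algebra by $\kappa$, making your defeating move legal, and it quietly disposes of the singular-non-strong-limit case that Proposition \ref{no A} does not address. One small quibble: your closing parenthetical about $\kappa=\omega$ has the convention backwards. Since Player~II wins the game on $\omega$ trivially (restrict any nonprincipal ultrafilter), reading ``inaccessible'' as ``uncountable regular strong limit'' would make $\omega$ a counterexample to the corollary rather than set it aside; the statement is saved by counting $\omega$ as inaccessible, which is in fact the paper's own usage (``the first inaccessible cardinal $\omega$'').
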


If $\kappa$ is inaccessible we have the following result, which can also be deduced directly from the results of \emph{Abramson et al.} (\cite{flipping}):
\begin{theorem} Suppose that $\kappa$ is inaccessible and Player II wins the Welch game of length 1. Then $\kappa$ is weakly compact.
\end{theorem}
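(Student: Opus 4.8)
The plan is to prove the two implications separately: first, that Player~II winning $\mcG^W_1$ is equivalent to the assertion that \emph{every $\kappa$-algebra carries a uniform $\kappa$-complete ultrafilter}, and second, that this assertion together with inaccessibility yields the tree property on $\kappa$, which for inaccessible $\kappa$ is equivalent to weak compactness.

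For the first step I would argue as follows. Let $\mcb$ be any $\kappa$-algebra. Enlarging $\mcb$ to the $\kappa$-algebra generated by $\mcb$ together with all singletons does not change its cardinality, since $\kappa$ is inaccessible, and a uniform $\kappa$-complete ultrafilter on the larger algebra restricts to one on $\mcb$; so we may assume $\mcb$ contains all singletons, i.e.\ that $\mcb$ is a legal opening move $\mca_0$ for Player~I. A winning strategy for Player~II in $\mcG^W_1$ must then provide a legal response, which by definition is a uniform $\kappa$-complete ultrafilter on $\mcb$. Hence Player~II winning $\mcG^W_1$ gives: every $\kappa$-algebra admits a uniform $\kappa$-complete ultrafilter.

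For the second step, let $T$ be a tree of height $\kappa$ with underlying set $\kappa$ and all levels $L_\alpha$ of size $<\kappa$; I must produce a cofinal branch. Let $\mcb$ be the $\kappa$-algebra generated by the cones $C_t=\{s:t\le_T s\}$ (for $t\in T$), the level sets $L_\alpha$ (for $\alpha<\kappa$), and all singletons; inaccessibility guarantees $|\mcb|=\kappa$, so the previous step supplies a uniform $\kappa$-complete ultrafilter $U$ on $\mcb$. Because each $L_\alpha$ has size $<\kappa$ and $U$ is uniform, $\kappa\setminus L_\alpha\in U$, and $\kappa$-completeness then forces every tail $T_{\ge\beta}=\{s:\lev(s)\ge\beta\}$ into $U$. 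For each $\alpha$ the cones $\{C_t:t\in L_\alpha\}$ partition $T_{\ge\alpha}\in U$ into fewer than $\kappa$ pieces, so, $U$ being a $\kappa$-complete ultrafilter, exactly one cone $C_{b_\alpha}$ lies in $U$. Since $C_{b_\alpha}\cap C_{b_\beta}\in U$ is nonempty whenever $\alpha<\beta$, the nodes $b_\alpha$ are pairwise comparable and $b_\alpha<_T b_\beta$; thus $\{b_\alpha:\alpha<\kappa\}$ is a cofinal branch, $\kappa$ has the tree property, and being inaccessible $\kappa$ is weakly compact.

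The step I expect to require the most care is the passage from ``uniform'' to ``contains every tail'': the winning strategy promises only that $U$ omits sets of size $<\kappa$, and the whole branch construction rests on the elementary but essential observation that, once the small level sets belong to the algebra, a uniform $\kappa$-complete ultrafilter must contain all of their complements and hence, by $\kappa$-completeness, all tails --- this is exactly what lets the ultrafilter ``climb'' the tree and makes each $b_\alpha$ well defined and coherent across levels. One could instead route the argument through the Keisler--Tarski filter-extension characterization (item~(2) in the opening theorem), but there the substantive (uniform filter) case reduces to essentially the same combinatorial analysis, so I would prefer the self-contained tree-property argument above.
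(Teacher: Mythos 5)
Your proof is correct and follows essentially the same route as the paper's: Player~II's winning response in $\mcG^W_1$ yields a uniform $\kappa$-complete ultrafilter on the $\kappa$-algebra generated by the cones of a $\kappa$-tree, and uniformity plus $\kappa$-completeness (each level, being of size $<\kappa$, cannot carry the ultrafilter's mass) forces exactly one cone per level into the ultrafilter, producing a cofinal branch. The paper's version is terser --- it does not bother adding level sets and singletons explicitly, instead writing $\kappa=\bigcup_{\alpha\in(\mct)_\gamma}A_\alpha\cup R$ with $|R|<\kappa$ --- but the substance is identical.
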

\begin{proof} To show $\kappa$ is weakly compact, it suffices to show it has the tree property.  Let $\mct$ be a $\kappa$-tree. For $\alpha<\kappa$, let $A_\alpha$ be the the set of $\beta<\kappa$ such that $\alpha\le_\mct\beta$. Let $\mca$ be the $\kappa$-algebra generated by $\{A_\alpha:\alpha<\kappa\}$ and $\mcu$ be a uniform $\kappa$-complete ultrafilter on $\mca$. 

For each $\gamma<\kappa$, $\kappa=\bigcup_{\alpha\in (\mct)_\gamma}A_\alpha \cup R$ where $|R|<\kappa$.  It follows that for each $\gamma$ there is an $\alpha\in (\mct)_\gamma$ such that $A_\alpha\in \mcu$.  But then $\{\alpha:A_\alpha\in \mcu\}$ is a $\kappa$-branch through $\mct$.\end{proof}

%%%%%%%%%%%%%%%%%%%%%%%%%%%%%%%%%%%%%%%%%%%%%%%%%%%%
%%%%%%%%%%%%%%%%%%%%%%%%%%%%%%%%%%%%%%%%%%%%%%%%%%%%
%%%%%%%%%%%%%%%%%%%%%%%%%%%%%%%%%%%%%%%%%%%%%%%%%%%%
%%%%%%%%%%%%%%%%%%%%%%%%%%%%%%%%%%%%%%%%%%%%%%%%%%%%

\section{Hopeless Ideals}\label{s.hopeless}

In this section we define the notion of a \emph{hopeless ideal} in a general
context, and toward the end of the section we will narrow our focus to the
context of games. Fix an 
inaccessible cardinal $\kappa$. Assume $F$ is a function with domain  $R$
such that for every $r\in R$ the value $F(r)$ is a sequence of length $\xi_r$
of the form
\begin{equation}\label{hopeless.e.f}
F(r)=\langle\mcA^r_i,U^r_i\mid i<\xi_r\rangle
\end{equation}
where for every $i<\xi_r$, 
\begin{itemize}
\item[(i)] $\mcA^r_i\subseteq\ptm(\kappa)$ and 
\item[(ii)] $U^r_i$ is a $\kappa$-complete ultrafilter on the $\kappa$-algebra
of subsets of $\kappa$ generated by $\bigcup_{j\le i}\mcA^r_j$.
\item[(iii)] {For all $r\in R$, the sequence $\langle U^r_i\mid
i<\xi_r\rangle$ is monotonic with respect to the inclusion.}
\item[(iv)]\label{density} (Density) For every $r\in R$, $j<\xi_r$ and
$\mcB\in[\ptm(\kappa)]^{\le\kappa}$ there is $s\in R$ such that
$F(r)\rst j=F(s)\rst j$ and $\mcB\subseteq\mcA^s_j$. 
\end{itemize}
{We will call functions $F$ with the properties (i)--(iv) \emph{assignments}.}

{One can also formulate a variant with normal ultrafilters $U^r_i$. Denote the maximo-lexicographical ordering of
$\kappa\times\kappa$ by $\mlex$. Let $h:(\kappa\times\kappa,\mlex)\to(\kappa,\in)$ be the natural isomorphism. For a set 
$A\subseteq \kappa$, let $A_i=(h^{-1}[A])_i$ be the $i^{th}$ section of $h^{-1}(A)$. The sequence  $\la A_i\mid i<\kappa\ra$ is \emph{associated to $A$}.
We will say that a 
$\kappa$-algebra $\mcA$ of subsets of $\kappa$ is \emph{normal} if for all $A\in \mca$, each $A_i$ belongs $\mca$ and  the diagonal intersection 
$\Delta_{i<\kappa}A_i$ also belongs to $\mca$. We will say that a sequence $\la A_i\mid i<\kappa\ra$ belongs to $\mca$ if it is associated to an element of $\mca$. }
 Finally we say that an ultrafilter $U$
on a normal $\kappa$-algebra $\mcA$ is \em normal \em iff for every sequence
$\langle A_i\mid i<\kappa\rangle\in\mcA$,
\begin{equation}\label{hopeless.e.nuf}
(\forall i<\kappa)(A_i\in U)
\quad
\Longrightarrow
\quad
\Delta_{i<\kappa}A_i\in U
\end{equation}
{A variant of this definition is 
an \emph{assignment with normal ultrafilters} where we require, instead of (ii)
above, that}
\begin{itemize}
\item[(ii)${}'$] $U^r_i$ is a $\kappa$-complete normal ultrafilter on the
normal $\kappa$-algebra of subsets of $\kappa$ generated by
$\bigcup_{j\le i}\mcA^r_j$. 
\end{itemize}
If (ii)$'$ is satisfied we say that $F$ is \em normal. \em Notice that there
is no need to modify clause (iv), as normal $\kappa$-algebras are able to
decode $\kappa$-sequences of subsets of $\kappa$ {from other} subsets of
$\kappa$ via the pairing function $h$ introduced above.  However, instead of
families $\mcB\in[\ptm(\kappa)]^{\le\kappa}$,  it is convenient in (iv) to
consider sets $B\in\ptm(\kappa)$ that code~$\mcB$. 

\begin{definition}\label{hopeless.d.hi}
Given an \emph{assignment} $F$, we define the ideal $\mcI(F)$ as follows.
\begin{equation}\label{hopeless.e.hi}
\mcI(F)=\mbox{the set of all $A\subseteq\kappa$ such that $A\notin U^r_i$
for any $i<\xi_r$ and any $r\in R$.} 
\end{equation}
The ideal $\mcI(F)$ is called the \em hopeless ideal on $\ptm(\kappa)$ induced
by $F$. \em
\end{definition}
Although in the above definition we say we are defining an ideal, an argument
is needed to see that $\mcI(F)$ is indeed an ideal. 
It follows immediately that $\varnothing\in\mcI(F)$ and $\mcI(F)$ is downward
closed under inclusion. The rest is given by the following proposition.

\begin{proposition}\label{hopeless.p.completeness}
Given an assignment $F$, 
the ideal $\mcI(F)$ is $\kappa$-complete. If all ultrafilters $U^r_i$ are
uniform then $\mcI(F)$ is uniform. If additionally $F$ is normal then
$\mcI(F)$ is normal. If $F'$ is an assignment on $R'\supseteq R$ and $F'\rest
R=F$, then $\mcI(F')\subseteq \mcI(F)$. 
\end{proposition}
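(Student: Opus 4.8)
The plan is to read all four assertions directly off the definition $\mcI(F)=\{A\subseteq\kappa : A\notin U^r_i\text{ for all }r\in R\text{ and }i<\xi_r\}$, handling them in increasing order of difficulty and isolating $\kappa$-completeness as the only one that genuinely needs the monotonicity and density clauses (iii) and (iv). The inclusion $\mcI(F')\subseteq\mcI(F)$ is immediate: since $R\subseteq R'$ and $F'\rst R=F$, a set in $\mcI(F')$ avoids $U^r_i$ for \emph{every} $r\in R'$, in particular for every $r\in R$ (where $F'(r)=F(r)$), which is exactly the requirement to lie in $\mcI(F)$. Uniformity is almost as quick: if $|A|<\kappa$ then for each $r,i$ either $A$ fails to lie in the domain algebra of $U^r_i$, whence trivially $A\notin U^r_i$, or it lies in that algebra and $A\notin U^r_i$ because $U^r_i$ is uniform; either way $A\in\mcI(F)$, so $\mcI(F)$ contains every set of size $<\kappa$.

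For $\kappa$-completeness, fix $\lambda<\kappa$ and sets $A_\beta\in\mcI(F)$ for $\beta<\lambda$, and suppose toward a contradiction that $A:=\bigcup_{\beta<\lambda}A_\beta\notin\mcI(F)$, witnessed by some $r\in R$ and $i<\xi_r$ with $A\in U^r_i$. The obstruction is that the pieces $A_\beta$ need not lie in the domain algebra of $U^r_i$, so $\kappa$-completeness of $U^r_i$ cannot be applied to them directly. I would repair this by applying clause (iv) at stage $i+1$ with $\mcB=\{A_\beta : \beta<\lambda\}$, obtaining $s\in R$ with $F(s)\rst(i+1)=F(r)\rst(i+1)$ and $\{A_\beta : \beta<\lambda\}\subseteq\mcA^s_{i+1}$. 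Then every $A_\beta$, and hence $A$, lies in the domain algebra of $U^s_{i+1}$; moreover $F(s)\rst(i+1)=F(r)\rst(i+1)$ forces $U^s_i=U^r_i$, so by monotonicity (iii) we get $A\in U^r_i=U^s_i\subseteq U^s_{i+1}$. Now $U^s_{i+1}$ is a $\kappa$-complete ultrafilter, $\lambda<\kappa$, and $A=\bigcup_\beta A_\beta$ belongs to $U^s_{i+1}$ with all the $A_\beta$ in its domain, so some $A_{\beta_0}\in U^s_{i+1}$, contradicting $A_{\beta_0}\in\mcI(F)$.

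The single delicate point in the previous paragraph is this domain-alignment: the pieces must be deposited in the domain of an ultrafilter that still contains $A$, which is precisely why both (iv) (to enlarge the algebra) and (iii) (to carry $A\in U^r_i$ up to the later stage) are invoked. The one genuinely fiddly case, which I expect to be the main obstacle, is $i+1=\xi_r$, where stage $i+1$ is unavailable inside $r$ itself; there I would first pass to a sufficiently long element of $R$ agreeing with $r$ through stage $i$ and then apply density, so that the issue is bookkeeping rather than mathematical content.

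Finally, normality follows by the same template in its normal incarnation. Given $A_\alpha\in\mcI(F)$ for $\alpha<\kappa$, I would code the sequence $\langle A_\alpha\mid\alpha<\kappa\rangle$ by a single set $B\subseteq\kappa$ via G\"odel's pairing function and use the normal form of (iv) to place $B$ into a normal algebra $\mcA^s_{i+1}$ while, as above, preserving $U^r_i\subseteq U^s_{i+1}$. Normality of that algebra makes each section $A_\alpha$ and the relevant complements and diagonals available in the domain of $U^s_{i+1}$; since each $A_\alpha\in\mcI(F)$ we get $\kappa\setminus A_\alpha\in U^s_{i+1}$, and applying the defining property (\ref{hopeless.e.nuf}) of a normal ultrafilter to the sequence $\langle\kappa\setminus A_\alpha\mid\alpha<\kappa\rangle$ yields $\Delta_{\alpha<\kappa}(\kappa\setminus A_\alpha)\in U^s_{i+1}$. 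As $\Delta_{\alpha<\kappa}(\kappa\setminus A_\alpha)$ is the complement of the diagonal union of the $A_\alpha$, this contradicts the assumption that this diagonal union lies in $U^s_{i+1}$, establishing closure under diagonal unions and hence normality.
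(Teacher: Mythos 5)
Your proof is correct and follows essentially the same route as the paper's: the contrapositive argument for $\kappa$-completeness using density at stage $i+1$ to deposit the pieces into $\mcA^s_{i+1}$, monotonicity to carry $A\in U^r_i=U^s_i$ into $U^s_{i+1}$, and then $\kappa$-completeness of that ultrafilter, with normality handled by the same template using diagonal operations (the paper literally says ``the same, with $\nabla$ in place of $\bigcup$'') and with uniformity and the restriction claim read off the definition. Your flagged edge case $i+1=\xi_r$ is real but is resolved in the paper exactly as you suspect: in all applications $R$ consists of runs of limit length, which the paper notes when verifying density for the game-induced assignments.
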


\begin{proof}
We first verify $\kappa$-completeness of $\mcI(F)$. 
We noted above that $\varnothing\in\mcI(F)$ and $\mcI(F)$ is downward
closed under inclusion; hence it suffices to check that $\mcI(F)$ is closed
under unions of cardinality $<\kappa$. If $\langle A_\eta\mid\eta<\xi\rangle$
is such that $\xi<\kappa$ and $A=\bigcup_{\eta<\xi}A_\eta\notin\mcI(F)$, then
there is some $r\in R$ and some $i<\xi_r$ such that $A\in U^r_i$. By the density
condition, there is some $s\in R$ such that $\mcA^s_j=\mcA^r_j$ and  
$U^s_j=U^r_j$ for all $j\le i$, and $\{A_\eta\mid \eta<\xi\}\subseteq\mcA^s_{i+1}$.
In particular, $A\in U^s_i\subseteq U^s_{i+1}$ and all sets $A_\eta$,
$\eta\le\xi$ are in the $\kappa$-algebra generated by
$\bigcup_{j\le i+1}\mcA_j$. By
$\kappa$-completeness of $U^s_{i+1}$ then $A_\eta\in U^s_{i+1}$ for some
$\eta<\xi$, hence $A_\eta\notin\mcI(F)$. 

The proof of normality of $\mcI(F)$ for normal $F$ is the same, with
$\nabla_{\eta<\kappa}A_\eta$ in place of $\bigcup_{\eta<\xi}A_\eta$.
The conclusion on uniformity of $\mcI(F)$ follows by a straightforward
argument from the definition of $\mcI(F)$.

Finally, if $R',F'$ are as in the statement of the proposition, then any
$A\in\mcI(F')$ trivially avoids all ultrafilters $U^r_i$ where $r\in R$ and
$i<\xi_r$, so $A\in\mcI(F)$. 
\end{proof}

Now assume $\mcG$ is a two player game of perfect information, and
$\mcS$ is a strategy for Player~II in $\mcG$. Denote the set of all
runs  of $\mcG$ according to $\mcS$ by
$R_\mcS$ (by a run we mean a complete play). Assume every $r\in R_\mcS$ 
is associated with a sequence of fragments $\mcA^r_i\subseteq\ptm(\kappa)$
and ultrafilters $U^r_i$, in a way that makes the function
\begin{equation}\label{hopeless.e.f-games}
F_\mcS:r\mapsto\langle \mcA^r_i,U^r_i\mid i<\lht(r)\rangle
\end{equation}
an assignment/normal assignment with domain $R_\mcS$. Here of course
$\xi_r=\lht(r)$ when compared with (\ref{hopeless.e.f}). In all concrete
situations we will consider, the rules of the game $\mcG$ will guarantee that
the function $F_\mcS$ is really an assignment. As the
strategy $\mcS$ makes it clear  which game is played, we suppress writing
$\mcG$ explicitly in our notation.

Here are some examples. If $\mcG$ is the Welch game $\mcG^W_\gamma$ then
$F_\mcS$ is the identity function. In the next section we introduce games
$\mcG_1^-,\mcG_1$ and $\mcG_2$. These games are defined relative to a sequence
of models $\langle N_\alpha\mid\alpha<\kappa^+\rangle$ increasing with respect
to the inclusion, and Player~I plays ordinals $\alpha<\kappa^+$ which refer to
these models. In the games $\mcG_1^-$ and $\mcG_1$ Player~II plays
uniform $\kappa$-complete ultrafilters on $\ptm(\kappa)\cap N_\alpha$;
in $\mcG_1$ these ultrafilters are required to be normal. Thus, if $r$ is a
run in one of these games according to $\mcS$, say
$r=\langle\alpha^r_i,U^r_i\mid i<\lht(r)\rangle$ then
\[
F_\mcS(r)=\langle \ptm(\kappa)\cap N_{\alpha^r_i},U^r_i\mid i<\lht(r)\rangle
\]
In the game $\mcG_2$ Player~II plays sets $Y\subseteq\kappa$ which determine
uniform normal $\kappa$-complete ultrafilters $U$ on $\ptm(\kappa)\cap
N_\alpha$ defined by
$U=\{X\in\ptm(\kappa)\cap N_\alpha\mid Y\subseteq^* X\}$.
Thus, if $r$ is a run in the game $\mcG_2$ according to $\mcS$, say
$r=\langle\alpha^r_i,Y^r_i\mid i<\lht(r)\rangle$ then
\[
F_\mcS(r)=
\langle
\ptm(\kappa)\cap N_{\alpha^r_i},
\{X\in\ptm(\kappa)\cap N_{\alpha^r_i}\mid Y^r_i\subseteq^* X\}
\mid i<\lht(r)
\rangle
\]

If $P$ is a position in $\mcG$ played according to $\mcS$ we let
\begin{equation}\label{hopeless.e.r-positional}
R_{\mcS,P}=\mbox{the set of all $r\in R_\mcS$ extending $P$}
\end{equation}
and
\begin{equation}\label{hopeless.e.f-positional}
F_{\mcS,P}=F_\mcS\rst R_{\mcS,P}
\end{equation}
We are now ready to define the central object of our interest.

\begin{definition}\label{hopeless.d.hig}
Assume $\mcG$ is a game of perfect information played by two players, $\mcS$
is a strategy for Player II in $\mcG$, and $F_\mcS$ is an assignment
with domain $R_\mcS$ as in (\ref{hopeless.e.f-games}).
\hypertarget{hopeless}{Consider a position $P$ in
$\mcG$ according to $\mcS$. We define 
\[
\mcI(\mcS,P)=\mcI(F_{\mcS,P})
\]
to be the \em hopeless ideal with respect to $\mcS$ conditioned on $P$. Here
we suppress mentioning the assignment $F_\mcS$ in the notation, as in all
situations we will consider it will be given by the strategy $\mcS$ in a
natural way.
\em The ideal $\mcI(\mcS,\varnothing)$ is called the \em unconditional hopeless
ideal with respect to $\mcS$.} We will write $\mcI(\mcS)$ for
$\mcI(\mcS,\varnothing)$. 
\end{definition}

When the strategy $\mcS$ is clear from the context we suppress referring to
it, and will talk briefly about the ``hopeless ideal conditioned on $P$" and
the ``unconditional hopeless ideal". By
Proposition~\ref{hopeless.p.completeness} we have the following as an
immediate consequence.

\begin{proposition}\label{hopeless.p.completeness-games}
Given a game $\mcG$ of limit length, a strategy $\mcS$ for Player~II in $\mcG$
and a position $P$ as in Definition~\ref{hopeless.d.hig}, the ideal
$\mcI(\mcS,P)$ is $\kappa$-complete. If all ultrafilters $U^r_i$ associated
with $F_{\mcS,P}$ are uniform then $\mcI(\mcS,P)$ is uniform. If
moreover $F_{\mcS,P}$ is normal, then $\mcI(\mcS,P)$ is normal as well. 
\end{proposition}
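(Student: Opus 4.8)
The plan is to deduce this from Proposition~\ref{hopeless.p.completeness} by checking that the conditioned object $F_{\mcS,P}=F_\mcS\rst R_{\mcS,P}$ behaves, for the purposes of that proof, as an assignment on the domain $R_{\mcS,P}$. Conditions (i)--(iii) are \emph{local} to individual runs: for each $r\in R_{\mcS,P}$ the value $F_{\mcS,P}(r)=F_\mcS(r)$ is unchanged by restricting the domain, so the requirements that $\mcA^r_i\subseteq\ptm(\kappa)$, that $U^r_i$ be a $\kappa$-complete (uniform, resp.\ normal) ultrafilter on the algebra generated by $\bigcup_{j\le i}\mcA^r_j$, and that $\langle U^r_i\mid i<\xi_r\rangle$ be $\subseteq$-monotone, all pass from $F_\mcS$ to $F_{\mcS,P}$ verbatim.

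The one condition needing care is the density clause (iv), and flagging this is the main point. As \emph{literally} stated it fails for $F_{\mcS,P}$ at stages $j<\lht(P)$: there the values $\mcA^s_j,U^s_j$ are pinned down by the position $P$ and agree across all $s\in R_{\mcS,P}$, so an arbitrary $\mcB\in[\ptm(\kappa)]^{\le\kappa}$ cannot be forced inside the fixed algebra $\mcA^P_j$. However, density does hold in its \emph{tail} form, for every stage $j\ge\lht(P)$: given $r\in R_{\mcS,P}$, such a $j$, and $\mcB\in[\ptm(\kappa)]^{\le\kappa}$, the assignment density (iv) for $F_\mcS$ furnishes $s\in R_\mcS$ with $F_\mcS(s)\rst j=F_\mcS(r)\rst j$ and $\mcB\subseteq\mcA^s_j$; since $s$ agrees with $r$ on the first $j\ge\lht(P)$ rounds and $r$ extends $P$, also $s$ extends $P$, whence $s\in R_{\mcS,P}$.

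It then remains to re-run the proof of Proposition~\ref{hopeless.p.completeness} using only this tail density, which is where monotonicity (iii) carries the load and which I regard as the crux. Suppose $A=\bigcup_{\eta<\xi}A_\eta\notin\mcI(F_{\mcS,P})$ with $\xi<\kappa$; pick $r\in R_{\mcS,P}$ and $i<\xi_r$ with $A\in U^r_i$. Choose a stage $j\ge\max(i+1,\lht(P))$ and apply tail density with $\mcB=\{A_\eta\mid\eta<\xi\}$ to obtain $s\in R_{\mcS,P}$ agreeing with $r$ through round $j$ and with every $A_\eta\in\mcA^s_j$. Since $U^s_i=U^r_i\ni A$ and $U^s_i\subseteq U^s_j$ by (iii), we get $A\in U^s_j$, while all $A_\eta$ lie in the domain algebra of $U^s_j$; $\kappa$-completeness of $U^s_j$ then yields some $\eta$ with $A_\eta\in U^s_j$, i.e.\ $A_\eta\notin\mcI(F_{\mcS,P})$. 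This establishes closure under $<\!\kappa$-unions, hence $\kappa$-completeness.

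The remaining clauses are then immediate. For uniformity, a uniform $U^s_j$ omits every set of size $<\kappa$, so all such sets lie in $\mcI(F_{\mcS,P})$. For normality one repeats the union argument with the diagonal intersection $\nabla_{\eta<\kappa}A_\eta$ in place of $\bigcup_{\eta<\xi}A_\eta$ and invokes normality of $U^s_j$ exactly as in Proposition~\ref{hopeless.p.completeness}; here one uses, as in the discussion following~(\ref{hopeless.e.nuf}), that normal $\kappa$-algebras decode $\kappa$-sequences, so the relevant family is realized at stage $j$ by a single coding set $B\in\ptm(\kappa)$ via tail density. The final inclusion assertion, that enlarging the domain shrinks the ideal, is inherited directly from the corresponding statement in Proposition~\ref{hopeless.p.completeness} applied to $F_{\mcS,P}$ and its restrictions to smaller positions.
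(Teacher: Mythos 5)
Your proof is correct and takes essentially the same route as the paper's, which likewise reduces the proposition to Proposition~\ref{hopeless.p.completeness} by verifying clauses (i)--(iv) for $F_{\mcS,P}$ and singles out density (iv) as the only clause needing attention for games of limit length. Your treatment is in fact slightly more precise than the paper's one-line verification: where the paper simply asserts that (iv) holds, you correctly observe that for nonempty $P$ it holds only in the tail form $j\ge\lht(P)$, and your bridge via monotonicity (iii) at a stage $j\ge\max(i+1,\lht(P))$ --- which exists precisely because the runs have limit length $\gamma$ --- is exactly the elided detail that makes the reduction go through.
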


%%%%%%%%%%%%%%%%%%%%%%%%%%%%%%%%%%%%%%%%%%%%%%%%%%%%
%%%%%%%%%%%%%%%%%%%%%%%%%%%%%%%%%%%%%%%%%%%%%%%%%%%%
%%%%%%%%%%%%%%%%%%%%%%%%%%%%%%%%%%%%%%%%%%%%%%%%%%%%
%%%%%%%%%%%%%%%%%%%%%%%%%%%%%%%%%%%%%%%%%%%%%%%%%%%%

\section{Games we Play}\label{s.games}

In this section we introduce a sequence of games $\mcG_k$ closely related to
Welch's game $\mcG^W_\gamma$. The last one will be $\mcG_2$, and we will be
able to show that if $\mcS$ is a winning strategy for II in $\mcG_2$ of
sufficient length then we can construct a winning strategy $\mcS^*$ for
Player~II in $\mcG_2$ such that $\mcI(\mcS^*)$ is precipitous and more,
depending on the length of the game
%II is playing
and the payoff set.

To unify the notation, we let $\mcG_0$ of length $\gamma$ be the Welch game
$\mcG^W_\gamma$.
Thus, a run of the game continues until either Player II cannot play or else
until $\gamma$ rounds are played. The set of all runs of $\mcG_0$ of length
$\gamma$ is denoted by $R_\gamma$. As usual with these kinds of games, a set
$B\subseteq R_\gamma$ is called a payoff set. We say that Player~II wins {a 
run $R$ of the game $\mcG_0$ of length $\gamma$ with payoff set $B$ if  $R$
has   $\gamma$ rounds and the resulting run is an element of $B$. We call this
game $\mcG_0(B)$.} Thus, if $B=R_\gamma$ then $\mcG_0(B)$ is just the game
$\mcG_0$. With this notation, the game $\mcG^*_\gamma$ is just the game 
\hypertarget{Qgamma}{$\mcG_0(Q_\gamma)$} of length $\gamma$ where

\begin{equation}\label{intro.e.q-gamma}     
Q_\gamma=\;
\parbox[t]{3.9in}{
The set of all runs $\langle\mcA_i,U_i\mid i<\gamma\rangle\in R_\gamma$ such
that there is a $\kappa$-complete ultrafilter on the $\kappa$-algebra
generated by $\bigcup_{i<\gamma}\mcA_i$ extending all $U_i$, $i<\gamma$.
}
\end{equation}
As already discussed in the introduction, the existence of a winning strategy
for Player~II in the game $\mcG_0(Q_\gamma)$ of length 
$\gamma$ is a strengthening of the requirement that Player~II has a winning
strategy in $\mcG_0$ of length $\gamma$. This strengthening is among the
weakest ones which increase the consistency strength  
in the case $\gamma=\omega$. From the point of view of increasing the
consistency strenth, the case $\gamma=\omega$ is of primary interest, as
follows from (TO1) combined with Corollary~\ref{c6}. Here
are some trivial observations.  
\begin{itemize}
\item[(TO1)] $\mcG_0(Q_\gamma)$ is the same game as $\mcG_0$ whenever $\gamma$
is a successor ordinal, so a winning strategy for Player~II in
$\mcG_0(Q_\gamma)$ gives us something new only when $\gamma$ is a limit.
\item[(TO2)] A winning strategy for Player~II in $\mcG_0(Q_\gamma)$ is a winning
strategy for Player~II in $\mcG_0$, but the converse may not be true in
general. 
\item[(TO3)] If $\mcS$ is a winning strategy for Player~II in $\mcG_0$ of length
$>\gamma$ then the restriction of $\mcS$ to positions of length $<\gamma$ is a
winning strategy for Player~II in $\mcG_0(Q_\gamma)$ of length $\gamma$. 
\item[(TO4)] Given $\xi<\kappa$ and sequences $\langle\mcA_i\mid i<\xi\rangle$
and $\langle U_i\mid i<\xi\rangle$ where $\mcA_i\subseteq\ptm(\kappa)$ and
$U_i$ is a $\kappa$-complete 
ultrafilter on the $\kappa$-algebra  (respectively normal $\kappa$-algebra)
of subsets of $\kappa$ generated by $\bigcup_{j\le i}\mcA_j$ such that
$U_i\subseteq U_j$ whenever $i\le j$, there is at most one $\kappa$-complete
(respectively normal) ultrafilter $U$ on the (normal) $\kappa$-algebra $\mcB$ of
subsets of $\kappa$ generated by $\bigcup_{i<\xi}\mcA_i$ which extends all
$U_i$. Thus, if we changed the rules of $\mcG_0$ to require that Player~II
goes first at limit stages then Player~II has a winning strategy in this
modified $\mcG_0$ if and only if Player~II has a winning strategy in the
original game $\mcG_0$.  
\item[(TO5)]{ Let $\mcs$ be a winning strategy for Player II in $\mcG_0$ or $\mcG_0(Q_\gamma)$ and 
\begin{center}
\begin{tabular}{ c||c|c|c|c|c|c } 
 I & $\mca_0$& $\mca_1$ &\dots &$\mca_\alpha$&$\mca_{\alpha+1}$&\dots \\
 \hline
II & $U_0$ & $U_1$& \dots &$U_\alpha$&$U_{\alpha+1}$& \dots \\ 
\end{tabular}
\end{center}
be a play of the game $\mcG_0$ or $\mcG_0(Q_\gamma)$ according to $\mcS$.  Let $\mcb_i\subseteq \mca_i$ be another sequence of 
$\kappa$-complete algebras.  Then the play:
\begin{center}
\begin{tabular}{ c||c|c|c|c|c|c } 
 I & $\mcB_0$& $\mcB_1$ &\dots &$\mcB_\alpha$&$\mcB_{\alpha+1}$&\dots \\
 \hline
II & $U_0\rest \mcB_0$ & $U_1\rest \mcB_1$& \dots &$U_\alpha\rest \mcB_\alpha$&$U_{\alpha+1}\rest \mcB_{\alpha+1}$& \dots \\ 
\end{tabular}
\end{center}
is a run of the game where Player II wins.}
 
\end{itemize}

In what follows we will consider $\theta$ a regular cardinal much larger than
$\kappa$, and \hypertarget{Fixisin}{fix a well-ordering of $H_\theta$ which we denote by
$<_\theta$.} We augment our language of set theory by a binary relation symbol
denoting this well-ordering, and work in this language when taking
elementary hulls of $H_\theta$. We will thus work with the structure
$(H_\theta,\in,<_\theta)$, but will {frequently} suppress the symbols denoting $\in$ and
$<_\theta$ in our notation. 

The common  background setting for the games we are going to describe is an 
\hypertarget{IA}{internally approachable sequence $\langle N_\alpha\mid\alpha<\kappa^+\rangle$} of
elementary substructures of $H_\theta$.  That is:
a continuous sequence  such that for all $\alpha<\kappa^+$ the
following hold.
\begin{itemize}
\item[(a)] $\kappa+1\subseteq N_\alpha$ and $\card(N_\alpha)=\kappa$,
\item[(b)] ${}^{<\kappa} N_{\alpha+1}\subseteq N_{\alpha+1}$,
\item[(c)] $\langle N_\xi\mid\xi\le\alpha'\rangle\in N_\alpha$ whenever
$\alpha'<\alpha$.
\end{itemize}
\medskip

\noindent {The following are standard remarks:
	\begin{itemize}
	\item If we are playing any of the games $\mcG_0, \mcG_1^-, \mcG_1$ of $\mcG_2$ then the game has length $\gamma\le \kappa$.  Since $\kappa+1\subseteq N_0$, $\gamma\subseteq N_\alpha$ for all $\alpha$.
	\item  If $\la N_\alpha:\alpha<\kappa\ra$ is an internally approachable sequence then there is a closed unbounded set $C\subseteq \kappa^+$ such that  for $\alpha\in C$, $N_\alpha\cap \kappa=\alpha$.	%
 	\item  If $2^\kappa=\kappa^+$, then there is a well ordering of
$\ptm(\kappa)$ of order type $\kappa^+$ in $H_\theta$. Hence if $\la
N_\alpha:\alpha<\kappa^+\ra$ is an internally approachable sequence then
$\ptm(\kappa)=\bigcup_{\alpha<\kappa^+}(\ptm(\kappa)\cap N_\alpha$). Clearly
$\ptm(\kappa)\subseteq \ptm(\kappa)\cap \bigcup_{\alpha<\kappa^+}N_\alpha$ 
implies that $2^\kappa=\kappa^+$, which we stated as an assumption 
in Theorems~\ref{t1} and~\ref{t2}. 
	\end{itemize}}

\begin{definition}[The Game $\mcG_1^-$]\label{games.d.g-one-minus}
The rules of the game $\mcG^-_1$ are as follows. {Fix an ordinal $\gamma\le \kappa^+$.}
\begin{itemize}
\item Player I plays an increasing sequence of ordinals $\alpha_i<\kappa^+$.
\item Player II plays an increasing sequence of uniform 
$\kappa$-complete ultrafilters $U_i$ on $\ptm(\kappa)^M$ where
$M=N_{\alpha_i+1}$. 
\item Player I plays first at limit stages. 
\end{itemize}
A run of $\mcG^-_1$ 
continues until Player II
cannot play or  until it reaches length $\gamma$. Player II wins a run
in $\mcG^-_1$  iff the length of the run is $\gamma$.

Payoff sets $R_\gamma$ and $Q_\gamma$ for $\mcG_1^-$ are defined
{analogously to the definition for the game} $\mcG_0$. So $R_\gamma$ consists of all runs of $\mcG_1^-$
of length $\gamma$, and $Q_\gamma$ consists of all runs
$\langle\alpha_i,U_i\mid i<\gamma\rangle\in R_\gamma$ such that there is a
$\kappa$-complete ultrafilter on the $\kappa$-algebra generated by
$\ptm(\kappa)\cap N_\alpha$, where $\alpha=\sup_{i<\gamma}\alpha_i$,  
extending all $U_i$, $i<\gamma$. 
\end{definition}
{The  symbols $R_\gamma$ and $Q_\gamma$ have a double usage: They were}
also defined in connection with the game $\mcG_0$
and {were} different, but analogous to that in
Definition~\ref{games.d.g-one-minus}.  Thus, to determine the exact meaning of
$R_\gamma$ and $Q_\gamma$ one always needs to take into account which game is
being considered.
\medskip

\noindent {In the case where $\gamma=\kappa^+$,  if Player II has a winning strategy in any of the games then $\kappa$ is measurable. So for the purposes of this paper we can assume that $\gamma\le \kappa$, in particular $\gamma\in N_\alpha$ for every $\alpha$.}

{
\begin{remark*} 
Let $\langle\alpha_i,U_i\mid i<\xi\rangle$ be a full or partial play of the game $\mcG^-_1$ and 
$\alpha=\sup_{i<\xi}\alpha_i$. 
	\begin{enumerate}  
	\item If $\xi$ has cofinality $\kappa$ then $\ptm(\kappa)\cap N_\alpha$ is a $\kappa$-algebra.
	\item If $\xi=\zeta+1$, then $\alpha=\alpha_\zeta$, and again $\ptm(\kappa)\cap N_{\alpha_\zeta+1}$ is a 
	$\kappa$-algebra.
	\item if $\xi$ is a limit ordinal of cofinality less than $\kappa$, then the $\kappa$-algebra of sets generated by 
	$\bigcup_{i<\xi}N_{\alpha_i}$ is not a $\kappa$-algebra, the $\kappa$-algebra it generates is strictly larger.
	\end{enumerate}
\end{remark*}
}

 {Finally let us stress that remarks analogous to the remarks
(TO1) -- (TO5) that stated below 
formula (\ref{intro.e.q-gamma}) for games $\mcG_0$ and $\mcG_0(Q_\gamma)$  also hold for $\mcG^-_1$ and
$\mcG^-_1(Q_\gamma)$. }

\begin{proposition}\label{games.p.g-zero-g-one-minus}
Assuming $2^\kappa=\kappa^+$ and $\gamma\le\kappa^+$ is an infinite regular
cardinal, the following hold. 
\begin{itemize}
\item[(a)] Player~II has a winning strategy in $\mcG_1^-$ of length
$\gamma$ iff Player II has a winning strategy in $\mcG_0$ of length $\gamma$.
\item[(b)] Player~II has a winning strategy in $\mcG_1^-(Q_\gamma)$ of length
$\gamma$ iff Player II has a winning strategy in $\mcG_0(Q_\gamma)$ of length
$\gamma$. 
\end{itemize}
Moreover, the analogues of the above equivalences (a) and (b) also hold for
winning strategies for Player~I in the respective games. 
\end{proposition}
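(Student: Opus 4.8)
The plan is to establish Proposition~\ref{games.p.g-zero-g-one-minus} by exhibiting, for each direction and each payoff set, an explicit translation between plays of $\mcG_0$ and plays of $\mcG_1^-$ that preserves legality and the winning condition, and then verifying that this translation carries winning strategies to winning strategies. The key observation driving everything is the remark already recorded in the excerpt: when $2^\kappa=\kappa^+$ and $\langle N_\alpha\mid\alpha<\kappa^+\rangle$ is internally approachable, we have $\ptm(\kappa)\subseteq\bigcup_{\alpha<\kappa^+}N_\alpha$, and each $\ptm(\kappa)\cap N_{\alpha+1}$ is a $\kappa$-algebra, as is $\ptm(\kappa)\cap N_\alpha$ when $\cff(\alpha)=\kappa$. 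So the moves of Player~I in $\mcG_1^-$ (ordinals $\alpha_i$) are just a way of naming $\kappa$-algebras of the special form $\ptm(\kappa)\cap N_{\alpha_i+1}$, which are a cofinal family among all $\kappa$-algebras under the internal approachability hypothesis.

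\medskip

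First I would prove the direction from $\mcG_1^-$ to $\mcG_0$. Suppose Player~II has a winning strategy $\mcS^-$ in $\mcG_1^-$ of length $\gamma$. I want a winning strategy $\mcS$ in $\mcG_0$. Given a play of $\mcG_0$ in which Player~I plays a $\kappa$-algebra $\mca_i$, I translate it into a move of $\mcG_1^-$ by choosing an ordinal $\alpha_i<\kappa^+$, larger than all previously chosen ordinals, such that $\mca_i\subseteq N_{\alpha_i+1}$; such an $\alpha_i$ exists because $|\mca_i|=\kappa$, $\ptm(\kappa)\subseteq\bigcup_\beta N_\beta$, and the sequence is continuous and increasing. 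The strategy $\mcS^-$ then responds with a uniform $\kappa$-complete ultrafilter $U_i$ on $\ptm(\kappa)\cap N_{\alpha_i+1}$, and I have Player~II play $U_i\rst\mca_i$ in $\mcG_0$. By remark (TO5) (restriction of filters to subalgebras preserves being a legal, increasing sequence of uniform ultrafilters), this is a legal response, and since $\mcS^-$ never gets stuck below $\gamma$, neither does $\mcS$. For the converse direction, from a winning $\mcS$ in $\mcG_0$ I build $\mcS^-$: when Player~I plays $\alpha_i$ in $\mcG_1^-$, I feed the $\kappa$-algebra $\mca_i=\ptm(\kappa)\cap N_{\alpha_i+1}$ to $\mcS$, and the resulting ultrafilter on $\mca_i$ is exactly a legal move for Player~II in $\mcG_1^-$. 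This handles part~(a).

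\medskip

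For part~(b) I would run the same two translations but verify in addition that the $Q_\gamma$ winning condition is preserved. Here the content is in matching up the relevant $\kappa$-algebras at the limit stage $\alpha=\sup_{i<\gamma}\alpha_i$. When $\gamma$ is regular and uncountable, $\cff(\alpha)=\gamma$; I must check that an ultrafilter extending all $U_i$ on the $\kappa$-algebra generated by $\bigcup_i\mca_i$ exists in the $\mcG_0$-run if and only if one exists in the corresponding $\mcG_1^-$-run on the $\kappa$-algebra generated by $\ptm(\kappa)\cap N_\alpha$. In the $\mcG_1^-$-to-$\mcG_0$ direction the $\mca_i$ are subalgebras of the $N_{\alpha_i+1}$-algebras, so an extending ultrafilter on the larger family restricts to one on $\bigcup_i\mca_i$; conversely, using (TO4) (uniqueness of the extending ultrafilter) together with the density clause (iv) from Section~\ref{s.hopeless}, I would argue that by arranging the $\alpha_i$ cofinally I can make $\bigcup_i\mca_i$ generate the same $\kappa$-algebra as $\ptm(\kappa)\cap N_\alpha$ up to the relevant ultrafilter-extension question. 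The cleanest way is to interleave: when translating a $\mcG_0$ play I should choose the $\alpha_i$ so that not only $\mca_i\subseteq N_{\alpha_i+1}$ but also $\ptm(\kappa)\cap N_{\alpha_{i-1}+1}\subseteq\mca_i$ (possible by density of Player~I's legal moves), so that the two increasing unions are cofinally interleaved and hence generate the same algebra at the limit.

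\medskip

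\emph{The hard part} is precisely this last interleaving at limits of cofinality less than $\kappa$, where the excerpt explicitly warns that the $\kappa$-algebra generated by $\ptm(\kappa)\cap N_\alpha$ is \emph{strictly larger} than the set $\ptm(\kappa)\cap N_\alpha$ itself. Since $\gamma$ is assumed regular, the only delicate case is $\gamma<\kappa$ with $\cff(\alpha)=\gamma<\kappa$; I must confirm that the $Q_\gamma$ condition is stated with respect to the \emph{generated} algebra on both sides (it is, by the definitions of $Q_\gamma$ for $\mcG_0$ and for $\mcG_1^-$), so that the interleaving argument yields genuinely the same generated algebra and the unique-extension criterion of (TO4) transfers verbatim. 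Once the interleaving is set up so the two cofinal unions generate identical $\kappa$-algebras, the equivalence of the existence of the extending ultrafilter is immediate, and the preservation of strategies goes through exactly as in part~(a).
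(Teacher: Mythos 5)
Your part (a) is essentially the paper's own argument: auxiliary games, feeding $\ptm(\kappa)\cap N_{\alpha_i+1}$ to the $\mcG_0$-strategy in one direction, and in the other direction choosing the least new $\alpha_i$ with $\mca_i\subseteq N_{\alpha_i+1}$ and having Player~II play the restriction $U_i\rst\mca_i$ (legal by (TO5)). The genuine problem is the ``interleaving'' step you add for part (b). As written it would fail: when you translate a run of $\mcG_0$ into the auxiliary $\mcG_1^-$, the algebras $\mca_i$ are Player~I's moves in $\mcG_0$ and are handed to you adversarially. No choice of the ordinal $\alpha_i$ can make $\ptm(\kappa)\cap N_{\alpha_{i-1}+1}\subseteq\mca_i$ hold --- that is a constraint on $\mca_i$, not on $\alpha_i$ --- and ``density of Player~I's legal moves'' is not something Player~II can exploit, since Player~II does not choose $\mca_i$. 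So in this direction the two increasing unions cannot in general be made to generate the same $\kappa$-algebra, and a proof of (b) resting on that equality is broken.

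Fortunately the step is also unnecessary, and deleting it leaves a correct proof matching the paper's (which treats (b) as a routine verification with the very same strategies). In the direction from a winning strategy in $\mcG_1^-(Q_\gamma)$ to one in $\mcG_0(Q_\gamma)$, you already wrote down the right argument yourself: with $\alpha=\sup_{i<\gamma}\alpha_i$, the $\kappa$-algebra generated by $\bigcup_{i<\gamma}\mca_i$ is a subalgebra of the one generated by $\ptm(\kappa)\cap N_\alpha$, and the restriction of the extending $\kappa$-complete ultrafilter to this subalgebra is $\kappa$-complete, uniform, and extends each $U_i\rst\mca_i$; equality of the generated algebras is never needed, only inclusion. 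In the converse direction no care is needed at all: the auxiliary $\mcG_0$-moves are by construction exactly $\mca_i=\ptm(\kappa)\cap N_{\alpha_i+1}$, and since $\gamma$ is a limit, $\bigcup_{i<\gamma}\bigl(\ptm(\kappa)\cap N_{\alpha_i+1}\bigr)=\ptm(\kappa)\cap N_\alpha$ by continuity of the internally approachable chain, so the algebra named in the $Q_\gamma$-condition for $\mcG_0$ and the algebra named in the $Q_\gamma$-condition for $\mcG_1^-$ are literally the same, and the winning conditions transfer verbatim; (TO4) plays no role. Your worry about the generated algebra being strictly larger than $\ptm(\kappa)\cap N_\alpha$ when $\cff(\alpha)<\kappa$ is real but is absorbed by both $Q_\gamma$-conditions being stated for the \emph{generated} algebra, exactly as you observed.
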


Although the last statement in the above proposition concerning winning
strategies for Player~I is not strictly relevant for this paper, we include it
for the sake of completeness. 

\begin{proof}
This is an easy application of auxiliary games. Regarding (a), if $\mcS_0$
is a winning strategy for Player~II in $\mcG_0$ then $\mcS_0$ induces a
winning strategy $\mcS^-_1$ for Player~II in $\mcG^-_1$ the output of which at
step $i$ is the same as the  
output of $\mcS$ at step $i$ in the auxiliary game $\mcG_0$ where Player~I
plays $\ptm(\kappa)\cap N_{\alpha_i+1}$ at step $i$ (where $\alpha_i$ is the
move of Player~I in $\mcG^-_1$ at step $i$). For the converse we proceed
similarly. This time a winning strategy $\mcS^-_1$ for Player~II in $\mcG^-_1$
induces a strategy $\mcS_0$ for Player~II in $\mcG_0$ as follows. If Player~I
plays $\mcA_i$ at step $i$ in $\mcG_0$ then Player~I plays
\[
\alpha_i=\mbox{the least $\alpha>\alpha_{i'}$ for all $i'<i$ such that
$\mcA_i\subseteq N_{\alpha+1}$}
\]
in the auxiliary game $\mcG^-_1$. Letting $U^-_i$ be the output of $\mcS^-_1$
at step $i$, we let the output of $\mcS_0$ at step $i$ to be
$U^-_i\cap\mcA_i$. That $\mcS_0$ is a winning strategy for Player~II in
$\mcG_0$ is immediate.

It is straightforward to verify that this choice of strategies also works in
the case of games with payoff sets $Q_\gamma$ in (b).

Because we will not study winning strategies for Player~I in the games we
consider, we leave the proof of the last statement in the proposition
concerning these strategies to the reader. The proof is based on the same
ideas as the proof of (a), (b) above.
\end{proof}
{We will use the following lemma: }

{\begin{lemma}\label{definable 1}
Suppose that $\mcS_0$ is the $<_\theta$ least winning strategy for Player II in $\mcG_0$ and $S_1^-$ be the strategy defined from $\mcS_0$ as in Proposition \ref{games.p.g-zero-g-one-minus}.  Suppose that {$\beta<\gamma$ and } $\la \alpha_i:i<\beta\ra$ is a sequence of ordinals such that for all {$i,\alpha_{ i+1}<\alpha$}.  Then Player II's response to 
$\la \alpha_i:i<\beta\ra$ in $\mcG_1^-$ belongs to {$N_{\alpha+1}$.}
\end{lemma}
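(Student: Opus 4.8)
The plan is to run a definability-plus-elementarity argument, exploiting that $\mcS_0$ is canonically definable and that the $N_\alpha$'s are elementary in $H_\theta$. Since $\mcS_0$ is the $<_\theta$-least winning strategy for Player~II in $\mcG_0$, the predicate ``$x$ is a winning strategy for Player~II in $\mcG_0$'' is definable in $(H_\theta,\in,<_\theta)$ from the parameters $\kappa$ and $\gamma$, so $\mcS_0$ is defined without further parameters by ``the $<_\theta$-least such $x$.'' As $N_\alpha\prec(H_\theta,\in,<_\theta)$ and $\kappa,\gamma\in N_\alpha$, the defining formula has a unique witness in $N_\alpha$, which must equal $\mcS_0$; hence $\mcS_0\in N_\alpha$. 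By Proposition~\ref{games.p.g-zero-g-one-minus}, the strategy $\mcS_1^-$ is computed from $\mcS_0$ by the explicit recipe: its response at stage $i$ to Player~I's moves $\la\alpha_j\mid j\le i\ra$ is the move $\mcS_0$ returns in the auxiliary run of $\mcG_0$ where Player~I plays $\ptm(\kappa)\cap N_{\alpha_j+1}$ at stage $j$. So the whole problem reduces to checking that the data fed to $\mcS_0$ is available inside $N_\alpha$.

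First I would record that $\alpha\cap\On\subseteq N_\alpha$: by clause (c) of internal approachability, for each $\alpha'<\alpha$ the sequence $\la N_\xi\mid\xi\le\alpha'\ra$ lies in $N_\alpha$, and applying $\dom$ inside $N_\alpha$ gives $\alpha'+1\in N_\alpha$, hence $\alpha'\in N_\alpha$. In particular every $\alpha_i$ and every $\alpha_i+1$ belongs to $N_\alpha$. Next, since $\alpha_i+1<\alpha$, clause (c) again yields $\la N_\xi\mid\xi\le\alpha_i+1\ra\in N_\alpha$, and evaluating this function at $\alpha_i+1$ inside $N_\alpha$ produces $N_{\alpha_i+1}\in N_\alpha$; as $N_\alpha$ is closed under the definable map $M\mapsto\ptm(\kappa)\cap M$, each auxiliary algebra $\ptm(\kappa)\cap N_{\alpha_i+1}$ is an element of $N_\alpha$. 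I would then argue by induction on $i<\beta$ that the initial position $\la\alpha_j\mid j\le i\ra$, the auxiliary sequence $\la\ptm(\kappa)\cap N_{\alpha_j+1}\mid j\le i\ra$, and the responses $\la U_j\mid j<i\ra$ produced so far are all members of $N_\alpha$. To accumulate these growing sequences inside $N_\alpha$ I would use that when $\cff(\alpha)=\kappa$ the structure $N_\alpha$ is $<\!\kappa$-closed (a length-$<\!\kappa$ sequence from $N_\alpha=\bigcup_{\alpha'<\alpha}N_{\alpha'}$ is bounded in some $N_{\alpha'}$ and hence lies in $N_{\alpha'+1}\subseteq N_\alpha$ by clause (b)); this covers the cases of primary interest, where positions have length $<\kappa$.

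Granting this, the conclusion is immediate by elementarity: the statement ``$U_i$ is the ultrafilter returned by $\mcS_0$ on the auxiliary position determined by $\la\ptm(\kappa)\cap N_{\alpha_j+1}\mid j\le i\ra$ and $\la U_j\mid j<i\ra$'' holds in $(H_\theta,\in,<_\theta)$ with all its parameters in $N_\alpha$, so it holds in $N_\alpha$, forcing $U_i\in N_\alpha$; running this through all $i<\beta$ places the entire response sequence in $N_\alpha$. The genuine obstacle to emphasize is that $U_i\in N_\alpha$ is \emph{not} automatic from $U_i\subseteq\ptm(\kappa)\cap N_{\alpha_i+1}$, since $N_\alpha$ contains only $\kappa$ of the $2^\kappa$-many candidate ultrafilters on that algebra; it is precisely the definability of the chosen ultrafilter from parameters in $N_\alpha$ that pins it down as an element of $N_\alpha$. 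The remaining delicacy is the closure bookkeeping at limit stages---ensuring that the growing position and response sequence never escape $N_\alpha$---which is exactly what the cofinality hypothesis on $\alpha$ (or the assumption that $\la\alpha_i\mid i<\beta\ra\in N_\alpha$) is there to guarantee.
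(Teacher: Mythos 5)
Your proposal is correct, and its engine is the same as the paper's: $<_\theta$-minimality makes $\mcS_0$ definable over $(H_\theta,\in,<_\theta)$, internal approachability (clause (c)) puts the auxiliary algebras $\ptm(\kappa)\cap N_{\alpha_i+1}$ inside $N_\alpha$, and elementarity then pins down the responses --- including your correct emphasis that $U_i\in N_\alpha$ comes from definability and not from $U_i\subseteq\ptm(\kappa)\cap N_{\alpha_i+1}$. Where you genuinely diverge is in the execution. The paper's proof is a one-shot argument: it observes that the whole sequence $\la\ptm(\kappa)\cap N_{\alpha_i+1}\mid i<\beta\ra$ is a single element of $N_\alpha$, and since the entire response sequence is definable in $H_\theta$ by recursion from that sequence together with $\mcS_0$, one application of elementarity puts it in $N_\alpha$ all at once. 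Your stage-by-stage induction instead has to re-assemble the growing initial segments of the position inside $N_\alpha$ at limit stages, which is why you were forced to import the $<\!\kappa$-closure of $N_\alpha$ when $\cff(\alpha)=\kappa$ and to restrict attention to positions of length $<\!\kappa$; the one-shot version needs none of that bookkeeping. On the other hand, your closing hedge is apt and exposes a real imprecision in the source: the paper's opening claim that ``each $\alpha_i+1<\alpha$'' already puts the algebra sequence in $N_\alpha$ cannot be taken literally when $\sup_i\alpha_i=\alpha$, since then $\bigcup_{i<\beta}\bigl(\ptm(\kappa)\cap N_{\alpha_i+1}\bigr)=\ptm(\kappa)\cap N_\alpha$, which cannot be an element of $N_\alpha$ (a surjection $f:\kappa\to\ptm(\kappa)\cap N_\alpha$ in $N_\alpha$ would contradict Cantor diagonalization inside $N_\alpha$). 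So the intended hypothesis, made explicit in Remark~\ref{definable 2}, is that $\la\alpha_i\mid i<\beta\ra$ itself belongs to $N_\alpha$ and is bounded below $\alpha$ --- precisely the alternative assumption you flag at the end. In short: same key idea, with your version more scrupulous about membership bookkeeping but carrying avoidable closure hypotheses, and the paper's version shorter because it quantifies over the play as one object.
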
}
\begin{proof}
{Because the sequence $\la N_\alpha:\alpha<\kappa^+\ra$ is internally approachable and $\alpha_i<\alpha$,  
$\alpha_i+1<\alpha$. Since we are taking $\gamma\le \kappa$ and  $N_{\alpha+1}$ is closed under $<\kappa$-sequences,} the sequence of $\kappa$-algebras $\la P(\kappa)\cap N_{\alpha_i+1}:i<\beta\ra$ belongs to $N_\alpha$.  Since $\mcS_0$ is $<_\theta$-least, the sequence of responses by Player II  to $\la P(\kappa)\cap N_{\alpha_i+1}:i<\beta\ra$  in $\mcG_0$ belongs to $N_{\alpha+1}$, and hence the sequence of responses by Player II according to $\mcS_1^-$ as defined in Proposition \ref{games.p.g-zero-g-one-minus} belongs to $N_{\alpha+1}$.
\end{proof}

\begin{definition}[The Game $\mcG_1$]\label{games.d.g-one}
The rules of $\mcG_1$ are exactly the same as those of $\mcG_1^-$ with the
only difference that the ultrafilters $U_i$ played by Player~II are
required to be normal with respect to $N_{\alpha_i+1}$.

As before, the payoff set $R_\gamma$ is defined for $\mcG_1$ the same way as
it was for $\mcG_0$ and $\mcG_1^-$, that is, $R_\gamma$ consists of all runs
of $\mcG_1$ of length $\gamma$. For $\mcG_1$ we define a payoff set $W_\gamma$
as follows. 
\[
W_\gamma=
\parbox[t]{4in}{
the set of all $\langle \alpha_i,U_i\mid i<\gamma\rangle\in R_\gamma$ such
that if $\langle X_i\mid i<\gamma\rangle$ is a sequence satisfying
$X_i\in U_i$ for all $i<\gamma$ then
$\bigcap_{i<\gamma}X_i\neq\varnothing$.  
}
\]
\end{definition}

Notice that $W_\gamma=\varnothing$ whenever $\gamma\ge\kappa$, so the
game $\mcG_1(W_\gamma)$ is of interest only for $\gamma<\kappa$.
The existence of a winning strategy for Player~II in $\mcG_1(W_\omega)$ of
length $\omega$ seems to be exactly what is needed to run the proof of
precipitousness of the hopeless ideal $\mcI(\mcS^*)$ in Section~\ref{s.star}; see
Proposition~\ref{star.p.precipitous}. As we will see shortly, the existence of
such a winning strategy follows from the existence of a winning strategy for
Player~II in $\mcG_1^-(Q_\omega)$ of length $\omega$. 

In the case of
$\mcG_1$ we will not make use of a payoff set for $\mcG_1$ that would be an
analogue of what was $Q_\gamma$ for $\mcG_0$ and $\mcG_1^-$, so we will not
introduce it formally.  We note that $Q_\gamma$ is a subset of $W_\gamma$, so the winning condition 
for Player II is weaker using $W_\gamma$.

Let us also note that the somewhat abstract notion of normality of an
ultrafilter $U_i$ on $\mcA_i=\ptm(\kappa)\cap N_{\alpha_i+1}$ introduced in
Section~\ref{s.hopeless} is identical with the usual notion of normality with
respect to the model $N_{\alpha_i+1}$ where it is required that $U_i$ is
closed under diagonal intersections of sequences $\langle
A_\xi\mid\xi<\kappa\rangle\in N_{\alpha_i+1}$ such that $A_\xi\in U_i$ for all
$\xi<\kappa$.

{
\begin{remark}\label{allow induction} If we have a strategy $\mcS$ defined for either $\mcG^-_1$ or $\mcG_1$, then a play of the game according to this
strategy is determined by Player I's moves.  Thus, if $\mcS$  is clear from context we can save notation by referring to plays as sequences of ordinals $\la \alpha_i:i<\beta\ra$.  Similarly if $\mcS$ is a partial strategy defined on plays of length at most $\beta$ we can index these plays according to $\mcs$ by $\la \alpha_i:i<\beta^*\ra$, where $\beta^*\le \beta$.  This allows strategies to be defined by induction on the lengths of the plays.
\end{remark}}

\begin{proposition}\label{games.p.passing-to-normal}
(Passing to normal measures.) The following correspondences between the
existence of winning strategies for $\mcG_1^-$ and $\mcG_1$ hold. 
\begin{itemize}
\item[(a)] Let $\gamma\le\kappa^+$ be an infinite regular cardinal. If
Player~II has a 
winning strategy in $\mcG_1^-$ of length $\gamma$ then Player~II has a winning
strategy in $\mcG_1$ of length $\gamma$. (So in fact we have ``iff" here, as
the converse holds trivially.)
\item[(b)] {If Player II has a winning strategy in \hyperlink{Qgamma}{$\mcG_1^-(Q_\gamma)$} of length
$\gamma$ then Player~II has a winning strategy in $\mcG_1(W_\gamma)$ of length
$\gamma$. }
\end{itemize}
\end{proposition}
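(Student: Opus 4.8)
The plan is to prove both parts by the auxiliary-game method of Proposition~\ref{games.p.g-zero-g-one-minus}, together with a \emph{normalization operator} sending a uniform $\kappa$-complete ultrafilter on $\mcA=\ptm(\kappa)\cap N_{\alpha+1}$ to a normal one. Given such a $U$, form the ultrapower $j_U\colon N_{\alpha+1}\to M=\ult(N_{\alpha+1},U)$. Since $U$ is uniform and $\kappa$-complete and $N_{\alpha+1}$ is closed under $<\kappa$-sequences (clause (b) of internal approachability), $j_U$ has critical point exactly $\kappa$ and $\kappa<j_U(\kappa)$; set
\[
\bar U=\{X\in\mcA : \kappa\in j_U(X)\}.
\]
First I would check that $\bar U$ is a legal move for Player~II in $\mcG_1$: it is uniform, since $\kappa\setminus\beta\in\bar U$ for all $\beta<\kappa$ as $\kappa\in j_U(\kappa)\setminus\beta$; it is $\kappa$-complete, since a $<\kappa$-sequence of members of $\bar U$ lies in $N_{\alpha+1}$ by $<\kappa$-closure and $j_U$ commutes with its intersection; and it is normal with respect to $N_{\alpha+1}$, because for $\langle A_i\mid i<\kappa\rangle\in N_{\alpha+1}$ one has $\kappa\in j_U(\Delta_{i<\kappa}A_i)$ iff $\kappa\in (j_U(\langle A_i\mid i<\kappa\rangle))_i=j_U(A_i)$ for every $i<\kappa$, i.e. iff $A_i\in\bar U$ for all $i$. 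Equivalently $\bar U=d_*U$, where $d\in N_{\alpha+1}$ is the $U$-least function with $[d]_U\ge\kappa$ (so $[d]_U=\kappa$); I will use this pushforward description for the completeness computations.

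For the strategy in $\mcG_1$ I would simulate $\mcG_1^-$ with the same moves $\alpha_i$ of Player~I, obtain Player~II's responses $U_i$ from the given winning strategy $\mcS_1^-$, and have Player~II answer with $D_i=\overline{U_i}\rst\mcA_i$. These are normal by the previous paragraph, so the one thing left to verify is that they are \emph{increasing}, $D_i\subseteq D_{i+1}$. Writing $k\colon\ult(N_{\alpha_i+1},U_i)\to\ult(N_{\alpha_{i+1}+1},U_{i+1})$ for the canonical factor embedding with $k\circ j_{U_i}=j_{U_{i+1}}\rst N_{\alpha_i+1}$ (well-defined since $U_i=U_{i+1}\cap\mcA_i$), it suffices that $\crp(k)>\kappa$, equivalently that the normalizer $d_i$ of $U_i$ stays $U_{i+1}$-minimal, $[d_i]_{U_{i+1}}=\kappa$; for then $k(\kappa)=\kappa$ and $\kappa\in j_{U_{i+1}}(X)=k(j_{U_i}(X))$ iff $\kappa\in j_{U_i}(X)$ for $X\in\mcA_i$, so that $D_{i+1}\rst\mcA_i=D_i$.

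This coherence of normalizers is the main obstacle: a larger algebra can contain a new unbounded function lying $U_{i+1}$-below $d_i$, which would strictly lower the normalizer and break containment. The route I would take is to note that, fixing any extension $U$ of the whole play, the ordinals $[d_i]_U$ form a \emph{non-increasing} sequence (because $d_{i+1}\le_{U_{i+1}}d_i$ by minimality, and the witnessing set lies in $\mcA_{i+1}\subseteq\dom(U)$), hence are eventually constant; the game then has to be played so that this value is already $\kappa$ from the outset. I would engineer this by bookkeeping in the simulated $\mcG_1^-$-game: using the density clause~(iv) to force Player~I's early algebras to contain (a code for) the eventual normalizing function, and using the definability of the $<_\theta$-least strategy (Lemma~\ref{definable 1}, which places each $U_i$ in $N_{\alpha_{i+1}+1}$) to pin that function down, so that no later move lowers $[d_i]$ and every factor map has critical point above $\kappa$. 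At limit stages there is nothing to choose: by~(TO4) the union of the $U_i$ admits at most one extending ultrafilter, and the derived normal measures cohere automatically. Granting this, part~(a) follows (the converse being trivial, as normal ultrafilters are uniform $\kappa$-complete ultrafilters).

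For~(b) I would run the same construction from a winning strategy for $\mcG_1^-(Q_\omega)$. The $Q_\omega$ payoff supplies a single $\kappa$-complete ultrafilter $U$ on the $\kappa$-algebra generated by $\ptm(\kappa)\cap N_\alpha$, where $\alpha=\sup_{i<\omega}\alpha_i$, extending all the $U_i$; its derived measure $D^{*}=\bar U=d_*U$ is then a uniform $\kappa$-complete filter. The coherence arranged in~(a) makes the stage-$i$ move $D_i$ equal to $D^{*}\rst\mcA_i$, so $D_i\subseteq D^{*}$ for every $i$. Consequently, for any choice $X_i\in D_i$ ($i<\omega$) we have $X_i\in D^{*}$, so $\bigcap_{i<\omega}X_i\in D^{*}$ by $\kappa$-completeness (the intersection is over $\omega<\kappa$ many sets), and in particular $\bigcap_{i<\omega}X_i\neq\varnothing$. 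Thus the run lies in $W_\omega$ and Player~II wins $\mcG_1(W_\omega)$. The only delicate point is again that the $D_i$ played on the fly must agree with $D^{*}\rst\mcA_i$, which is precisely the normalizer-coherence that I expect to be the technical heart of the whole proposition.
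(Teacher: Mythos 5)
Your normalization machinery is exactly the paper's: the derived ultrafilter $\bar U=\{X:\kappa\in j_U(X)\}$, the observation that the factor embedding $k$ between successive ultrapowers has critical point above $\kappa$ precisely when the normalizer does not strictly decrease, and the endgame of part~(b) (the $Q_\omega$ payoff supplies a single $\kappa$-complete $U_\infty$, whose normalization extends every move, so $\kappa$-completeness gives a nonempty intersection and hence $W_\omega$) all match the paper's proof. You have also correctly isolated the genuine difficulty, namely that a larger algebra can contain a new function lying below the current normalizer, which is what the paper calls a \emph{drop}.

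However, the mechanism you propose for defeating drops is not a proof, and this is where the gap lies. First, in part~(a) your stabilization argument begins by ``fixing any extension $U$ of the whole play'' --- but no such extension need exist for a run of length $\gamma$; its existence is precisely the $Q_\gamma$ payoff condition, which is not assumed in~(a). The paper's substitute (Lemma~\ref{doesn't happen}) uses only the strategy's own later moves: any $U_j$ with $j$ beyond a countable set of stages is a countably complete extension of those $U_{i_n}$, and a common point $\delta$ of the witnessing sets would give an infinite descending sequence of ordinals $f_n(\delta)$. Second, and more seriously, your bookkeeping plan --- use density clause~(iv) to force Player~I's early algebras to contain a code for ``the eventual normalizing function'' --- is circular: which function is eventually $U$-minimal depends on Player~I's future moves, different continuations produce different drop patterns, and density lets you enlarge algebras but cannot stabilize the normalizer uniformly against all continuations. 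The paper resolves this structurally rather than inside a single play: every play according to $\mcS$ has only finitely many drops (Lemma~\ref{doesn't happen} together with Lemma~\ref{not at limits}, which reflects a drop across a limit back to a drop at an earlier stage), so the set $\mcZ_\gamma$ of drop-positions ordered by proper end-extension $\blacktriangleleft$ is a well-founded tree; one then fixes a $\blacktriangleleft$-minimal position $P$, so that by minimality no $\mcS$-play extending $P$ ever drops again, and defines $\mcS_P$ to answer small ordinals $\alpha_i\le\alpha^*$ with restrictions of the single fixed normal ultrafilter $(U_{i_P})^*$ and larger ordinals by normalizing $\mcS$'s responses to $P$ concatenated with the new moves. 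This minimal-position device --- a commitment made before the run starts and justified by well-foundedness, not by in-play bookkeeping --- is the missing idea; without it, the ``coherence of normalizers'' on which both your~(a) and~(b) rest remains an unestablished hypothesis.
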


We do not know whether there
is an analogue of Proposition~\ref{games.p.passing-to-normal} with respect to
strategies for Player~I. 

\begin{proof}
We begin with some conventions and settings. Let $M_\alpha$ be the transitive
collapse of $N_\alpha$. We will work with models $M_\alpha$ in place of
$N_\alpha$. 

Since
$\kappa+1\subseteq N_\alpha$, we have
\[
\ptm(\kappa)^{N_\alpha}=\ptm(\kappa)\cap N_\alpha=\ptm(\kappa)\cap M_\alpha=
\ptm(\kappa)^{M_\alpha},
\]
so the games $\mcG_1^-$ and $\mcG_1$ can be equivalently defined using
structures $M_\alpha$ instead of $N_\alpha$.

If $U$ is an $M$-ultrafilter
over $\kappa$ we denote the internal ultrapower of $M$ by $U$ by $\ult(M,U)$. Then
$\ult(M,U)$ is formed using all functions $f:\kappa\to M$ which are
elements of $M$. If $U$ is $\kappa$-complete then $\ult(M,U)$ is well-founded,
and we will always consider it transitive; moreover the critical point of the
ultrapower map $\pi_U:M\to\ult(M,U)$ is precisely~$\kappa$. Recall also that
$U$ is normal if and only if $\kappa=[\idm]_U$, that is, $\kappa$ is
represented in the ultrapower by the identity map. As $M\models\zfc^-$ (by
$\zfc^-$ we mean $\zfc$ without the power set axiom), the \L o\'s Theorem
holds for all formulae, hence the ultrapower embedding $\pi_U$ is fully
elementary. Finally recall that the $M$-ultrafilter derived from $\pi_U$,
which we denote by $U^*$, is defined by
\begin{equation}\label{games.e.derived-normal}
X\in U^*
\;\Longleftrightarrow\;
\kappa\in\pi_U(X)
\end{equation}
and $U^*$ is normal with respect to $M$. 

Assume $\alpha<\alpha'$ and  $U'$ is a $\kappa$-complete
$M_{\alpha'}$-ultrafilter. Suppose 
that $U=U'\cap M_\alpha$. We have the following diagram:
\begin{equation}\label{games.diag.single-up}
\xymatrix{
M_{\alpha'} \ar[r]^{\pi_{U'}\quad\quad} & \ult(M_{\alpha'},U')\\
M_\alpha    \ar[r]_{\pi_U\quad\quad} \ar[u]^\sigma & \ult(M_\alpha,U) \ar[u]_{\sigma'}
}
\end{equation}
Here $\sigma:M_\alpha\to M_{\alpha'}$ is the natural map arising from
collapsing the inclusion map from  $N_\alpha$ to $N_{\alpha'}$, and $\sigma'$ is the natural
embedding of the ultrapowers defined by
\begin{equation}\label{games.e.up-embedding}
[f]_U\mapsto[\sigma(f)]_{U'}.
\end{equation}
Notice that $\crp(\sigma)=(\kappa^+)^{M_\alpha}$. 
Using the \L o\'s theorem, it is easy to check that the diagram is
commutative, $\sigma'$ is fully elementary, and
$\sigma'\rst\kappa=\idm\rst\kappa$. It follows that
\begin{equation}\label{games.e.sigma-prime-kappa}
\sigma'(\kappa)\ge\kappa.
\end{equation}
Given a set $X\in\ptm(\kappa)\cap M_\alpha$,
\begin{equation}\label{games.e.set-in-u-star}
X\in U^*
\;\Longleftrightarrow\;
\kappa\in\pi_U(X)
\;\Longleftrightarrow\;
\sigma'(\kappa)\in\sigma'(\pi_U(X))=\pi_{U'}(\sigma(X))=\pi_{U'}(X).
\end{equation}
Thus, using (\ref{games.e.sigma-prime-kappa}) combined with
(\ref{games.e.set-in-u-star}), 
\begin{equation}\label{games.e.distinct-normal}
U^*\not\subseteq(U')^*
\;\Longrightarrow\;
\sigma'(\kappa)>\kappa
\end{equation}

{ The property that $\sigma'(\kappa)>\kappa$ can be restated as saying that if $f$ represents $\kappa$ in $\ult(M_\alpha, U)$ and $g$ represents $\kappa$ in $\ult(M_{\alpha'},U')$, then $\{\delta: f(\delta)>g(\delta)\}\in U'$. Equation \ref{games.e.distinct-normal} can also be rephrased in this way.
} 

Notice that since $U^*,(U')^*$ are ultrafilters on the respective models,
the statement $U^*\subseteq(U')^*$ can be equivalently expressed as
$U^*=(U')^*\cap M_\alpha$.

\bigskip
 
 Before we define the winning strategies for Player II in $\mcG_1$, we prove two useful facts about the normalization process.  The first says there can't be an infinite sequence of ultrafilters that disagree on their normalizations.
 
	\begin{lemma}\label{doesn't happen} Let $\la\alpha_n:n\in\nn\ra$ be an increasing sequence of ordinals between 
 	$\kappa$ and $\kappa^+$.
	Then there is no sequence of ultrafilters $\la U_n:n\in\nn\ra$ such that
	\begin{itemize}
	\item $U_n$ is a $\kappa$-complete ultrafilter on $M_{\alpha_n+1}$
	\item $(U_n)^*\not\subseteq (U_{n+1})^*$
	\item there is a countably complete ultrafilter $V$ on $\bigcup_n(\ptm(\kappa)\cap M_{\alpha_n+1})$ with 
	$V\supseteq U_n$ for all $n$.
	\end{itemize}
	\end{lemma}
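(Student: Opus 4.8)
The plan is to assume toward a contradiction that such a sequence exists and to manufacture, on a set in $V$, an infinite strictly decreasing sequence of ordinals. First I would observe that the three hypotheses pin down $U_n=V\cap M_{\alpha_n+1}$ for every $n$: since $V\supseteq U_n$ and $V\cap(\ptm(\kappa)\cap M_{\alpha_n+1})$ is an ultrafilter on the $\kappa$-algebra $\ptm(\kappa)\cap M_{\alpha_n+1}$ extending the ultrafilter $U_n$ on that same algebra, the two must coincide. Because $\langle\alpha_n\mid n\in\nn\rangle$ is increasing we have $\alpha_n+1\le\alpha_{n+1}$, hence $\ptm(\kappa)\cap M_{\alpha_n+1}\subseteq\ptm(\kappa)\cap M_{\alpha_{n+1}+1}$, and therefore $U_n=U_{n+1}\cap M_{\alpha_n+1}$. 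This is precisely the relationship $U=U'\cap M_\alpha$ required to set up diagram~(\ref{games.diag.single-up}), which I would instantiate with $\alpha=\alpha_n+1$, $\alpha'=\alpha_{n+1}+1$, $U=U_n$ and $U'=U_{n+1}$, writing $\sigma_n'$ for the induced embedding $\ult(M_{\alpha_n+1},U_n)\to\ult(M_{\alpha_{n+1}+1},U_{n+1})$.

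Next I would fix, for each $n$, a function $f_n\in M_{\alpha_n+1}$ with $f_n:\kappa\to\kappa$ representing $\kappa$ in $\ult(M_{\alpha_n+1},U_n)$; such an $f_n$ exists since $\crp(\pi_{U_n})=\kappa$. The hypothesis $(U_n)^*\not\subseteq(U_{n+1})^*$ combined with~(\ref{games.e.distinct-normal}) yields $\sigma_n'(\kappa)>\kappa$. By the reformulation of this inequality recorded immediately after~(\ref{games.e.distinct-normal}), applied with the representing functions $f_n$ and $f_{n+1}$, this says exactly that
\[
X_n:=\{\delta<\kappa\mid f_n(\delta)>f_{n+1}(\delta)\}\in U_{n+1}\subseteq V.
\]

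Finally I would invoke countable completeness of $V$: as each $X_n\in V$, the intersection $\bigcap_{n\in\nn}X_n$ belongs to $V$ and is in particular nonempty. Any $\delta$ in this intersection satisfies $f_0(\delta)>f_1(\delta)>f_2(\delta)>\cdots$, an infinite strictly decreasing sequence of ordinals, contradicting well-foundedness. I expect the only genuinely delicate point to be the translation in the second step, namely that a single function $f_n$ may be compared in the two successive ultrapowers. This is exactly what the preceding discussion supplies: since $\crp(\sigma_n)=\kappa^{+M_{\alpha_n+1}}>\kappa$, the map $\sigma_n$ fixes every subset of $\kappa$, whence $\sigma_n(f_n)=f_n$ and $\sigma_n'\rst\kappa=\idm\rst\kappa$; thus $\sigma_n'(\kappa)=[f_n]_{U_{n+1}}$ while $\kappa=[f_{n+1}]_{U_{n+1}}$, and $\sigma_n'(\kappa)>\kappa$ unwinds to $X_n\in U_{n+1}$. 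Everything else is routine bookkeeping.
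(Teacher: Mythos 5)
Your proof is correct and takes essentially the same approach as the paper: fix $f_n$ representing $\kappa$ in $\ult(M_{\alpha_n+1},U_n)$, translate the drop hypothesis via (\ref{games.e.distinct-normal}) and the remark following it into $\{\delta<\kappa\mid f_n(\delta)>f_{n+1}(\delta)\}\in U_{n+1}\subseteq V$, and use countable completeness of $V$ to find a single $\delta$ yielding an infinite descending sequence of ordinals. Your preliminary observation that $U_n=V\cap M_{\alpha_n+1}$, hence $U_n=U_{n+1}\cap M_{\alpha_n+1}$, together with the check that $\sigma_n$ fixes $f_n$, simply makes explicit setup steps the paper leaves implicit.
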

 	\begin{proof} For each $n$ let $f_n\in M_{\alpha_n+1}$ represent $\kappa$ in $\ult(M_{\alpha_n+1}, U_{n})$, and let 
	$\sigma'_n$ be the map from $\ult(M_{\alpha_n+1}, U_n)$ to $\ult(M_{\alpha_{n+1}+1}, U_{n+1})$   
	defined as in 
	equation \ref{games.e.up-embedding}.  Then there is a set $X_{n+1}\in U_{n+1}$ such that for all $\delta\in X_{n+1}, f_n(\delta)>f_{n+1}(\delta)$. The $X_n$'s all belong to $V$ and intersecting them we get a $\delta\in \kappa$ such that for all $n, f_{n}(\delta)>f_{n+1}(\delta)$, a contradiction. \end{proof}

 We note that Lemma \ref{doesn't happen} implies that in a play $\la \alpha_i, U_i:i<\gamma\ra$ there is no infinite increasing sequence $\la i_n:n\in\nn\ra$ such that $(U_{\alpha_{i_n}})^*\not\subseteq (U_{\alpha_{i_{n+1}}})^*$. 
 \medskip

{Let  $\la \alpha_i, U_i:i<\beta\ra$ be a partial or complete play of the game $\mcG_1^-$ of limit length $\beta$. Suppose that 
$N_\infty=\bigcup_{i<\beta}N_{\alpha_i+1}$. Then the transitive collapse of $N_\infty$ is the direct limit of 
$\la M_{\alpha_i+1}:i<\beta\ra$ along the canonical functions $\sigma_{j,j'}:M_{\alpha_j+1}\to M_{\alpha_{j'}+1}$ in diagram \ref{games.diag.single-up}. Denote the transitive collapse of $N_\infty$ by $M_\infty$. 
 Let $U_\infty$ be a $\kappa$-complete ultrafilter defined on the 
$\kappa$-algebra generated by $P(\kappa)\cap M_\infty$ 
that extends
$\bigcup_{i<\beta}U_i$. The next lemma implies that if for some $i<\beta, (U_i)^*\not\subseteq (U_\infty)^*$ then for some $j$ with $i < j <\beta$, $(U_i)^*\not\subseteq (U_j)^*$.}
 
  \begin{lemma}\label{not at limits}
 Let $M_\alpha\prec M_\beta \prec M_\gamma$   with  $\alpha, \beta, \gamma$ members of the 
 {$\alpha_i+1$'s.}  Let $U_\alpha\subseteq U_\beta\subseteq U_\gamma$ be $\kappa$-complete 
 ultrafilters {on the respective $\ptm(\kappa)$'s of} $M_\alpha, M_\beta, M_\gamma$.  Suppose that 
 $(U_\alpha)^*\not\subseteq (U_\gamma)^*$.  Let $X\in M_\alpha\cap \ptm(\kappa)$ be such that 
 $X\notin (U_\alpha)^*, X\in (U_\gamma)^*$.  
 
 Then we can choose $f^\alpha, g^\alpha, f^\gamma, g^\gamma$ such that 
	 \begin{eqnarray*}
	 f^\alpha, g^\alpha\in M_\alpha, &  [f^\alpha]_{U_\alpha}=\kappa, & [g^\alpha]_{U_\alpha}=X\\
	 f^\gamma, g^\gamma\in M_\gamma, & [f^\gamma]_{U_\gamma}=\kappa, & [g^\gamma]_{U_\gamma}=X
	\end{eqnarray*} 
Suppose that $f^\gamma, g^\gamma\in M_\beta$. Then $(U_\alpha)^*\not\subseteq (U_\beta)^*$.   \end{lemma}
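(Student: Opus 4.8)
The plan is to reduce the whole statement to a single membership set that is computed \emph{inside} $M_\beta$, exploiting that $U_\beta=U_\gamma\cap\ptm(\kappa)^{M_\beta}$. Throughout I would use that, since the play is increasing and each $U_i$ is an ultrafilter on the corresponding model, $U_\alpha=U_\beta\cap\ptm(\kappa)^{M_\alpha}$ and $U_\beta=U_\gamma\cap\ptm(\kappa)^{M_\beta}$, while $\ptm(\kappa)^{M_\alpha}\subseteq\ptm(\kappa)^{M_\beta}\subseteq\ptm(\kappa)^{M_\gamma}$ because these are just $\ptm(\kappa)\cap N_\alpha\subseteq\ptm(\kappa)\cap N_\beta\subseteq\ptm(\kappa)\cap N_\gamma$. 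For the clause ``we can choose\ldots'' I would take any $f^\alpha\in M_\alpha$ representing the critical point, i.e.\ $[f^\alpha]_{U_\alpha}=\kappa$ (such a function exists because $\kappa=\crp(\pi_{U_\alpha})$ is represented in the ultrapower), and set $g^\alpha(\delta)=X\cap f^\alpha(\delta)$; a routine \L o\'s computation gives $[g^\alpha]_{U_\alpha}=X$. Applying the identical recipe inside $M_\gamma$ produces $f^\gamma,g^\gamma\in M_\gamma$ with $[f^\gamma]_{U_\gamma}=\kappa$ and $[g^\gamma]_{U_\gamma}=X$.

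The key step, where the hypothesis $f^\gamma\in M_\beta$ is used, is to show that $f^\gamma$ \emph{also} represents $\kappa$ at the $\beta$-level, i.e.\ $[f^\gamma]_{U_\beta}=\kappa$. Let $\sigma'\colon\ult(M_\beta,U_\beta)\to\ult(M_\gamma,U_\gamma)$ be the analogue, for the pair $\beta<\gamma$, of the elementary embedding (\ref{games.e.up-embedding}) in diagram (\ref{games.diag.single-up}), so that $\sigma'([h]_{U_\beta})=[\sigma(h)]_{U_\gamma}$ where $\sigma\colon M_\beta\to M_\gamma$ has $\crp(\sigma)=\kappa^{+M_\beta}$ and $\sigma'\rst\kappa=\idm\rst\kappa$. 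Since $f^\gamma\in M_\beta$ is coded by a set of rank below $\kappa^{+M_\beta}$, it is fixed by $\sigma$, whence $\sigma'([f^\gamma]_{U_\beta})=[f^\gamma]_{U_\gamma}=\kappa$. Now $\sigma'$ is elementary, hence non-decreasing on the ordinals; so $[f^\gamma]_{U_\beta}<\kappa$ would give $\sigma'([f^\gamma]_{U_\beta})=[f^\gamma]_{U_\beta}<\kappa$, and $[f^\gamma]_{U_\beta}>\kappa$ would give $\sigma'([f^\gamma]_{U_\beta})\ge[f^\gamma]_{U_\beta}>\kappa$, each contradicting $\sigma'([f^\gamma]_{U_\beta})=\kappa$. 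Hence $[f^\gamma]_{U_\beta}=\kappa$.

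With $f^\gamma$ representing $\kappa$ in both $\ult(M_\beta,U_\beta)$ and $\ult(M_\gamma,U_\gamma)$, the normalization of $X$ is decided at both levels by the \emph{same} set: by \L o\'s,
\[
X\in (U_\gamma)^*\iff\kappa\in\pi_{U_\gamma}(X)\iff\{\delta:f^\gamma(\delta)\in X\}\in U_\gamma,
\]
and likewise $X\in (U_\beta)^*\iff\{\delta:f^\gamma(\delta)\in X\}\in U_\beta$. The set $\{\delta:f^\gamma(\delta)\in X\}$ is definable from $f^\gamma,X\in M_\beta$, hence lies in $\ptm(\kappa)^{M_\beta}$; it belongs to $U_\gamma$ (because $X\in (U_\gamma)^*$), so by $U_\beta=U_\gamma\cap\ptm(\kappa)^{M_\beta}$ it belongs to $U_\beta$, giving $X\in (U_\beta)^*$. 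Finally, since $X\in\ptm(\kappa)^{M_\alpha}$ and $X\notin (U_\alpha)^*$, the ultrafilter $(U_\alpha)^*$ contains $\kappa\setminus X$, whereas $X\in (U_\beta)^*$ forces $\kappa\setminus X\notin (U_\beta)^*$; as $\kappa\setminus X\in\ptm(\kappa)^{M_\alpha}\subseteq\ptm(\kappa)^{M_\beta}$, this exhibits a set in $(U_\alpha)^*\setminus (U_\beta)^*$, which is exactly $(U_\alpha)^*\not\subseteq (U_\beta)^*$. I expect the genuinely delicate point to be the identity $[f^\gamma]_{U_\beta}=\kappa$: it is \emph{not} enough that $f^\gamma$ be unbounded below $\kappa$ (which only yields $[f^\gamma]_{U_\beta}\ge\kappa$); one must invoke that $\sigma'$ is non-decreasing on ordinals to exclude $[f^\gamma]_{U_\beta}>\kappa$, and this is precisely where $f^\gamma\in M_\beta$ bites. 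Note that the hypothesis $g^\gamma\in M_\beta$ is not actually needed for this implication (only $f^\gamma\in M_\beta$ is); it is recorded because in applications one chooses $\beta$ so that the representing functions for both $\kappa$ and the witness $X$ are available already at level $\beta$. Everything else is a routine \L o\'s computation together with the restriction identity $U_\beta=U_\gamma\cap\ptm(\kappa)^{M_\beta}$.
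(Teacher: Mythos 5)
Your proof is correct, and its load-bearing step is the same as the paper's: both arguments hinge on the factor map $\sigma'\colon\ult(M_\beta,U_\beta)\to\ult(M_\gamma,U_\gamma)$ of diagram (\ref{games.diag.single-up}) being elementary between transitive structures (hence the identity below $\kappa$ and non-decreasing on ordinals) and fixing $f^\gamma$ because $\crp(\sigma)=\kappa^{+M_\beta}$, which forces $[f^\gamma]_{U_\beta}=\kappa$; your two-sided exclusion of $[f^\gamma]_{U_\beta}<\kappa$ and $[f^\gamma]_{U_\beta}>\kappa$ is just a spelled-out version of the paper's terse ``$\sigma'$ is order preserving'' remark. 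Where you genuinely diverge is the second half. The paper keeps the representing function $g^\gamma$ in play: it shows $\{\delta\mid f^\gamma(\delta)\in g^\gamma(\delta)\}\in U_\beta$ and then, via a supremum computation and an induction with the constant functions $c_\alpha$, argues that $g^\gamma$ still represents $X$ at level $\beta$ — this is exactly where the hypothesis $g^\gamma\in M_\beta$ is consumed (the restriction $U_\beta=U_\gamma\cap\ptm(\kappa)^{M_\beta}$ can only be applied to sets lying in $M_\beta$). You instead replace $g^\gamma$ by the constant function with value $X$, available in $M_\beta$ since $X\in M_\alpha$, so the whole transfer collapses to the single set $\{\delta\mid f^\gamma(\delta)\in X\}\in\ptm(\kappa)^{M_\beta}$, which lies in $U_\gamma$ iff $X\in(U_\gamma)^*$ and in $U_\beta$ iff $X\in(U_\beta)^*$, once one knows $f^\gamma$ represents $\kappa$ at both levels. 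This buys two things: it eliminates the most delicate bookkeeping in the paper's computation (where one must keep straight whether $g^\gamma$ represents $X$ itself or its image $\pi_{U}(X)$ — your explicit choice $g^\alpha(\delta)=X\cap f^\alpha(\delta)$ realizes $[g]_U=X$ literally and cleanly), and it establishes, as you correctly observe, that the implication needs only $f^\gamma\in M_\beta$, so the hypothesis $g^\gamma\in M_\beta$ in the lemma is redundant for the conclusion, though harmless in the way the lemma is invoked (where one chooses $\beta$ large enough to contain all the ``evidence''). Mathematically your streamlined route proves the same, indeed a slightly stronger, statement.
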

 
 \begin{proof} {If any of $U_\alpha, U_\beta, U_\gamma$ are principal the hypothesis clearly fails.  It follows that each of the ultrafilters is uniform.}
 \smallskip
 
 The point of the proof is showing that if $f^\alpha, g^\alpha, f^\gamma, g^\gamma$ belong to $M_\beta$, then $X\in (U_\beta)^*$. Since $X\notin (U_\alpha)^*$ but does belong to $M_\alpha$, it follows that  $(\kappa\setminus X)\in U_\alpha^*$.  So $\kappa\setminus X$ witnesses the conclusion of the lemma.
 
% are sufficient 
% to show $(U_\alpha)^*\not\subseteq (U_\beta)^*$. Since $X\notin (U_\alpha)^*$ it suffices to show that $X\in (U_\beta)^*$. 
 
Using  the notation of diagram \ref{games.diag.single-up}, since 
	\[ \sigma':\ult(M_\beta, U_\beta)\to \ult(M_\gamma, U_\gamma)\]
	is order preserving and $[f^\gamma]_{U_\gamma}=\kappa$, we must have $[f^\gamma]_{U_\beta}=\kappa$.

 	Since {$X\in (U_\gamma)^*$}, $[f^\gamma]_{U_\gamma}=\kappa$ and $[g^\gamma]_\gamma=X$, we must have 
	$\{\delta:f^\gamma(\delta)\in g^\gamma(\delta)\}\in U_\gamma$.  Since $f^\gamma$ and $g^\gamma$ belong to $M_\beta$ and $U_\beta\subseteq U_\gamma$ we have $\{\delta:f^\gamma(\delta)\in g^\gamma(\delta)\}\in U_\beta$, and hence $[g^\gamma]_{U_\beta}\in (U_\beta)^*$. 
	
	To finish it suffices to show that $[g^\gamma]_{U_\beta}=X$.  Since 
	$\{\delta:\sup(g^\gamma(\delta))=f^\gamma(\delta)\}\in U_\gamma$, we must have 
	$\{\delta:\sup(g^\gamma(\delta))=f^\gamma(\delta)\}\in U_\beta$. Thus $\sup([g]_{U_\beta})=\kappa$.
	
	For $\alpha<\kappa$, let  $c_\alpha:\kappa\to\kappa$ be the constant function $\alpha$. Then 
	$\{\delta:c_\alpha(\delta)<f^\gamma(\delta)\}\in U_\beta$, by $\kappa$-completeness. Using induction and the  
	$\kappa$-completeness of $U_\beta$, one proves that $[c_\alpha]_{U_\beta}=\alpha$.   
	But then
	\begin{eqnarray*}
	\alpha\in [g^\gamma]_{U_\beta} &\mbox{iff} &\{\delta:c_\alpha(\delta)\in g^\gamma(\delta)\}\in U_\beta\\
							& \mbox{iff}&\{\delta:c_\alpha(\delta)\in g^\gamma(\delta)\}\in U_\gamma\\
							&\mbox{iff} & \alpha\in [g^\gamma]_{U_\gamma}.
	\end{eqnarray*}
 	Since $[g^\gamma]_{U_\gamma}=X$ we have $[g^\gamma]_{U_\beta}=X$. \end{proof}

It follows from Lemmas \ref{doesn't happen} and \ref{not at limits} that if $\la \alpha_i, U_i:i<\beta+k\ra$ is a play of 
$\mcG_1^-$ 
where $\beta$ is zero or a limit ordinal and $k\in \omega$, then there is a finite set $i_0=0<i_1<i_2<\dots i_n=\beta$ such that {for all $1\le m<n$}
\begin{enumerate}
\item[A.)]{ for all  $i< j\in [i_{m-1}, i_{m})$, it holds that } $(U_i)^*\subseteq (U_j)^*$,
\item[B.)] for all $i\in [i_{m-1},i_m)$, $(U_i)^*\not\subseteq (U_{i_m})^*$.
\end{enumerate}
We will call the stages $i_1, \dots i_n$ together with $\{0\le j<k-1: (U_{\beta+j})^*\not\subseteq (U_{\beta+j+1})^*\}$ \emph{drops}.
{Note that in clause B.), $m<n$ so this does not imply that $\beta$ is a drop.}

\bigskip 
 
 A position $P$ of the game $\mcG_1^-$ has the form
\begin{equation}\label{games.e.run-g-one-minus}
P=\langle\alpha^P_i,U^P_i\mid i<\beta^P\rangle
\end{equation}
where $\alpha^P_i$ are moves of Player I and $U^P_i$ are moves of Player II,
and we will {not use} the superscripts ${}^P$ if there is no danger of confusion.  We will take 
$\beta=0$ as the length of the empty position.
Given an infinite regular cardinal $\gamma$ and a strategy $\mcS$ for
Player II in the game $\mcG_1^-$ of length $\ge\gamma$, let $\mcZ_\gamma$ be
the set of all positions in $\mcG_1^-$ of length $<\gamma$ according to
$\mcS$ that have successor length, where the last {move of Player I} is a drop. As stated in Remark \ref{allow induction} we can index plays in $\mcZ_\gamma$ by increasing sequences
of ordinals. On $\mcZ_\gamma$ we define a binary relation $\blacktriangleleft$ as
follows. Given two positions $P,Q\in\mcZ_\gamma$, we let 
\begin{equation}\label{games.e.relation-r}
P\;\blacktriangleleft\;Q
\end{equation}
if and only if $P$ properly extends $Q$.

\begin{claim}\label{games.c.well-foundedness}
Assume one of the following holds
\begin{itemize}
\item[(a)] $\gamma>\omega$ is regular and $\mcS$ is a winning strategy for Player~II in
$\mcG^-_1$ of length $\gamma$. 
\item[(b)] $\gamma=\omega$ and $\mcS$ is a winning strategy for Player~II in
$\mcG^-_1(Q_\gamma)$ of length $\gamma$.
\end{itemize}
Then $\blacktriangleleft$ is a well-founded tree. 
\end{claim}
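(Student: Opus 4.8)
The plan is to verify well-foundedness by contradiction, after first noting that $\blacktriangleleft$ is automatically a tree: the $\blacktriangleleft$-predecessors of a position $P\in\mcZ_\gamma$ (that is, the $Q$ with $P\blacktriangleleft Q$) are exactly the proper initial segments of $P$ that lie in $\mcZ_\gamma$, and these are well-ordered by length. So the whole content is to rule out an infinite descending $\blacktriangleleft$-chain. Such a chain is an increasing sequence of positions $P_0\subsetneq P_1\subsetneq\cdots$ in $\mcZ_\gamma$, each properly extending the previous one; writing $\delta_n$ for the (drop) stage at which $P_n$ ends, we have $\delta_0<\delta_1<\cdots$. I would set $P_\infty=\bigcup_n P_n$, a play according to $\mcS$ of limit length $\lambda=\sup_n\delta_n$, which has cofinality $\omega$. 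Because whether a stage is a drop is decided by the play up to and including that stage, every $\delta_n$ is still a drop of $P_\infty$, so $P_\infty$ would carry infinitely many drops below $\lambda$.

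The crux is then to produce a $\kappa$-complete, hence countably complete, ultrafilter $V$ extending all the $U_i$ ($i<\lambda$) on the $\kappa$-algebra generated by $\bigcup_{i<\lambda}(\ptm(\kappa)\cap N_{\alpha_i+1})$; this is the one place where hypotheses (a) and (b) genuinely diverge. Under (a), regularity of $\gamma>\omega$ forces $\lambda<\gamma$, so since $\mcS$ wins $\mcG^-_1$ of length $\gamma$, Player~II can still respond at the limit stage $\lambda$, and $\mcS$ returns an ultrafilter $U_\lambda$ on $\ptm(\kappa)\cap N_{\alpha_\lambda+1}$ extending every $U_i$; as $N_{\alpha_\lambda+1}$ is closed under ${<}\kappa$-sequences, $\ptm(\kappa)\cap N_{\alpha_\lambda+1}$ is a $\kappa$-algebra containing the union above, and I would take $V=U_\lambda$. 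Under (b) we have $\lambda=\omega=\gamma$, so $P_\infty$ is a \emph{complete} run of length $\omega$ won by the $\mcG^-_1(Q_\omega)$-strategy $\mcS$; hence $P_\infty\in Q_\omega$, and the defining property of $Q_\omega$ supplies $V$ directly.

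With $V$ in hand I would appeal to the finiteness of drops that was derived from Lemmas~\ref{doesn't happen} and~\ref{not at limits} just before the statement: along the play $P_\infty$ (extended by $U_\lambda$ in case (a)) the drops form a finite set. Concretely, since the $\delta_n$ are infinitely many reset points of $P_\infty$, listing all reset points below $\lambda$ as $e_0<e_1<\cdots$ and using the defining property of a reset point gives $(U_{e_m})^*\not\subseteq(U_{e_{m+1}})^*$ for every $m$, while $V$ extends each $U_{e_m}$; this is precisely the configuration excluded by Lemma~\ref{doesn't happen}. The resulting contradiction shows that no infinite descending $\blacktriangleleft$-chain exists.

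I expect the construction of $V$ to be the main obstacle, and it is exactly what dictates the split into cases (a) and (b): in case (b) the run already has the full length $\omega$ and cannot be prolonged, so $V$ cannot be obtained from a further move of $\mcS$ as in case (a) and must instead be furnished by the stronger payoff condition $Q_\omega$. A secondary point to handle carefully is the locality of the drop notion, which is what guarantees that the drops witnessed along the chain survive in $P_\infty$.
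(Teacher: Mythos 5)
Your proof is correct and follows essentially the same route as the paper, which disposes of the claim in two sentences: the tree part is immediate, and well-foundedness follows because positions according to $\mcS$ are closed under unions along branches and a play can have only finitely many drops, the latter resting on Lemmas~\ref{doesn't happen} and~\ref{not at limits} with the countably complete extension supplied exactly as in your case split (a continuation move of $\mcS$ at the limit stage $\lambda<\gamma$ in case (a), the payoff condition $Q_\omega$ in case (b)). Your write-up simply makes explicit the details the paper leaves implicit (locality of the drop notion, $\lambda<\gamma$ by regularity, and the appeal to Lemma~\ref{doesn't happen} via the witnessing pairs at consecutive drops).
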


\begin{proof} It is immediate that $\blacktriangleleft$ is a tree.  The well-foundedness follows  from the fact that 
there can be only finitely many drops along a play of the game.
\end{proof}
The proof of Claim \ref{games.c.well-foundedness}  implies that if $\mcS$ is a winning strategy in any of the variants of $\mcG^-_1$ of any length 
$\gamma$, then $\blacktriangleleft$ is well-founded. Note for well-foundedness the only relevant $\gamma$ are limit ordinals.   As stated, the Claim handles all of the cases relevant to the theorems we are proving.
\bigskip

\bigskip

 Now assume $\mcS$ is as in (a) or (b) in Claim~\ref{games.c.well-foundedness}. For $P\in \mcZ_\gamma$, 
let $i_P$ be the largest
drop in $P$ if $P$ does have a drop, and $i_P=0$ otherwise. Fix a $\blacktriangleleft$-minimal $P\in\mcZ_\gamma$.  By the minimality
of $P$, if $P'$ extends $P$ then $i_{P'}=i_P$; in other
words, $P'$ has no drops above $i_P$, hence
$(U^{P'}_i)^*\subseteq (U^{P'}_{i'})^*$
whenever $i_P\le i<i'$. 
\smallskip

\begin{quotation}
Let $\alpha^*=\alpha_{i_P}$ and $V^*=(U_{\alpha^*})^*$.
\end{quotation}

\noindent We define a winning strategy $\mcs_P$ for Player II in $\mcG_1$ of length $\gamma$. 
Viewing 
$\mcS$ as defined on sequences of ordinals $\la \alpha_i:i<\beta\ra$, we define $\mcS_P$ on such sequences $\la \alpha_i:i<\beta\ra$ by induction on their length $\beta$.

For ordinals $\alpha_i<\alpha^*$ played by the first player we assume inductively that the normal ultrafilter $V_{i}$ played by the second player is $(V)^*\cap N_{\alpha_i+1}$.

Suppose we have defined defined $\mcS_P$ on sequences of length less than $\beta$, where $\beta=0$ 
corresponds to the empty position. Formally, to $\la\alpha_i:i<\beta\ra$ we inductively associate the play 
$\la(\alpha_i, V_i):i<\beta\ra$ where $V_i$ is the response by Player II according to $\mcS_P$. We need to define 
$\mcS_P$ on 
$\la \alpha_i:i<\beta\ra \cat\alpha_\beta$. 

\begin{description}
	\item[Case 1] $\alpha_\beta\le \alpha^*$. In this case 
	\[\mcS_P(\la \alpha_i:i<\beta\ra \cat \alpha_\beta)= (V)^*\cap N_{\alpha_{\beta}+1} \]
	\item[Case 2] $\alpha_\beta> \alpha^*$.\\ Let $j$ be least such that $\alpha_j>\alpha^*$.
	Let 
	\[\mcs_P(\la \alpha_i:i<\beta\ra \cat \alpha_\beta\ra)=(\mcS(P\cat\la \alpha_i:j\le i\le \beta\ra))^*\]
\end{description}
Note that in Case 1, it is trivial that Player II's move is a legal move.  In Case 2, all of the filters played in response to ordinals less that $\alpha^*$ are sub-filters of $V^*$ and hence are legal plays and sub-filters of $\mcS(P)^*$. Going beyond $P$, the plays {of $\mcS_P$} are extensions of plays according to $\mcS$ that have initial segment $P$.  Since $P$ is $\blacktriangleleft$ minimal there are no drops for those plays--in other words, there is inclusion of the normalized responses according to $\mcS$. 
\smallskip

\noindent From this we conclude that Player II wins the game of length $\gamma$ in part (a) of Claim \ref{games.c.well-foundedness}. 

\medskip

{We only prove (b) for $\gamma=\omega$ because that is the most relevant case for this paper. 
 A straightforward
generalization of this argument gives the result for general $\gamma$. } The strategy
$\mcS_P$ is {defined using a } winning play by $\mcS$ in the game $\mcG_1^-(Q_\omega)$. Since $\mcS$ is a winning strategy in that game, if $\la (\alpha_n, U_n):n\in\nn\ra$ is that play according to $\mcS$, there is a $\kappa$-complete ultrafilter $U_\infty\supseteq \bigcup U_n$ defined on the $\kappa$-algebra generated by $\bigcup_nM_{\alpha_n}$.  By Lemmas \ref{doesn't happen} and \ref{not at limits} and the remarks preceding them, $(U_\infty)^*$ extends $V_n$ for all $n$.  Part (b) follows. 

\end{proof}

{\begin{remark}\label{definable 2} Arguing exactly as in Lemma \ref{definable 1}, if $\la \alpha_i:i<\beta\ra\in N_\alpha$ is a sequence of ordinals 
{and a $\blacktriangleleft$-minimal position
 position $P$ in the game $\mcG^-_1$} 
 belongs to $N_\alpha$ then  the sequence of responses by Player II
 to $\la \alpha_i:i<\beta\ra$ {using $\mcS_P$} belongs to $N_\alpha$. In particular if $\beta$ is a successor ordinal $j+1$ then $\mcS_P$'s responses belong to $N_{\alpha_j+2}$.
\end{remark}}

%%%%%%%%%%%%%%%%%%%%%%%%%%%%%%%%

\begin{definition}[The Game $\mcG_2$]\label{games.d.g-two}
The rules of the game $\mcG_2$ are as follows.
\begin{itemize}
\item Player I plays an increasing sequence of ordinals $\alpha_i<\kappa^+$ as
before.
\item Player II plays distinct sets $Y_i\subseteq\kappa$ such that the following are
satisfied. 
   \begin{itemize}
   \item[(i)] $Y_j\subseteq^* Y_i$ whenever $i<j$, and
   \item[(ii)] Letting
     $U_i=\{X\in\ptm(\kappa)\cap N_{\alpha_i+1}\mid Y_i\subseteq^* X\}$,
     the family $U_i$ is a uniform normal ultrafilter on
     $\ptm(\kappa)\cap N_{\alpha_i+1}$.
  
   \end{itemize}
\item Player I goes first at limit stages.
\end{itemize}
A run of $\mcG_2$ of length $\gamma\le\kappa^+$ continues until Player II
cannot play or else until it reaches length $\gamma$.

Payoff sets $R_\gamma$ and $W_\gamma$ for the game $\mcG_2$ are defined
analogously to those for $\mcG_1$. So $R_\gamma$ consists of all runs 
in $\mcG_2$ of length $\gamma$ and $W_\gamma$ consists of all those runs
$\langle \alpha_i,Y_i\mid i<\gamma\rangle\in R_\gamma$ such that if
$\langle X_i\mid i<\gamma\rangle$ is a sequence satisfying
$X_i\in N_{\alpha_i+1}$ and $Y_i\subseteq^* X_i$
for all $i<\gamma$ then $\bigcap_{i<\gamma}X_i\neq\varnothing$.  
\end{definition}

Note that $Y_i\notin N_{\alpha_i+1}$ in (ii). Note also that since
the ultrafilters $U_i$ are required to be uniform, the sets $Y_i$ are
unbounded in $\kappa$. As with $\mcG_1$, we will not make any use of what
would be an analogue of payoff set $Q_\gamma$.

\begin{proposition}\label{games.p.g-one-e-g-two}
Assume $\gamma\le\kappa^+$ is an infinite regular cardinal.
\begin{itemize}
\item[(a)] Player II has a winning strategy in $\mcG_1$ of length $\gamma$ iff
Player~II has a winning strategy in $\mcG_2$ of length $\gamma$.
\item[(b)] Player~II has a winning strategy in $\mcG_1(W_\gamma)$ of length
$\gamma$ iff Player~II has a winning strategy in $\mcG_2(W_\gamma)$ of length
$\gamma$.
\end{itemize}
\end{proposition}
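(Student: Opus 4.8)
The plan is to prove (a) and (b) simultaneously by transferring winning strategies across the two games, the whole content being a dictionary between a uniform normal ultrafilter $U_i$ on $\mca_i:=\ptm(\kappa)\cap N_{\alpha_i+1}$ and a single unbounded set $Y_i\subseteq\kappa$ that $\subseteq^*$-generates it. The direction $\mcG_2\Rightarrow\mcG_1$ is immediate: if $\mcT$ is a winning strategy for II in $\mcG_2$, I let II in $\mcG_1$ run $\mcT$ and respond at stage $i$ with $U_i=\{X\in\mca_i\mid Y_i\subseteq^* X\}$. By clause (ii) of Definition~\ref{games.d.g-two} each $U_i$ is a uniform normal ultrafilter on $\mca_i$, and $Y_j\subseteq^* Y_i$ for $i<j$ gives $U_i\subseteq U_j$, so this is a legal run of $\mcG_1$ of the same length. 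Moreover, for $X\in\mca_i$ the condition $Y_i\subseteq^* X$ is \emph{by definition} the same as $X\in U_i$, so the payoff set $W_\gamma$ is preserved verbatim. The same remark handles $W_\gamma$ in both directions, so (b) will follow from (a) once the forward strategy is built.

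For $\mcG_1\Rightarrow\mcG_2$ fix a winning strategy $\mcS_1$ for II in $\mcG_1$; by Proposition~\ref{games.p.passing-to-normal} we may assume the $U_i$ it produces are normal, and by Lemmas~\ref{definable 1} and~\ref{definable 2} we may take $\mcS_1$ to be $<_\theta$-definable, so that $U_{i'}\in N_{\alpha_i+1}$ whenever $i'<i$. I let II in $\mcG_2$ simulate $\mcS_1$ and, having obtained $U_i$, play the \emph{diagonal generator}
\[
Y_i=\Delta_{\xi<\kappa}A^i_\xi,\qquad
A^i_\xi=\begin{cases}X^i_\xi & \text{if }X^i_\xi\in U_i,\\ \kappa\setminus X^i_\xi & \text{otherwise,}\end{cases}
\]
where $\langle X^i_\xi\mid\xi<\kappa\rangle\in N_{\alpha_i+1}$ is the $<_\theta$-least enumeration of $\mca_i$. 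Since each $A^i_\xi\in U_i$ and every $X\in U_i$ occurs as some $A^i_\xi$, we get $Y_i\subseteq^* X$ for every $X\in U_i$. Each $Y_i$ is unbounded: as II wins $\mcG_1$, $\kappa$ is weakly compact (Section~\ref{weak compactness}), so $U_i$ extends to a uniform normal ultrafilter $U'$ on some $\mca'=\ptm(\kappa)\cap N_{\beta+1}$ with $U_i\in N_{\beta+1}$ (using Proposition~\ref{games.p.passing-to-normal} and the fact that $U_i=U_i^*\subseteq (U')^*$ from Corollary~\ref{games.c.runs-g-one-two}); then $\langle A^i_\xi\rangle\in N_{\beta+1}$ and each $A^i_\xi\in U'$, so normality of $U'$ gives $Y_i\in U'$, hence $Y_i$ is unbounded. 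Together with $Y_i\subseteq^* X$ for all $X\in U_i$, unboundedness yields $\{X\in\mca_i\mid Y_i\subseteq^* X\}=U_i$: if $Y_i\subseteq^* X$ with $X\notin U_i$ then also $Y_i\subseteq^*(\kappa\setminus X)\in U_i$, forcing $Y_i$ bounded. Thus clause (ii) holds and the recovered ultrafilter is exactly $U_i$.

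The remaining step, clause (i), is the one that requires care, and is the main point of the proof: the $Y_i$ must be $\subseteq^*$-decreasing. Fix $i'<i$. Because $\mcS_1$ is $<_\theta$-definable, $U_{i'}\in N_{\alpha_i+1}$, hence $Y_{i'}\in\mca_i$ and the sequence $\langle A^{i'}_\xi\mid\xi<\kappa\rangle\in N_{\alpha_i+1}$. Each $A^{i'}_\xi\in U_{i'}\subseteq U_i$, so by normality of $U_i$ with respect to $N_{\alpha_i+1}$ — applied to a sequence that genuinely lies in $N_{\alpha_i+1}$, since it merely codes $U_{i'}$ — we obtain $Y_{i'}=\Delta_\xi A^{i'}_\xi\in U_i$; the underlying calculation is exactly~(\ref{games.e.set-in-u-star}), which gives $\kappa\in\pi_{U_i}(Y_{i'})$ iff $A^{i'}_\xi\in U_i$ for all $\xi<\kappa$. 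Since $Y_i\subseteq^* X$ for every $X\in U_i$, taking $X=Y_{i'}$ gives $Y_i\subseteq^* Y_{i'}$; at limit stages $i$ the same argument applies since $Y_{i'}\in\mca_{i'+1}\subseteq\mca_i$ and $A^{i'}_\xi\in U_i$. Hence the simulated run is a legal $\mcG_2$-play of the same length as the $\mcG_1$-run, so II wins it, and by the $W_\gamma$-remark above it respects the $W_\gamma$-payoff, giving (b). The crux — and the reason one must use the \emph{diagonal intersection} rather than a mere pseudo-intersection such as a cofinal selector — is that only the diagonal generator $Y_{i'}$ becomes an \emph{element} of the later ultrafilter $U_i$; this membership is what converts the bookkeeping for clause (i) from an obstruction into an automatic consequence of normality.
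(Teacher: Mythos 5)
Your proposal is correct and takes essentially the same route as the paper's proof: simulate the (normalized, $<_\theta$-definable) strategy for $\mcG_1$ and answer in $\mcG_2$ with the definable diagonal intersection generating $U_i$, obtaining clause (i) of Definition~\ref{games.d.g-two} from $Y_{i'}\in U_i$ via normality of $U_i$ applied to an enumeration lying in $N_{\alpha_i+1}$, with part (b) free because the associated ultrafilters coincide. The only differences are cosmetic: you enumerate all of $\mca_i$ and pass to complements where the paper takes the $<_\theta$-least enumeration of $U_i$ itself, and your unboundedness step should not be credited to weak compactness (which yields only a $\kappa$-complete, not normal, extension) but rather to what you in fact then use --- the computation $U_i=U_i^*\subseteq(U')^*$ from Corollary~\ref{games.c.runs-g-one-two}, or simply one more round of the run by $\mcS_1$, matching the paper's observation that $Y_i\in U_{i+1}$.
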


\begin{proof} 
For (a), it is immediate that a winning strategy for Player II in $\mcG_2$ gives a winning strategy in $\mcG_1$: if Player II plays $Y_i$ at turn $i$, then $Y_i$ generates a normal ultrafilter on $N_{\alpha_i+1}$ which is Player II's move in $\mcG_1$.

For the non-trivial direction, assume Player II has a winning
strategy $\mcS$ in $\mcG_1$ of length $\gamma$. As noted before Definition \ref{games.d.g-two}, such a strategy exists in $N_0$.   We build a winning strategy  $\mcS'$ for Player II in $\mcG_2$ of length~$\gamma$ by induction. 
\bigskip

\bfni{Induction Hypothesis}\ \ 
Suppose that Player I plays $\la \alpha_i:i<\beta\ra$ in the game $\mcG_2$, and $\la U_i:i<\beta\ra$ is the play by Player II using $\mcS$ in the game $\mcG_1$.  Then  Player II plays $\la Y_i:i<\beta\ra$ where $Y_i$ is a definable diagonal intersection of the members of  $U_i$.  
\medskip

For each $i$, let $\langle X^i_\xi\mid\xi<\kappa\rangle$ be the $<_\theta$-least  enumeration of $U_i$ of length 
$\kappa$
(recall that $<_\theta$ is the well-ordering of $H_\theta$ fixed at the
beginning of this section; see the paragraphs immediately above
Definition~\ref{games.d.g-one-minus}).  The induction hypothesis is that for all $i<\beta$, { Player II's responses according to the strategy $\mcS'$ to the sequence $\la \alpha_i:i \le \delta\ra$ are $\la Y_i:i\le \delta\ra$ where 
\[
Y_i=\Delta_{\xi<\kappa}X^i_\xi.
\]
}
%%%%%%%%%%%%%%
This induction hypothesis is automatically preserved at limit stages.  Suppose that it holds up to $\beta$ and Player I plays $\alpha_\beta$. Then Player II plays an ultrafilter $U_\beta$ on 
$\ptm(\kappa)\cap N_{\alpha_\beta+1}$ in the game $\mcG_1$ using the strategy defined in Proposition 
\ref{games.p.passing-to-normal}.   Then, {as in Remark \ref{definable 2},} $N_{\alpha_\beta+2}$ contains the information that $U_\beta$ is 
Player II's response as well as the 
$<_\theta$-least enumeration $\la X^\beta_\xi:\xi<\kappa\ra$ of $U_\beta$. Let 
$Y_\beta=\Delta_{\xi<\kappa}X^\beta_\xi$ {and let $Y_\beta$ be} Player II's response in $\mcG_2$ using 
$\mcS'$.

Suppose now that $\la \alpha_i:i<\gamma\ra$ is a run of the game $\mcG_2$ according to $\mcS'$.  Then, since $U_{i+1}$ is normal each $Y_i$ belongs to $U_{i+1}$.  Since, $Y_j\subseteq^*X$ for all $X\in U_j$, for $i<j, Y_j\subseteq^*Y_i$. Moreover, since $Y_i$ is a diagonal intersection of the ultrafilter $U_i$, clause (ii) in  Definition \ref{games.d.g-two} is immediate. 

Since the relevant ultrafilters are the same, whether II is playing by $\mcS$ in $\mcG_1$ or $\mcS'$ in $\mcG_2$, clause (b) in  Proposition \ref{games.p.g-one-e-g-two} is immediate.
\end{proof}

{\begin{remark}\label{definable 3}
The definition of the winning strategy $\mcs'$ for Player II in the previous proof depends on the 
position $P$ in $\mcG_1^-$, beyond which there are no drops. 
Suppose that Player I plays $\la \alpha_i:i<\beta\ra$ in the game 
$\mathcal G_2$ and player II responds with $\la Y_i:i<\beta\ra$ using the winning strategy $\mcS'$. Then for all $j<\beta$ with $P\in N_j$,   
\begin{itemize}
\item $Y_j\notin N_{\alpha_j+1}$ because it induces an ultrafilter on $N_{\alpha_j+1}$,
\item $Y_j\in N_{\alpha_j+2}$ because $\la N_{\alpha_i}:i\le j\ra\in N_{\alpha_j+2}$ and Player II's response to 
$\la \alpha_i:i\le j\ra$ according to $\mcS'$ is definable from Player II's response to $\la \alpha_i:i\le j\ra$ according to the strategy
$\mcS$ for $\mcG_1$, which in turn is definable from $P$ and Player II's  response according to her strategy  in  
$\mcG_1^-$ and thus from the original strategy in $\mcG_0$. 
\end{itemize}
It follows that for all $i<j$,   $Y_{j}\subseteq^*Y_i$ and $|Y_i\setminus Y_j|=\kappa$. (Restating this $Y_j\subsetneq^*Y_i$.)
\end{remark}}

{
We complete this section with a corollary which will be used in studying
properties of the strategies constructed in Section~\ref{s.model}.
}

{
\begin{corollary}\label{games.c.ideals-g-one-g-two}
Assume $\mcS_1$ is a winning strategy for Player~II in the game $\mcG_1$ of
length $\gamma$ and $\mcS_2$ is the winning strategy for Player~II in the game
$\mcG_2$ of length $\gamma$ obtained as in
Proposition~\ref{games.p.g-one-e-g-two}. Then for every $A\in\ptm(\kappa)$,
\[
A\in\mcI(\mcS_1)
\quad\Longleftrightarrow\quad
A\in\mcI(\mcS_2).
\]
\end{corollary}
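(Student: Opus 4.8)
The plan is to read both ideals directly off Definition~\ref{hopeless.d.hi} and to observe that the two assignments $F_{\mcS_1}$ and $F_{\mcS_2}$ generate literally the same family of ultrafilters; equality of the ideals is then immediate. Concretely, by Definition~\ref{hopeless.d.hi} we have, for $k\in\{1,2\}$, that $A\in\mcI(\mcS_k)$ iff $A\notin U$ for every $U$ occurring as some $U^r_i$ with $r\in R_{\mcS_k}$ and $i<\gamma$. So it suffices to prove
\[
\{U^r_i\mid r\in R_{\mcS_1},\ i<\gamma\}=\{U^{r'}_i\mid r'\in R_{\mcS_2},\ i<\gamma\}.
\]

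First I would fix the natural bijection between $R_{\mcS_1}$ and $R_{\mcS_2}$. The rules for Player~I are identical in $\mcG_1$ and $\mcG_2$ (an increasing sequence of ordinals $\alpha_i<\kappa^+$, with Player~I moving first at limits), and both $\mcS_1$ and $\mcS_2$ are winning of length $\gamma$, so every run in either game has length exactly $\gamma$ and is determined by Player~I's sequence $\langle\alpha_i\mid i<\gamma\rangle$. This pairs each $r\in R_{\mcS_1}$ with the unique $r'\in R_{\mcS_2}$ having the same Player~I moves.

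The heart of the argument is then to invoke the construction of $\mcS_2$ from $\mcS_1$ in Proposition~\ref{games.p.g-one-e-g-two}: under this bijection, at each stage $i$ the set $Y_i$ played by $\mcS_2$ is the diagonal intersection $\Delta_{\xi<\kappa}X_\xi$ of a $<_\theta$-enumeration of the normal ultrafilter $U^r_i$ played by $\mcS_1$. Because $U^r_i$ is normal, $Y_i\in U^r_i$, and $Y_i$ is $\subseteq^*$-below every member of $U^r_i$ (each coordinate of the enumeration contains $Y_i$ off a bounded set); conversely, for any $X\in\ptm(\kappa)\cap N_{\alpha_i+1}$ with $Y_i\subseteq^* X$, discarding the bounded set $Y_i\setminus X$ from $Y_i$ leaves an element of the uniform $\kappa$-complete ultrafilter $U^r_i$ lying inside $X$, so $X\in U^r_i$. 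Hence the $\mcG_2$-induced ultrafilter $U^{r'}_i=\{X\in\ptm(\kappa)\cap N_{\alpha_i+1}\mid Y_i\subseteq^* X\}$ coincides with $U^r_i$. Running over all stages $i$ and all runs gives the displayed set equality, and therefore $\mcI(\mcS_1)=\mcI(\mcS_2)$.

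I expect the only work to be bookkeeping. The substantive fact---that the $\mcG_2$-move $Y_i$ induces exactly the $\mcG_1$-ultrafilter $U^r_i$---is precisely what is already recorded at the end of the proof of Proposition~\ref{games.p.g-one-e-g-two} (``the relevant ultrafilters are the same''), so no new idea is required beyond checking the two almost-containment directions and verifying that the bijection respects stages. A minor point to double-check is that the ideal depends, by Definition~\ref{hopeless.d.hi}, on the ultrafilter components $U^r_i$ alone, so the differing move data ($U_i$ versus $Y_i$) in the two games is irrelevant once these ultrafilters have been matched.
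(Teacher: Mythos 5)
Your overall route is exactly the paper's own: the corollary is left unproved there precisely because it is meant to be read off the last line of the proof of Proposition~\ref{games.p.g-one-e-g-two} (``the relevant ultrafilters are the same''), i.e., one matches runs by Player~I's common ordinal moves, checks that the ultrafilter induced by $Y_i$ equals the ultrafilter $U^r_i$ played by $\mcS_1$, and observes that by Definition~\ref{hopeless.d.hi} the hopeless ideal depends only on these ultrafilter components. Your bijection between $R_{\mcS_1}$ and $R_{\mcS_2}$ and the reduction to the set equality of ultrafilters are correct bookkeeping.

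However, two justifications in your middle paragraph are false as literally written, both for the same reason. You assert $Y_i\in U^r_i$ (``because $U^r_i$ is normal''), and later that $Y_i\smallsetminus(Y_i\smallsetminus X)=Y_i\cap X$ is ``an element of the uniform $\kappa$-complete ultrafilter $U^r_i$.'' But $U^r_i$ is an ultrafilter on the $\kappa$-algebra $\ptm(\kappa)\cap N_{\alpha_i+1}$, so all its elements lie in $N_{\alpha_i+1}$, while $Y_i\notin N_{\alpha_i+1}$ --- indeed $Y_i$ cannot belong to $N_{\alpha_i+1}$ precisely because it induces an ultrafilter on that model (Remark~\ref{definable 3}); normality of $U^r_i$ applies only to sequences lying in $N_{\alpha_i+1}$, and the $<_\theta$-least enumeration of $U^r_i$ is not such a sequence (the paper's substitute observation is that $Y_i\in U_{i+1}$, one stage later). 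Neither false claim is load-bearing. For $U^r_i\subseteq U^{r'}_i$ you need only $Y_i\subseteq^*X_\xi$ for each $\xi$, which your diagonal-intersection computation already gives. For the converse, argue by complements: if $X\in\ptm(\kappa)\cap N_{\alpha_i+1}$ and $X\notin U^r_i$, then $\kappa\smallsetminus X\in U^r_i$, hence $Y_i\subseteq^*\kappa\smallsetminus X$; together with $Y_i\subseteq^*X$ this would make $Y_i$ bounded, contradicting uniformity of the induced ultrafilter (clause~(ii) of Definition~\ref{games.d.g-two}, available because $\mcS_2$'s moves are legal). Quicker still: $U^r_i$ is contained in the induced family $U^{r'}_i$, and a proper ultrafilter on a fixed algebra is a maximal filter, so $U^{r'}_i=U^r_i$ outright. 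With either one-line repair your argument is complete and coincides with the paper's intended proof.
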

}

%%%%%%%%%%%%%%%%%%%%%%%%%%%%%%%%%%%%%%%%%%%%%%%%%%%%
%%%%%%%%%%%%%%%%%%%%%%%%%%%%%%%%%%%%%%%%%%%%%%%%%%%%
%%%%%%%%%%%%%%%%%%%%%%%%%%%%%%%%%%%%%%%%%%%%%%%%%%%%
%%%%%%%%%%%%%%%%%%%%%%%%%%%%%%%%%%%%%%%%%%%%%%%%%%%%

\section{Strategies $\mcS^*$ and $\mcS_\gamma$}\label{s.star}

Consider a winning strategy $\mcS_0$ for Player II in $\mcG_0$ of length
$\gamma$ and a position $P$ in $\mcG_0$ according to $\mcS_0$. Given a set
$X\in\mcI(\mcS_0,P)^+$, there may 
exist different runs of $\mcG_0$ extending $P$ which witness that $X$ is
$\mcI(\mcS_0,P)$-positive. This causes difficulties in proving that
$\mcI(\mcS_0,P)$ has strong properties like precipitousness or the existence
of a dense subset with a high degree of closure. To address this issue, we
construct 
a winning strategy $\mcS^*$ for Player~II in $\mcG_2$ of length $\gamma$ such
that for each position $Q$ in $\mcG_2$ according to $\mcS^*$ and each
$X\notin\mcI(\mcS^*,Q)$ there is a unique run witnessing that $X$ is
$\mcI(\mcS^*,Q)$-positive, and show that using $\mcS^*$ one can prove the
 precipitousness of {$\mcI(\mcS^*,\emptyset)$} and the existence of a
dense subset with a high degree of closure, thus proving Theorems~\ref{t1}
and~\ref{t2}.

Recall from the introduction that when we talk about saturated ideals over
$\kappa$, we always mean uniform $\kappa$-complete and $\kappa^+$-saturated
ideals over $\kappa$. The results in this section are formulated under the
assumption of the non-existence of a normal saturated ideal over $\kappa$, as
this allows to fit the results together smoothly. That the results actually
constitute a proof of Theorem~\ref{t2}, which is stated under a seemingly
stronger requirement on the non-existence of a saturated ideal over $\kappa$,
is a consequence of the following {standard} proposition.

\begin{proposition}\label{star.p.complete-vs-normal}
Given a regular cardinal $\kappa>\omega$, the following are equivalent.
\begin{itemize}
\item[(a)] $\kappa$ carries a saturated ideal.
\item[(b)] $\kappa$ carries a normal saturated ideal. 
\end{itemize}
\end{proposition}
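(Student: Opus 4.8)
The direction (b) $\Rightarrow$ (a) is immediate: a normal uniform ideal on $\kappa$ is automatically $\kappa$-complete, so a normal uniform $\kappa^+$-saturated ideal is in particular a saturated ideal in the sense fixed above. The content is therefore the forward direction, and the plan is to read a normal saturated ideal off the generic ultrapower associated with a given saturated ideal $\mci$, exploiting that saturation survives the passage to the ultrapower as a chain condition.

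First I would fix a uniform $\kappa$-complete $\kappa^+$-saturated ideal $\mci$. By Solovay's theorem cited earlier (saturated $\Rightarrow$ precipitous), forcing with $B$, the Boolean completion of $\ptm(\kappa)/\mci$, yields a well-founded generic ultrapower $j\colon\mbfV\to M$; since $\mci$ is uniform and $\kappa$-complete, it is forced that $\crp(j)=\kappa$ and $\kappa<j(\kappa)$. Note that $B$ is $\kappa^+$-c.c., since $\kappa^+$-saturation is preserved by passing to the completion. Next I would define the candidate ideal using the seed $\kappa=\crp(j)$: for $A\subseteq\kappa$ let $e(A)=\|\check\kappa\in j(\check A)\|_B$ be the Boolean value, and set $\mcJ=\{A\subseteq\kappa : e(A)=\mathbf 0\}$.

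Because $j$ is fully elementary it commutes with the Boolean operations on subsets of $\kappa$, and $\|\check\kappa\in j(\check\kappa)\|=\mathbf 1$; a short computation then shows that $e$ is a Boolean homomorphism of $\ptm(\kappa)$ into $B$ with $\Ker(e)=\mcJ$. I would then verify that $\mcJ$ is a uniform, $\kappa$-complete, normal ideal: $\kappa$-completeness because $j$ commutes with unions of length $<\kappa$ (so $e$ preserves $<\kappa$-joins); uniformity because every tail $\kappa\setminus\beta$ has $e$-value $\mathbf 1$ while every bounded set has $e$-value $\mathbf 0$; and normality by the standard seed argument, namely that for $\langle A_\xi\mid\xi<\kappa\rangle$ with each $A_\xi\in\mcJ$, the fact $\crp(j)=\kappa$ gives $(j\langle A_\xi\mid\xi<\kappa\rangle)_\xi=j(A_\xi)$ for $\xi<\kappa$, whence $\kappa\in j(\nabla_{\xi<\kappa}A_\xi)$ iff $\kappa\in j(A_\xi)$ for some $\xi<\kappa$, so that $e(\nabla_{\xi<\kappa}A_\xi)=\bigvee_{\xi<\kappa}e(A_\xi)=\mathbf 0$.

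Finally comes the heart of the argument: $\mcJ$ is $\kappa^+$-saturated. Since $\Ker(e)=\mcJ$, the map $e$ factors through an injective Boolean homomorphism $\bar e\colon\ptm(\kappa)/\mcJ\hookrightarrow B$, which sends nonzero elements to nonzero elements and incompatible elements to incompatible elements. Hence any antichain in $\ptm(\kappa)/\mcJ$ is carried to an antichain of the same cardinality in $B$, so the $\kappa^+$-c.c. of $B$ forces $\ptm(\kappa)/\mcJ$ to be $\kappa^+$-c.c.\ as well; that is, $\mcJ$ is the desired normal saturated ideal. The only genuinely delicate point is the bookkeeping that $e$ respects the Boolean structure and has kernel exactly $\mcJ$ — once that is in place, everything downstream is a formal consequence of injectivity into a $\kappa^+$-c.c.\ algebra, and it is precisely this chain-condition transfer (rather than mere precipitousness) where the saturation hypothesis is used.
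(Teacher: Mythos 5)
Your proof is correct, and it shares the paper's overall architecture: both arguments dispose of (b)$\Rightarrow$(a) by noting a uniform normal ideal is $\kappa$-complete, and both obtain the normal ideal in (a)$\Rightarrow$(b) from the seed $\kappa$ of the generic ultrapower (well-founded by Solovay's theorem), declaring $A$ null iff it is forced that $\check\kappa\notin j(\check A)$, and then transfer the $\kappa^+$-chain condition via an incompatibility-preserving map. Where you genuinely diverge is in how that transfer is implemented. The paper builds a concrete map $e:(\mcI^*)^+\to\mcI^+$ by first fixing a single function $f:\kappa\to\kappa$ in $\mbfV$ representing $\kappa$ in every generic ultrapower and setting $e(a)=\{\xi<\kappa\mid f(\xi)\in a\}$; the existence of such an $f$ is itself a second use of saturation (disjointifying a maximal antichain of conditions deciding representing functions, the ``standard techniques'' the paper cites). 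You bypass $f$ entirely by working with Boolean values: $A\mapsto\Vert\check\kappa\in j(\check A)\Vert_B$ is a homomorphism of $\ptm(\kappa)$ into the completion $B$ with kernel exactly $\mcJ$, so $\ptm(\kappa)/\mcJ$ embeds injectively, hence incompatibility-preservingly, into the $\kappa^+$-c.c.\ algebra $B$, and your verifications of completeness, uniformity, and normality by commuting $j$ with unions, tails, and diagonal unions are all sound. Your route is slightly more economical, with saturation entering only through precipitousness and the chain condition of $B$; the paper's version buys a little extra, namely that the reduction lands in $\mcI^+$ itself rather than in the completion, exhibiting the normal quotient concretely inside $\ptm(\kappa)/\mcI$. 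One point worth making explicit in your write-up: the appeal to Solovay is doing real work, not bookkeeping --- for a merely $\kappa$-complete ideal the ordinal $\kappa$ need not lie in the well-founded part of the generic ultrapower, so the expression $\Vert\check\kappa\in j(\check A)\Vert$ would not make sense as you use it without precipitousness.
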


\begin{proof}
A~standard elementary argument shows that any uniform normal ideal over
$\kappa$ is $\kappa$-complete, hence (a) follows immediately from (b).

To see that (b) follows from (a), assume $\mcI$ is a saturated ideal over
$\kappa$. Let $\mbbP_\mci$ be the partial ordering 
$(\mcI^+,\subseteq_\mci)$ and $\dot{U}$ be a $\mbbP_\mci$-term for
the normal $\mbfV$-ultrafilter over $\kappa$ derived from the generic
embedding $j_G:\mbfV\to M_G$ associated with $\ult(\mbfV,G)$ where $G$ is
$(\mbbP_\mci,\mbfV)$-generic. Let $\mcI^*\in\mbfV$ be the ideal over $\kappa$
defined by 
\[
a\in\mcI^*
\;\Longleftrightarrow\;\;\;
\forces^\mbfV_{\mbbP_\mci}\check{a}\notin\dot{U}.
\]
{Equivalently:
\[a\in \mcI^* \:\Longleftrightarrow\;\;\;
\forces^\mbfV_{\mbbP_\mci}\check{\kappa}\notin{j(\check{a})}.\]} A standard
argument shows that $\mcI^*$ is a uniform normal ideal over $\kappa$. To see
that $\mcI^*$ is saturated, we construct an incompatibility-preserving map
$e:(\mcI^*)^+\to\mcI^+$. Let $f:\kappa\to\kappa$ be a function in $\mbfV$
which represents $\kappa$ in $\ult(\mbfV,G)$ whenever $G$ is
$(\mbbP_\mci,\mbfV)$-generic. Since  $\mcI$ is saturated, such a function can
be constructed using standard techniques (see \cite{rvm}).
{Let 
\[e(a)\dfeq\{\xi<\kappa\mid f(\xi)\in a\}.\]}

Notice that for every $a\in\ptm(\kappa)^\mbfV$ and every
$(\mbbP_\mci,\mbfV)$-generic $G$, 
\[a\in\dot{U}^G
\;\Longleftrightarrow\;
\kappa\in j_G(a)
\;\Longleftrightarrow\;
[f]_G\in [c_a]_G
\;\Longleftrightarrow\;
e(a)\in G
\]
It follows from these equivalences that indeed $e(a)\in\mcI^+$ whenever
$a\in(\mcI^*)^+$. To see that $e$ is incompatibility preserving, we prove the
contrapositive. Assume $e(a),e(b)$ are compatible, so
$e(a)\cap e(b)\in\mcI^+$. Let $G$ be $(\mbbP_\mci,\mbfV)$-generic such that
$e(a)\cap e(b)\in G$. Then $e(a),e(b)\in G$, so $a,b\in\dot{U}^G$ by the above
equivalences. But then $a\cap b\in\dot{U}^G$, which tells us that
$a\cap b\in(\mcI^*)^+$. 
\end{proof}

We are now ready to formulate the main technical result of this section.

\begin{proposition}\label{star.p.tree}
Assume {$2^\kappa=\kappa^+$ and} there is no normal saturated ideal over $\kappa$. Let
$\gamma\le\kappa^+$ be an infinite regular cardinal and $\mcS$ be a winning
strategy for Player~II in $\mcG_2$ of length $\gamma$. Then there is a tree
$T(\mcS)$ which is a subtree of the partial ordering {$(\ptm(\kappa),\subseteq^*)$} such that
the following hold. 

\begin{itemize}
\item[(a)] The height of $T(\mcS)$ is $\gamma$ and $T(\mcS)$ is
\hyperlink{gammaclosed}{$\gamma$-closed.}

\item[(b)] If $Y,Y'\in T(\mcS)$ are $\subseteq^*$-incomparable then $Y,Y'$ are
almost disjoint. 
\item[(c)] There is an assignment $Y\mapsto P_Y$ assigning to each
$Y\in T(\mcS)$ a position $P_Y$ in $\mcG_2$ of successor length according to
$\mcS$ in which the
last move by Player~II is $Y$; we denote the last move of Player~I in
$P_Y$ by $\alpha(Y)$. The assignment $Y\mapsto P_Y$ has the following
property:
\[
Y'\subsetneq^* Y
\;\Longrightarrow\;
\mbox{ $\alpha(Y)<\alpha(Y')$ and $P_{Y'}$ is an extension of $P_Y$ .}
\]

\item[(d)] If $b$ is a branch of $T(\mcS)$ of length
$<\!\gamma$, let $P_b=\bigcup_{Y\in b}P_Y$. Then $P_b$ is a position in
$\mcG_2$ according to $\mcS$, and the set of all immediate successors of $b$
in $T(\mcS)$ is of cardinality $\kappa^+$.
Moreover the assignment $Y\mapsto\alpha(Y)$ is injective on this set. 
\end{itemize}

{
Finally, if $A\in\mcI(\mcS)^+$ then it is possible to construct the tree
$T(\mcS)$ in such a way that
\begin{equation}\label{star.e.a-in-t}
A\in T(\mcS)
\end{equation}
}
\end{proposition}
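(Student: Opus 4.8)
The plan is to build $T(\mcS)$ by recursion on its levels $\delta<\gamma$, maintaining alongside it the assignment $Y\mapsto P_Y$ of clause~(c) so that each node placed at level $\delta$ is the final Player~II move of a position of length exactly $\delta+1$ according to $\mcS$; thus tree-height and position-length will coincide, which is what will make the closure in (a) transparent. The one structural fact I would isolate first is a \emph{dichotomy} for moves: if $Y,Y'$ are moves of positions according to $\mcS$ with $\alpha(Y)<\alpha(Y')$, then since $Y\in N_{\alpha(Y)+2}\subseteq N_{\alpha(Y')+1}$ by Remark~\ref{definable 3}, the normal ultrafilter $U'$ that $Y'$ generates on $\ptm(\kappa)\cap N_{\alpha(Y')+1}$ decides $Y$; hence either $Y'\subseteq^* Y$ (when $Y\in U'$) or $Y\cap Y'$ is bounded (when $\kappa\setminus Y\in U'$). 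Consequently any two such moves are either $\subseteq^*$-nested or almost disjoint, so ``$\subseteq^*$-incomparable'' coincides with ``almost disjoint'' for the sets that will be nodes. Granting a splitting lemma (below), I would take the immediate successors of a branch $b$ to be a family of $\kappa^+$ pairwise almost disjoint one-step responses; the map $\alpha\mapsto Y(\alpha)$ sending a legal next ordinal $\alpha$ to Player~II's response is injective (again since $Y(\alpha)\in N_{\alpha+2}\setminus N_{\alpha+1}$ by Remark~\ref{definable 3}), so injectivity of $Y\mapsto\alpha(Y)$ on this set is automatic.

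The heart of the argument, and the step where the hypothesis enters, is the \textbf{splitting lemma}: for every position $P_b$ of length $<\gamma$ according to $\mcS$ there are $\kappa^+$ pairwise almost disjoint one-step responses. To prove it I would consider the length-one game conditioned on $P_b$, whose assignment $G$ sends each admissible next ordinal $\alpha$ to the pair $\langle\ptm(\kappa)\cap N_{\alpha+1},U_\alpha\rangle$, where $U_\alpha$ is the normal ultrafilter generated by Player~II's response $Y(\alpha)$. This $G$ satisfies clauses (i)--(iv) of an assignment (density being immediate, as one may take $\alpha$ large), so by Proposition~\ref{hopeless.p.completeness} the induced hopeless ideal $\mcI''=\mcI(G)$ is a proper, uniform, normal, $\kappa$-complete ideal on $\kappa$. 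I would then match antichains with almost disjoint families: given an antichain $\langle B_\xi\mid\xi<\kappa^+\rangle$ in $\ptm(\kappa)/\mcI''$, at most one $B_\xi$ lies in each $U_\alpha$, so the witnessing ordinals $\alpha_\xi$ are distinct, and the dichotomy forces the responses $Y(\alpha_\xi)\subseteq^* B_\xi$ to be pairwise almost disjoint, since a nesting would place a positive set inside $B_\xi\cap B_{\xi'}\in\mcI''$. Hence if no $\kappa^+$-sized almost disjoint family of responses existed, $\mcI''$ would be $\kappa^+$-saturated, i.e.\ a normal saturated ideal over $\kappa$, contradicting the hypothesis via Proposition~\ref{star.p.complete-vs-normal}. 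This is the step I expect to be the main obstacle: arranging that the conditioned ideal is a genuine \emph{normal} hopeless ideal and that its antichains correspond \emph{exactly} to almost disjoint families of one-step responses.

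With the splitting lemma in hand the remaining clauses fall out of the bookkeeping. For (a), since each level adds exactly one round, a branch of length $\delta<\gamma$ has $P_b$ of length $\delta<\gamma$; as $\mcS$ wins games of length $\gamma$, Player~I may play once more (first, at limit stages) and Player~II's response $Y$ lies $\subseteq^*$ below every node of $b$ by Remark~\ref{definable 3}, giving a node below the branch and hence $<\!\gamma$-closure, while applying the splitting lemma at each level makes the height exactly $\gamma$. For (b), if $Y,Y'$ are incomparable nodes let $b$ be their meet; they lie below distinct immediate successors $Y_1,Y_1'$ of $b$, which are almost disjoint by construction, so $Y\cap Y'\subseteq^* Y_1\cap Y_1'$ is bounded. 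For (c), positions are nested and Player~I's ordinals strictly increase along any branch, so $Y'\subsetneq^* Y$ (which by (b) forces $Y'$ to be a descendant of $Y$) yields both $P_{Y'}\supseteq P_Y$ and $\alpha(Y)<\alpha(Y')$.

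Finally, to arrange $A\in T(\mcS)$ for a given $A\in\mcI(\mcS)^+$, I would use positivity to fix a run according to $\mcS$ and a stage at which $A$ enters some $U^r_i$, so that the corresponding response $Y_0$ satisfies $Y_0\subseteq^* A$, and then seed the recursion so that $A$ is recorded as the $\subseteq^*$-maximal node with the construction proceeding below $Y_0$. The delicate point is that nodes are required to be actual Player~II moves, so placing $A$ literally at the root needs the witnessing position to be chosen with last move $A$; absent that, one falls back on the node $Y_0\subseteq^* A$, which is all the later density application in Theorem~\ref{t2} actually requires. Throughout, the genuinely nontrivial input is the splitting lemma and its reliance on the non-existence of a normal saturated ideal; everything else amounts to organizing the recursion so that tree-level, position-length, and the increasing sequence of Player~I ordinals stay synchronized.
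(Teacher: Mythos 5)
Your overall architecture coincides with the paper's: build the tree level by level, use the failure of $\kappa^+$-saturation of a conditioned hopeless ideal (Proposition~\ref{hopeless.p.completeness} together with Proposition~\ref{star.p.complete-vs-normal}) to extract an antichain of size $\kappa^+$, convert its members into pairwise almost disjoint Player~II responses via the ``the new ultrafilter decides the old node'' dichotomy, rule out nesting by the antichain property, and handle limits, closure, and clause~(c) by the same bookkeeping. But there is a genuine gap at exactly the point where you diverge from the paper: your dichotomy rests on $Y(\alpha)\in N_{\alpha+2}$, quoted from Remark~\ref{definable 3}. That remark is proved only for the particular strategy manufactured in Proposition~\ref{games.p.g-one-e-g-two} out of the $<_\theta$-least strategies (and even there under the side hypothesis that the relevant position lies in the model in question), whereas Proposition~\ref{star.p.tree} is stated --- and is later applied, e.g.\ in Propositions~\ref{star.p.precipitous} and~\ref{star.p.dense} --- for an \emph{arbitrary} winning strategy $\mcS$ for Player~II in $\mcG_2$. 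For arbitrary $\mcS$ there is no reason why the response to $\alpha$ should lie in $N_{\alpha+2}$, so for $\alpha_\xi<\alpha_{\xi'}$ the ultrafilter generated by $Y(\alpha_{\xi'})$ need not decide $Y(\alpha_\xi)$, and the conclusion of your splitting lemma (``pairwise almost disjoint'') is unjustified. Worse, your one-step synchronization (tree level $=$ position length) blocks the natural repair: the witnessing ordinals $\alpha_\xi$ are handed to you by $\mcI''$-positivity and cannot be inflated, since the response to a larger ordinal need not be almost contained in $B_\xi$.

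The paper supplies exactly the missing idea, at the cost of desynchronizing tree levels from position lengths: for each antichain member $X$ it fixes a position $Q_X$ of successor length extending $P_b$ whose last response is almost contained in $X$, and then, recursively in $\xi<\kappa^+$, extends $Q_X$ by \emph{one more} Player~I move $\alpha$ chosen so large that $X$ and all previously constructed successors $Y_{\bar\xi}$ belong to $N_{\alpha+1}$ (equation~(\ref{star.e.alpha-y})). The new response is still almost contained in $X$, it decides every earlier $Y_{\bar\xi}$ because those sets are now in its algebra, nesting is excluded by the antichain property exactly as in your argument, and the injectivity of $Y\mapsto\alpha(Y)$ and the ``$X\notin N_{\alpha(Y_{\bar\xi})+1}$'' bookkeeping come for free --- all without any definability assumption on $\mcS$. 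Clause~(a) survives the desynchronization because $\gamma$ is regular: along a branch of length $<\gamma$ the strictly increasing position lengths stay below $\gamma$, so $P_b$ remains a legal position. Two smaller points: the fact that a response extending a branch is $\subseteq^*$-below all of its nodes is just rule~(i) of $\mcG_2$, not Remark~\ref{definable 3}; and your fallback for the final clause --- seeding the construction with a node $Y_0\subseteq^* A$ when $A$ itself is not a move of $\mcS$ --- is sensible and is in substance what the paper's ``slight modification, left to the reader'' amounts to, since the application in Corollary~\ref{model.c.sets-in-i-s-star} only needs $A\in\mcI(\mcS^*)^+$.
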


Clause (d) in the above definition treats both successor and limit cases for
{${\gamma}$}. The successor case in (d) simply says that if $Y\in T(\mcS)$
then the conclusions in (d) apply to the set of all immediate successors of
$Y$ in $T(\mcS)$.

\begin{proof}
The tree $T(\mcS)$ is constructed by induction on levels. Limit stages of this
construction are trivial: If $\bar{\gamma}<\gamma$ is a limit and we have 
already constructed initial segments $T_{\gamma^*}$ of $T(\mcS)$ of height
$\gamma^*$ for all $\gamma^*<\bar{\gamma}$ so that (b) -- (d) hold with
$T_{\gamma^*}$ in place of $T(\mcS)$ and $T_{\gamma'}$ end-extends
$T_{\gamma^*}$ whenever $\gamma^*<\gamma'<\bar{\gamma}$ then it is easy to see
that $T_{\bar{\gamma}}=\bigcup_{\gamma^*<\bar{\gamma}}T_{\gamma^*}$ is a tree
with tree ordering $\supseteq^*$ end-extending all $T_{\gamma^*}$,
$\gamma^*<\bar{\gamma}$, and such that (b) -- (d) hold with $T_{\bar{\gamma}}$
in place of $T(\mcS)$. We will thus focus on the successor stages of the
construction. 

Assume $\bar{\gamma}<\gamma$ and $T(\mcS)$ is constructed at all levels up to
level $\bar{\gamma}$; our task now is to construct the $\bar{\gamma}$-th level
of $T(\mcS)$. Let $b$ be a cofinal branch through this initial segment of
$T(\mcS)$, so $b$ is of length $\bar{\gamma}$. We construct the set of immediate
successors of $b$ in $T(\mcS)$, along with the assignment $Y\mapsto P_Y$ on this
set, as follows. As we are assuming there is no normal
saturated ideal over $\kappa$, we can pick an antichain $\mcA$ in
$\mcI(\mcS,P_b)^+$ of cardinality $\kappa^+$. For each $X\in\mcA$ there is a
position $Q_X$ in $\mcG_2$ of successor length $<\gamma$ according to $\mcS$
extending $P_b$ such that the last move by Player~II in $Q_X$ is almost
contained in $X$. For the sake of definability we can let this position to be
$<_\theta$-least, where recall that $<_\theta$ is the \hyperlink{Fixisin}{{fixed} well-ordering of
$H_\theta$.} 

Now construct the set $\langle Y_\xi\mid\xi<\kappa^+\rangle$ of all immediate
successors of $b$ in $T(\mcS)$ recursively as follows. Assume $\xi<\kappa^+$
and we have already constructed the set
$\langle Y_{\bar{\xi}}\mid\bar{\xi}<\xi\rangle$ along with the
assignment $Y_{\bar{\xi}}\mapsto P_{Y_{\bar{\xi}}}$ with the desired
properties. Since each model $N_\beta$ is of cardinality $\kappa$, we can pick
{the $<_\theta$-least set} $X\in\mcA$ which is not an element of any $N_{\alpha(Y_{\bar{\xi}})+1}$
where $\bar{\xi}<\xi$. Now let Player~I extend $Q_X$ by playing {the least} ordinal
$\alpha$ such that 
\begin{equation}\label{star.e.alpha-y}
\{X\}\cup\{Y_{\bar{\xi}}\mid\bar{\xi}<\xi\}\subseteq N_{\alpha+1}.
\end{equation}
This is a legal move in $\mcG_2$ following $Q_X$. Let $Y$
be the response of the strategy $\mcS$ to
${Q_X}^\smallfrown\langle\alpha\rangle$.
We let $Y_\xi$ be this $Y$ and
$P_Y={Q_X}^\smallfrown\langle\alpha,Y\rangle$. Notice that $Y_\xi\subseteq^*X$,
as $Y_\xi$, being played according to $\mcS$, is almost contained in the last
move by Player~II in $Q_X$.

 We show:
\begin{equation}\label{star.e.last-level}
\mbox{Any two sets $Y\neq Y'$ on the $\bar{\gamma}$-th level are almost
disjoint.} 
\end{equation}
If $Y,Y'$ are above two distinct cofinal branches 
then this follows immediately from the induction hypothesis: Letting $Z$,
resp.\ $Z'$ be the immediate successor of $b\cap b'$ in $b$, resp.\ $b'$, we
have $Y\subseteq^*Z$ and $Y'\subseteq^*Z'$, and the induction hypothesis tells
us that $Z,Z'$ are almost disjoint. 

Now assume $Y,Y'$ are above the same branch
$b$; without loss of generality we may assume $Y=Y_\xi$ and $Y'=Y_{\xi'}$ in
the above enumeration and $\xi'<\xi$. Then we have $X, X', P_Y, P_{Y'}$ as in
the construction, with $Y\subseteq^* X$ and $Y'\subseteq^*X'$. Also
$\alpha(Y')<\alpha(Y)$.

 If $Y\subseteq^*Y'$ then
$Y\subseteq^* X\cap X'$, thus witnessing $X\cap X'\in\mcI(\mcS,P_b)^+$. This
contradicts the fact that $\mcA$ is an antichain in $\mcI(\mcS,P_b)^+$. It
follows that $Y\not\subseteq^*Y'$. Now for every $Z\in N_{\alpha(Y)+1}$ the set
$Y$ is either almost contained in or  almost disjoint from $Z$. As
$Y'\in N_{\alpha(Y)+1}$ by our choice of $\alpha(Y)$ in (\ref{star.e.alpha-y}),
necessarily $Y$ is almost disjoint from $Y'$. This proves
(\ref{star.e.last-level}).  

To verify that (b) -- (d) hold with the tree obtained by adding the immediate
successors of a single branch $b$ as described in the previous paragraph in
place of $T(\mcS)$, notice that~(c)
and~(d) immediately follow from the construction just described, so all we
need to check is clause~(b) and the fact that $\supseteq^*$ is still a tree
ordering after adding the entire $\bar{\gamma}$-th level. But clause~(b)
follows from the combination of (\ref{star.e.last-level}) with the induction
hypothesis and the fact that every set on the $\bar{\gamma}$-th level is
almost contained in some set on an earlier level. Finally, that adding the
$\bar{\gamma}$-th level keeps $\supseteq^*$ a tree ordering follows from
clause~(b). More generally, any collection $\mcX\subseteq\ptm(\kappa)$ which
satisfies~(b) with $\mcX$ in place of $T(\mcS)$ has the property that the set
of all $Y'\in\mcX$ which are 
$\supseteq^*$-predecessors of a set $Y\in\mcX$ is linearly ordered under
$\supseteq^*$. What now remains is to see that clause~(a) holds, but this is
immediate once we have completed all $\gamma$ steps of the construction. 

{
Finally, given a set $A\in\mcI(S)^+$, to see that we can construct the tree
$T(\mcS)$ so that (\ref{star.e.a-in-t}) holds, notice that we can put $A$ into
the first level of $T(\mcS)$ at the first step in the inductive
construction. This involves a slight modification of the construction of the
first level of $T(\mcS)$, and is left to the reader.}
\end{proof}

The new strategy $\mcS^*$ for Player~II in $\mcG_2$ is now obtained by,
roughly speaking, playing down the tree $T(\mcS)$. More precisely:

\begin{definition}\label{star.d.strategy}
Assume $\gamma\le\kappa^+$ is an infinite regular cardinal, $\mcS$ is a
winning strategy for Player~II in $\mcG_2$ of length $\gamma$, and $T(\mcS)$
is a subtree of the partial ordering $(\ptm(\kappa),\supseteq^*)$ satisfying (a) -- (d) in
Proposition~\ref{star.p.tree}. We define a strategy $\mcS^*$ for
Player~II in $\mcG_2$ of length $\gamma$ associated with $T(\mcS)$ recursively
as follows.

Assume
\[
P=\{(\alpha_i,Y_i)\mid i<j\}
\]
is a position in $\mcG_2$ of length $j<\gamma$ according to $\mcS^*$. Denote
the corresponding branch in $T(\mcS)$ by $b_P$, that is,
\[
b_P=\{Y_i\mid i<j\}.
\]
If $\alpha_j$ is a legal move of Player~I in $\mcG_2$ at position $P$ then
\[
\mcS^*(P^\smallfrown\langle\alpha_j\rangle)=
\parbox[t]{3in}{
the unique immediate successor $Y$ of $b_P$ in $T(\mcS)$ with
minimal possible $\alpha(Y)\ge\alpha_j$.} 
\]
Here recall that $\alpha(Y)$ is the last move of Player~I in $P_Y$. 
\end{definition}

As an immediate consequence of the properties of $T(\mcS)$ we obtain:

\begin{proposition}\label{star.p.star-winning}
Let $\gamma\le\kappa^+$ be an infinite regular cardinal and assume $T(\mcS)$
is as in Proposition~\ref{star.p.tree}. Then 
$\mcS^*$ is a winning strategy for Player~II in $\mcG_2$ of length~$\gamma$. 

Moreover, if
\[
r^*=\langle\alpha_i,Y_i\mid i<\gamma\rangle
\]
is a run of $\mcG_2$ of length $\gamma$ according to $\mcS^*$ then
\[
r=\bigcup_{i<\gamma}P_{Y_i}
\]
is a run of $\mcG_2$ of length $\gamma$ according to $\mcS$. 
\end{proposition}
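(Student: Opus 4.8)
The plan is to argue by induction on the length of positions that $\mcS^*$ prescribes a legal move at every round below $\gamma$, so that Player~II survives all $\gamma$ rounds, and then to read off the ``moreover'' clause directly from the assignment $Y\mapsto P_Y$. The engine of the induction is the invariant that any $\mcS^*$-position $P=\{(\alpha_i,Y_i)\mid i<j\}$ of length $j<\gamma$ has $b_P=\{Y_i\mid i<j\}$ equal to a $\supseteq^*$-chain in $T(\mcS)$, i.e.\ a branch of length $j$, in which each $Y_i$ is an immediate successor of $b_P\rst i$ with $\alpha(Y_i)$ least $\ge\alpha_i$. The base case and the successor step are immediate from the definition of $\mcS^*$. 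At a limit $\bar\gamma<\gamma$ the chain $\{Y_i\mid i<\bar\gamma\}$ has length of cofinality $<\gamma$, so clause~(a) of Proposition~\ref{star.p.tree} ($\gamma$-closedness) guarantees that this chain is a genuine branch of $T(\mcS)$ admitting a lower bound, whence clause~(d) applies to it.

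Next I would establish that $\mcS^*$ is everywhere defined. Given a legal move $\alpha_j<\kappa^+$ of Player~I at position $P$, clause~(d) of Proposition~\ref{star.p.tree} supplies $\kappa^+$ immediate successors of $b_P$ on which $Y\mapsto\alpha(Y)$ is injective; hence $\{\alpha(Y)\mid Y\text{ an immediate successor of }b_P\}$ is a subset of $\kappa^+$ of cardinality $\kappa^+$, and since $\kappa^+$ is regular it is unbounded. Therefore a least $\alpha(Y)\ge\alpha_j$ exists, and $\mcS^*$ returns the corresponding $Y_j$. This argument is uniform across successor and limit stages, so combined with the invariant it shows Player~II can always respond and thus survives all $\gamma$ rounds.

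The one genuinely delicate point is checking that the returned move $Y_j$ is legal in $\mcG_2$. Requirement~(i) is automatic: $Y_j$ is an immediate successor of the branch $b_P$, so $Y_j\subsetneq^* Y_i$ for every $i<j$, which also gives distinctness of the $Y_i$. Requirement~(ii) is where the gap between $\alpha_j$ and $\alpha(Y_j)$ must be absorbed: $Y_j$ was produced by $\mcS$ as a legal move relative to $N_{\alpha(Y_j)+1}$, where $\alpha(Y_j)\ge\alpha_j$, whereas the current round demands that $Y_j$ induce a uniform normal ultrafilter on the possibly smaller algebra $\ptm(\kappa)\cap N_{\alpha_j+1}$. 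I would resolve this by restriction: since $N_{\alpha_j+1}\subseteq N_{\alpha(Y_j)+1}$ and $\ptm(\kappa)\cap N_{\alpha_j+1}$ is a normal $\kappa$-subalgebra of $\ptm(\kappa)\cap N_{\alpha(Y_j)+1}$, the restriction to it of the uniform normal ultrafilter generated by $Y_j$ on the larger algebra is again a uniform normal ultrafilter (for normality one uses that the diagonal intersection of a $\kappa$-sequence lying in $N_{\alpha_j+1}$ is itself computed inside $N_{\alpha_j+1}$, and for uniformity that $Y_j$ is unbounded). This is the main obstacle, being the only place where the discrepancy $\alpha(Y_j)\ge\alpha_j$ has to be handled; with it in hand, every $\mcS^*$-move is legal, so $\mcS^*$ is a winning strategy.

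Finally, for the ``moreover'' clause I would invoke clause~(c) of Proposition~\ref{star.p.tree}. Along a run $r^*=\langle\alpha_i,Y_i\mid i<\gamma\rangle$ we have $Y_j\subsetneq^* Y_i$ whenever $i<j$, so $\alpha(Y_i)<\alpha(Y_j)$ and $P_{Y_i}\subseteq P_{Y_j}$; thus the positions $P_{Y_i}$ form a $\subseteq$-increasing chain and $r=\bigcup_{i<\gamma}P_{Y_i}$ is well-defined. Each $P_{Y_i}$ is a position according to $\mcS$, and nestedness makes every Player~II move occurring in $r$ the $\mcS$-prescribed response at its stage, so $r$ is a run according to $\mcS$ (this is clause~(d) applied to the branch $\{Y_i\mid i<\gamma\}$, viewed as the increasing union of its initial segments). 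For the length, each $|P_{Y_i}|$ is a successor ordinal $<\gamma$ and the sequence $\langle|P_{Y_i}|\mid i<\gamma\rangle$ is strictly increasing, so $|P_{Y_i}|\ge i$ forces $\sup_{i<\gamma}|P_{Y_i}|=\gamma$; hence $r$ has length $\gamma$. Since $\mcS$ is winning of length $\gamma$, $r$ is a complete run of $\mcG_2$ won by Player~II, which completes the proof.
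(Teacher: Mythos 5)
Your proof is correct and takes essentially the approach the paper intends: the paper states Proposition~\ref{star.p.star-winning} without proof, as an ``immediate consequence of the properties of $T(\mcS)$,'' and your argument is exactly the detailed verification from clauses (a)--(d) of Proposition~\ref{star.p.tree} together with Definition~\ref{star.d.strategy}. Your restriction argument for legality --- that the uniform normal ultrafilter generated by $Y_j$ on $\ptm(\kappa)\cap N_{\alpha(Y_j)+1}$ restricts to a uniform normal ultrafilter on the smaller algebra $\ptm(\kappa)\cap N_{\alpha_j+1}$ --- correctly supplies the one step the paper leaves implicit.
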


{
Before giving a proof of Theorem~\ref{t1}, we record the following obvious
fact, which will be useful in Section~\ref{s.model} in studying properties of
winning strategies for Player~II in games $\mcG_i$ of length $\gamma$, and
to which we will refer later.}

{
\begin{corollary}\label{model.c.sets-in-i-s-star}
Under the assumptions of Proposition~\ref{star.p.tree}, assume
$A\in\mcI(\mcS)^+$ and $T(\mcS)$ is constructed in such a way that
(\ref{star.e.a-in-t}) holds, that is, $A\in T(\mcS)$. Let $\mcS^*$ be the 
winning strategy for Player~II constructed as in
Definition~\ref{star.d.strategy} using this $T(\mcS)$. Then
$A\in\mcI(\mcS^*)^+$.   
\end{corollary}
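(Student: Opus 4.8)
The plan is to exhibit a single complete run of $\mcG_2$ according to $\mcS^*$ along which $A$ is placed into one of Player~II's ultrafilters; by Definition~\ref{hopeless.d.hi} this is exactly what certifies $A\in\mcI(\mcS^*)^+$. Using the final clause of Proposition~\ref{star.p.tree} I would first fix the tree $T(\mcS)$ so that $A\in T(\mcS)$, with $A$ sitting on the first level. Thus $A$ is an immediate successor of the empty branch and carries an associated position $P_A=\langle\alpha(A),A\rangle$ of length one according to $\mcS$, where $\alpha(A)$ is the last move of Player~I in $P_A$.

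First I would let Player~I open with $\alpha_0=\alpha(A)$. By Definition~\ref{star.d.strategy}, $\mcS^*$ answers with the immediate successor $Y$ of the empty branch having minimal $\alpha(Y)\ge\alpha_0$; since the map $Y\mapsto\alpha(Y)$ is injective on immediate successors by Proposition~\ref{star.p.tree}(d) and the value $\alpha(A)$ is attained (by $A$ itself), this minimum is $\alpha(A)$ and the response is forced to be $Y_0=A$. The hard part is precisely this first move: one must check that feeding $\alpha(A)$ into the minimality clause of Definition~\ref{star.d.strategy} returns exactly the node $A$, which is where injectivity of $\alpha(\cdot)$ on each level is essential. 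Note that a merely large opening move would be useless, since the first-level nodes are pairwise almost disjoint by Proposition~\ref{star.p.tree}(b), so any response other than $A$ would be almost disjoint from $A$ and could never place $A$ into an ultrafilter.

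Next I would have Player~I play an ordinal $\alpha_1<\kappa^+$ large enough that $A\in N_{\alpha_1+1}$; such an ordinal exists because $2^\kappa=\kappa^+$ yields $\ptm(\kappa)\subseteq\bigcup_{\beta<\kappa^+}N_\beta$. Since $A$ has $\kappa^+$ immediate successors in $T(\mcS)$ whose $\alpha$-values are injective, hence cofinal in $\kappa^+$ (Proposition~\ref{star.p.tree}(d)), the strategy $\mcS^*$ can respond, and its response $Y_1$ is an immediate successor of $A$, so $Y_1\subseteq^* A$ by the tree ordering $\supseteq^*$. Consequently the ultrafilter $U^{r^*}_1=\{X\in\ptm(\kappa)\cap N_{\alpha_1+1}\mid Y_1\subseteq^* X\}$ contains $A$, because $A\in N_{\alpha_1+1}$ and $Y_1\subseteq^* A$.

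Finally, since $\mcS^*$ is a winning strategy by Proposition~\ref{star.p.star-winning}, this position of length two extends to a complete run $r^*$ of $\mcG_2$ of length $\gamma$ according to $\mcS^*$, and along $r^*$ one has $A\in U^{r^*}_1$. By Definition~\ref{hopeless.d.hi} this gives $A\notin\mcI(\mcS^*)$, i.e.\ $A\in\mcI(\mcS^*)^+$, as desired.
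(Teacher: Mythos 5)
Your proof is correct, and it is essentially the paper's argument made explicit: the paper records this corollary as an ``obvious fact'' with no written proof, and your run --- steering $\mcS^*$ to answer with $A$ itself by opening with $\alpha_0=\alpha(A)$ (justified, as you note, by the injectivity of $Y\mapsto\alpha(Y)$ on the first level from Proposition~\ref{star.p.tree}(d)), then having Player~I play $\alpha_1$ with $A\in N_{\alpha_1+1}$ so that $Y_1\subseteq^* A$ forces $A\in U_1$, and finally extending to a complete run via Proposition~\ref{star.p.star-winning} so that Definition~\ref{hopeless.d.hi} applies --- is exactly the intended unwinding of the definitions, including the correct observation that one must wait for a later algebra since $A\notin N_{\alpha(A)+1}$. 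The only blemish is the unjustified parenthetical that $P_A=\langle\alpha(A),A\rangle$ has length one (the construction only guarantees that $P_A$ is a position of successor length whose last moves are $\alpha(A)$ and $A$), but your argument uses only the latter facts, so nothing is affected.
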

}

{One of the main points of passing to $\mcS^*$ is the following remark.
\begin{remark}
For any position $P$ of a partial run according to $\mcS^*$ of successor length with $Y$ being the last move by Player II, the \hyperlink{hopeless}{conditional hopeless ideal} $\mcI(\mcS^*,P)$ is equal to the unconditional hopeless ideal restricted to $Y$:
\[\mcI(\mcS^*, P)=\mcI(S^*)\rest Y.\]
\end{remark}
}

We now turn to a proof of Theorem~\ref{t1}. If there is a 
normal saturated ideal over $\kappa$ then there is nothing to
prove. Otherwise Player~II has a winning strategy in $\mcG_2(W_\omega)$ of
length $\omega$, as follows from
Propositions~\ref{games.p.g-zero-g-one-minus}(b),
\ref{games.p.passing-to-normal}(b) and~\ref{games.p.g-one-e-g-two}(b). The
conclusion in Theorem~\ref{t1} then follows from a more specific fact we
prove, namely  Proposition~\ref{star.p.precipitous} below. In the proof of
this proposition we will make use of the criterion for
precipitousness in terms of the ideal game, see Section~\ref{s.intro}.

\begin{proposition}\label{star.p.precipitous}
Assume there is no normal saturated ideal over $\kappa$. Let 
\begin{itemize}
\item $\mcS$ be a winning strategy for Player~II in $\mcG_2(W_\omega)$ of
length $\omega$, and 
\item $\mcS^*$ be the winning strategy constructed from $\mcS$ as in
Definition~\ref{star.d.strategy}. 
\end{itemize}
Then Player~I does not have a winning strategy in the ideal game
$\mcG(\mcI(\mcS^*))$. Consequently, the ideal $\mcI(\mcS^*)$ is precipitous. 
\end{proposition}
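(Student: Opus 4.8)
The plan is to apply the Galvin--Jech--Magidor criterion recalled in Section~\ref{s.intro}: since $\mcI(\mcS^*)$ is $\kappa$-complete, it suffices to show that Player~I has no winning strategy in the ideal game $\mcG(\mcI(\mcS^*))$. So I would fix an arbitrary strategy $\tau$ for Player~I in the ideal game and produce a run consistent with $\tau$ in which $\bigcap_n B_n\neq\varnothing$. The underlying idea is that Player~II plays her ideal-game moves while secretly building a cofinal branch $b$ through the tree $T(\mcS)$, guided by Player~I's moves, so that the associated run $P_b$ of $\mcG_2$ lands in the payoff set $W_\omega$.

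First I would run the following induction. When $\tau$ opens with $A_0\in\mcI(\mcS^*)^+$, the definition of the hopeless ideal says that positivity of $A_0$ is witnessed by a position according to $\mcS^*$; truncating it at the witnessing stage gives a node $Y_0\in T(\mcS)$ with $Y_0\subseteq^* A_0$ and, automatically, $A_0\in N_{\alpha(Y_0)+1}$ (because $A_0$ lies in the ultrafilter attached to that node, whose underlying algebra is $\ptm(\kappa)\cap N_{\alpha(Y_0)+1}$). Player~II plays $B_0=A_0\cap Y_0$, which equals $Y_0$ modulo a bounded set and hence is in $\mcI(\mcS^*)^+$. In general, once $B_n$ and the node $Y_n$ with position $P_{Y_n}$ have been defined, $\tau$ responds with $A_{n+1}\subseteq B_n$, so $A_{n+1}\subseteq^* Y_n$ and $A_{n+1}\in\mcI(\mcS^*)^+$. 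Here I would invoke the identity $\mcI(\mcS^*,P_{Y_n})=\mcI(\mcS^*)\restriction Y_n$ recorded before Proposition~\ref{star.p.precipitous}: it yields $A_{n+1}\in\mcI(\mcS^*,P_{Y_n})^+$, so the positivity of $A_{n+1}$ is witnessed by a position according to $\mcS^*$ extending $P_{Y_n}$, i.e.\ by a node $Y_{n+1}\subsetneq^* Y_n$ of $T(\mcS)$ with $Y_{n+1}\subseteq^* A_{n+1}$ and $A_{n+1}\in N_{\alpha(Y_{n+1})+1}$; Player~II then plays $B_{n+1}=A_{n+1}\cap Y_{n+1}$. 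By Proposition~\ref{star.p.tree}(c) the positions $P_{Y_n}$ increase with $n$, so the nodes $Y_n$ lie along a single cofinal branch $b$ of $T(\mcS)$; by Proposition~\ref{star.p.tree}(d), $P_b=\bigcup_{Y\in b}P_Y$ is a run of $\mcG_2$ of length $\omega$ according to $\mcS$, whence $P_b\in W_\omega$ since $\mcS$ is winning for $W_\omega$.

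It remains to deduce $\bigcap_n B_n\neq\varnothing$ from $P_b\in W_\omega$, and this is the step where the main care is needed. Write $P_b=\langle\alpha_i,Z_i\mid i<\omega\rangle$, so that the chosen nodes form a subsequence $Y_n=Z_{k_n}$ with $\alpha(Y_n)=\alpha_{k_n}$. The obstacle is that the nodes $Z_i$ are themselves \emph{not} available as test sets in the defining clause of $W_\omega$: by Remark~\ref{definable 3}, $Z_i\notin N_{\alpha_i+1}$ precisely because $Z_i$ induces an ultrafilter on that model. The device that resolves this is to use Player~I's own moves, which \emph{do} lie in the models at the relevant stages. I would set $X_{k_n}=A_n$ for each $n$ and $X_i=\kappa$ for every other $i$. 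Then $X_i\in N_{\alpha_i+1}$ and $Z_i\subseteq^* X_i$ for all $i$: for $i=k_n$ because $A_n\in N_{\alpha_{k_n}+1}$ and $Z_{k_n}=Y_n\subseteq^*A_n$, and trivially at the padding stages. The defining clause of $W_\omega$ therefore applies and gives $\bigcap_{i<\omega}X_i\neq\varnothing$.

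Finally, since the moves interleave as $A_0\supseteq B_0\supseteq A_1\supseteq B_1\supseteq\cdots$ with the $A_n$ decreasing, $\bigcap_{i<\omega}X_i=\bigcap_{n<\omega}A_n=\bigcap_{n<\omega}B_n$, so $\bigcap_n B_n\neq\varnothing$ and Player~II wins this run. As $\tau$ was arbitrary, Player~I has no winning strategy in $\mcG(\mcI(\mcS^*))$, and precipitousness of $\mcI(\mcS^*)$ follows from the Galvin--Jech--Magidor theorem. The crux of the whole argument is the padding trick in the previous paragraph, which converts the ``$\bigcap X_i\neq\varnothing$'' furnished by $W_\omega$ into the ``$\bigcap A_n\neq\varnothing$'' that Player~II needs, circumventing the fact that the branch nodes themselves are not legitimate test sets.
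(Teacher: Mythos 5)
Your proposal is correct and follows essentially the same route as the paper's proof: you diagonalize against an arbitrary strategy for Player~I by descending through $T(\mcS)$, using that $\mcI(\mcS^*)$-positivity of each $A_{n+1}$ is witnessed by a node $Y_{n+1}\subseteq^* A_{n+1}$ with $A_{n+1}\in N_{\alpha(Y_{n+1})+1}$, and then apply the $W_\omega$ payoff condition to the run $\bigcup_n P_{Y_n}$ of $\mcG_2$ according to $\mcS$. Your padding of the test sequence with $\kappa$ at the intermediate stages is a cosmetic variant of the paper's block-constant sequence $X'_i=X_{2n}$ for $\lht(P_{Z_n})\le i<\lht(P_{Z_{n+1}})$, just as playing $B_n=A_n\cap Y_n$ is a variant of the paper's choice of the longest tail-end of $Z_n$ contained in the previous move.
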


\begin{proof}
Assume $\mcS_\mcI$ is a strategy for Player~I in the ideal game
$\mcG(\mcI(\mcS^*))$. We construct a run in $\mcG(\mcI(\mcS^*))$ according to
$\mcS_\mcI$ which is winning for Player~II.
Odd stages in this run will come from positions in $\mcG_2$ played according
to $\mcS^*$; more precisely, they will be tail-ends of sets on those
positions. So suppose 
\[
Q=\langle X_0,X_1,X_2,X_3\dots,X_{2n-1}\rangle
\]
is the finite run of $\mcG(\mcI(\mcS^*))$ constructed so far, and 
\[
\beta_0,Z_0,\beta_1,Z_1,\cdots\beta_{n-1},Z_{n-1}
\]
is the associated auxiliary run
of $\mcG_2$ according to $\mcS^*$ such that $Z_i\subseteq^*X_{2i}$ and
\[
X_{2i+1}=\mbox{the longest tail-end of $Z_i$ that is contained in $X_{2i}$}
\]
for all $i<n$. 
Let $X_{2n}$ be the response of $\mcS_\mcI$ to $Q$ in
$\mcG(\mcI(\mcS^*))$. As $X_{2n}\in\mcI(\mcS^*)^+$, there is a finite position
in $\mcG_2$ according to $\mcS^*$ where the last move of Player~II is a
set almost contained in $X_{2n}$ and, letting $Z_n$ be this set, we also have
$X_{2n}\in N_{\alpha(Z_n)+1}$. 

As the sets $Z_n$ constitute an $\subseteq^*$-decreasing chain of nodes in
$T(\mcS)$, the  positions $P_{Z_n}$ extend $P_{Z_m}$ whenever $m<n$. 
By Proposition~\ref{star.p.star-winning}

\[
r=\bigcup_{n\in\omega}P_{Z_n}.
\]
is a run in $\mcG_2$ of length $\omega$ according to $\mcS$. Let 
\[
r=\langle\alpha_i,Y_i\mid i\in\omega\rangle
\] be this run.
For each $i\in\omega$ let
\[
X'_i=X_{2n}
\mbox{ where $n$ is such that $\lht(P_{Z_n})\le i<\lht(P_{Z_{n+1}})$.}
\]
Then
\[
\bigcap_{n\in\omega}X_n=\bigcap_{n\in\omega}X_{2n}=\bigcap_{i\in\omega} X'_i
\neq\varnothing. 
\]
Here the equality on the left comes from the fact that the sets $X_n$,
$n\in\omega$ constitute an $\subseteq$-descending chain, and the inequality on
the right follows from the fact that $X'_i\in N_{\alpha_i+1}$ and
$Y_i\subseteq^* X'_i$ for all $i\in\omega$, and that $\mcS$ is a winning
strategy for Player~II in $\mcG_2(W_\omega)$ of length $\omega$; see the last
paragraph in Definition~\ref{games.d.g-two}. 
\end{proof}
\noindent We remark that the proof of Proposition \ref{star.p.precipitous} shows Player II has a winning strategy in the ideal game $\mcI(\mcS^*)$.

\bigskip
The following proposition gives a proof of Theorem~\ref{t2}. Recall that all
background we have developed so far was under the assumption that $\kappa$ is
inaccessible and $2^\kappa=\kappa^+$. Also recall that by trivial observation
(TO3) at the beginning of Section~\ref{s.games} and results in
Section~\ref{s.games}, if Player~II has a winning strategy in $\mcG_0$ of
length $\gamma>\omega$ then Player~II has a winning strategy in
$\mcG_0(Q_\omega)$ of length $\omega$ and in $\mcG_2(W_\omega)$ of length
$\omega$, as well as in $\mcG_2$ of length $\gamma$ whenever $\gamma$ is
regular. By a similar argument, if Player~II has a winning strategy in
$\mcG_2$ of length $\gamma>\omega$ then Player~II has a winning strategy in
$\mcG_2(W_\omega)$ of length $\omega$.  Thus, under the assumptions of
Theorem~\ref{t2}, the assumptions of Proposition~\ref{star.p.dense} below are
not vacuous.

\begin{proposition}\label{star.p.dense}
Assume there is no normal saturated ideal over~$\kappa$ and $2^\kappa=\kappa^+$.
Let $\gamma\le\kappa^+$ be an uncountable regular cardinal. Assume further
that $\mcS$ and $\mcS^*$ are strategies as in 
Proposition~\ref{star.p.precipitous}, with $\gamma$ in place of $\omega$.

{Then $T(\mcS)$ is a $\gamma$-closed dense subset of
$\mcI(\mcS^*)^+$.  It follows that Player~I does not have a winning strategy in the ideal game
$\mcG(\mcI(\mcS^*))$. Consequently, the ideal $\mcI(\mcS^*)$ is
precipitous.}
\end{proposition}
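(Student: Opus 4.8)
The plan is to establish the two structural assertions about $T(\mcS)$ — density and $\gamma$-closure — separately, and then feed them into the Galvin--Jech--Magidor criterion (\cite{GJM}), which reduces precipitousness to Player~I having no winning strategy in the ideal game $\mcG(\mcI(\mcS^*))$.

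\emph{Density.} First I would make $\mcI(\mcS^*)^+$ explicit. By Definition~\ref{hopeless.d.hi} applied to the assignment attached to $\mcS^*$, a set $A$ is $\mcI(\mcS^*)$-positive iff $A\in U^r_i$ for some run $r=\langle\alpha_j,Y_j\mid j<\gamma\rangle$ of $\mcG_2$ according to $\mcS^*$ and some $i<\gamma$; by the rules of $\mcG_2$ this means $A\in N_{\alpha_i+1}$ and $Y_i\subseteq^* A$. Since $\mcS^*$ plays down the tree (Definition~\ref{star.d.strategy}), the move $Y_i$ is a node of $T(\mcS)$, so $A\in\mcI(\mcS^*)^+$ yields a node $Y_i\in T(\mcS)$ with $Y_i\subseteq^* A$, hence $Y_i\subseteq_{\mcI(\mcS^*)}A$ (bounded differences lie in the uniform ideal $\mcI(\mcS^*)$). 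This gives density once I check that every node is itself positive: given a node $Y$ with branch $b$ below it, Player~I can force $\mcS^*$ to respond with $Y$ by playing $\alpha(Y)$ (legitimate by the injectivity in clause~(d) of Proposition~\ref{star.p.tree}); extending this partial run by one step and using Remark~\ref{definable 3} to get $Y\in N_{\alpha_i+2}\subseteq N_{\alpha_{i+1}+1}$ together with $Y_{i+1}\subseteq^* Y$, I conclude $Y=Y_i\in U^r_{i+1}$, so $Y\in\mcI(\mcS^*)^+$.

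\emph{$\gamma$-closure.} This is essentially Proposition~\ref{star.p.tree}(a): a $\subseteq^*$-descending sequence of nodes of length $<\gamma$ is a chain in the tree order $\supseteq^*$ and so determines a branch of cofinality $<\gamma$, which by the $\gamma$-closure of the tree has a node of $T(\mcS)$ lying below every one of its elements. As $\gamma$ is uncountable, $T(\mcS)$ is in particular closed under $\subseteq^*$-descending $\omega$-sequences.

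\emph{Precipitousness.} Given a strategy $\mcS_\mcI$ for Player~I in $\mcG(\mcI(\mcS^*))$, I would produce a run won by Player~II. Whenever Player~I plays $A_n\in\mcI(\mcS^*)^+$, density supplies a node $Z_n\in T(\mcS)$ with $Z_n\subseteq^* A_n$; as $Z_n\setminus A_n$ is bounded, Player~II answers with the tail-end $B_n=Z_n\setminus\delta_n\subseteq A_n$ for suitable $\delta_n<\kappa$, again a positive set. Since $A_{n+1}\subseteq B_n$ and $Z_{n+1}\subseteq^* A_{n+1}$, the node $Z_{n+1}$ is $\subseteq^*$-below $Z_n$; being unbounded it is not almost disjoint from $Z_n$, so by Proposition~\ref{star.p.tree}(b) it lies on the same branch, and the $Z_n$ form a $\subseteq^*$-descending chain. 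By $\omega$-closure there is $Z_\omega\in T(\mcS)$ lying $\subseteq^*$-below every $Z_n$; since $Z_{n+1}\subseteq^* A_{n+1}\subseteq B_n$, this gives $Z_\omega\subseteq^* B_n$ for all $n$. The decisive point, and the only place the hypothesis that $\kappa$ is regular uncountable is used, is that $\bigcup_n(Z_\omega\setminus B_n)$ is a countable union of bounded sets, hence bounded by some $\delta^*<\kappa$; therefore the tail-end $Z_\omega\setminus\delta^*$ of the unbounded set $Z_\omega$ is contained in $\bigcap_n B_n$, so $\bigcap_n B_n\neq\varnothing$ and Player~II wins. Hence Player~I has no winning strategy and $\mcI(\mcS^*)$ is precipitous. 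The main obstacle is exactly this reconciliation: the tree order is almost-containment $\subseteq^*$ while the ideal game demands literal containments and a genuinely nonempty intersection, forcing the passage to tail-ends and the use of regularity of $\kappa$ to keep the accumulated bounded error below $\kappa$; everything else is bookkeeping with the definitions of $\mcG_2$, $\mcS^*$ and $\mcI(\mcS^*)$ and the already-established clauses (a)--(d) of $T(\mcS)$.
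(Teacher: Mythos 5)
Your proof is correct and follows essentially the same route as the paper: density comes straight from the definition of $\mcI(\mcS^*)$ together with the fact that $\mcS^*$ plays nodes of $T(\mcS)$, closure is Proposition~\ref{star.p.tree}(a), and precipitousness is the Laver-style argument in which Player~II descends through the tree and uses its $\gamma$-closure to find a node below the whole play. The only cosmetic differences are that you subtract bounded tails and invoke the regularity of $\kappa$ where the paper subtracts sets in $\mcI(\mcS^*)$ and invokes its countable completeness, and that you explicitly verify that every node of $T(\mcS)$ is $\mcI(\mcS^*)$-positive, a point the paper leaves implicit.
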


\begin{proof}

That $T(\mcS)$ is a $\gamma$-closed dense subset of $\mcI(\mcS^*)^+$ follows
immediately from the properties of $T(\mcS)$. If $A\in \mcI(\mcS^*)^+$, then there is a play of the game such that
$A$ is in the ultrafilter determined by some $Y_{\xi}$ played by Player II using $\mcS^*$. But then $Y_{\xi}\subseteq^*A$.  Since $Y_{\xi}$ is on $T(\mcS)$, we have shown that for every element of $\mcI(\mcS^*)^+$ there is an element of the tree below it.  Hence the tree is dense.

To see that $\mcI(\mcS^*)$ is precipitous, we use an argument originally due
to Laver. It follows the idea of Proposition \ref{star.p.precipitous} and  
shows that Player II has a winning strategy in the game $\mcG(\mcI(\mcS^*))$. At stage $n$ of the game 
suppose that Player I plays $X_{2n}$.   Player II chooses an $X'_{2n+1}\in T(\mcS)$ {(so 
$X'_{2n+1}\in \mcI(\mcS^*)^+$)} and $X'_{2n+1}\subseteq^*_{\mcI(S^*)}X_{2n}$.

{Let $A_n\in \mcI(S^*)$ be such that $X'_{2n+1}\setminus A_n\subset X_{2n}$.  Player II's response to $X_{2n}$ in $\mcG(\mcI(S^*))$ is $X_{2n+1}\dfeq X'_{2n+1}\setminus A_n$.  Let $A=\bigcup_nA_n$. Since 
$\mcI(S^*)$ is countably complete, $A\in \mcI(S^*)$. Let $X_\infty\in T(S)$, with $X_\infty\subseteq^*X'_{2n+1}$ for all $n$. Then:

{
\begin{equation*}
\bigcap_n X_n\ \ \supseteq\ \ \bigcap_n X_n\setminus A
\ \ \supseteq^*_{\mcI(\mcS^*)}\ \ X_\infty\setminus A.
\end{equation*}}

It follows that there is a set $B\in \mcI(S^*)$ such that $\bigcap X_n\supseteq X_\infty \setminus B$. Since $X_\infty\notin\mcI(S^*)$, $X_\infty \setminus B$ is not empty.  Hence
$\bigcap X_n\ne \emptyset$.}
\end{proof}

\bfni{Proof of Theorem~\ref{t4}}
\begin{proof}
Consider a uniform $\kappa$-complete ideal $\mcJ$ over $\kappa$ such that
$\ptm(\kappa)/\mcJ$ is $(\kappa^+,\infty)$-distributive and has a dense
$\gamma$-closed set. Because of notational convenience we will work with the
partial ordering $\mbbP_\mcJ=(\mcJ^+,\subseteq_\mcJ)$. (See also the partial ordering $\mbbP_\mcI$
used in the proof of Proposition~\ref{star.p.complete-vs-normal}.) Since
$a\mapsto[a]_\mcJ$ is a dense embedding of $\mbbP_\mcJ$ onto
$\ptm(\kappa)/\mcJ$, we can fix a dense $\gamma$-closed set
$D\subseteq\mbbP_\mcJ$. We work inside $H_\theta$ for a
sufficiently large $\theta$ and will use the
\hyperlink{Fixisin}{fixed well-ordering $<_\theta$ introduced in
Section~\ref{s.games}} to define a winning strategy $\mcS_\gamma$ for
Player~II in $\mcG^W_\gamma$. As usual, $\mcS_\gamma$ is defined inductively
on the length of runs.  

So assume
\[
\mcA_0,U_0,\mcA_1,U_1,\dots,\mcA_j,U_j,\dots
\]
is a run of $\mcG^W_\gamma$ according to $\mcS_\gamma$ for $j<i$. Along
the way, we define auxiliary moves $X_j$ played by Player~II; these moves are
elements of $D$, constitute a descending chain in the ordering by
$\subseteq_\mcJ$, and for each $j<i$,
\begin{equation}\label{star.e.x-j}
X_j\forces_{\mbbP_{\mcJ}}\dot{G}\cap\check{\mcA}_j=\check{U}_j.
\end{equation}
At step $i<\gamma$ Player~I plays a $\kappa$-algebra $\mcA_i$ on $\kappa$ of
cardinality $\kappa$ extending all $\mcA_j$, $j<i$. As $D$ is $\gamma$-closed
and $i<\gamma$, there is an element $X\in D$ below all $X_j$ in $\mbbP_\mcJ$,
$j<i$. If $G$ is a $(\mbbP_{\mcJ},\mbfV)$-generic filter such that $X\in G$
then by (\ref{star.e.x-j}), $U_j\subseteq G$ whenever $j<i$. Since
$\mbbP_{\mcJ}$ is $(\kappa^+,\infty)$-distributive and $\mcA_i\in\mbfV$
is of cardinality $\kappa$, the intersection $G\cap\mcA_i$ is an element of
$\mbfV$, and is a uniform $\kappa$-complete ultrafilter on $\mcA_i$ extending
all $U_j$ where $j<i$. This is then forced by some condition $Y\in G$ such
that $Y\subseteq_\mcJ X$, hence $Y\subseteq_\mcJ X_j$ for all $j<i$. As $D$ is
dense in $\mbbP_{\mcJ}$, $Y$ can be chosen to be an element of $D$. The
following is thus not vacuous. We define 
\[
X_i=\parbox[t]{3.5in}{
the $<_\theta$-least element $Y$ of $D$ such that $Y\subseteq_\mcJ X_j$ for all
$j<i$ and there is a $U\in\mbfV$ satisfying  
$Y\forces_{\mbbP_{\mcJ}}\dot{G}\cap\check{\mcA}_i=\check{U}$
}
\]
and
\[
U_i=\mbox{the unique $U\in\mbfV$ such that
$X_i\forces_{\mbbP_{\mcJ}}\dot{G}\cap\check{\mcA}_i=\check{U}$.}
\]
Letting
\[
\mcS_\gamma
(\langle\mcA_j,U_j\mid j<i\rangle^\smallfrown\langle\mcA_i\rangle)
=U_i,
\]
it is straightforward to verify that $\mcS_\gamma$ is a winning strategy
for Player~II in $\mcG^W_\gamma$. 
\end{proof}

%%%%%%%%%%%%%%%%%%%%%%%%%%%%%%%%%%%%%%%%%%%%%%%%%%%%
%%%%%%%%%%%%%%%%%%%%%%%%%%%%%%%%%%%%%%%%%%%%%%%%%%%%
%%%%%%%%%%%%%%%%%%%%%%%%%%%%%%%%%%%%%%%%%%%%%%%%%%%%
%%%%%%%%%%%%%%%%%%%%%%%%%%%%%%%%%%%%%%%%%%%%%%%%%%%%

\section{The Model}\label{s.model}

In this section we give a construction of a model where the following holds. 
\begin{equation}\label{model.e.inaccessible}
\mbox{$\kappa$ is inaccessible and carries no saturated ideals}
\end{equation}
and
\begin{equation}\label{model.e.ideal-existence}
\parbox{4in}
{For every regular uncountable $\gamma\le\kappa$ there is an ideal
$\mcJ_\gamma$ on $\ptm(\kappa)$ as in Theorem~\ref{t3}, that is, $\mcJ_\gamma$
is uniform, normal, $\gamma$-densely treed and
$(\kappa^+,\infty)$-distributive.}  
\end{equation}
The model is a forcing extension of a universe $\mbfV$ in which the following
{are} satisfied. 
\begin{itemize}
\item[(A)] $\gch$.
\item[(B)] \hypertarget{B}{$U$ is a normal measure} on $\kappa$.
\item[(C)] \hypertarget{(C)}{$\langle T_{\alpha,\xi}\mid\xi<\alpha^+\rangle$
           is a disjoint 
           sequence of stationary subsets of $\alpha^+\cap\cff(\alpha)$
	   whenever $\alpha\le\kappa$ is inaccessible. }
\item[(D)]\hypertarget{(D)}{ Assume $\mbfV[K]$ is a generic extension via a set-size
	   forcing which preserves $\kappa^+$}, and, {
	   in  $\mbfV[K]$ 
	     \begin{itemize}
	     \item there is a definable  class elementary embedding  $j':\mbfV\to M'$ where $M'$
	               is transitive, and  
	     \item Letting 
	     \begin{eqnarray*}\langle\langle
		T'_{\alpha,\xi}\mid\xi<\alpha^+\rangle\mid\alpha\le j'(\kappa)
	           \mbox{ is inaccessible in }M'\rangle\rangle =\\
                   j'(\langle\langle T_{\alpha,\xi}\mid\xi<\alpha^+\rangle\mid
                   \alpha\le\kappa\mbox{ is inaccessible}\rangle)
		\end{eqnarray*}
	      $\mbfV,M'$ agree on what $H_{\kappa^+}$ is and
	     $T'_{\kappa,\xi}=T_{\kappa,\xi}$ whenever $\xi<\kappa^+$. 
	     \end{itemize}}
\end{itemize}

We will informally explain the purpose of the sets $T_{\alpha,\xi}$ before we
begin with the construction of the model. These sets are not needed for the
construction of ideals $\mcJ_\gamma$ in Theorem~\ref{t3}, but only for the
proof that $\kappa$ does not carry a saturated ideal in our model. To
understand this proof, it suffices to accept (D) as a black box, that is, it
is not necessary to understand how the system of sets $T_{\alpha,\xi}$ is
constructed. 

Proper class models satisfying (A) -- (D) are known to exist, and can be produced via the so-called background certified constructors. The two most used background certified constructions are $K^c$-constructions and fully background certified constructions
If 
there is a proper class inner model with a measurable cardinal then any
$\Kc$-construction (see for instance \cite{oimt} for $\Kc$-constructions of
models with Mitchell-Steel indexing of extenders, and \cite{imlc} for
$\Kc$-constructions with Jensen's $\lambda$-indexing) performed
inside such a model gives rise to a fine structural proper class model
satisfying (A) -- (D). We will sketch a proof of this fact below in
Proposition~\ref{model.p.sets-t}. Similar conclusions are true of fully background certified constructions, but one needs to assume that a measurable cardinal exists in $\mathbf{V}$.

There is some similarity in the argument in Proposition~\ref{model.p.sets-t} of
the existence of a sequence of mutually disjoint stationary subsets
$T_{\kappa,\xi}$ of $\kappa^+$ which behave nicely with 
respect to the ultrapower by a normal ultrafilter on $\kappa$ to a similar {claim
in} \cite{nnuf} where it is proved that one can have such sequence of
stationary sets in $\mbfL[U]$.

A background certified construction as above gives rise to a model of the form
$\mbfL[E]$ where $E=\langle E_\alpha\mid\alpha\in\On\rangle$ is such that each
$E_\alpha$ either codes an extender in a way made precise, or 
$E_\alpha=\varnothing$. Additionally, a model of this kind admits a detailed
fine structure theory. There is an entire family of such models, so called
fine structural models; the internal first order theory of these models is
essentially the same, up to the large cardinal axioms.  There are $L[E]$ models with the properties
needed for the construction in this paper that satisfy the statement:

\begin{quotation}
\noindent There is a Woodin cardinal $\kappa$ that is a limit of Woodin cardinals,
\end{quotation}
as is shown in \cite{wlw}.

 We now list some
notation, terminology and general facts which will be used for the proof of
\hyperlink{(C)}{(C)} and \hyperlink{(D)}{(D)}. Clauses (A) and (B) follow from 
the construction of the {$L[E]$} model,
and their proofs can be found in \cite{oimt} or \cite{imlc}. In fact, each
proper initial segment of the model is acceptable in the sense of fine
structure theory. We omit the technical definition here and merely say that
acceptability is a local form of $\gch$, and is proved along the way  
the model is constructed.

From now on assume $W=\mbfL[E]$ is a fine structural extender model with indexing of extenders 
as in \cite{oimt} or in  Chapter~9 of \cite{imlc} (which covers all $\mathbf{L}[E]$ models discussed above). We often write
$E^W$ in place of $E$ to emphasize that $E$ is the extender sequence of $W$.   
\begin{itemize}
\item[FS1] $W \cut\alpha$ is the initial segment of $W$ of height $\omega\alpha$
with the top predicate, that is, $W\cut\alpha=(J^E_\alpha,E_{\omega\alpha})$.
\item[FS2] If $\alpha$ is a cardinal of $W$ then $E_\alpha=\varnothing$. Thus,
in this case $W\cut\alpha=(J^E_\alpha,\varnothing)$ and we identify this
structure with $J^E_\alpha$.
\item[FS3] If $\mu$ is a cardinal of $W$ then the structure $W\cut\mu$
calculates all cardinals and cofinalities $\le\mu$ the same way as $W$. This
is a consequence of acceptability.
\item[FS4] $\beta(\tau)$ is the unique $\beta$ such that $\tau$ is a cardinal
in $W\cut\beta$ but not in $W\cut(\beta+1)$.
\item[FS5] $\varrho^1$ stands for the first projectum; that
$\varrho^1(W\cut\beta)\le\alpha$ is equivalent to saying that there is a
surjective partial map $f:\alpha\to J^E_\beta$ which is $\Sigma_1$-definable
over $W\cut\beta$ with parameters.
\item[FS6] (Coherence.) If $i:W\to W'$ is a $\Sigma_1$-preserving map in
possibly some outer universe of $W$ such that $\kappa$ is the critical point
of $i$ and $\tau=(\kappa^+)^W$ then $E^{W'}\rst\tau=E^W\rst\tau$.
\item[FS7] (Cores.) Assume $\alpha$ is a cardinal in $W$ and $N$ is a
structure such that $\varrho^1(N)=\alpha$ and there is a $\Sigma_1$-preserving
map $\pi$ of $N$ into a level of $W$ such that $\pi\rst\alpha=\idm$. Let $p_N$
be the $<^*$-least finite set of ordinals $p$ such that there is a set of
ordinals $a$ which is $\Sigma_1(N)$-definable in the parameter $p$ and 
satisfies $a\cap\alpha\notin N$. Here
$<^*$ is the usual well-ordering of finite sets of ordinals, that is, finite
sets of ordinals are viewed as descending sequences and $<^*$ is the
lexicographical ordering of these sequences. Let $X$ be the $\Sigma_1$-hull of
$\alpha\cup\{p_N\}$ and $\sigma:\bar{N}\to N$ be the inverse of the collapsing
isomorphism. Then $\rho^1(\bar{N})=\alpha$, the models $\bar{N},N$ agree on
what $\ptm(\alpha)$ is, and $\pi$ is $\Sigma_1$-preserving and maps $\bar{N}$
cofinally into $N$. In this situation, $\bar{N}$ is called the core of $N$ and
$\sigma$ is called the core map.
\item[FS8] (Condensation lemma.) Assume $\alpha$ is a cardinal in $W$ and
$N,\bar{N},\pi$ and $\sigma$ are as in FS7. Then $\bar{N}$ is a level of $W$,
that is, $\bar{N}=W\cut\beta$ for some $\beta$.  
\end{itemize}

\begin{proposition}\label{model.p.sets-t}
\hypertarget{Talphas}{There is a formula $\varphi(u,v,w)$ in the language of
extender models such that the following holds. If $W=\mbfL[E]$ is a fine
structural extender model, $\alpha$ is an inaccessible cardinal of $W$ and
$\xi<\alpha^+$, letting 
\[
T_{\alpha,\xi}=
\{\tau\in\alpha^+\cap\cff(\alpha)\mid
W\cut(\alpha^+)^W\models\varphi(\tau,\alpha,\xi)\}, 
\]
each $T_{\alpha,\xi}$ is a stationary subset of $\alpha^+\cap\cff(\alpha)$ in
$W$, and $T_{\alpha,\xi}\cap T_{\alpha,\xi'}=\varnothing$ whenever
$\xi\neq\xi'$. Moreover, the sequence
$(\langle T_{\alpha,\xi}\mid\xi<\alpha^+\rangle\mid\alpha\le\kappa
 \mbox{ is inaccessible in $W$})$
satisfies clause \hyperlink{(D)}{(D)} above with $W$ in place of $\mbfV$.}
\end{proposition}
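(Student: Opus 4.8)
The plan is to read the colour $\xi$ of a point $\tau\in\alpha^+\cap\cff(\alpha)$ off the fine structure of the least level of $W$ that collapses $\tau$, and to arrange $\varphi$ so that $T_{\alpha,\xi}$ collects the points of colour $\xi$. Concretely, for such $\tau$ let $N_\tau=W\cut(\beta(\tau)+1)$ be the least level in which $\tau$ ceases to be a cardinal (FS4); this level, or an appropriate reduct of it, has first projectum $\le\alpha$ and standard parameter $p=p_{N_\tau}$, and carries a $\Sigma_1$-definable surjection of $\alpha$ onto $\tau$. Let $\bar N_\tau$ be the transitive collapse of the $\Sigma_1$-hull of $\alpha\cup\{\tau,p\}$ in $N_\tau$; by FS7 and the condensation lemma FS8, $\bar N_\tau=W\cut\gamma_\tau$ is again a level of $W$ with $\gamma_\tau<\alpha^+$, and the collapse carries $\tau,p$ to $\bar\tau,\bar p$. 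I would then let $\xi(\tau)<\alpha^+$ be the rank, in the canonical ($\Sigma_1$-definable, order type $\alpha^+$) well-ordering of $W\cut\alpha^{+W}$, of the datum $(\gamma_\tau,\bar p,\bar\tau)$, and set $T_{\alpha,\xi}=\{\tau:\cff(\tau)=\alpha\ \wedge\ \xi(\tau)=\xi\}$. Since every ingredient (FS1--FS8, the canonical well-order) is uniformly $\Sigma_1$ over $W\cut\alpha^{+W}$ and only $\tau,\alpha,\xi$ occur as parameters, this is given by a single formula $\varphi(\tau,\alpha,\xi)$ not depending on $\alpha$; and disjointness $T_{\alpha,\xi}\cap T_{\alpha,\xi'}=\varnothing$ for $\xi\neq\xi'$ is immediate because $\tau\mapsto\xi(\tau)$ is a partial function.

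The substantial point, and the step I expect to be the main obstacle, is that each $T_{\alpha,\xi}$ is stationary in $\alpha^+\cap\cff(\alpha)$. I would argue by a condensation/reflection argument in the style of the $\mathbf{L}[U]$ construction of \cite{nnuf}: fixing a club $C\subseteq\alpha^+$ and a prescribed colour, build a continuous $\subseteq$-increasing chain of $\Sigma_1$-elementary substructures of a large enough level $W\cut\lambda$, take a closure point $\tau\in C$ of cofinality $\alpha$, and show that its collapse datum $(\gamma_\tau,\bar p,\bar\tau)$ can be made to realize the prescribed colour. The heart of this is a realization lemma: the map $\tau\mapsto\xi(\tau)$ takes $\alpha^+$-many of its values on stationary sets, because stationarily many $\tau$ share the same transitive collapse $W\cut\gamma$ with the same distinguished point $\bar\tau$, the uncollapse maps $\sigma_\tau$ (satisfying $\sigma_\tau\rst\alpha=\idm$ and $\sigma_\tau(\bar\tau)=\tau$) being the only thing that varies. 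If only some colours turn out to be realized stationarily, one re-indexes the (necessarily $\alpha^+$-many) stationary colours into an $\alpha^+$-sequence, definably and uniformly; this bookkeeping is routine once the realization lemma is in hand.

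Clause \hyperlink{(D)}{(D)} should then fall out of the locality of $\varphi$ together with coherence. Working with $W$ in place of $\mathbf V$: under the hypotheses of (D), $j':W\to M'$ is elementary and the forcing preserves $\kappa^+$, so $\kappa^{+M'}=\kappa^{+W}$; coherence (FS6) gives $E^{M'}\rst\kappa^{+W}=E^{W}\rst\kappa^{+W}$, whence $W$ and $M'$ have the same $H_{\kappa^+}$, agree on cofinalities below $\kappa^+$, and in particular share the very same level $W\cut\kappa^{+W}=M'\cut\kappa^{+M'}$ --- which is the first half of the conclusion of (D). Now $T_{\kappa,\xi}$ is by definition the set computed from $\varphi$ over $W\cut\kappa^{+W}$, while $T'_{\kappa,\xi}$, being the $\kappa$-component of $j'$ applied to the canonical sequence, is by elementarity of $j'$ (and since $\varphi$ is a fixed formula mapped to itself) exactly the set computed from the same $\varphi$ over $M'\cut\kappa^{+M'}$. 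As these two structures coincide, $T'_{\kappa,\xi}=T_{\kappa,\xi}$ for every $\xi<\kappa^+$, completing (D). Thus the genuinely new work is confined to the stationarity/realization lemma of the second paragraph; disjointness and the verification of (D) are then either immediate or a direct consequence of the fine-structural facts FS1--FS8.
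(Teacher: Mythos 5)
There is a genuine gap, and it sits exactly where you predicted: the stationarity of the colour classes. The deeper problem is that your colouring cannot support a stationarity proof. Under your own hypothesis that $N_\tau$ (or its reduct) has first projectum $\le\alpha$ with parameter $p$, the $\Sigma_1$-Skolem function with parameter $p$ maps $\alpha$ onto $N_\tau$, so the $\Sigma_1$-hull of $\alpha\cup\{\tau,p\}$ is all of $N_\tau$; the collapse is then trivial, $\bar\tau=\tau$, and your datum $(\gamma_\tau,\bar p,\bar\tau)$ determines $\tau$ outright, making each $T_{\alpha,\xi}$ at most a singleton. And when that hypothesis fails — which it does in general, since the first collapsing level $W\cut\beta(\tau)$ need only project below $\tau$, possibly to an intermediate ordinal $\eta\in(\alpha,\tau)$ that is still a cardinal there, not to $\alpha$ — your "surjection of $\alpha$ onto $\tau$" is not available. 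Either way the construction as written degenerates. The paper's definition is engineered precisely to dodge this: the colour of $\tau$ is the pair of conditions "$\varrho^1(W\cut\beta(\tau))=\alpha$ and $W\cut\beta(\tau)$ has exactly $\xi+1$ cardinals above $\alpha$", an invariant you can \emph{prescribe in advance} by choosing the ambient level of the hull. Given a club $C$ and a target $\xi$, one takes the $\Sigma_1$-hull $X$ of $\alpha\cup\{C,\xi,\alpha^{+\xi+1}\}$ in $W\cut\alpha^{+\xi+2}$, sets $\tau=X\cap\alpha^+\in C$, shows $\cff^W(\tau)=\alpha$ by a separate $\Sigma_1$-cofinality computation, and uses FS7/FS8 to identify the core of the collapse with $W\cut\beta(\tau)$, whose cardinal structure above $\alpha$ has order type exactly $\xi+1$. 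Your unproved "realization lemma" is this entire argument, and nothing in a rank-of-collapse-datum colouring lets you dial in the value $\xi$.

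Your fallback — re-indexing "the stationarily realized colours" — is also fatal rather than routine bookkeeping: stationarity of a subset of $\alpha^+$ is not expressible over the structure $W\cut\alpha^{+W}$ (clubs witnessing it live at higher levels), and it is not absolute between $\mbfV$, $M'$ and the relevant generic extensions. But the uniform formula $\varphi$ must be evaluated inside $W\cut\alpha^{+W}$, and your own (correct, and the same as the paper's) verification of clause (D) — $\kappa^+$-preservation plus coherence FS6 giving agreement of the models on $W\cut\kappa^{+}$, hence on the sequence — depends entirely on that locality. A stationarity-dependent re-indexing would make $T'_{\kappa,\xi}=T_{\kappa,\xi}$ require agreement about which classes are stationary, which is exactly what you do not have. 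So the disjointness observation and your (D) paragraph are fine, but the colouring itself and the stationarity argument — the substance of the proposition — are missing or broken.
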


\begin{proof}
Since the definition of $\langle T_{\alpha,\xi}\mid\xi<\alpha^+\rangle$ takes
place inside $W\cut(\alpha^+)^W$, any two extender models $W,W'$ such that
$(\alpha^+)^W=(\alpha^+)^{W'}$ and $E^W\rst\alpha^+=E^{W'}\rst\alpha^+$
calculate this sequence the same way (here $\alpha^+$ stands for the common
value of the cardinal successor of $\alpha$ in both models). Now if $\mbfV=W$
and $j$ is as in (D) above then
\[
T'_{\alpha,\xi}=
\{\tau\in(\alpha^+)^{M'}\cap\cff(\alpha)\mid
M'\cut(\alpha^+)^{M'}\models\varphi(\tau,\alpha,\xi)\}, 
\]
whenever $\alpha\le j'(\kappa)$ is inaccessible in $M'$, 
so to see that $T'_{\kappa,\xi}=T_{\kappa,\xi}$ for all $\xi<\kappa^+$ it
suffices to prove that $(\kappa^+)^{M'}=(\kappa^+)^{\mbfV}$ and
$E^\mbfV\rst\kappa^+=E^{M'}\rst\kappa^+$ (where again $\kappa^+$ stands for the
common value of the cardinal successor of $\kappa$ in $\mbfV$ and
$M'$). Regarding the former, the inequality
$(\kappa^+)^{\mbfV}\le(\kappa^+)^{M'}$ is entirely general and follows from
the fact that 
$\ptm(\kappa^\mbfV)\subseteq\ptm(\kappa)^{M'}$. The reverse inequality follows
from the assumption that the generic extension preserves $\kappa^+$, so
$(\kappa^+)^{\mbfV}$ remains a cardinal in $M'$. The latter is then a
consequence of the coherence property FS6.

It remains to come up with a formula $\varphi$ such that the sets
$T_{\alpha,\xi}$ are stationary  in $W$ for all $\alpha,\xi$ of interest, and
pairwise disjoint. Here we make a more substantial use of the fine structure
theory of $W$. Given an inaccessible $\alpha$ and a $\xi<\alpha^+$, letting 
\begin{equation}\label{model.e.collapsing-level}
T_{\alpha,\xi}\dfeq\;
\parbox[t]{3.5in}{
the set of all $\tau\in\alpha^+\cap\cff(\alpha)$ such that
$\varrho^1(W\cut\beta(\tau))=\alpha$ and $W\cut\beta(\tau)$ has $\xi+1$
cardinals above $\alpha$,} 
\end{equation}
it is clear that $T_{\alpha,\xi}\cap T_{\alpha,\xi'}=\varnothing$ whenever
$\xi\neq\xi'$. Then it suffices to show that 
\begin{equation}\label{model.e.t-stationarity}
\mbox{$T_{\alpha,\xi}$ is stationary in $W$,}
\end{equation}
as we can then take $\varphi$ be the defining formula for the system
$(T_{\alpha,\xi})_{\alpha,\xi}$.

The first step toward the proof of (\ref{model.e.t-stationarity}) is the
following observation.
\begin{equation}\label{model.e.collapsing-level-cofinality}
\parbox{4in}{
Assume $\nu>\alpha$ is regular in $W$, $p\in W\cut\nu$ and $X$ is the
$\Sigma_1$-hull of $\alpha\cup\{p\}$ in $W\cut\nu$. Let
$\nu^X=\sup(X\cap\nu)$. Then $\cff^W(\nu^X)=\alpha$.
} 
\end{equation}

\begin{proof}
Obviously, $\gamma=\cff^W(\nu^X)\le\alpha$. Assume for a contradiction that
$\gamma<\alpha$. Let 
$\langle\nu_i\mid i<\gamma\rangle$ be an increasing sequence converging to
$\nu^X$ such that $\nu_i\in X$ for every $i<\gamma$. For each such $i$ pick a
$j_i\in\omega$ and an ordinal $\eta_i<\alpha$ such that
$\nu_i=h_{W\cut\nu}(j_i,\langle\eta_i,p\rangle)$ where $h_{W\cut\nu}$ is the
standard $\Sigma_1$-Skolem function for $W\cut\nu$. Here $W\cut\nu$ is of the
form $\langle J^E_\nu,\varnothing\rangle$ (see FS2), and we identify it with
the structure $J^E_\nu$. The Skolem function $h_{W\cut\nu}$ has a
$\Sigma_1$-definition of the form $(\exists w)\psi(w,u_0,u_1,v)$ where
$\psi$ is a $\Delta_0$-formula in the language of extender models. (The
standard $\Sigma_1$-Skolem function has a uniform $\Sigma_1$-definition, which
means that there is a $\Sigma_1$-formula which defines a
$\Sigma_1$-Skolem function $h_N$ over every acceptable structure $N$. However,
the argument below does not make use of uniformity of the definition.) Since
$\nu>\alpha$ is regular, 
\begin{equation}\label{model.e.skolem}
(\exists\bar{\nu})
\Big( J^E_{\bar{\nu}}\models(\forall i<\gamma)(\exists w)(\exists v)
\psi(w,j_i,\langle\eta_i,p\rangle,v) \Big)
\end{equation}
Since the statement in (\ref{model.e.skolem}) is $\Sigma_1$, there is some such
$\bar{\nu}$ with $J^E_{\bar{\nu}}\in X$. To justify this note that the sequences
$\langle\eta_i\mid i<\gamma\rangle$ and $\langle j_i\mid j<\gamma\rangle$ are
elements of $X$ as $J^E_\alpha\subseteq X$, and we can view these sequences as
parameters in the formula in (\ref{model.e.skolem}). Fix such an
ordinal~$\bar{\nu}$. Now consider $i<\gamma$ such that
$\nu_i>\omega\bar{\nu}$. Using 
(\ref{model.e.skolem}) pick $z$ and $\nu^*$ in $J^E_{\bar{\nu}}$ such that
$J^E_{\bar{\nu}}\models\psi(z,j_i,\langle\eta_i,p\rangle,\nu^*)$. Since $\psi$
is $\Delta_0$, we actually have
$J^E_\nu\models\psi(z,j_i,\langle\eta_i,p\rangle,\nu^*)$, which tells us that
$\nu^*=h_{W\cut\nu}(j_i,\langle\eta_i,p\rangle)=\nu_i$. As
$\nu_i>\omega\bar{\nu}$, this is a contradiction. This completes the proof of 
(\ref{model.e.collapsing-level-cofinality}). 
\end{proof}

Now let $C$ be a club subset of $\alpha^+$, $X$ be the $\Sigma_1$-hull of
$\alpha\cup\{C,\xi,\alpha^{+\xi+1}\}$ in $W\cut\alpha^{+\xi+2}$, $N$ be the
transitive collapse of $X$, and $\pi:N\to W\cut\alpha^{+\xi+2}$ be the inverse
of the collapsing isomorphism. Let further
$\tau=X\cap\alpha^+=\crp(\pi)$. Then $\tau>\xi$ as
$\alpha\cup\{\xi\}\subseteq X$. It is a standard fact that
$\cff^W(\tau)=\cff^W(\sup(X\cap\On))$ (and can be proved similarly as
(\ref{model.e.collapsing-level-cofinality}) above). Now
$\cff^W(\sup(X\cap\On))=\alpha$ by (\ref{model.e.collapsing-level-cofinality}),
hence $\cff^W(\tau)=\alpha$. Moreover $\tau\in C$ as $C$ is closed and $\tau$
is a limit point of $C$. Thus, the proof of
(\ref{model.e.collapsing-level}) will be complete once we show that
$\varrho^1(W\cut\beta(\tau))=\alpha$ and $W\cut\beta(\tau))$ has $\xi+1$
cardinals above $\kappa$. We first look at the set of cardinals in $N$.

By acceptability, the
structures $W\cut\alpha^{+\xi+1}$ and $W\cut\alpha^{+\xi+2}$ agree on what is a
cardinal below $\alpha^{+\xi+1}$. It follows that in $W\cut\alpha^{+\xi+2}$, the
statement
\[
\mbox{``The order type of the set of cardinals in the interval
$(\alpha,\alpha^{+\xi+1})$ is $\xi$"}
\]
can be expressed in a $\Sigma_1$-way as
\begin{equation}\label{model.e.cardinals-sigma-1}
\mbox{``The order type of the set of cardinals above $\alpha$ in the structure
$W\cut\alpha^{+\xi+1}$ is $\xi$."}
\end{equation}
Since $\pi$ is $\Sigma_1$-preserving and $\crp(\pi)=\tau$, this
$\Sigma_1$-statement can be pulled back to $N$ via $\pi$. Also by the
$\Sigma_1$-elementarity of $\pi$ we have $\pi^{-1}(\alpha^{+\xi+1})$ is the
largest cardinal in $N$. Then, using acceptability in $N$, we conclude:
\begin{equation}\label{model.e.cardinals-n}
\mbox{The order type of the set of cardinals above $\alpha$ in $N$ is $\xi+1$.}
\end{equation}
By construction, the $\Sigma_1$-Skolem function of $N$ induces a partial
surjection of $\alpha$ onto $N$. Then $\varrho^1(N)\le\alpha$ by FS5. Since
$\alpha$ is a cardinal in $W$, we conclude $\varrho^1(N)=\alpha$. Let
$\bar{N}$ be the core of $N$ and $\sigma:\bar{N}\to N$ be the core map. By
FS7, $\varrho^1(\bar{N})=\alpha$ and $\ptm(\alpha)^{\bar{N}}=\ptm(\alpha)^N$,
so in particular $\tau=(\alpha^+)^N=(\alpha^+)^{\bar{N}}$. By FS8,
$\bar{N}=W\cut\beta$ for some $\beta$. Since $\varrho^1(\bar{N})=\alpha$, FS5
implies $\beta=\beta(\tau)$. To see that $\bar{N}=W\cut\beta(\tau)$ has
$\xi+1$ cardinals above $\alpha$, first notice that, since by FS7 the map
$\sigma$ is cofinal, the largest cardinal in $N$ must be in the range of
$\sigma$. This along with (\ref{model.e.cardinals-n}) provides a
$\Sigma_1$-definition of $\xi$ in $N$ from parameters in $\rng(\sigma)$. The
point here is that we can reformulate the notion of cardinal in $N$ below
$\alpha^{+\xi+1}$ as the cardinal in the sense of the structure
$N\cut\alpha^{+\xi+1}$, similarly as in (\ref{model.e.cardinals-sigma-1}). It
follows that $\xi\in\rng(\sigma)$, and since $\xi<(\alpha^+)^N$ we have
$\xi<\crp(\sigma)$. Then, using the $\Sigma_1$-reformulation of
(\ref{model.e.cardinals-n}) one more 
time, we conclude that $\alpha^{+\eta}\in\rng(\sigma)$ for every $\eta\le\xi$,
which means that $W\cut\beta(\tau)=\bar{N}$ has $\xi+1$ cardinals above
$\alpha$. This completes the proof of (\ref{model.e.collapsing-level}) and
thereby the proof of Proposition~\ref{model.p.sets-t}.
\end{proof}

\subsection{The tools} \label{forcing tools}
Two main tools we will use to construct the forcing used to build our model
are club shooting with initial segments, and adding non-reflecting stationary
sets with initial segments. We then use variations of standard techniques for
building ideals using elementary embeddings. The background information on the
first two can be found in \cite{ifee}, \cite{CFM},  \cite{cfm1} and on ideal
constructions in \cite{quotalg}, but we review the relevant facts for the
reader's convenience. When discussing the successor of a regular cardinal
$\lambda$ we will often assume $\gch$ {even} when it is known that
$\lambda^{<\lambda}$ suffices. Since the models we work in satisfy the $\gch$
this  is not important for our results. 

Recall that if $S\subseteq\lambda^+$ is a stationary set (where
 $\lambda$ is a cardinal) then the club shooting partial ordering $\csp(S)$
consists of closed bounded subsets of $\lambda^+$ which are contained in 
$S$, and is ordered by end-extension. In general, this partial ordering may not have good
preservation properties, but if {$S$} is sufficiently large then it is known to
be highly distributive. {The following is standard.}

\begin{proposition}[See \cite{cummingssings}, \cite{CFM}, \cite{cfm1}]\label{model.p.distributivity}
{Assume  $\lambda$ is regular, $\lambda^{<\lambda}=\lambda$ and $T$ is a subset of
$\lambda^+$ such that $T\cap\alpha$ is non-stationary in $\alpha$ whenever
$\alpha<\lambda^+$, and $(\lambda^+\cap\cff(\lambda))\smallsetminus T$ is
stationary.} Then the following hold. 
\begin{itemize}
\item[(a)] $\csp(\lambda^+\smallsetminus T)$ is
$(\lambda^+,\infty)$-distributive, that is, it does not add any new function
$f:\lambda\to\mbfV$. In particular, generic extensions of $\mbfV$ via
$\csp(\lambda^+\smallsetminus T)$ agree with $\mbfV$ on all cardinals and
cofinalities $\le\lambda^+$, and on what $H_{\lambda^+}$ is.
\item[(b)] If $\gamma\le\lambda$ is regular and
$T\subseteq\lambda^+\cap\cff(\gamma)$ then $\csp(\lambda^+\smallsetminus T)$
has a dense set which is $\gamma$-closed but if $T$ is stationary then it does
not have a dense set which is $\gamma^+$-closed.
\item[(c)] If $G$ is $(\csp(\lambda^+\smallsetminus T),\mbfV)$-generic then
$C_G=\bigcup G$ is a closed unbounded subset of $\lambda^+$ such that
$C_G\subseteq\lambda^+\smallsetminus T$.
\end{itemize}
\end{proposition}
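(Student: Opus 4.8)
The plan is to prove the three clauses separately, treating (c) as essentially immediate, (a) as the main content, and (b) as a short consequence of the analysis of limit stages carried out for (a). Throughout I write $\mathbb P=\csp(\lambda^+\smallsetminus T)$ and $S=\lambda^+\smallsetminus T$, and I recall that conditions are closed bounded subsets of $S$ ordered by end-extension. For (c), first note that $S$ is unbounded in $\lambda^+$, since $(\lambda^+\cap\cff(\lambda))\smallsetminus T$ is stationary; hence for each $\beta<\lambda^+$ the set $\{q:\max q>\beta\}$ is dense (extend any $q$ by a single point of $S$ above $\max(q\cup\{\beta\})$). A genericity argument then gives that $C_G=\bigcup G$ is unbounded; it is contained in $S$ because every condition is; and it is closed because any limit point $\delta$ of $C_G$ is a limit point of a single $q\in G$ with $\sup q>\delta$, whence $\delta\in q$ by closure of $q$.

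For (a), the goal is to show that the intersection of any family $\langle D_\xi:\xi<\mu\rangle$ of dense open sets with $\mu\le\lambda$ is dense, which is precisely $(\lambda^+,\infty)$-distributivity. Fix $p\in\mathbb P$. First I would use $\lambda^{<\lambda}=\lambda$ to build a continuous increasing chain $\langle N_\eta:\eta<\lambda\rangle$ of elementary substructures of $(H_\theta,\in,<_\theta)$ containing $\mathbb P,T,p,\langle D_\xi\rangle$, with $\lambda\subseteq N_\eta$, $\langle N_{\eta'}:\eta'\le\eta\rangle\in N_{\eta+1}$, and $\delta_\eta:=N_\eta\cap\lambda^+$ an ordinal. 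Writing $\delta=\sup_\eta\delta_\eta$ we get $\cof(\delta)=\lambda$, and, crucially, using the stationarity of $(\lambda^+\cap\cff(\lambda))\smallsetminus T$ the chain can be chosen so that $\delta\notin T$. Then I would recursively construct a decreasing sequence $\langle q_\eta:\eta<\lambda\rangle$ below $p$ with $q_\eta\in N_{\eta+1}$, meeting $D_\eta$ by stage $\eta+1$ (by density together with elementarity of $N_{\eta+1}$, which keeps the maximum below $\delta_{\eta+1}$), and with the maxima cofinal in $\delta$. The final condition $q=\bigcup_\eta q_\eta\cup\{\delta\}$ is then legal (this is where $\delta\notin T$ is used), lies below $p$, and, each $D_\xi$ being open, belongs to every $D_\xi$. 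From $(\lambda^+,\infty)$-distributivity the ``In particular'' assertions follow in the usual way: no new $\lambda$-sequence of ground-model sets, hence no new subset of any $\gamma\le\lambda$, so all cardinals and cofinalities $\le\lambda^+$ are preserved, and, using $\lambda^{<\lambda}=\lambda$ to code hereditarily small sets by subsets of $\lambda$, $H_{\lambda^+}$ is unchanged.

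The step I expect to be the main obstacle is exactly the limit stages of the condition-construction: to close off $\bigcup_{\eta<\xi}q_\eta$ into a condition I must add its supremum $s$, and this would be illegal if $s\in T$, yet I must also keep everything inside the chain so that dense sets can be met with maxima bounded below $\delta$. This is where both hypotheses on $T$ do their work. The point is that $T\cap\delta_\eta$ is non-stationary in $\delta_\eta$ for every $\eta$, and this fact is known inside $N_{\eta+1}$ (which contains $T$ and $\delta_\eta$), so $N_{\eta+1}$ contains a club of $\delta_\eta$ disjoint from $T$; choosing the ``capping'' points of the $q_\eta$ along these coherently ($<_\theta$-least) chosen clubs forces every limit-stage supremum to be a limit point of a $T$-avoiding club, hence outside $T$, while simultaneously keeping the construction definable inside the $N_{\eta+1}$. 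The stationarity of the complement at $\cff(\lambda)$ then supplies the one remaining point, the top ordinal $\delta\notin T$. The only genuinely delicate bookkeeping is this alignment of the capping points with the chain, and it is precisely what avoids the apparent circularity of fixing a global club $E\subseteq\delta$ before $\delta$ is determined.

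For (b), the $\gamma$-closure is immediate and in fact holds for all of $\mathbb P$: given a decreasing sequence of conditions of length $\rho$ with $\cof(\rho)<\gamma$, its supremum $s$ satisfies $\cof(s)=\cof(\rho)<\gamma$, so $s\notin\lambda^+\cap\cff(\gamma)\supseteq T$; hence $\bigcup_iq_i\cup\{s\}$ is a legal lower bound and $\mathbb P$ itself serves as the required dense $\gamma$-closed set. For the non-existence of a dense $\gamma^+$-closed set I would argue by contradiction, using that $T$ is stationary (which holds in all the intended applications and is the natural hypothesis making this half non-vacuous). Given a dense $\gamma^+$-closed $D$, choose $N\prec H_\theta$ with $D,\mathbb P,T\in N$, $\gamma\subseteq N$ and $t:=\sup(N\cap\lambda^+)\in T$ (possible since the set of such suprema is stationary and $T$ is stationary in $\cff(\gamma)$). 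Building inside $N$ a decreasing sequence in $D$ of length $\gamma$ with maxima cofinal in $t$ — using $\gamma^+$-closure of $D$ for the lower bounds at the fewer-than-$\gamma$-many limit stages, and density plus elementarity to keep the maxima below $t$ — the $\gamma^+$-closure of $D$ would yield a lower bound $q\in D\subseteq\mathbb P$ having $t$ as a limit point, forcing $t\in q\subseteq S$ and contradicting $t\in T$.
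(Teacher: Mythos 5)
Your overall architecture — internally approachable chain of length $\lambda$, recursion meeting the dense sets inside the models, stationarity of $(\lambda^+\cap\cff(\lambda))\smallsetminus T$ to put the final cap $\delta$ outside $T$ — is the standard one (the paper gives no proof, citing \cite{CFM}, \cite{cfm1}, \cite{cummingssings}), and your clauses (c), the $\gamma$-closure half of (b), and the ``in particular'' consequences of (a) are all correct. But the limit-stage mechanism you propose for (a), which you yourself identify as the main obstacle, does not work. Once you arrange $\delta_\eta<\max q_{\eta+1}<\delta_{\eta+1}$ — and you must, both to keep $q_{\eta+1}$ an element of $N_{\eta+2}$ (so that elementarity bounds the maxima) and to make the maxima cofinal in $\delta$ — the supremum reached at a limit stage $\xi$ is exactly $\delta_\xi=N_\xi\cap\lambda^+$, \emph{independently of which capping points you chose}. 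So the ability to close off at stage $\xi$ depends only on whether $\delta_\xi\in T$, and that is fixed by the model chain before the recursion on conditions ever starts. The local clubs $E_{\delta_\eta}\subseteq\delta_\eta$ witnessing non-reflection cannot help: they live on ordinals $\le\delta_\xi$, and a club in $\delta_\xi$ disjoint from $T$ says nothing about the point $\delta_\xi$ itself; moreover $<_\theta$-least choices give no coherence among the $E_{\delta_\eta}$'s, so a limit of points drawn from \emph{different} $T$-avoiding clubs need not avoid $T$. The step you dismissed as circular is in fact the correct and non-circular repair: first build a long continuous chain $\langle M_j\mid j<\lambda^+\rangle$ whose trace $C=\{j\mid M_j\cap\lambda^+=j\}$ is club; pick $\delta\in C\cap\bigl((\lambda^+\cap\cff(\lambda))\smallsetminus T\bigr)$ by stationarity; \emph{then} apply non-reflection once at this fixed $\delta$ to get a club $E\subseteq\delta$ with $E\cap T=\varnothing$; and only then select the length-$\lambda$ chain $\langle N_\eta\rangle$ from among the $M_j$ with every $\delta_\eta\in E\cap C$. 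At limit $\xi$ the required cap $\delta_\xi$ is then a limit of points of $E$, hence in $E$, hence outside $T$; and the membership issue you were trying to dodge never arises, because the caps actually used are the ordinals $\delta_\eta$, which lie in $N_{\eta+1}$ by internal approachability — $E$ itself is never needed inside any $N_\eta$.

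The same soft spot recurs in your proof of the second half of (b). With a single model $N$ and an externally chosen cofinal sequence $\langle t_i\rangle$ in $t$, the sequence $\langle q_j\mid j<i\rangle$ is not an element of $N$ at limit stages $i$, so ``density plus elementarity'' is unavailable there: $\gamma^+$-closure of $D$ gives \emph{some} lower bound in $D$, but nothing bounds its maximum below $t$ (a lower bound end-extends everything, and $D$ may only contain such bounds with large maxima), and once a maximum jumps past $t$ you cannot recover. The fix is the same device as in (a): replace the single $N$ by an internally approachable chain $\langle N_i\mid i<\gamma\rangle$ with $t=\sup_i(N_i\cap\lambda^+)\in T$ (available since $T$ is stationary in $\cff(\gamma)$), make every choice $<_\theta$-least so that initial segments of the construction lie in the later models, and let the squeeze $\delta_i<\max q_{i+1}<\delta_{i+1}$ force the maxima to converge to $t$; any lower bound in $D\subseteq\csp(\lambda^+\smallsetminus T)$ would then have $t$ as a limit point, contradicting $t\in T$. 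Two things you did get exactly right and deserve note: this half of (b) genuinely requires $T$ to be stationary (the proposition's statement omits this, but it holds in every application in the paper, where $T$ is a generic non-reflecting stationary set or the coding set $T_\gamma(S)$), and the first half of (b) indeed holds for the whole poset, no dense subset needed.
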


{To show that there is no saturated ideal in the model of Theorem \ref{t3} and Corollary \ref{c6} we will need to see that the forcing for shooting a closed unbounded set through the complement of a non-reflecting stationary set $A$ preserves stationary sets disjoint from $A$. This is the content of the next proposition that appears in \cite{CFM}, \cite{cfm1} and \cite{cummingssings}. We give the proof here for the reader's convenience.}

\begin{lemma}\label{what you need} Assume $\lambda$ is an uncountable cardinal with 
$\lambda^{<\lambda}=\lambda$, $A_1, A_2$ are disjoint stationary subsets of $\lambda^+$ 
and  that for all $\delta<\lambda^+, A_2\cap \delta$ is non-stationary. If $G\subseteq \csp(\lambda^+\smallsetminus A_2)$ is generic, then $A_1$ remains stationary in $V[G]$. 
\end{lemma}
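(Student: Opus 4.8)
The plan is to verify that $\csp(\lambda^+\setminus A_2)$ forces ``$A_1$ is stationary'' by the usual master-condition argument. Fix a name $\dot C$ and a condition $p$ with $p\forces$``$\dot C$ is a closed unbounded subset of $\lambda^+$''; it then suffices to produce $q\le p$ and an ordinal $\beta\in A_1$ with $q\forces\check\beta\in\dot C$. Recall that conditions in $\csp(\lambda^+\setminus A_2)$ are closed bounded subsets of $\lambda^+\setminus A_2$ ordered by end-extension, so building $q$ amounts to constructing a closed set threading $A_1$ at its top while deciding cofinally many points of $\dot C$ below that top.

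First I would fix a large regular $\theta$ and build, using $\lambda^{<\lambda}=\lambda$, a continuous increasing elementary chain $\langle M_\xi\mid\xi<\lambda^+\rangle$ of submodels of $(H_\theta,\in,<_\theta)$, each of size $\lambda$, with $\lambda\subseteq M_0$, $\{p,\dot C,\csp(\lambda^+\setminus A_2),A_1,A_2\}\in M_0$, $\langle M_\eta\mid\eta\le\xi\rangle\in M_{\xi+1}$, and $\delta_\xi:=M_\xi\cap\lambda^+$ an ordinal for every $\xi$. Then $\{\delta_\xi\mid\xi<\lambda^+\}$ is club, so since $A_1$ is stationary the set $\{\xi\mid\delta_\xi\in A_1\}$ is stationary and contains a limit ordinal $\xi^*$; put $\delta^*=\delta_{\xi^*}$, so $\delta^*\in A_1$, $\delta^*=\sup_{\xi<\xi^*}\delta_\xi$, and $\gamma:=\cof(\delta^*)=\cof(\xi^*)\le\lambda$. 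Because $A_1\cap A_2=\varnothing$ we have $\delta^*\notin A_2$, and the non-reflection hypothesis gives a club $E\subseteq\delta^*$ with $E\cap A_2=\varnothing$; this club is the device that will let me pass limit stages.

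The heart of the argument is a simultaneous recursion producing a descending sequence of conditions $\langle p_i\mid i<\gamma\rangle$, ordinals $\zeta_i\in\dot C$ decided cofinally in $\delta^*$, and an increasing sequence $\langle\eta_i\rangle$ cofinal in $\xi^*$, maintaining the invariants $\max(p_i)\in E$ and $p_i\in M_{\eta_i+1}$. At a successor step, since $p_i,\dot C,\max(p_i)$ all lie in $M_{\eta_i+1}$ and it is dense to decide a point of $\dot C$ above $\max(p_i)$, elementarity yields $r\le p_i$ and $\zeta_i>\max(p_i)$ in $M_{\eta_i+1}$ with $r\forces\check\zeta_i\in\dot C$; as $M_{\eta_i+1}\cap\lambda^+<\delta^*$ we get $\zeta_i<\delta^*$. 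I then extend $r$ by its least $E$-point on top, $p_{i+1}=r\cup\{e_{i+1}\}$ with $e_{i+1}=\min(E\setminus(\max r+1))<\delta^*$; this is legal because $e_{i+1}\in E$ avoids $A_2$, and since $e_{i+1}<\delta^*$ it lies in some later $M_{\eta_{i+1}}$, keeping the invariant $p_{i+1}\in M_{\eta_{i+1}+1}$. At a limit stage $j<\gamma$ the supremum $s_j=\sup_{i<j}\max(p_i)=\sup_{i<j}e_i$ is a limit point of $E$ below $\delta^*$, hence lies in $E$ and so avoids $A_2$; thus $p_j=\bigcup_{i<j}p_i\cup\{s_j\}$ is a legal condition. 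Finally $q=\bigcup_{i<\gamma}p_i\cup\{\delta^*\}$ is a condition (its top $\delta^*\notin A_2$, and its lower limit points come from the closed $p_i$), $q\le p_i$ for all $i$, and $q$ forces $\langle\zeta_i\rangle$ to be a cofinal sequence in $\dot C\cap\delta^*$; since $\dot C$ is forced closed this gives $q\forces\check\delta^*\in\dot C$, and $\delta^*\in A_1$ completes the argument.

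The main obstacle is precisely the passage through limit stages: $\csp(\lambda^+\setminus A_2)$ need not be closed, so the union of a descending chain is extendible only when its supremum avoids $A_2$. The two ideas that resolve this must be run together: threading the tops through the non-reflecting club $E$ so that all limit suprema fall in $E\subseteq\delta^*\setminus A_2$, while simultaneously keeping each $p_i$ inside the model $M_{\eta_i+1}$ so that the decided values $\zeta_i$ stay below $\delta^*$. The one bookkeeping point requiring care is that the $E$-points threaded into the conditions lie below the current model's height $\delta_{\eta_i+1}$ and are therefore genuine elements of the models, so the two requirements do not conflict.
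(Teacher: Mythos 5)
Your architecture is essentially the paper's: a chain of elementary submodels of $(H_\theta,\in,<_\theta)$, a point $\delta^*\in A_1$ realized as the $\lambda^+$-trace of a model on the chain, a club $E\subseteq\delta^*$ disjoint from $A_2$ obtained from non-reflection, and a descending $\gamma$-sequence of conditions whose tops thread $E$ (so that limit suprema land in $E$ and avoid $A_2$) while the decided points of $\dot C$ are trapped below $\delta^*$ by elementarity. (The paper phrases this as a contradiction from $p\Vdash\dot D\cap A_1=\emptyset$ and runs the recursion inside the single top model $N_\delta$, using $N_\delta^{<\cff(\delta)}\subseteq N_\delta$; that difference is cosmetic.) However, there is one genuine gap, at exactly the point where the paper's hypotheses on its chain do real work: the maintenance of your invariant $p_i\in M_{\eta_i+1}$ at \emph{limit} stages. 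At a limit $j$ you verify only that $p_j=\bigcup_{i<j}p_i\cup\{s_j\}$ is a legal condition; but to run the next successor step you must find $r\le p_j$ and $\zeta_j$ by elementarity inside some $M_{\eta+1}$, and since $r$ end-extends $p_j$, this forces $p_j$ itself to be an element of that model. Nothing in your chain as specified --- elementary, continuous, of size $\lambda$, with $\langle M_\eta\mid\eta\le\xi\rangle\in M_{\xi+1}$ --- guarantees that a union of $j$-many conditions chosen across increasing models belongs to any $M_\eta$; already for $j=\omega$ this fails unless the models are countably closed, and then the bound $\zeta_j<\delta^*$ is lost and the argument breaks.

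The repair is precisely the clause you omitted, and it is where $\lambda^{<\lambda}=\lambda$ is genuinely used (the paper builds it into the definition of its internally approachable sequences): require ${}^{<\lambda}M_{\xi+1}\subseteq M_{\xi+1}$ for every $\xi$. Then at a limit $j<\gamma$, since $j<\gamma=\cff(\xi^*)$, all $p_i$ for $i<j$ lie in a single $M_{\eta'}$ with $\eta'<\xi^*$, and $\langle p_i\mid i<j\rangle$, being a sequence of length $<\lambda$ with range in $M_{\eta'+1}$, is an element of $M_{\eta'+1}$; hence so are $\bigcup_{i<j}p_i$, $s_j$, and $p_j$, restoring the invariant. (The paper's version of the same device is the observation that $N_\delta^{<\cff(\delta)}\subseteq N_\delta$ for limit $\delta$, which keeps every proper initial segment of its condition sequence inside the one model $N_\delta$.) A smaller point to tighten: as written, $\zeta_i>\max(p_i)$ together with $e_{i+1}=\min(E\setminus(\max r+1))$ does not by itself make the tops cofinal in $\delta^*$, which you need both for $q$ to be closed at its top and for $q\Vdash\check{\delta}^*\in\dot C$; demand in addition $\zeta_{i+1}>\delta_{\eta_i}$, which is expressible inside $M_{\eta_i+1}$ because your approachability clause $\langle M_\eta\mid\eta\le\eta_i\rangle\in M_{\eta_i+1}$ puts the ordinal $\delta_{\eta_i}$ into that model.
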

\begin{proof} Let $p\in \csp(\lambda^+\smallsetminus A_2)$ force that $\dot{D}$ is a closed unbounded subset of 
$\lambda^+$ with 
$\dot{D}\cap A_1=\emptyset$. Let 
$\theta> (2^{2^\lambda})$ be a regular cardinal and  let $\la N_\alpha:\alpha<\lambda^+\ra$ be an \hyperlink{IA}{internally approachable sequence of elementary substructures} of $\la H_\theta, \epsilon, <_\theta,\{A_1, A_2, p, \dot{D}\}\ra$.
Then $\la N_\alpha\cap\lambda^+:\alpha$ is a limit$\ra$ is a closed unbounded subset of $\lambda^+$ and for each such $\alpha, N_\alpha^{<\cff(\alpha)}\subseteq N_\alpha$.

{Choose a limit $\delta$  such that $N_\delta\cap \lambda^+\in A_1$. Let 
$\gamma=\cff(\delta)$ and $C_\delta\subseteq(\delta\smallsetminus A_2)$ be a closed unbounded set of order type $\gamma$. {Build a decreasing sequence of conditions $\la p_\alpha:\alpha<\gamma\ra$ such that
\begin{itemize}
\item $p_0=p$
\item for each $\beta<\gamma$, $\la p_\alpha:\alpha<\beta\ra\in N_\delta$
\item if $i$ is the $\alpha^{th}$ member of $C_\delta$, then for some ordinal
$\xi<\delta$, with $i<\xi$
	\[p_{\alpha+1}\Vdash \xi\in \dot{D}.\]
\item $\sup(p_{\alpha+1})>i$.
\end{itemize}
Such a sequence is possible to build, because
$N_\delta^{<\gamma}\subseteq N_\delta$. }} 

{{But then $\sup(\bigcup_{\alpha<\gamma}p_\alpha)=\delta$ and $\delta\notin
A_2$, hence
\[
q=\bigcup_{\alpha<\gamma}p_\alpha\cup\{\delta\}\in \csp(\lambda^+\smallsetminus A_2).
\]
Moreover $q\Vdash \dot{D}\cap A_1\ne \emptyset$. This contradiction
establishes Lemma \ref{what you need}.}} \end{proof}

\bigskip

{Our application of the next definition and the following lemmas will be with $\mu=\lambda^+$ for a regular $\lambda$.
\begin{definition}\label{nonreflecting}
Let  $\mu$ be a regular cardinal. 
\begin{itemize}
\item[(a)] The  partial ordering  $\nrp(\mu)$ for adding a
non-reflecting stationary subset of $\mu$ consists of functions
$p:\alpha\to\{0,1\}$ for some $\alpha<\mu$ and letting
           \[
           S_p=\{\xi<\alpha\mid p(\xi)=1\},
           \]
           for every limit $\bar{\alpha}\le\alpha$ there is
	   a closed unbounded set $C\subseteq \bar{\alpha}$ such that
	   $S_p\cap C=\varnothing$.
\item[(b)]  Let $\gamma<\mu$ be regular. The partial ordering $\nrp(\mu,\gamma)$ for adding a
non-reflecting stationary subset of $\mu\cap\cff(\gamma)$ consists of those conditions $p\in\nrp(\mu)$ which concentrate on
$\mu\cap\cff(\gamma)$:
\[ p(\xi)=0 \mbox{ whenever }\cff(\xi)\neq\gamma.\]
\end{itemize}
\end{definition}

Let $\gamma<\mu$ be uncountable regular cardinals and define the map 
\[\pi_\gamma:\nrp(\mu)\to \nrp(\mu, \gamma)\]
by setting 
\[\pi_\gamma(p)(\xi)=
	\begin{cases}
	p(\xi) & \mbox{ if }\cff(\xi)=\gamma\\
	0 & \mbox{ otherwise.}
	\end{cases}
\]

We will use the following lemma which relates $\nrp(\mu)$ with 
$\nrp(\mu, \gamma)$.
\begin{lemma}\label{projecting}
Let $\gamma<\mu$ be uncountable regular cardinals. 
	\begin{itemize}
	\item[(a)] If $G$ is generic for $\nrp(\mu)$, then 
		\[S_G\dfeq\{\xi<\mu:\mbox{ for some }p\in G, p(\xi)=1\}\]
	 is a non-reflecting stationary subset of $\mu$.
	\item[(b)] If $H$ is generic for $\nrp(\mu,\gamma)$  then 
	\[S_H=_{def}\{\xi<\mu:\mbox{ for some }p\in H, p(\xi)=1\}\] is a non-reflecting stationary subset of $\mu\cap \cff(\gamma)$.
	\item[(c)] If $G\subseteq \nrp(\mu)$ is generic over $V$, and $H=\pi_\gamma``G$, then $H$ is generic over $V$ for
	$\nrp(\mu,\gamma)$. (In other words the map $\pi_\gamma$ is a \emph{projection}.)
	\end{itemize}
\end{lemma}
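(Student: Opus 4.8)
The plan is to handle the three clauses separately: clauses (1) and (2) by the standard genericity arguments (each splitting into a \emph{non-reflection} half and a \emph{stationarity} half), and clause (3) by checking directly that $\pi_\gamma$ is a projection of forcing notions and that the image of a generic is generic. For clause (1), non-reflection is the easy half: for each limit $\bar\alpha<\mu$ the set of conditions $p$ with $\dom(p)>\bar\alpha$ is dense (one extends any condition by appending $0$'s, which leaves $S_p$ unchanged and keeps it a condition, as a tail of $\bar\alpha$ then witnesses non-reflection), so by genericity some such $p$ lies in $G$; since any $q\le p$ in $G$ end-extends $p$ we get $S_G\cap\bar\alpha=S_p\cap\bar\alpha$, and the defining clause of $\nrp(\mu)$ supplies a club $C\subseteq\bar\alpha$ with $S_p\cap C=\varnothing$.

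Stationarity is the substantive half. Given a condition $p$ and a name $\dot D$ forced to be club in $\mu$, I would build a descending chain $p=p_0\ge p_1\ge\cdots$ together with ordinals $\xi_0<\xi_1<\cdots$ so that $\dom(p_{n+1})=\xi_n+1$, $p_{n+1}(\xi_n)=0$, and $p_{n+1}\forces\xi_n\in\dot D$. Setting $\delta=\sup_n\xi_n$, a point of cofinality $\omega$, and $q=\bigcup_n p_n$, the set $\{\xi_n:n<\omega\}$ is a club in $\delta$ disjoint from $S_q$, so $q\cup\{(\delta,1)\}$ is a legal condition; it forces $\delta\in\dot D$ (by closure of $\dot D$) and $\delta\in S_G$, witnessing $\dot D\cap S_G\neq\varnothing$.

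Clause (2) runs along the same lines, with the length-$\omega$ tower replaced by a tower of length $\gamma$, so that the point added to $S_H$ has cofinality exactly $\gamma$ and $S_H\subseteq\cff(\gamma)$ as required. The extra ingredient is the closure behavior of $\nrp(\mu,\gamma)$ at limit stages $\lambda<\gamma$ of the construction, where I form $p_\lambda=\bigcup_{i<\lambda}p_i$ with $\dom(p_\lambda)=\beta_\lambda=\sup_{i<\lambda}\xi_i$ of cofinality $\cff(\lambda)<\gamma$ and must verify it is a condition. This is where the key observation enters: every condition concentrates on $\cff(\gamma)$, while each intermediate sup $\beta_{\lambda'}$ has cofinality $<\gamma$, hence lies outside $S$; so, together with the chosen $\xi_i$ (where the value $0$ was placed), the closure of $\{\xi_i:i<\lambda\}$ is a club in $\beta_\lambda$ disjoint from $S_{p_\lambda}$. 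The same computation lets me adjoin the top point $\delta=\sup_{i<\gamma}\xi_i$, of cofinality $\gamma$, at the end. I expect this limit-stage bookkeeping, which is effectively the $<\gamma$-strategic closure of $\nrp(\mu,\gamma)$, to be the main obstacle of the whole argument; the remaining steps are routine.

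For clause (3) I would verify the three defining properties of a projection for $\pi_\gamma:\nrp(\mu)\to\nrp(\mu,\gamma)$. Preservation of the empty condition and order preservation are immediate from the pointwise definition, using that $S_{\pi_\gamma(p)}=S_p\cap\cff(\gamma)\subseteq S_p$ inherits non-reflection from $p$. For the lifting property, given $p$ and $q\le\pi_\gamma(p)$ I set $p'\restriction\dom(p)=p$ and $p'(\xi)=q(\xi)$ on $[\dom(p),\dom(q))$, so that $\pi_\gamma(p')=q$; the one point needing care is that $p'\in\nrp(\mu)$, i.e.\ non-reflection at limits $\bar\alpha\in(\dom(p),\dom(q)]$, which I would obtain by taking a club $D\subseteq\bar\alpha$ witnessing non-reflection of $S_q$ and truncating it to its tail above $\dom(p)$, thereby also avoiding $S_p\subseteq\dom(p)$. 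Having shown $\pi_\gamma$ is a projection, I would finish by checking that $H=\pi_\gamma``G$ is genuinely generic: it is upward closed, since if $q=\pi_\gamma(p)\in H$ and $r\ge q$ then $p\restriction\dom(r)\in G$ with $\pi_\gamma(p\restriction\dom(r))=r$; it is directed by order preservation; and it meets every dense open $E\subseteq\nrp(\mu,\gamma)$ because the preimage $\{p:\pi_\gamma(p)\in E\}$ is dense in $\nrp(\mu)$ by the lifting property and hence is met by $G$.
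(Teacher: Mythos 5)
Your proposal is correct, and on clause (3) --- the only clause the paper proves in any detail --- it takes exactly the paper's route: the lifting criterion (for every $p$ and every $q\le\pi_\gamma(p)$ there is $p'\le p$ with $\pi_\gamma(p')\le q$), which you verify concretely by copying $q$ onto $[\dom(p),\dom(q))$ and then use to pull dense sets back; the paper dismisses clauses (1) and (2) as immediate, and your density and tower arguments for them are the standard ones that fill that gap. One small over-specification in your stationarity argument for (1): you cannot in general arrange $\dom(p_{n+1})=\xi_n+1$ with $p_{n+1}(\xi_n)=0$ and $p_{n+1}\forces\xi_n\in\dot D$, because the condition you obtain forcing $\xi_n\in\dot D$ may already have domain past $\xi_n$ and assign it the value $1$ (a point of $S_G$ may perfectly well be forced into $\dot D$). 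The repair is routine: take $\xi_n$ to be any ordinal above $\dom(p_n)$ that some extension forces into $\dot D$, pad that extension with at least one $0$ at the top of its domain, and use those padded zero-points --- rather than the $\xi_n$ themselves --- as the cofinal $\omega$-sequence witnessing non-reflection at $\delta=\sup_n\xi_n=\sup_n\dom(p_n)$; then $q\cup\{(\delta,1)\}$ is still a condition and still forces $\delta\in\dot D\cap S_G$. The same decoupling handles your limit stages in clause (2), where in fact less is needed than you suggest: the union of any descending chain of length $<\gamma$ is automatically a condition, since one can choose a club in $\beta_\lambda$ of order type $\cff(\beta_\lambda)<\gamma$ all of whose points are successors or of cofinality $<\gamma$, hence disjoint from $S_{p_\lambda}\subseteq\cff(\gamma)$ --- this is the genuine $\gamma$-closure of $\nrp(\mu,\gamma)$ recorded in Proposition~\ref{model.p.poset-for-nr}(b), not merely strategic closure.
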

\begin{proof}
The first two items are immediate.  For the third note that for all $p\in \nrp(\mu)$ and all $q\in \nrp(\mu, \gamma)$ with $q\le_{\nrp(\mu, \gamma)}\pi_\gamma(p)$, there is a $p'\le_{\nrp(\mu)}p$ with $\pi_\gamma(p')\le_{\nrp(\mu,\gamma)}q$.  This is the standard criterion for being a projection.
 \end{proof}
 {It is an easy remark that in (a) $V[G]=V[S_G]$ and in (b) $V[H]=V[S_H]$. For this reason we will frequently write $V[S]$ for the extension, when it is clear from context whether we are in case (a) or (b).}

We will make use of these partial orderings in the special case where $\mu$ is of the form
$\lambda^+$. For this reason we formulate the next proposition for cardinals
of the form $\lambda^+$, although it is true for any regular $\mu>\omega$. 

\begin{proposition}[See \cite{ifee}, \cite{CFM},  \cite{cfm1}]\label{model.p.partial ordering-for-nr}
Assume $\gamma\le\lambda$ where $\gamma$ is regular and {$\lambda^{<\lambda}=\lambda$.}  Then the following hold. 
\begin{itemize}
\item[(a)] Both $\nrp(\lambda^+)$ and $\nrp(\lambda^+,\gamma)$ are
strategically $\lambda^+$-closed.  In particular, both $\nrp(\lambda^+)$ and
$\nrp(\lambda^+,\gamma)$ preserve stationarity of stationary 
subsets of $\lambda^+$, are $(\lambda^+,\infty)$-distributive, so they 
do not add any new functions $f:\lambda\to\mbfV$, and generic extensions of
$\mbfV$ via these partial orderings agree with $\mbfV$ on all cardinals and
cofinalities $\le\lambda^+$ and on what $H_{\lambda^+}$ is.
\item[(b)] $\nrp(\lambda^+,\gamma)$ is $\gamma$-closed but not
$\gamma^+$-closed.

\item[(c)] If $G$ is $(\nrp(\lambda^+,\gamma),\mbfV)$-generic then
$S_G=\bigcup\{S_p\mid p\in G\}$ is a non-reflecting stationary subset of
$\lambda^+\cap\cff(\gamma)$.
\item[(d)] If $G$ is $(\nrp(\lambda^+),\mbfV)$-generic then
$S_G=\bigcup\{S_p\mid p\in G\}$ is a non-reflecting stationary subset of
$\lambda^+$ such that {$S_G$ has stationary intersection with each stationary subset of $\lambda^+$ that lies in $V$.  In particular,} $S_G\cap\lambda^+\cap\cff(\gamma)$ is stationary for all
regular $\gamma<\lambda^+$.
\end{itemize}
\end{proposition}

\begin{proof} {Only (d) is not explicitly proved in the earlier literature (though it was known). 
Let $T$ be a stationary subset of $\lambda^+$ in $V$.
Let $G\subseteq \nrp(\lambda^+)$ be generic and  $S\subseteq \lambda^+$ be the generic stationary set added by $G$.  We claim that $S$ has stationary intersection   with
$T$. We assume without loss of generality that every ordinal in $T$ has the same cofinality $\gamma\le \lambda$.}

{If the claim fails let $p\in \nrp({\lambda^+})$ force over $V$ that $\dot{S}\cap T\cap\dot{D}=\emptyset$ where 
$\dot{D}$ is a term for a closed unbounded subset of $\lambda^+$ in $V[G]$.}

 { Let 
$\theta> (2^{2^\lambda})$ be a regular cardinal and  let $\la N_\alpha:\alpha<\lambda^+\ra$ be an 
\hyperlink{IA}{internally approachable sequence of elementary substructures} of 
$\la H_\theta, \epsilon, <_\theta,\{\dot{S}, T, \dot{D}\}\ra$.
Then $\la N_\alpha\cap\lambda^+:\alpha$ is a limit$\ra$ is a closed unbounded subset of $\lambda^+$ and for each such 
$\alpha, N_\alpha^{<\cff(\alpha)}\subseteq N_\alpha$. Choose a limit ordinal $\delta$  such that $N_\delta\cap \lambda^+\in T$ and 
$N_\delta\cap \lambda^+=\delta$. Then $\delta$ has cofinality $\gamma$.  Let $C_\delta\subseteq \delta$  be closed and 
unbounded in $\delta$ with order type $\gamma$ such that every initial segment of $C_\delta$ belongs to $N_\delta$.  }

{By recursion on $\beta$ build a decreasing sequence of conditions $\la p_\alpha:\alpha<\delta\ra$ in $\nrp(\lambda^+)$ such that
\begin{itemize}
\item $p_0=p$
\item for each $\beta<\gamma$,  $\la p_\alpha:\alpha<\beta\ra\in N_\delta$
\item if $i$ is the $\alpha^{th}$ member of $C_\delta$, then for some ordinal
$\zeta<\delta$, with $i<\zeta$
	\[p_\alpha\Vdash \zeta\in \dot{D}.\]
\item $\sup(dom(p_\alpha))>i$.
\item If $\beta$ is a limit ordinal, then $p_\beta=\bigcup_{\bar{\beta}<\beta}p_{\bar{\beta}}$ and if
$\delta_\beta=\sup(\bigcup_{\bar{\beta}<\beta} dom(p_{\bar{\beta}}))$, then $p_{\beta+1}$ forces $\delta_\beta\notin S$. 
\end{itemize}
Let $p^*=\bigcup_{\beta<\delta}p_\beta$.  Then $dom(p^*)$ has supremum $\delta$ and forces that 
\begin{itemize}
\item $\dot{S}\cap \delta$ is non-stationary
\item $\dot{D}\cap \delta$ is cofinal in $\delta$
\end{itemize}
  Extending  $p^*$ by one point to get a condition $q$ that  forces $\delta\in \dot{S}$ gives a condition $q\in \nrp(\lambda^+)$ that forces 
$\delta\in \dot{S}\cap T\cap \dot{D}$. This contradiction shows that in the extension by $\nrp(\lambda^+)$, $S$ intersects every stationary $T$. }
\end{proof}

Although both partial orderings $\nrp(\lambda^+,\gamma)$ and
$\csp(S)$ have a low degree of closure in general, {the
iteration  $\nrp(\lambda^+,\gamma)*\csp(\lambda^+\smallsetminus \dot{S})$ that
generically adds  a   non-reflecting stationary set $S$ followed by adding a
closed unbounded subset of the complement of ${S}$
does have a high degree of closure. }

\begin{proposition}\label{model.p.nrp-star-csp}
Assume $\lambda$ is a cardinal, $\gamma\le\lambda$ is regular, and $\dot{S}$
is the canonical $\nrp(\lambda^+,\gamma)$-term for the generic non-reflecting
stationary subset of $\lambda^+\cap\cff(\gamma)$. Then the composition
\[
\nrp(\lambda^+,\gamma)*\csp(\lambda^+\smallsetminus\dot{S})
\]
has a dense $\lambda^+$-closed subset $D\subseteq H_{\lambda^+}$. {In
particular, this two step iteration preserves stationarity of stationary 
subsets of $\lambda^+$.}
\end{proposition}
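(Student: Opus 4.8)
The plan is to exhibit an explicit $\lambda^+$-closed dense subset $D\subseteq H(\lambda^+)$ of the iteration and then invoke the standard fact that a forcing carrying such a subset preserves stationary subsets of $\lambda^+$. The members of $D$ will be \emph{aligned} conditions $(p,\check c)$ in which the second coordinate is a check name for a genuine ground-model closed bounded set $c$ whose maximum is pinned to the top of the domain of $p$. Precisely, let $D$ consist of all $(p,\check c)$ for which there is $\delta<\lambda^+$ with $\dom(p)=\delta+1$, $p(\delta)=0$, with $c$ a closed bounded subset of $\lambda^+$ satisfying $\max(c)=\delta$, and with $c\cap S_p=\varnothing$, where $S_p=\{\xi<\dom(p)\mid p(\xi)=1\}$. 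Since $c\subseteq\dom(p)$ and $p$ vanishes on $c$, the condition $p$ forces $\check c\cap\dot S=\varnothing$; hence each $(p,\check c)$ is a bona fide condition of the iteration and lies in $H(\lambda^+)$, so $D\subseteq H(\lambda^+)$.

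\textbf{Density.} Given an arbitrary condition $(q,\dot e)$, I would first extend $q$ to some $q'$ forcing $\dot e=\check e$ for a ground-model set $e$. This uses that $\nrp(\lambda^+,\gamma)$ is $(\lambda^+,\infty)$-distributive (Proposition~\ref{model.p.poset-for-nr}(a)) and so adds no new bounded subset of $\lambda^+$, whence the name $\dot e$ for a closed bounded set must take a ground-model value. As $q'$ forces $\check e\cap\dot S=\varnothing$ we get $e\cap S_{q'}=\varnothing$. Now extend $q'$ by zeros to a condition $p$ with $\dom(p)=\delta+1$ for some $\delta>\max(e)$, so that $S_p=S_{q'}$ and $p(\delta)=0$, and set $c=e\cup\{\delta\}$. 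Then $(p,\check c)\in D$ end-extends $(q',\check e)$ in both coordinates and so refines $(q,\dot e)$, proving density.

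\textbf{Closure (the crux).} Let $\langle(p_i,\check c_i)\mid i<\mu\rangle$ be descending in $D$ with $\mu<\lambda^+$ a limit ordinal (the successor case being trivial); write $\dom(p_i)=\delta_i+1$, so $\max(c_i)=\delta_i$ and the $\delta_i$ are increasing. Put $\delta=\sup_{i<\mu}\delta_i<\lambda^+$ (using $\mu<\lambda^+$ and regularity of $\lambda^+$), $c_\infty=\bigcup_{i<\mu}c_i$, $p_\infty=\bigcup_{i<\mu}p_i$, and propose the lower bound $(p,\check c)$ with $p=p_\infty\cup\{(\delta,0)\}$ and $c=c_\infty\cup\{\delta\}$. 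The main obstacle is to check that $p$ is a legal $\nrp(\lambda^+,\gamma)$-condition, i.e.\ that $S_p$ fails to reflect at the new limit point $\delta$: one must produce a club in $\delta$ disjoint from $S_p=\bigcup_i S_{p_i}$. Here the alignment built into $D$ is exactly what is needed. Because $c_j$ end-extends $c_i$ and $S_{p_i}\subseteq S_{p_j}$ for $i<j$, while each $c_i\cap S_{p_i}=\varnothing$, the set $c_\infty$ is disjoint from $S_p$; and since $\max(c_i)=\delta_i\to\delta$, the set $c_\infty$ is closed and cofinal in $\delta$, i.e.\ a club in $\delta$. Thus $c_\infty$ does double duty: it witnesses that $S_p$ is non-stationary at $\delta$ (so $p$ is a legal condition, recalling $p(\delta)=0$), and, together with the top point $\delta$, it forms the legal closed bounded set $c$ disjoint from $S_p$. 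This simultaneous role of $c_\infty$ is precisely what neither poset supplies on its own and is the source of the extra closure. The remaining points — that $c$ is closed with $\max(c)=\delta$, that $p$ still concentrates on $\cff(\gamma)$, and that $(p,\check c)\le(p_i,\check c_i)$ for all $i$ by end-extension in both coordinates — are routine, and show $(p,\check c)\in D$ is a lower bound.

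\textbf{Conclusion.} Finally, since $D$ is $\lambda^+$-closed, the iteration is $(\lambda^+,\infty)$-distributive and preserves stationary subsets of $\lambda^+$ by the usual elementary-submodel argument: given a name $\dot C$ for a club in $\lambda^+$, a condition forcing it, and a stationary $S\subseteq\lambda^+$, choose $M\prec H_\theta$ with $M\cap\lambda^+=\alpha\in S$, build an $M$-generic descending chain in $D$ of length $\cff(\alpha)\le\lambda$, and take its lower bound to force $\alpha\in\dot C\cap S$. This yields the ``in particular'' clause.
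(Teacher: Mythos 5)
Your proof is correct and takes essentially the same route as the paper: the paper's proof defines the same dense set of aligned pairs $(p,\check{c})$ — conditions whose second coordinate is a decided ground-model closed set tied to the top of $\dom(p)$ and disjoint from $S_p$ — and simply cites the literature for the density and closure verifications, which you carry out in full. Your key closure observation, that the union of the decided clubs is itself a club in $\delta$ disjoint from $S_p$ and so simultaneously legalizes the limit condition and furnishes the new second coordinate, is exactly the mechanism underlying the paper's dense set.
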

{\begin{proof} 
Let $D$ be the collection of all 
$(p,\dot{c})\in\nrp(\lambda^+,\gamma)*\csp(\lambda^+\smallsetminus\dot{S})\cap
H_{\lambda^+}$
such that 
\begin{itemize} 
\item $\{\xi:p(\xi)=0\}$ is closed (so has successor order type), and 
\item $p\forces\dot{c}=\check{c}$ for some closed unbounded set 
$c\subseteq\dom(p)$ with $p(\xi)=0$ for all $\xi\in c$.
\end{itemize}
Then $D$ is dense in $\nrp(\lambda^+,\gamma)*\csp(\lambda^+\smallsetminus\dot{S})$.  For details see \cite{ifee}, \cite{CFM} or \cite{cfm1}. 
\end{proof}

Fix a regular cardinal $\lambda$. At successor steps in the iteration used to prove Theorem \ref{t3}, we will use an iteration of the form
\begin{equation}\label{def of Palpha} \nrp(\lambda^+,\gamma)*\csp(\lambda^+\smallsetminus \dot{T})*\csp(\lambda^+\smallsetminus \dot{S}),\end{equation}
where $\dot{S}$ is {a term for} the generic non-reflecting stationary subset
of $\lambda^+\cap \cff(\gamma)$ given by $\nrp(\lambda^+,\gamma)$ and
$\dot{T}$ will be {a term for}  a certain subset of $\lambda^+\cap \cff(\lambda)$. 
{We note in passing that the realization of $\dot{T}$ is a non-reflecting stationary set.}
 Since both $\dot{S}$ and $\dot{T}$ lie in $V^{\nrp(\lambda^+,\gamma)}$, the following three forcing notions are equivalent:
\begin{description}
\item[Version 1]$\nrp(\lambda^+,\gamma)*\csp(\lambda^+\smallsetminus \dot{T})*\csp(\lambda^+\smallsetminus \dot{S})$
\item[Version 2] $\nrp(\lambda^+,\gamma)*(\csp(\lambda^+\smallsetminus \dot{T})\times\csp(\lambda^+\smallsetminus \dot{S}))$
\item[Version 3]$\nrp(\lambda^+,\gamma)*\csp(\lambda^+\smallsetminus \dot{S})*\csp(\lambda^+\smallsetminus \dot{T}).$
\end{description}

\begin{lemma}\label{isolation}
Let $\poP=\nrp(\lambda^+,\gamma)*\csp(\lambda^+\smallsetminus \dot{T})*\csp(\lambda^+\smallsetminus \dot{S})$.  Then $\poP$ has a dense set $D$ such that 
	\begin{itemize}
	\item[(i)] $D$ has cardinality $\lambda^+$, 
	\item[(ii)] $D\subseteq H_{\lambda^+}$,
	\item[(iii)] $D$ is $\lambda$-closed, and
	\item[(iv)] $D$ is $(\lambda^+,\infty)$-distributive.
        \end{itemize}
\end{lemma}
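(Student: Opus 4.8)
The plan is to carry out the proof using the third of the three equivalent presentations of $\poP$, namely
\[
\poP\;\equiv\;\poQ*\csp(\lambda^+\setminus\dot T),
\qquad
\poQ:=\nrp(\lambda^+,\gamma)*\csp(\lambda^+\setminus\dot S),
\]
and to treat the two-step iteration $\poQ$ and the final club-shooting poset $\csp(\lambda^+\setminus\dot T)$ separately. The reason for preferring Version 3 is that $\poQ$ pairs $\nrp(\lambda^+,\gamma)$ with the club shooting for \emph{the same} generic set $S$, so Proposition~\ref{model.p.nrp-star-csp} applies directly and supplies a dense $\lambda^+$-closed subset $D_0\subseteq H_{\lambda^+}$ of $\poQ$. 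In particular $\poQ$ is $\lambda^+$-closed and $(\lambda^+,\infty)$-distributive, preserves cardinals, cofinalities and stationary subsets up to $\lambda^+$, and agrees with $\mbfV$ on $H_{\lambda^+}$. I would then analyze the last step inside $\mbfV^{\poQ}$, which I abbreviate $\mbfV_1$.

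The one step requiring care is to check that in $\mbfV_1$ the set $T$ still meets the hypotheses of Proposition~\ref{model.p.distributivity}: that each $T\cap\alpha$ (for $\alpha<\lambda^+$) is non-stationary and that $(\lambda^+\cap\cff(\lambda))\setminus T$ is stationary. Both transfer from the model in which $T$ is defined, where they hold because $T$ is one of the canonical non-reflecting subsets of $\lambda^+\cap\cff(\lambda)$ with stationary complement supplied by the construction. Indeed, every poset in sight is $(\lambda^+,\infty)$-distributive and preserves stationary subsets of $\lambda^+$: distributivity adds no new bounded subset of $\lambda^+$, so a club witnessing non-stationarity of $T\cap\alpha$ survives and no new club can make $T\cap\alpha$ stationary, while stationary-set preservation (together with preservation of cofinalities $\le\lambda^+$) keeps the complement stationary. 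With the hypotheses verified in $\mbfV_1$, Proposition~\ref{model.p.distributivity}(a) gives that $\csp(\lambda^+\setminus T)$ is $(\lambda^+,\infty)$-distributive there, and Proposition~\ref{model.p.distributivity}(b) — applied with $\lambda$ in the role of $\gamma$, which is legitimate since $T\subseteq\cff(\lambda)$ — gives it a dense $\lambda$-closed subset $\dot D_1$ consisting of closed bounded subsets of $\lambda^+$, hence lying in $H_{\lambda^+}$.

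From here the four clauses fall out by assembling the two layers. For (4) I would use that $(\lambda^+,\infty)$-distributivity is preserved under two-step iteration: a new $f:\lambda\to\mbfV$ in $\mbfV^{\poP}$ would, by distributivity of the top step over $\mbfV_1$, already lie in $\mbfV_1$, and then by distributivity of $\poQ$ already lie in $\mbfV$. For (1)--(3) I would exhibit the explicit dense set
\[
D=\{(q,\check c)\mid q\in D_0,\ c\text{ a closed bounded subset of }\lambda^+,\ q\forces_{\poQ}\check c\in\dot D_1\}.
\]
Since $\poQ$ is $(\lambda^+,\infty)$-distributive, any $\poQ$-name for a (bounded, hence ground-model) condition of $\csp(\lambda^+\setminus\dot T)$ equals a check below a condition of $D_0$, so $D$ is dense; under the ambient $\gch$ one has $|D|\le|D_0|\cdot|H_{\lambda^+}|=\lambda^+$ and $D\subseteq H_{\lambda^+}$, giving (1) and (2). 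For (3), a descending sequence in $D$ of length $\mu<\lambda$ has its first coordinates bounded below in $D_0$ by $\lambda^+$-closure, and that bound forces the second coordinates to form a descending $\mu$-sequence in the $\lambda$-closed $\dot D_1$; taking a lower bound there (again a check, by distributivity of $\poQ$) yields a lower bound in $D$, so $D$ is $\lambda$-closed.

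I expect the main obstacle to be precisely the transfer of the non-reflection and stationary-complement properties of $T$ up to $\mbfV_1$, since this is where the choice of $T$ as a non-reflecting set and the distributivity/preservation of the intervening forcings are essential; everything else is routine bookkeeping for two-step iterations of highly closed, distributive posets. A secondary point to handle cleanly is the replacement of names by checks, which is what makes clauses (1) and (2) literally true rather than merely true up to forcing equivalence.
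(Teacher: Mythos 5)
Your proposal is correct and follows essentially the same route as the paper's proof: pass to Version~3 of $\poP$, apply Proposition~\ref{model.p.nrp-star-csp} to the first two steps to get a dense $\lambda^+$-closed subset of $H_{\lambda^+}$, observe that since $\dot T\subseteq\cff(\lambda)$ the final club shooting is $\lambda$-closed and $(\lambda^+,\infty)$-distributive, and replace names by checks to obtain the dense set witnessing clauses (1) and (2). Your explicit verification in $\mbfV_1$ of the hypotheses of Proposition~\ref{model.p.distributivity} (non-reflection of $T$ and stationarity of the complement, preserved by the closure of the earlier stages) is a point the paper's proof leaves implicit, but it is the same argument, just spelled out.
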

\begin{proof}
Proposition \ref{model.p.nrp-star-csp} shows that $\nrp(\lambda^+,\gamma)*\csp(\lambda^+\smallsetminus \dot{S})$ has a dense $\lambda^+$-closed subset.  Since $\dot{T}$ consists of ordinals of cofinality $\lambda$,
$\csp(\lambda^+\smallsetminus \dot{T})$ is $\lambda$-closed and 
$(\lambda^+,\infty)$-distributive.  Since $\poP$ is isomorphic to
$\nrp(\lambda^+,\gamma)*\csp(\lambda^+\smallsetminus
\dot{S})*\csp(\lambda^+\smallsetminus \dot{T})$, items (iii) and (iv) 
follow. Now (i) is immediate, since $\csp(\lambda^+\smallsetminus \dot{T})$ has a
dense set of size $\lambda^+$ after forcing with the first two partial
orderings.  

To see (ii), use Version~3
of the partial ordering  $\poP$.  The first step is clearly a subset of $H_{\lambda^+}$.  By Proposition \ref{model.p.nrp-star-csp} there is a dense subset of the first two steps that lies in $H_{\lambda^+}$ and is $\lambda^+$-closed.
After forcing with $\nrp(\lambda^+,\gamma)*\csp(\lambda^+\smallsetminus \dot{S})$ the conditions in 
$\csp(\lambda^+\smallsetminus \dot{T})$ belong to $H_{\lambda^+}$ and can be realized by elements of $V$ using the closure of $\nrp(\lambda^+,\gamma)*\csp(\lambda^+\smallsetminus \dot{S})$.  Hence there is a dense subset of Version~3 consisting of triples $(p, c, d)$ where each coordinate belongs to $H_{\lambda^+}$. Rearranging, we get (ii).
\end{proof}

In the iteration, we will construct $\dot{T}$ as a coding tool.   Let 
$\{T_\xi:\xi<\lambda^+\}$ be a sequence of disjoint stationary subsets of $\lambda^+\cap \cff(\lambda)$.
Let $S\subset \lambda^+$ and  define
\begin{equation}T(S)=\bigcup_{\xi\in S}T_{2\xi}\cup \bigcup_{\xi\notin S}T_{2\xi+1}.\label{T(S)}
\end{equation}
We will use $T({S})$ for a set $S$ that is $V$-generic for $\nrp(\lambda^+)$.  When forcing with $\nrp(\lambda^+,\gamma)$ we will use the following variant:

\begin{equation}
T_\gamma(S)=\bigcup_{\xi\in S\cap \cff(\gamma)}T_{2\xi}\ \cup \bigcup_{\xi\in \cff(\gamma)\cap 
(\lambda^+\smallsetminus S)}T_{2\xi+1}. \label{Tgamma(S)} 
\end{equation}
\bigskip

Given an $\nrp(\lambda^+)$-generic $S\subseteq \lambda$, and a sequence of sets $S_\gamma$ for each regular uncountable $\gamma\le \lambda$ with $S_\gamma=S\cap \cff(\gamma)$, the following holds: 
\begin{equation}\label{unioning}
T(S)=\bigcup_\gamma T_\gamma(S_\gamma)
\end{equation}
\hypertarget{out is out}{In particular if $\delta\notin T(S)$ then $\delta\notin T_\gamma(S)$.}

\begin{proposition}\label{new-coding}
Suppose $\lambda$ is regular and the $\gch$ holds.
Let $\poP$ be the partial ordering 
\[
\nrp(\lambda^+)*\csp(\lambda^+\smallsetminus{T(\dot{S})})*
\csp(\lambda^+\smallsetminus\dot{S})
\]
where $\dot{S}$ is the canonical $\nrp(\lambda^+)$-term for the generically
added non-reflecting stationary set $S$ and $T(\dot{S})$  is the canonical
$\nrp(\lambda^+)$-term for the set $T(S)$. 
If $G\subseteq\poP$ is generic then in $V[G]$:
\begin{itemize}
	\item[(a)] If $\xi\in\dot{S}^G$, then  $T_{2\xi}$ is non-stationary and $T_{2\xi+1}$ is stationary.
	\item[(b)] If $\xi\notin\dot{S}^G$, then $T_{2\xi+1}$ is non-stationary, and $T_{2\xi}$ is stationary.
\end{itemize}
\end{proposition}

\begin{proof} {Force with $\nrp(\lambda^+)$ to get a generic stationary set $S$ and} 
let $\dot{C}$ be a term for the closed unbounded set added by 
$\csp(\lambda^+\smallsetminus{T(S)})$. If
$H\subseteq\csp(\lambda^+\smallsetminus{T(S)})$ is $V[S]$-generic then
$\dot{C}^H\cap T(S)$ is empty which shows the non-stationarity claims in both
(a) and (b).  

What is left is to show that the appropriate $T_{\eta}$'s stationarity is
preserved. The argument in each case is the same, so assume we argue for case
(a). Since the partial ordering $\nrp(\lambda^+)*\csp(\lambda^+\smallsetminus\dot{S})$ has a
dense $<\!\lambda^+$-closed subset, it preserves the stationarity of each 
$T_\xi$.   

Suppose that $\xi\in\dot{S}^G$. Applying  Lemma \ref{what you need} in $V[G]$
with $A_1=T_{2\xi+1}$ and $A_2=T(S)$ shows that $T_{2\xi+1}$ is stationary in
$V[G][H]$. \end{proof}

Essentially the same proof shows:
\begin{proposition} \label{doggammait}
Suppose $\lambda$ is regular, $\gamma\le \lambda$ is regular and uncountable,
and {that the $\gch$ holds}.  
Let $\poP$ be the partial ordering 
\[
\nrp(\lambda^+,\gamma)*
\csp(\lambda^+\smallsetminus{T_\gamma(\dot{S})})*
\csp(\lambda^+\smallsetminus \dot{S})
\]
where $\dot{S}$ and $T_\gamma(\dot{S})$ are {defined as  in Proposition~\ref{new-coding}}. 
If $G\subseteq\poP$ is generic then in $V[G]$:
\begin{itemize}
	\item[(a)] If $\xi\in\dot{S}^G\cap \cff(\gamma)$, then  $T_{2\xi}$ is
	non-stationary and $T_{2\xi+1}$ is stationary. 
	\item[(b)] If $\xi\in\cff(\gamma)\cap(\lambda^+\smallsetminus\dot{S}^G)$,
	then $T_{2\xi+1}$ is non-stationary, and $T_{2\xi}$ is stationary. 
	\item[(c)] If $\xi\notin \cff(\gamma)$, then $T_\xi$ is stationary.
\end{itemize}
\end{proposition}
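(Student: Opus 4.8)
The plan is to follow the proof of Proposition~\ref{new coding} essentially verbatim, with $\nrp(\lambda^+)$ replaced by $\nrp(\lambda^+,\gamma)$ and $T(\dot S)$ by $T_\gamma(\dot S)$ throughout, and then to dispatch the extra clause (c) by the same preservation argument. First I would force with $\nrp(\lambda^+,\gamma)$ to obtain the generic non-reflecting stationary set $S=S^G\subseteq\lambda^+\cap\cff(\gamma)$, so that both $S$ and $T_\gamma(S)$ lie in $\mbfV^{\nrp(\lambda^+,\gamma)}$. The club $C$ added by $\csp(\lambda^+\smallsetminus T_\gamma(S))$ is disjoint from $T_\gamma(S)$; since each summand $T_{2\xi}$ (for $\xi\in S\cap\cff(\gamma)$) and each $T_{2\xi+1}$ (for $\xi\in\cff(\gamma)\smallsetminus S$) is contained in $T_\gamma(S)$, all of these sets become non-stationary in $\mbfV[G]$. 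This immediately yields the non-stationarity assertions in (a) and (b).

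For the stationarity assertions I would first observe that, because both $\dot S$ and $T_\gamma(\dot S)$ belong to $\mbfV^{\nrp(\lambda^+,\gamma)}$, the poset $\poP$ is equivalent (as in the Version~1/Version~3 discussion preceding Lemma~\ref{isolation}) to
\[
\nrp(\lambda^+,\gamma)*\csp(\lambda^+\smallsetminus\dot S)*\csp(\lambda^+\smallsetminus T_\gamma(\dot S)),
\]
so I may perform the two club shootings in the opposite order. By Proposition~\ref{model.p.nrp-star-csp} the first two steps $\nrp(\lambda^+,\gamma)*\csp(\lambda^+\smallsetminus\dot S)$ preserve stationarity of stationary subsets of $\lambda^+$ (they have a dense $\lambda^+$-closed subset); hence every $T_\eta$, and thus $T_\gamma(S)$ itself, remains stationary in the intermediate model $\mbfV[G_0]$. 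It then remains to force with the single club shooting $\csp(\lambda^+\smallsetminus T_\gamma(S))$ over $\mbfV[G_0]$ and to apply Lemma~\ref{what you need}: in case (a) with $A_1=T_{2\xi+1}$ and $A_2=T_\gamma(S)$; in case (b) with $A_1=T_{2\xi}$ and $A_2=T_\gamma(S)$; and in case (c) with $A_1=T_\xi$ and $A_2=T_\gamma(S)$. In each instance $A_1$ is one of the pairwise-disjoint sets $T_\eta$ that does \emph{not} occur as a summand of $T_\gamma(S)$ (its index is not of the form $2\zeta$ or $2\zeta+1$ with $\zeta\in\cff(\gamma)$ on the appropriate side of $S$), so $A_1$ and $A_2$ are disjoint, and both are stationary in $\mbfV[G_0]$ by the previous sentence.

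The one hypothesis of Lemma~\ref{what you need} that needs separate verification---and the only genuine point of the argument---is that $A_2=T_\gamma(S)$ is non-reflecting, i.e.\ that $T_\gamma(S)\cap\delta$ is non-stationary for every $\delta<\lambda^+$. This is automatic from $T_\gamma(S)\subseteq\lambda^+\cap\cff(\lambda)$: any $\delta<\lambda^+$ has $\cff(\delta)\le\lambda$, so a club in $\delta$ of order type $\cff(\delta)$ consists of points of cofinality $<\lambda$ and therefore avoids $\cff(\lambda)$ entirely; hence $\cff(\lambda)\cap\delta$, and a fortiori $T_\gamma(S)\cap\delta$, is non-stationary in $\delta$. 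Since $\nrp(\lambda^+,\gamma)*\csp(\lambda^+\smallsetminus\dot S)$ preserves cofinalities $\le\lambda^+$ and computes $H_{\lambda^+}$ correctly, this non-reflection persists in $\mbfV[G_0]$. With the hypotheses of Lemma~\ref{what you need} in place, each designated $A_1$ remains stationary after the final club shooting, giving the stationarity clauses of (a) and (b) and the whole of (c). The main obstacle is thus purely bookkeeping: confirming that the summands of $T_\gamma(S)$ are exactly the sets that are killed and that every other $T_\eta$ is genuinely disjoint from $T_\gamma(S)$, so that the non-reflection observation and Lemma~\ref{what you need} apply uniformly across the three cases.
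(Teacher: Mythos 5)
Your proposal is correct and is essentially the paper's own argument: the paper proves this proposition by noting it follows from ``essentially the same proof'' as Proposition~\ref{new coding}, namely killing the summands of $T_\gamma(S)$ via the generic club, reordering the two club shootings so that the $\lambda^+$-closed-dense part $\nrp(\lambda^+,\gamma)*\csp(\lambda^+\smallsetminus\dot S)$ (Proposition~\ref{model.p.nrp-star-csp}) preserves all the $T_\eta$, and then applying Lemma~\ref{what you need} with $A_2=T_\gamma(S)$. Your explicit verifications of the bookkeeping and of the non-reflection of $T_\gamma(S)$ (automatic from $T_\gamma(S)\subseteq\lambda^+\cap\cff(\lambda)$) are exactly the points the paper leaves implicit.
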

\hypertarget{codingcoding}{The point of this coding is that using  the forcing in either Proposition \ref{new-coding} 
or \ref{doggammait}, for $\xi$ of the appropriate cofinality we have:
		\[\xi\in S \mbox{ if and only if }T_{2\xi} \mbox{ is non-stationary and } T_{2\xi+1} \mbox{ is 
		stationary}.\]
	}

\begin{proposition}\label{Spreserved} 
Under the hypotheses of Proposition \ref{new-coding} (or Proposition \ref{doggammait}), the set $S$ added by $\nrp(\lambda^+)$ (respectively $\nrp(\lambda^+,\gamma)$)  remains stationary after forcing with
$
\csp(\lambda^+\smallsetminus{T(\dot{S})})$ (respectively 
$\csp(\lambda^+\smallsetminus{T_\gamma(\dot{S})})$).
\end{proposition}

\begin{proof}
\noindent We prove it with the hypotheses of Proposition \ref{new-coding}, the proof using the hypotheses of Proposition \ref{doggammait} is essentially the same. 

Let $G\subseteq \nrp(\lambda^+)$ be generic and  $S\subseteq \lambda^+$ be the generic stationary set constructed by $G$. 
By Proposition \ref{model.p.partial ordering-for-nr} item (d), in $V[G]$,  $S$ has stationary intersection with each $T_\xi$. Choose a $\xi_0$ such that $T_{\xi_0}\cap T(S)=\emptyset$. Let $A_1=S\cap T_{\xi_0}$ and $A_2=T(S)$.  The $A_1$ and $A_2$ satisfy the hypotheses of   Lemma \ref{what you need} for the forcing $\csp(\lambda^+\smallsetminus T(S))$.  
Hence $S\cap T_{\xi_0}$ is stationary in the generic extension of $V[S]$ by $\csp(\lambda^+\smallsetminus T(S))$, and so $S$ is stationary after the forcing $\nrp(\lambda^+)*\csp(\lambda^+\smallsetminus T(\dot{S})$.
\end{proof}

\subsection{The construction}\label{the construction of the interation}
  \medskip
 
 Let $U$ be the normal measure  as in
\hyperlink{B}{(B)} above and 
	\begin{equation}\label{model.e.map-j}
	j:\mbfV\to M
	\end{equation}
be the ultrapower embedding by $U$ where $M$ is transitive.  Let $\kappa$ be the critical point of $j$.
 
 {The forcing will be an iteration of length $\kappa+2$
  with Easton supports. If $\alpha<\kappa$ is inaccessible 
we will choose a regular uncountable $\gamma\le \alpha$ and do a three step forcing.  First we add a non-reflecting stationary set $S$. We then force to code the non-reflecting stationary set using the stationary sets $T_{\alpha,\xi}$. The last step is to shoot a club through the complement of the stationary set $S$ created in the first step.}

{At stage $\kappa$ we do the analogous forcing except that we only use the first two steps.}
\bigskip

\bfni{Description of the Forcing.}
We now {formally define the  partial orderings used in the construction.} 
For an inaccessible cardinal $\alpha$ fix the stationary sets \hyperlink{Talphas}{$\la T_{\alpha, \xi}:\xi<\alpha^+\ra$} from Proposition \ref{model.p.sets-t}. Fix a regular uncountable $\gamma\le \alpha$.   For this $\gamma$, let 
$\poQ_\alpha^\gamma$ be the partial ordering

\begin{equation}
\label{now-with-alpha}
\nrp(\alpha^+,\gamma)*
\csp(\alpha^+\smallsetminus T_{\alpha,\gamma}(\dot{S}_{\alpha,\gamma}))*
\csp(\alpha^+\smallsetminus \dot{S}_{\alpha,\gamma}),
\end{equation}
defined as in Proposition~\ref{doggammait}, with $\alpha$ in place of
$\lambda$, $T_{\alpha,\gamma}$ in place of $T_\gamma$, and
$\dot{S}_{\alpha,\gamma}$ in place of $\dot{S}$. (We will often
suppress $\gamma$ in the notation if $\gamma$ is clear from the context, and
write simply $\dot{S}_{\alpha}$.) 
\bigskip

%%%%%%%%%%%%%%%%%%%%%%%%%%%%%%%%%%%%%%%%%
The final partial ordering $\poP^*$ will be an iteration with Easton supports of length 
{$\kappa+2$.} We define the initial segment of length $\kappa$,  $\poP_\kappa$, as follows. 
$\poP_\kappa$ will be the direct limit of the forcing iteration
\[
(\mbbP_\alpha\mid\alpha\le\kappa)  \]
satisfying the following.
\begin{itemize}
\item[FI-1] \hypertarget{F1}{For inaccessible $\alpha$, conditions in each $\mbbP_\alpha$ are partial functions $p$ with
$\dom(p)$ contained in inaccessibles below $\alpha$ such that
$\dom(p)\cap\beta$ is bounded in $\beta$ whenever $\beta\le\alpha$ is
inaccessible.}
\item[FI-2] \hypertarget{F2}{If $p\in\mbbP_\alpha$ and $\bar{\alpha}\in\dom(p)$ then
\[
p(\bar{\alpha})=
(\gamma^p(\bar{\alpha}),w^p(\bar{\alpha}))\]
is an ordered pair such that 
\[
\gamma^p(\bar{\alpha})\in R_{\bar{\alpha}}=
\{\gamma\le\bar{\alpha}\mid\gamma\mbox{ is regular uncountable}\},
\]}
and $w^p(\bar{\alpha})\in H_{\alpha^+}$ is a $\mbbP_{\bar{\alpha}}$-term  for a condition in the three step forcing 
$\mbbQ^{\gamma^p(\bar{\alpha})}_{\bar{\alpha}}$ defined in equation \ref{now-with-alpha}.\footnote{We can view $w^p(\bar{\alpha})$ as a triple $(w^p(\bar{\alpha}), w^p(\bar{\alpha}+1), w^p(\bar{\alpha}+2))$ but the notation $w^p(\bar{\alpha})$ is frequently  more convenient.}
\end{itemize}
The ordering on $\mbbP_\alpha$ is defined in the standard way, that is,
\begin{itemize}
\item[FI-3] \hypertarget{F3}{$p\le q$ iff the following hold:
  \begin{itemize}
  \item[(1)] $\dom(p)\supseteq\dom(q)$ and
  \item[(2)] for every $\bar{\alpha}\in\dom(q)$: 
      \begin{itemize}
      \item[(a)] $\gamma^p(\bar{\alpha})=\gamma^q(\bar{\alpha})$ and 
      \item[(b)] $p\rst\bar{\alpha}\forces_{\mbbP_{\bar{\alpha}}}
            \mbox{``$w^p(\bar{\alpha})$ extends $w^q(\bar{\alpha})$
	    in $\dot{\mbbQ}^{\gamma^p(\bar{\alpha})}_{\bar{\alpha}}$"}$
      \end{itemize}
   (where, by $p\rst\bar{\alpha}$ we  mean
   $p\rst(\dom(p)\cap\bar{\alpha})$.   )    }
\end{itemize}
\end{itemize}

From lemmas \ref{model.p.partial ordering-for-nr} to \ref{isolation}, we conclude that:
	\begin{itemize}
	\item[(i)] \hypertarget{theobv}{For all inaccessible $\alpha, \poP_\alpha\subseteq V_\alpha$}
	\item[(ii)] For $\alpha$ Mahlo, $\poP_\alpha$ is $\alpha$-c.c. 
	\item[(iii)] If $G$ is $(\poP_\alpha,\mbfV)$-generic then in
	$\mbfV[G]$ the partial ordering $(\dot{\poQ}^{\gamma^p(\bar{\alpha})}_\alpha)^G$
	contains a dense $\alpha$-closed set and is
	$(\alpha^+,\infty)$-distributive. 
	\item[(iv)] \hypertarget{top closed}{For $\alpha<\kappa$, if
	$\poP_\kappa=\poP_\alpha*\dot{\poP}^\alpha_\kappa$ is the canonical
	factorization, and $G$ is $(\poP_\alpha,\mbfV)$-generic, then 
	\[
	\mbfV[G]\models \mbox{``$(\dot{\poP}^\alpha_\kappa)^G$ has an
	$\alpha$-closed dense subset"}
	\]}
	\item[(v)] For each inaccessible $\alpha<\kappa$, if $p\in
	\poP_\kappa$ then $(p(\bar{\alpha}),p(\bar{\alpha}+1), p(\bar{\alpha}+3))\in H_{\bar{\alpha}^+}$.
	\item[(vi)] For all cardinals $\alpha$, $\poP_{\alpha+3}$ preserves both $\alpha$ and $\alpha^+$.
	\item[(vii)] $\poP_\kappa$ preserves all cardinals.
	\end{itemize}

\bigskip
Now define a \hypertarget{Pstar}{partial ordering $\mbbP^*$} as {the $\kappa+2$ length iteration:}
\begin{equation}\label{model.e.p-star}
\mbbP^*=\mbbP_\kappa*\nrp(\kappa^+)*\csp(\kappa^+\smallsetminus{T(\dot{S}}))
\end{equation}
{where $\dot{S}$ and $T(\dot{S})$ are as in Proposition~\ref{new-coding}, with
$\kappa$ in place of $\lambda$.
\medskip

We claim that any generic extension via $\mbbP^*$ produces a model as in
Theorem~\ref{t3}. We will first focus on the proof of the following
proposition.

\begin{proposition}\label{model.p.extensions-via-p-star}
In any generic extension via $\mbbP^*$ all cardinals and cofinalities are
preserved, $\kappa$ remains inaccessible, and for  each regular {uncountable} 
$\gamma\le\kappa$
there is a uniform normal $(\kappa^+,\infty)$-distributive ideal $\mcJ_\gamma$
such that $\ptm(\kappa)/\mcJ_\gamma$ has a dense $\gamma$-closed set, but no
dense $\gamma^+$-closed set. 
\end{proposition}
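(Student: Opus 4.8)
The plan is to prove the three assertions in turn, the substance lying in the construction of the ideals $\mcJ_\gamma$ by lifting the ultrapower embedding $j\colon\mbfV\to M$ of~(\ref{model.e.map-j}).

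\emph{Preservation and inaccessibility.} First I would dispose of the easy half. By the numbered properties of $\mbbP_\kappa$ collected at the end of Subsection~\ref{the construction of the interation}, $\mbbP_\kappa\subseteq V_\kappa$ is $\kappa$-c.c.\ (since $\kappa$ is Mahlo) and preserves all cardinals, so $\kappa$ stays inaccessible after $\mbbP_\kappa$. The two remaining steps $\nrp(\kappa^+)$ and $\csp(\kappa^+\smallsetminus T(\dot S))$ are each $(\kappa^+,\infty)$-distributive, by Proposition~\ref{model.p.poset-for-nr}(a) and Proposition~\ref{model.p.distributivity}(a); here the hypothesis of the latter holds for free, because every $T_{\kappa,\xi}$ concentrates on $\kappa^+\cap\cff(\kappa)$ and such a set is non-stationary in each $\delta<\kappa^+$ (a continuous cofinal sequence in $\delta$ has all its limit points of cofinality $<\kappa$), whence $T(S)$ is non-reflecting and co-stationary. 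As these two posets add no new subset of $\kappa$ and have size $\kappa^+$, they preserve cardinals, cofinalities, and the inaccessibility of $\kappa$. This yields the first two clauses.

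\emph{Analysing the tail.} Fix a regular $\gamma\le\kappa$ and write the generic as $G^*=G*S*C$, so that $\mbfV[G^*]$ contains $S$ and hence its projection $S_\gamma=S\cap\cff(\gamma)$, which is $\nrp(\kappa^+,\gamma)$-generic by Lemma~\ref{projecting}. Since $\crp(j)=\kappa$ and $\mbbP_\kappa\subseteq V_\kappa$, the image $j(\mbbP_\kappa)$ factors in $M$ as $\mbbP_\kappa*\mbbQ_\kappa*\mbbP_{(\kappa,j(\kappa))}$, where at the active stage $\kappa$ the lottery can be steered to choose $\gamma$ and run $\nrp(\kappa^+,\gamma)*\csp(\kappa^+\smallsetminus T_\gamma(\dot S_\kappa))*\csp(\kappa^+\smallsetminus\dot S_\kappa)$, and where $\mbbP_{(\kappa,j(\kappa))}$ is $\kappa^+$-closed in $M$ (its least active stage is an inaccessible above $\kappa$). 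Thus the whole tail of $j(\mbbP^*)$ over $\mbfV[G^*]$ is
\[
\mbbQ_\kappa \,*\, \mbbP_{(\kappa,j(\kappa))} \,*\, \nrp(j(\kappa)^+)^M \,*\, \csp\bigl(j(\kappa)^+\smallsetminus T^M(j(\dot S))\bigr)^M .
\]
Using the reorderings of Lemma~\ref{isolation} I would present $\mbbQ_\kappa$ over $M[G][S_\gamma]$ with $\csp(\kappa^+\smallsetminus S_\gamma)$ first, followed by the coding club $\csp(\kappa^+\smallsetminus T_\gamma(S_\gamma))$. Since $S_\gamma$ is now a \emph{fixed} non-reflecting subset of $\cff(\gamma)$, the factor $\csp(\kappa^+\smallsetminus S_\gamma)$ is $\gamma$-closed but, by Proposition~\ref{model.p.distributivity}(b), not $\gamma^+$-closed; it is the only factor of the entire tail that fails to be $\kappa^+$-closed, and it is this factor that will become $\ptm(\kappa)/\mcJ_\gamma$.

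\emph{The lift.} Now force $\csp(\kappa^+\smallsetminus S_\gamma)$ over $\mbfV[G^*]$, producing a club $C_\gamma$, and work in $\mbfV[G^*][C_\gamma]$ to assemble an $M$-generic $H^*$ for $j(\mbbP^*)$ containing $j''G^*$, giving a lift $j^+\colon\mbfV[G^*]\to M[H^*]$. The stage-$\kappa$ data are $S_\gamma$, $C_\gamma$, and a generic $\bar C$ for the coding poset; the Easton tail $\mbbP_{(\kappa,j(\kappa))}$ lives below $j(\kappa)$, hence has only $\kappa^+$ dense subsets in $M$ (as $j(\kappa^+)<\kappa^{++}$) and is $\kappa^+$-closed, so its $M$-generic is built by a descending construction of length $\kappa^+$, $M$ remaining closed under $\kappa$-sequences; the two top factors at level $j(\kappa)^+$ are handled by the master conditions $\bigcup j''S$ and $\bigcup j''C$ together with the $\kappa^+$-closure of $M$ above $\kappa$. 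The delicate point is the coding club $\bar C$: I would obtain it from the club $C$ already present in $\mbfV[G^*]$, using that $T(S)=\bigcup_{\gamma'}T_{\gamma'}(S_{\gamma'})$ with the pieces $T_{\gamma'}(S_{\gamma'})$, $\gamma'\ne\gamma$, mutually generic over $M[G][S_\gamma]$; this invisibility of $T(S)\smallsetminus T_\gamma(S_\gamma)$ to $M[G][S_\gamma]$, combined with the non-reflection of $T(S)$, should show that $C$ is in fact $M[G][S_\gamma]$-generic for $\csp(\kappa^+\smallsetminus T_\gamma(S_\gamma))$.

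\emph{The ideal and its properties.} Finally, in $\mbfV[G^*]$ define
\[
\mcJ_\gamma=\{A\subseteq\kappa \mid\; \forces_{\csp(\kappa^+\smallsetminus S_\gamma)}\kappa\notin j^+(\check A)\}.
\]
Because every factor of the tail above $\csp(\kappa^+\smallsetminus S_\gamma)$ is $\kappa^+$-closed and hence adds no subset of $\kappa$, the membership $\kappa\in j^+(A)$ for $A\in\mbfV[G^*]$ is decided by the condition of $\csp(\kappa^+\smallsetminus S_\gamma)$ lying in $C_\gamma$; this makes $\mcJ_\gamma$ well defined and shows that $A\mapsto\|\,\kappa\in j^+(\check A)\,\|$ is a dense embedding of $\ptm(\kappa)/\mcJ_\gamma$ into the completion of $\csp(\kappa^+\smallsetminus S_\gamma)$. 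A standard generator argument, using $2^\kappa=\kappa^+$ and the dense set of size $\kappa^+$ furnished by Lemma~\ref{isolation}, upgrades this to an isomorphism of $\ptm(\kappa)/\mcJ_\gamma$ with a dense subset of $\csp(\kappa^+\smallsetminus S_\gamma)$. The ideal is uniform and normal, being derived from an embedding of critical point $\kappa$, and it inherits $(\kappa^+,\infty)$-distributivity together with a dense $\gamma$-closed but no dense $\gamma^+$-closed subset from $\csp(\kappa^+\smallsetminus S_\gamma)$ via Proposition~\ref{model.p.distributivity}. The main obstacle is precisely the third paragraph: constructing $H^*$ inside $\mbfV[G^*][C_\gamma]$ so that $\csp(\kappa^+\smallsetminus S_\gamma)$ remains the sole non-$\kappa^+$-closed factor, which forces one to realise the $M$-generics for the coding posets at level $\kappa^+$ and their images at $j(\kappa)^+$ without a counting argument, instead exploiting the master conditions and the coherence between the full coding set $T$ and its restriction $T_\gamma$; everything else is routine distributivity bookkeeping or standard ideal-from-embedding technology.
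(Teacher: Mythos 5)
Your overall architecture (lift the ultrapower map, define $\mcJ_\gamma$ from the derived measure, dense-embed the quotient against $\csp(\kappa^+\smallsetminus S_\gamma)$, read off closure from Proposition~\ref{model.p.distributivity}) matches the paper's, but the step you yourself flag as delicate contains a genuine gap. You claim that the club $C$ already present in $\mbfV[G^*]$ is $M[G][S_\gamma]$-generic for the coding poset $\csp(\kappa^+\smallsetminus T_\gamma(S_\gamma))$, on the grounds that the pieces $T_{\gamma'}(S_{\gamma'})$ for $\gamma'\neq\gamma$ are ``mutually generic'' and invisible to $M[G][S_\gamma]$. Nothing in the paper supports this: Lemma~\ref{projecting} only provides that $\pi_\gamma$ is a projection, and $\nrp(\kappa^+)$ does not factor as a product of the posets $\nrp(\kappa^+,\gamma')$. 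To get genericity of $C$ you would need that every dense subset $D$ of the small poset lying in $M[G,S_\gamma]$ remains predense from the point of view of the big poset $\csp(\kappa^+\smallsetminus T(S))$, i.e.\ that every condition avoiding $T(S)$ has an end-extension in $D$ which \emph{still} avoids all of $T(S)$; since members of $D$ are only constrained to avoid the smaller set $T_\gamma(S_\gamma)$, they may be forced through $T(S)\smallsetminus T_\gamma(S_\gamma)$, and ruling this out is exactly the quotient analysis you defer with ``should show.'' The paper sidesteps this entirely, and it does so precisely by the counting argument you explicitly renounce: in $\mbfV[G,G'_0]$ there are only $\kappa^+$ dense subsets of the coding poset lying in $M[G,G_{\kappa,0}]$, and one builds the generic $G_{\kappa,1}$ as a descending $\kappa^+$-chain (each proper initial segment in $M[G,G_{\kappa,0}]$ by $\kappa$-closure), interleaving the next point of $C$ after each dense-set step so that at limit stages the supremum is a limit point of $C$, hence outside $T(S)$, hence outside $T_\gamma(S_\kappa)$ by the decomposition (\ref{unioning}). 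The club $C$ is used only to guarantee closure at limits; this is both simpler and actually provable, whereas your mutual-genericity claim is unestablished.

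Two secondary problems. First, your ``master conditions'' $\bigcup j''S$ and $\bigcup j''C$ at level $j(\kappa)^+$ are not conditions at all: since $j$ is the ultrapower by a normal measure under $\gch$, every $f:\kappa\to\kappa^+$ is bounded, so $\sup j''\kappa^+=j(\kappa^+)$, and these unions have full height $j(\kappa)^{+M'}$. What could replace them is the transfer-of-generics argument for $(\kappa^+,\infty)$-distributive posets when $j''\kappa^+$ is cofinal in $j(\kappa^+)$ (every dense open set in the target is $j^+(f)(\kappa)$, and $G'_0$ meets $\bigcap_{\xi<\kappa}f(\xi)$ by distributivity) --- but the entire enterprise is unnecessary: the paper lifts only $j\rst\mbfV[G]$ to $j_{H,K}:\mbfV[G]\to M[G,G_\kappa,H,K]$ and defines $\mcJ_\gamma$ in $\mbfV[G,G']$ via the term $\dot{U}$ for the derived measure, which suffices because $\ptm(\kappa)^{\mbfV[G,G']}=\ptm(\kappa)^{\mbfV[G]}$ by distributivity of the top two factors. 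Second, your ``standard generator argument'' for density should be replaced by the explicit duality map of Proposition~\ref{model.p.dense-embedding}, $e(p)=[a_p]$ with $a_p=\{\alpha<\kappa\mid f_p(\alpha)\in h(\alpha)\}$, and your final claim that there is no dense $\gamma^+$-closed set additionally needs $S_\gamma$ to remain stationary in $\mbfV[G,G']$ --- which the paper obtains from Lemma~\ref{what you need}, using that $T(S)$ is a non-reflecting stationary set concentrating on $\cff(\kappa)$ and disjoint from $S_\gamma$; your proposal never addresses this preservation.
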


\begin{proof} Fix a regular  {uncountable} cardinal $\gamma\le \kappa$.
\medskip

By $\gch$ in $\mbfV$, any generic extension via $\mbbP_\kappa$ satisfies
$2^\kappa=\kappa^+$, so in any such generic extension the partial ordering
$\nrp(\kappa^+)$ has cardinality $\kappa^+$. Using the strategic closure of
$\nrp(\kappa^+)$ we conclude that $2^\kappa=\kappa^+$ in the generic extension
via $\mbbP_\kappa*\nrp(\kappa^+)$. Let $S$ be the non-reflecting stationary set added by 
$\nrp(\kappa^+)$.  Then $\csp(\kappa^+\smallsetminus T(S))$ has
cardinality $\kappa^+$ in any such generic extension. All of this combined
with the distributivity properties of $\nrp(\kappa^+)$ and
$\csp(\kappa^+\smallsetminus T(S))$, {shows that $2^\kappa=\kappa^+$.  Similar arguments show that} 
\begin{equation}\label{model.e.p-star-preservation}
\mbox{$\mbbP^*$ preserves all cardinals and cofinalities and also the $\gch$.}
\end{equation}

Now return to the map $j$ from (\ref{model.e.map-j}). Let $G$ be
$(\mbbP_\kappa,\mbfV)$-generic.
Because \hyperlink{theobv}{$\card(\poP_\kappa)=\kappa$ and $\poP_\kappa$ is $\kappa$-c.c.}, $M[G]$ is closed under
$\kappa$-sequences in $\mbfV[G]$ and the models $M[G],\mbfV[G]$ agree on what
$H_{\kappa^+}$ is.  It follows that the models $M[G],\mbfV[G]$ agree on what {$\nrp(\kappa^+)$} and $\nrp(\kappa^+,\gamma)$ are.

Let  $G'=G'_0*G'_1$ be
$(\nrp(\kappa^+)*\csp(\kappa^+\smallsetminus T(\dot{S})),\mbfV[G])$-generic
where $\dot{S}$ is as in equation \ref{model.e.p-star}. 
It follows that $S=\dot{S}^{G'_0}=\bigcup G'_0$.

Let $G_{\kappa,0}=\pi_\gamma(G_0')$ where $\pi_\gamma$ is as in Lemma \ref{projecting}. Then $G_{\kappa,0}$ is generic for $\nrp(\kappa,\gamma)$ {over both $M[G]$ and $V[G]$. Let
$\dot{S}$ be the term for the non-reflecting stationary set coming from $G'_0$. Then 
$\dot{S}_{\kappa,\gamma}=\dot{S}\cap \cff(\gamma)$.  Denote $ S_{\kappa,\gamma}$ by $S_\kappa$.}

{Since $\nrp(\kappa^+)$ and $\nrp(\kappa^+,\gamma)$ are  
$(\kappa^+,\infty)$-distributive in the models where they live,
\begin{equation}\label{model.e.m-ko-closure}
\mbox{$M[G,G_{\kappa,0}]$ is closed under $\kappa$-sequences lying in  $\mbfV[G,G'_0]$.}
\end{equation} 
In particular,
$M[G,G_{\kappa,0}]$ and $\mbfV[G,G'_0]$ agree on what $H_{\kappa^+}$ and
$\csp(\kappa^+\smallsetminus T_{\kappa,\gamma}(S_\kappa))$ are.}

Let $C\in\mbfV[G,G']$ be the closed unbounded subset of
$\kappa^+\smallsetminus T(S)$ associated with the generic ultrafilter $G'_1$ for
$\csp(\kappa^+\smallsetminus T(S))$ over $\mbfV[G,G'_0]$.

Notice that
$T_{\kappa,\gamma}(S_\kappa)\in M[G,G_{\kappa,0}]$ and 
$C\cap T_{\kappa,\gamma}(S_\kappa)=\varnothing$ because
$T_{\kappa,\gamma}(S_\kappa)\subseteq T(S)$. From the point of view of
$\mbfV[G,G'_0]$ there are only $\kappa^+$ many dense subsets of
$\csp(\kappa^+\smallsetminus T_{\kappa,\gamma}(S_\kappa))$ which are in
$M[G,G_{\kappa,0}]$.

 We can construct a 
$(\csp(\kappa^+\smallsetminus T_{\kappa,\gamma}(S_\kappa)),M[G,G_{\kappa,0}])$-generic 
filter $G_{\kappa,1}\in\mbfV[G,G']$ as follows.
 In $\mbfV[G,G'_0]$ fix an
enumeration $\langle D_\beta\mid\beta<\kappa^+\rangle$ of dense subsets of
$\csp(\kappa^+\smallsetminus T_{\kappa,\gamma}(S_\kappa))$ which {belong to}
$M[G,G_{\kappa,0}]$. {Using recursion} on $\beta<\kappa^+$ construct a descending
chain $\langle c_\beta,c'_\beta\mid\beta<\kappa^+\rangle$ in
$\csp(\kappa^+\smallsetminus T_{\kappa,\gamma}(S_\kappa))$ as follows.
\begin{itemize}
\item Let $c'_0=\varnothing$. 
\item Given $c'_\beta$, pick $c_\beta\in D_\beta$ such that
$c_\beta\le c'_\beta$ in
$\csp(\kappa^+\smallsetminus T_{\kappa,\gamma}(S_\kappa))$. 
\item Given $c_\beta$, let $c'_{\beta+1}=c_\beta\cup\{\delta_{\beta+1}\}$ where
$\delta_{\beta+1}$ is the least element of $C$ larger than $\max(c_\beta)$.
\item If $\beta$ is a limit let
$c'_\beta=\left(\bigcup_{\bar{\beta}<\beta}c'_{\bar{\beta}}\right)
\cup\{\delta_\beta\}$ 
where $\delta_\beta=\sup\{\max(c'_{\bar{\beta}})\mid\bar{\beta}<\beta\}$.  
\end{itemize}
To see that this works, notice that for every $\beta<\kappa^+$ both $c_\beta$
and $c'_\beta$ are elements of $M[G,G_{\kappa,0}]$, which is verified
inductively on $\beta$. The only non-trivial step in the induction is to see
that $c'_\beta\in M[G,G_{\kappa,0}]$ for $\beta$ limit.  That the sequence $\la c'_{\bar{\beta}}:{\bar{\beta}}<\beta\ra$ 
belongs to $M[G,G_{\kappa,0}]$ follows from the $\kappa$-closure property of $M[G,G_{\kappa,0}]$.
For the union to be a condition 
 requires that the supremum $\delta$ of $c'_\beta$ does not belong to $T_{\kappa,\gamma}(S_\kappa)$.  
 However by equation (\ref{unioning}), since $\delta\notin T(S)$ we know that 
 $\delta\notin T_{\kappa,\gamma}(S_\kappa)$.

Now let $G_{\kappa,1}$ be the filter on
$\csp(\kappa^+\smallsetminus T_{\kappa,\gamma}(S_\kappa))$ generated by the
sequence $\langle c_\beta\mid\beta<\kappa^+\rangle$; it is clear that
$G_{\kappa,1}\in\mbfV[G,G']$ and is
$(\csp(\kappa^+\smallsetminus T_{\kappa,\gamma}(S_\kappa)),M[G,G_{\kappa,0}])$-generic. Finally set $G_\kappa=G_{\kappa,0}*G_{\kappa,1}$.

{We note here that by Proposition \ref{Spreserved}, $S_\kappa$ is stationary in $\mbfV[G,G']$. Thus in $\mbfV[G,G']$, $S_\kappa$ is a non-reflecting stationary set.}

Consider a 
$(\csp(\kappa^+\smallsetminus S_\kappa),\mbfV[G,G'])$-generic filter
$H$. Then the filter 
$G_\kappa*H$ is
$(\nrp(\kappa^+,\gamma)*
  \csp(\kappa^+\smallsetminus T_{\kappa,\gamma}(\dot{S}_\kappa))*
  \csp(\kappa^+\smallsetminus\dot{S}_\kappa),M[G])$-generic.
It follows that $G*G_\kappa*H$ is
$(j(\mbbP_\kappa)\rst(\kappa+3),M)$-generic. By 
the Factor Lemma applied inside $M[G,G_\kappa,H]$, the quotient
$j(\mbbP_\kappa)/G*G_\kappa*H$ is isomorphic to the iteration
$\mbbP^{\kappa+3}_{j(\kappa)}$ as calculated in $M[G,G_\kappa,H]$.
 Let $\mu$ 
be the least inaccessible of $M$ above $\kappa$. Using \hyperlink{top
closed}{(iv)} in the list of the properties of the iteration stated
below~FI-3, we conclude that $M[G,G_\kappa,H]$ satisfies the following:
\begin{equation}\label{model.e.closure-quotient}
\mbox{$j(\mbbP_\kappa)/G*G_\kappa*H$ has a dense $\mu$-closed subset.}
\end{equation}
Since
$\nrp(\kappa^+)*\csp(\kappa^+\smallsetminus T(\dot{S}_\kappa))*
 \csp(\kappa^+\smallsetminus\dot{S}_\kappa)$
is $(\kappa^+,\infty)$-distributive in $\mbfV[G]$,
\begin{equation}\label{model.e.m-final-closure}
\mbox{$M[G,G_\kappa,H]$ is closed under $\kappa$-sequences in $\mbfV[G,G',H]$.}
\end{equation}
Working in $\mbfV[G,G',H]$: since the cardinality of {$\mbbP^{\kappa+3}_{j(\kappa)}$} 
 is $\kappa^+$, we have an enumeration
$\langle D_\beta\mid\beta<\kappa^+\rangle$ of all dense subsets of
$j(\mbbP_\kappa)/G*G_\kappa*H$ which are in $M[G,G_\kappa,H]$. Using
\hyperlink{top closed}{(iv)} in the list of the properties of the iteration
stated below~FI-3,
the sentences labelled (\ref{model.e.closure-quotient}), (\ref{model.e.m-final-closure}) above and 
the fact that $\mu>\kappa^+$, we can construct a descending sequence
$\langle p_\beta\mid\beta<\kappa^+\rangle$ with each proper initial segment 
being an element of $M[G,G_\kappa,H]$ and such that $p_\beta\in D_\beta$
for all $\beta<\kappa^+$. {Let {$K_1$} be the filter on
$j(\mbbP_\kappa)/G*G_\kappa*H$ generated by this sequence. Then $K_1$ is
$(j(\mbbP_\kappa)/G*G_\kappa*H,M[G,G_\kappa,H])$-generic and 
$K_1\in\mbfV[G,G',H]$. Let $K= G*G_\kappa*H*K_1$.  Then $K$ can be
viewed as a $(j(\mbbP_\kappa),M)$-generic filter, so} we can extend $j$ to an
elementary embedding
{\[j_{H,K}:\mbfV[G]\to M[K]\]}
\noindent defined {by setting}
$j_{H,K}(\dot{x}^G)=j(\dot{x})^K$ whenever
$\dot{x}\in\mbfV$ is a $\mbbP_\kappa$-term.
Since
{$K_1$ can  be 
constructed inside $\mbfV[G,G',H]$, there is a
$\csp(\kappa^+\smallsetminus S_\kappa)$-term $\dot{K_1}\in\mbfV[G,G']$ such that
$\dot{K}_1^H$} is 
$(j(\mbbP_\kappa)/G*G_\kappa*H,M[G,G_\kappa,H])$-generic whenever $H$ is
$(\csp(\kappa^+\smallsetminus S_\kappa),\mbfV[G,G'])$-generic. {In particular there is a $M$-generic $K^H\subseteq j(\mbbP)$ determined by {forcing over  $V[G,G_\kappa]$ to get a generic $H\subseteq \csp(\kappa^+\smallsetminus S_\kappa)$.}}

{Changing notation slightly to emphasize the dependence on $H$, define}  
$j_H$ be as follows.  
\begin{equation}\label{model.e.j-k}
j_H=j_{H,\dot{K}^H}:\mbfV[G]\to M[\dot{K}^H].
\end{equation}

{We also }have a
$\csp(\kappa^+\smallsetminus S_\kappa)$-term $\dot{U}\in\mbfV[G,G']$ such that
$\dot{U}^H$ is the normal $\mbfV[G]$-measure over $\kappa$ derived from $j_H$.
That is, 
\begin{equation}\label{model.e.dot-u}
\dot{U}^H=\{x\in\ptm(\kappa)^{\mbfV[G]}\mid \kappa\in j_H(x)\}
\end{equation}
whenever $H$ is a $(\csp(\kappa^+\smallsetminus S_\kappa),\mbfV[G,G'])$-generic
filter. It is a standard fact that
\begin{equation}\label{model.e.ultrapower}
\parbox{3.2in}{$M[\dot{K}^H]=\ult(\mbfV[G],\dot{U}^H)$ and
$j_H:\mbfV[G]\to M[\dot{K}^H]$ is the associated ultrapower map.}
\end{equation}
Since the composition 
$\nrp(\kappa^+)*\csp(\kappa^+\smallsetminus T(\dot{S}_\kappa))*
 \csp(\kappa^+\smallsetminus\dot{S_\kappa})$ is
$(\kappa^+,\infty)$-distributive in $\mbfV[G]$, the models $\mbfV[G]$ and
$\mbfV[G,G']$ agree on what $\ptm(\kappa)$ is, so $\dot{U}^H$ is also a normal 
$\mbfV[G,G',H]$-measure over $\kappa$. Since $\dot{U}^H\in\mbfV[G,G',H]$ we
record that
\begin{equation}\label{model.e.measurability}
\mbox{$\kappa$ is measurable in $\mbfV[G,G',H]$.}
\end{equation}

We now define the ideal $\mcJ_\gamma$ on $\ptm(\kappa)$ in $\mbfV[G,G']$. For every $x\in\ptm(\kappa)^{\mbfV[G,G']}$,  
\begin{equation}\label{model.e.j-gamma}
x\in\mcJ_\gamma
\;\Longleftrightarrow\;\;
\forces^{\mbfV[G,G']}_{\csp(\kappa^+\smallsetminus S_\kappa)}
\check{x}\notin\dot{U}, 
\end{equation}
{Note  that this definition takes place in $\mbfV[G,G']$ so $\mcJ_\gamma\in\mbfV[G,G']$ and standard}
arguments show
that $\mcJ_\gamma$ is a uniform normal ideal on $\ptm(\kappa)$ in
$\mbfV[G,G']$. 

{Recall
that $S_\kappa\subseteq\kappa^+\cap\cff(\gamma)$ where $\gamma$ was fixed at
the in $V[G]$. This is crucial for determining the closure properties of
$\ptm(\kappa)/\mcJ_\gamma$.} 
The main tool for analyzing properties of
$\mcJ_\gamma$ is the duality theory developed in \cite{quotalg}. Rather than simply cite theorems there, we show the following proposition. 

\begin{proposition}\label{model.p.dense-embedding}
In $\mbfV[G,G']$ there is a dense embedding
\[
e:\csp(\kappa^+\smallsetminus S_\kappa)\to\ptm(\kappa)/\mcJ_\gamma.
\]
\end{proposition}

\begin{proof}
In $\mbfV$, fix an assignment $x\mapsto f_x$ where $x\in M$ and
$f_x:\kappa\to\mbfV$ is such that  
\begin{equation}\label{model.e.f-x}
x=[f_x]_U=j(f_x)(\kappa).
\end{equation}
The partial ordering $\csp(\kappa^+\smallsetminus S_\kappa)$ in the generic extension
$M[G,G_\kappa]$ can be viewed as the quotient
$\left(j(\mbbP_\kappa)\rst(\kappa+1)\right)/G*G_\kappa$, so we can consider
conditions in $\csp(\kappa^+\smallsetminus S_\kappa)$ as elements of $M$ {that are}
ordered the same way as conditions in $j(\mbbP_\kappa)$. Hence
each such condition $p$ is represented in the ultrapower by $U$ by the
function $f_p$. 

Next, recall that at each inaccessible $\alpha<\kappa$, {stages $\alpha, \alpha+1$ and $\alpha+2$ of 
$\mbbP_\kappa$ are a composition of three
partial orderings where the last one is $\csp(\alpha^+\smallsetminus S_\alpha)$.}
The $\alpha+1, \alpha+2, \alpha+3$ components of the generic filter $G$ are then of the
form $G_{\alpha,0}*G_{\alpha,1}*h(\alpha)$ where $h(\alpha)$ is
$(\csp(\alpha^+\smallsetminus S_\alpha),
  \mbfV[G\rst\alpha*G_{\alpha,0}*G_{\alpha,1}])$-generic.
The function $h$ is thus an element of $\mbfV[G]$ and represents the filter
$H$ in the ultrapower by $\dot{U}^H$, that is, $H=j_H(h)(\kappa)$; see
(\ref{model.e.ultrapower}).  

Then for any $p\in\csp(\kappa^+\smallsetminus S_\kappa)$ we have the following:
\begin{equation}\label{model.e.calculation}
p\in H
\;\Longleftrightarrow\;
j_H(f_p)(\kappa)\in j_H(h)(\kappa)
\;\Longleftrightarrow\;
a_p\dfeq\{\alpha<\kappa\mid f_p(\alpha)\in h(\alpha)\}\in \dot{U}^H.
\end{equation} 
We show that in $\mbfV[G,G']$, the map
$e:\csp(\kappa^+\smallsetminus S_\kappa)\to\ptm(\kappa)/\mcJ_\gamma$
defined by 
\begin{equation}\label{model.e.map-e}
e(p)=[a_p]_{\mcJ_\gamma}
\end{equation}
is a dense embedding. The proof is a standard variant of the duality
argument, which we include for the reader's convenience. We write
briefly $[a]$ for $[a]_{\mcJ_\gamma}$.

To see that $e$ is order-preserving, consider $p\le q$ in
$\csp(\kappa^+\smallsetminus S_\kappa)$. By the above remarks on the ordering
of the quotient, we 
have $p\le q$ in $j(\mbbP_\kappa)$, hence $j(f_p)(\kappa)\le j(f_q)(\kappa)$
in $j(\mbbP_\kappa)$. It follows that
\[
b_{p,q}\dfeq\{\xi<\kappa\mid f_p(\xi)\le f_q(\xi)\}\in U,
\]
and so $b_{p,q}\in\dot{U}^H$ whenever $H$ is a 
$(\csp(\kappa^+\smallsetminus S_\kappa),\mbfV[G,G'])$-generic filter. It
follows that  
$\kappa\smallsetminus b_{p,q}\in\mcJ_\gamma$. Since
$a_p\smallsetminus a_q\subseteq\kappa\smallsetminus b_{p,q}$, we have
$[a_p]\le_{\mcJ_\gamma}[a_q]$.

To see that the map $e$ is incompatibility preserving, we prove the
contrapositive. Assume $p,q\in\csp(\kappa^+\smallsetminus S_\kappa)$ are such
that $a_p\cap a_q\in\mcJ^+_\gamma$. It follows that there is some
$(\csp(\kappa^+\smallsetminus S_\kappa),\mbfV[G,G'])$-generic filter $H$ such that
$a_p\cap a_q\in\dot{U}^H$. Then $a_p\in\dot{U}^H$ and $a_q\in\dot{U}^H$. Using
(\ref{model.e.calculation}) we conclude that $p,q\in H$. Hence $p,q$ are
compatible.

To see that {the range of } $e$ is dense, assume that $a\in\mcJ^+_\gamma$. It follows that
there is some $(\csp(\kappa^+\smallsetminus S_\kappa),\mbfV[G,G'])$-generic
filter $H$ such that $a\in\dot{U}^H$. So there is some $p\in H$ such that
\begin{equation}\label{model.e.a-forced}
p\forces^{\mbfV[G,G']}_{\csp(\kappa^+\smallsetminus S_\kappa)}
\check{a}\in\dot{U}. 
\end{equation}
Now for every $(\csp(\kappa^+\smallsetminus S_\kappa),\mbfV[G,G'])$-generic
filter $H$ we have 
\[
a_p\in\dot{U}^H
\;\Longrightarrow\;
p\in H
\;\Longrightarrow\;
a\in\dot{U}^H.
\]
Here the first implication follows from (\ref{model.e.calculation}) and the
second implication from (\ref{model.e.a-forced}). 
We thus conclude that $a_p\smallsetminus a\notin\dot{U}^H$ whenever $H$ is a
$(\csp(\kappa^+\smallsetminus S_\kappa),\mbfV[G,G'])$-generic filter, which
means that $a_p\smallsetminus a\in\mcJ_\gamma$, or equivalently,
$[a_p]\le_{\mcJ_\gamma}[a]$.
\end{proof}
We can now complete the proof of
Proposition~\ref{model.p.extensions-via-p-star}
{by {looking}} at the
properties of the partial ordering $\csp(\kappa^+\smallsetminus S_\kappa)$ in
$\mbfV[G,G']$. {By Proposition \ref{Spreserved}, $S_\kappa$ is stationary in $\mbfV[G,G']$, 
so $\csp(\kappa^+\smallsetminus S_\kappa)$ is a standard forcing for killing a non-reflecting stationary subset of $\kappa^+$.
The $(\kappa^+,\infty)$-distributivity follows from
Proposition~\ref{model.p.distributivity}(a). The existence of a dense
$\gamma$-closed set as well as the non-existence of a dense $\gamma^+$-closed
set follows from Proposition~\ref{model.p.distributivity}(b) and the fact that
$S_\kappa\subseteq\kappa^+\cap\cff(\gamma)$. }           \qed

\bigskip

The last major step toward the proof of Theorem~\ref{t3} is the following
proposition.

\begin{proposition}\label{model.p.no-sat-ideals}
$\kappa$ does not carry a saturated ideal in a generic extension
\hyperlink{Pstar}{via~$\mbbP^*$}.  
\end{proposition}

\begin{proof}
Assume for a contradiction that $\kappa$ does carry a saturated ideal in
$\mbfV[G,G']$ where $G,G'$ are as above. Denote this ideal by $\mcI$, and let
$L$ be a $(\mbbP_\mcI,\mbfV[G,G'])$-generic filter where $\mbbP_\mcI$ is the
partial ordering $(\mcI^+,\subseteq)$ {and} 
\[j':\mbfV[G,G']\to N\]
     be the
generic embedding associated with the ultrapower
$\ult(\mbfV[G,G'],L)$. Letting $M'=j'(\mbfV)$ and $(K,K')=j'(G,G')$, we have
$N=M'[K,K']$. The partial ordering $\mbbP^**\mbbP_\mcI$ preserves $\kappa^+$, which
allows us to refer to \hyperlink{(D)}{(D)} at the beginning of this section. It follows that
the  models $\mbfV,M'$ and all transitive extensions of these models which are
contained in $\mbfV[G,G',L]$ have a common cardinal successor of $\kappa$,
which we denote by $\kappa^+$.    

Now look at the $\kappa$-th step of the iteration
$j'(\mbbP_\kappa)$. Obviously $j'(\mbbP_\kappa)\rst\kappa=\mbbP_\kappa$ and
$K\cap\mbbP_\kappa=G$. Let $\gamma\in R^{M'}_\kappa=R_\kappa$ be the ordinal 
chosen by the generic filter $K$ at step $\kappa$ of the iteration
$j'(\mbbP_\kappa)$ (see
\hyperlink{F2}{FI-2}).  Then steps $\kappa, \kappa+1$ and $\kappa+2$  are thus forcing with 
\[
\nrp(\kappa^+,\gamma)*\csp(\kappa^+\smallsetminus T(\dot{S}_\kappa))*
\csp(\kappa^+\smallsetminus\dot{S}_\kappa)
\]
over $M'[G]$. 
This composition of partial orderings is computed the
same way in $M'[G]$ and $\mbfV[G]$, as by \hyperlink{(D)}{(D)} at the beginning of this
section, the models $\mbfV$ and $M'$ agree on what $H_{\kappa^+}$ is,  but we don't use this
directly. What is relevant is
the agreement of the models on what $\kappa^+$ is, along with the fact that
$T'_{\kappa,\xi}=T_{\kappa,\xi}$ for all $\xi<\kappa^+$ where the sets
$T_{\kappa,\xi}$ and $T'_{\kappa,\xi}$ are as in \hyperlink{(D)}{(D)} quoted above.

The $\kappa$-th component $K_\kappa$ of $K$ has the form 
$K_{\kappa,0}*K_{\kappa,1}*K_{\kappa,2}$. Let $S_\kappa$ be the generic
non-reflecting stationary subset of $\kappa^+\cap\cff(\gamma)$ added by
$K_{\kappa,0}$ over $M'[G]$. Since
$\bigcup K_{\kappa,2}\in M'[K]\subseteq\mbfV[G,G',L]$ is a closed unbounded
subset of $\kappa^+$ disjoint from $S_\kappa$, the set $S_\kappa$ is
non-stationary in $\mbfV[G,G',L]$. 

By elementarity, the generic filter $K_{\kappa,1}$ codes the set $S_\kappa$
inside $M'[K]$ as follows. Given an ordinal $\xi\in\kappa^+\cap\cff(\gamma)$, 
\[
\xi\in S_\kappa
\;\Longleftrightarrow\;
\mbox{$T'_{\kappa,2\xi+1}$ is stationary and $T'_{\kappa,2\xi}$ is
non-stationary.} 
\]
By the agreement $T'_{\kappa,\xi}=T_{\kappa,\xi}$ coming from \hyperlink{(D)}{(D)} and
mentioned above,
\[
\xi\in S_\kappa
\;\Longleftrightarrow\;
\mbox{$T_{\kappa,2\xi+1}$ is stationary and $T_{\kappa,2\xi}$ is
non-stationary} 
\]
for all such $\xi$. Recall that $S$ is the subset of $\kappa^+$ with
characteristic function $\bigcup G'_0$, and the generic filter $G'_1$ codes $S$
in $\mbfV[G,G']$ the same way as the generic filter $K_{\kappa,1}$ codes the set
$S_\kappa$ inside $M'[K]$, that is,
\[
\xi\in S
\;\Longleftrightarrow\;
\mbox{$T_{\kappa,2\xi+1}$ is stationary and $T_{\kappa,2\xi}$ is
non-stationary.} 
\]
whenever $\xi<\kappa^+$. 
It follows that for every $\xi\in\kappa^+\cap\cff(\gamma)$,
\begin{eqnarray*}
\xi\in S_\kappa
& \Longrightarrow &
\mbox{$T_{\kappa,2\xi}$ is non-stationary in $M'[K]$}\\
& \Longrightarrow &
\mbox{$T_{\kappa,2\xi}$ is non-stationary in $\mbfV[G,G',L]$}\\
& \Longrightarrow &
\mbox{$T_{\kappa,2\xi}$ is non-stationary and $T_{\kappa,2\xi+1}$ is
stationary in $\mbfV[G,G',L]$}\\
& \Longrightarrow &
\xi\in S
\end{eqnarray*}
Here the third implication follows from the fact that in $\mbfV[G,G']$, if
$\xi<\kappa^+$ then exactly one of $T_{\kappa,2\xi}$, $T_{\kappa,2\xi+1}$ is
stationary. As $\mbbP_\mcI$ is $\kappa^+$-c.c., for each $\xi<\kappa^+$
exactly one of $T_{\kappa,2\xi}$, $T_{\kappa,2\xi+1}$ is stationary in
$\mbfV[G,G',L]$, namely the one which is stationary in
$\mbfV[G,G']$. Similarly we verify the implication $\xi\notin
S_\kappa\;\Longrightarrow\;\xi\notin S$ whenever
$\xi\in\kappa^+\cap\cff(\gamma)$. Altogether we then  conclude that
$S_\kappa=S\cap\cff(\gamma)$. But then, by 
{Proposition~\ref{Spreserved},} $S_\kappa$ is stationary in
$\mbfV[G,G']$. Then, again by the $\kappa^+$-c.c. of $\mbbP_\mcI$, $S_\kappa$
remains stationary in $\mbfV[G,G',L]$, a contradiction. 
\end{proof}

{Finally we give a proof of incompatibility of strategies $\mcS_\gamma$
from Corollary~\ref{c6}(a), as formulated at the end of Corollary~\ref{c6}.} 
\medskip

The point here is that in the construction of
$\mcJ_\gamma$, the ordinal $\gamma$ at the $\kappa$-th stage in
$j(\mbbP_\kappa)$ is chosen before the generic filter $H$ comes into
play. Therefore the set $x_\gamma$ defined by  
\[
x_\gamma=\{\alpha<\kappa\mid\gamma^p(\alpha)=\gamma
\mbox{ for some/all $p\in G$ with $\alpha\in\dom(p)$}\}
\quad \mbox{ if } \gamma<\kappa 
\]
and 
\[
x_\gamma=\{\alpha<\kappa\mid\gamma^p(\alpha)=\alpha
\mbox{ for some/all $p\in G$ with $\alpha\in\dom(p)$}\}
\quad \mbox{ if } \gamma=\kappa 
\]
is an element of $\dot{U}^H$ for all
$(\csp(\kappa^+\smallsetminus S_\kappa),\mbfV[G,G'])$-generic filters $H$,
hence $x_\gamma$ is in the filter dual to $\mcJ_\gamma$. Now if Player~I plays
$\mcA_0$ such 
that $x_\gamma,x_{\gamma'}\in\mcA_0$ and Player~II responds with $U_0$
according to $\mcS_\gamma$ then $x_\gamma\in U_0$, as $U_0=W\cap\mcA_0$ for
some $(\mbbP_{\mcJ_\gamma},\mbfV[G,G'])$-generic filter $W$. Similarly as
above, $x_{\gamma'}\in U'_0$ for the response $U'_0$ of $\mcS_{\gamma'}$ to
$\langle\mcA_0\rangle$. Since $x_\gamma\cap x_{\gamma'}=\varnothing$, we have
$U_0\neq U'_0$. 
\end{proof}

\begin{remark} We could do the construction without the  ``lottery" aspect, aiming at a single $\gamma$.  Indeed that works for that $\gamma$, but leaves open the problem of whether ideals exist with dense trees of height $\gamma'$ for $\gamma'\ne \gamma$ and for which $\gamma'$ strategies exist in the Welch game.  These questions are thorny and are left to the second part of this paper.  The solutions there use extensive fine structural arguments. 
\end{remark}

%%%%%%%%%%%%%%%%%%%%%%%%%%%%%%%%%%%%%%%%
%%%%%%%%%%%%%%%%%%%%%%%%%%%%%%%%%%%%%%%%
%%%%%%%%%%%%%%%%%%%%%%%%%%%%%%%%%%%%%%%%
%%%%%%%%%%%%%%%%%%%%%%%%%%%%%%%%%%%%%%%%

\section{Open Problems}\label{s.open}
\noindent 
In this section we raise questions we don't know the answer to. We do not guarantee any of these 
questions are deep, difficult or even make sense.

\medskip
\begin{open}\bfni{Removing Hypotheses} Theorem \ref{t2} requires the GCH and the non-existence of saturated ideals on $\kappa$.  Are either of these hypotheses necessary?  Can some variant of the proof work without those hypotheses?
\medskip
\end{open}

\begin{open}
\bfni{What can be said about correspondence between ideals and strategies? }
\noindent Theorem \ref{t4} says that starting with a nice ideal $\mcJ_\gamma$ one can build a winning strategy 
$\mcS^*_\gamma$ for Player II in $\mcG_\gamma$. 
%Starting with the ideal, if $\mcJ_\gamma$ is used to get a winning strategy 
%$\mcS_\gamma^*$ in the game $\mcg^W_\gamma$. 
In turn, $\mcS_\gamma^*$ can used to build the ideal 
$\mci_\gamma$ with the methods in Theorems \ref{t1} and \ref{t2}:

%%%%%%%%%%%%%%%
\[ \mcJ_\gamma\Longrightarrow \mcS^*_\gamma\Longrightarrow \mci_\gamma\]
Inspection of the proof shows that  $\mcJ_\gamma\subseteq\mci_\gamma$. Is there anything else one can say?  For example, are the two ideals equal?
\end{open}

 \medskip

\noindent{\bf An Ulam Game}
Consider the following variant of the cut-and-choose game of length $\omega$
derived from games introduced by Ulam in \cite{Ulam64} (see
\cite{kanomag}).\footnote{Velickovic \cite{Velo} calls these Mycielski games}  

\begin{center}
\begin{tabular}{ c||c|c|c|c|c|c } 
 I & $(A^0_0,  A^0_1)$& ($A^1_0,  A^1_1)$ &\dots &$(A^n_0,
 A^n_1)$&$(A^{n+1}_0,  A^{n+1}_1)$&\dots \\ 
 \hline
II & $B_0$ & $B_1$& \dots &$B_n$&$B_{n+1}$& \dots \\ 
\end{tabular}
\end{center}
At stage $0$, Player~I plays a partition $(A^0_0, A^0_1)$ of $\kappa$. At stage
$n\ge 0$ Player~II lets $B_n$ be either $A^n_0$ or $A^n_1$, and plays $B_n$.
At stage $n\ge 1$ Player~I plays a partition $(A^{n+1}_0, A^{n+1}_1)$ of
$B_n$.  The winning condition for Player~II is that $|\bigcap_{n\in \omega}
B_n|\ge 2$. 

These games generalize to lengths $\gamma>\omega$ as follows:
\begin{enumerate}
	\item At successor stages $\alpha+1$, Player $I$ partitions $B_\alpha$ into two pieces and Player $II$ chooses 
	one of the pieces. 
	\item At limit stages $\alpha$, let $B_\alpha=\bigcap_{\beta<\alpha}B_\beta$ and then Player $I$ partitions $B_\alpha$ into two pieces, and Player $II$ chooses one of the pieces.
	\item The winning condition is the same: the intersection of the pieces that player $II$ chooses has to have at least two elements.
\end{enumerate}

\begin{description}
\item[Observation] If Player~II has a winning strategy in the game 
$\mathcal
G^*_\omega$, then Player~II has a winning strategy in the Ulam game. 
\end{description}
This is immediate: Player~II follows her strategy in an auxiliary play of the game 
$\mathcal G^*_\omega$
against the Boolean  Algebras $\mca_n$ generated by $\{A^i_0, A^i_1:i\le n\}$.
In the game $\mathcal G^*_\omega$ she then plays as $B_n$ whichever of $A^n_0$
or $A^n_1$ belongs to $U_n$.  By the winning condition on $G^*_\omega$,
$\bigcap_{n\in\omega} B_n$ belongs to a $\kappa$-complete, uniform
filter. Hence $|\bigcap_nB_n|=\kappa>1$. 
 
Silver and Solovay (see \cite{kanomag}, page 249) showed that if Player~II
wins the Ulam game, then there is an inner model with a measurable
cardinal. This provides an alternate proof that the  consistency strength of
the statement ``Player~II has a winning strategy in $\mcG^*_\omega$" is
that of a measurable cardinal. 
 
What is unclear is the exact relationship between the Ulam Game and the Welch Game. Laver showed that if a measurable 
cardinal is collapsed to  $\omega_2$ by the L\'evy collapse  and $\mathcal I$ is the ideal generated by the original 
normal measure on $\kappa$, then in the extension $\ptm(\omega_2)/\mci$  has a dense countably closed subset 
(\cite{quotalg}). He showed that it follows from this that Player~II has a winning strategy in the Ulam game.  

In Section \ref{weak compactness}, it is shown that the Welch games only make sense at regular 
cardinals $\kappa$ such that for all $\gamma<\kappa$, $2^\gamma\le \kappa$. At successor 
cardinals $\kappa$ there is a single play by Player I (the algebra in part (2) of 
Theorem ~\ref{showingprincipals}) that defeats Player II in the game of length 1.  Moreover at non-weakly compact inaccessible cardinals $\kappa$, the  Keisler-Tarski Theorem shows player I has a winning strategy in the game of length 1.  But if $\kappa$ is weakly compact, Player II has a winning strategy in the game of length $\omega$.

The upshot of this discussion is that a comparison between the Ulam games and the Welch games should occur at weakly compact cardinals.
 
\begin{open}\label{Ulam}
\noindent {Suppose that $\kappa$ is weakly compact and}
that Player~II has a wining strategy in the Ulam game of length
$\gamma$ (for $\gamma\ge \omega$), does Player~II have a winning strategy in
$\mathcal G^*_\gamma$?  
\end{open}

\bfni{Determinacy of the Welch Games}  The discussion in the paragraphs before Problem \ref{Ulam} (based on Section \ref{weak compactness} of this paper) shows that questions about the determinacy of Welch Games really only make sense at inaccessible cardinals.  Moreover at non-weakly-compact inaccessible cardinals Player I wins the game of length 1 and at weakly compact cardinals Player II wins the game of length $\omega$. By work of Nielsen and Welch if  II has a winning strategy in the game of length $\omega_1$, then there is an inner model with a measurable cardinal--so Player II can't have such a winning strategy in $L$. (Theorem \ref{t1} in this paper also gives this result.)
Welch showed that for all regular $\gamma$, $\mcG^W_\gamma$ is determined in $L$ (this also follows immediately from Theorem 5.6 in \cite{HolySh}).

%\red{Work of Holy and Schlicht (\cite{HolySh}), elaborated on by Welch, showed that, in fact,  Player I has winning 
%strategies in the games $\mcG_\gamma$ for regular uncountable $\gamma$.} 
%\red{Holy and Schlicht proved that a failure of Welch game determinacy implies that $0^\sharp$ exists 
%(\cite{HolySh}). (In fact they proved it implies the existence of Ramsey cardinals with strong properties.)}

%The following is an immediate corollary:
%\begin{corollary*}
%If $V=L$ the Welch games are determined.
%\end{corollary*}
%
 However the following seems to be an open problem: 

\begin{open}
Is there a model of $\zfc + \gch$ with a measurable cardinal where the Welch games are determined? With a supercompact cardinal?  
\end{open}

\medskip
\bfni{Welch Games on Larger cardinals} In this paper the Welch games are shown to provide intermediary properties between weakly compact cardinals and measurable cardinals. What is the analogue for cardinals that are at least measurable?  Perhaps the most interesting question is the following:

\begin{open}{Are  there $\ptm_\kappa(\lambda)$ versions of the game?}
\end{open}

It is not trivial to even formulate a reasonable analogue of Welch games on supercompact cardinals.  The classical ultrafilter extension properties  on $\ptm_\kappa(\lambda)$ that follow from  large cardinals  suggest one, but it is not clear how to proceed.

Another technical obstacle that would have to be overcome is the following:  in the proofs in this paper one 
passes from a $\kappa$-filter $U$ on an $N_\alpha$ to its normal derivative $U^*$.  Normality presents an obstacle for   $\ptm_\kappa(\lambda)$  because this is the crucial difference between supercompact and strongly compact cardinals.

In \cite{BD} Buhagiar and Dzamonja found analogies of strongly compact cardinals that  Dzamonja suggested might be candidates for this game. 

\medskip
\bfni{Extender Algebras}  Large cardinals whose embeddings are determined by Extender Algebras also form  
candidates for places games like this can be played.  If $E$ is an extender with generators $\lambda^{<\omega}$ one might consider games 
where Player I plays elements of $\lambda^{<\omega}$ and sequences of $\kappa$-algebras in a coherent way, and 
player II plays ultrafilters on the associated algebras. 

In this manner one might hope to extend these results to $\ptm^2(\kappa)$ or further.

\medskip
\bfni{Games on accessible cardinals}
\begin{open}
Are there small cardinal versions of these games?
\end{open} 
The results in Section \ref{weak compactness} limit the Welch games to inaccessible cardinals.  However one might hope that there is some version of these games that end up creating ideals on cardinals that are not weakly compact.  A random suggestion is to require Player II to play ideals with some combinatorial property at each stage (rather than ultrafilters).
One target would be to define a game  similar to the Welch games that  gives
 $\omega$-closed densely treed ideals on $\omega_2$ (the original Laver ideals).

\bibliography{games}

\bibliographystyle{plain}

\end{document}